\theoremstyle{plain}
\newtheorem{lemma}{Lemma}[section]
\newtheorem{corollary}[lemma]{Corollary}
\newtheorem{proposition}[lemma]{Proposition}
\newtheorem{theorem}[lemma]{Theorem}
\theoremstyle{definition}
\newtheorem{definition}[lemma]{Definition}
\newtheorem{question}[lemma]{Question}
\theoremstyle{remark}
\newtheorem{remark}[lemma]{Remark}
\newtheorem{example}[lemma]{Example}
\newtheorem{claim}[lemma]{Claim}
\newcommand{\R}{\mathbb{R}}
\newcommand{\Z}{\mathbb{Z}}
\newcommand{\N}{\mathbb{N}}
\newcommand{\X}{\mathcal{X}}
\newcommand{\Di}{\mathcal{T}}
\newcommand{\VDi}{\mathcal{R}}
\newcommand{\VVDi}{\mathcal{V}}
\newcommand{\ID}{\mathrm{id}}
\newcommand{\fix}[2]{\mathrm{Fix}_{#1}(#2)}
\newcommand{\fin}[2]{\mathrm{Fin}_{#1}(#2)}
\newcommand{\loc}[2]{\mathrm{Loc}_{#1}(#2)}
\newcommand\xqed[1]{%
  \leavevmode\unskip\penalty9999 \hbox{}\nobreak\hfill
  \quad\hbox{#1}}
\newcommand{\tqed}{\xqed{$\fullmoon$}}
\newcommand{\Aut}{\mathrm{Aut}}
\newcommand{\rig}{\rightarrow}
\newcommand{\upp}{\uparrow}
\newcommand{\lef}{\leftarrow}
\newcommand{\dow}{\downarrow}
\newcommand{\theorempreskipamount}{0.5cm}
\newcommand{\theorempostskipamount}{0.5cm}
\begin{document}

\title{Nilpotent Endomorphisms of Expansive Group Actions}

\author{
  Ville Salo
  \footnote{Author supported by Academy of Finland grant 2608073211.}
  \ \ \ \ \ \ and\ \ \ \ \
  Ilkka T\"orm\"a
  \footnote{Author supported by Academy of Finland grant 295095.}
  \\
  Department of Mathematics and Statistics \\
  University of Turku, Finland \\
  \{\texttt{vosalo}, \texttt{iatorm}\}\texttt{@utu.fi}
}

\maketitle

\begin{abstract}
We consider expansive group actions on a compact metric space containing a special fixed point denoted by $0$, and endomorphisms of such systems whose forward trajectories are attracted toward $0$.
Such endomorphisms are called \emph{asymptotically nilpotent}, and we study the conditions in which they are \emph{nilpotent}, that is, map the entire space to $0$ in a finite number of iterations.
We show that for a large class of discrete groups, this property of \emph{nil-rigidity} holds for all expansive actions that satisfy a natural specification-like property and have dense homoclinic points.
Our main result in particular shows that the class includes all residually finite solvable groups and all groups of polynomial growth.
For expansive actions of the group $\Z$, we show that a very weak gluing property suffices for nil-rigidity.
For $\Z^2$-subshifts of finite type, we show that the block-gluing property suffices.
The study of nil-rigidity is motivated by two aspects of the theory of cellular automata and symbolic dynamics: It can be seen as a finiteness property for groups, which is representative of the theory of cellular automata on groups. Nilpotency also plays a prominent role in the theory of cellular automata as dynamical systems.
As a technical tool of possible independent interest, the proof involves the construction of \emph{tiered dynamical systems} where several groups act on nested subsets of the original space. 
\end{abstract}



\section{Introduction}

\subsection{Finiteness Notions for Groups}

Let $G$ be a discrete group (all groups in this paper are discrete unless otherwise specified), and consider the full $G$-shift $X = \Sigma^G$ over a finite alphabet $\Sigma$ and the action of $G$ on it by translations, defined by $(g \cdot x)_h = x_{g^{-1} h}$.
The endomorphisms of this dynamical system, i.e. the continuous self-maps commuting with the action, are defined by a local rule: for an endomorphism $f : X \to X$ and $x \in X$, the value of a coordinate $f(x)_g$ depends only on the contents of the local pattern at $g$, that is, the restriction $(g^{-1} \cdot x)|_N$, where $N \subset G$ is a fixed finite set, called a \emph{neighborhood} of $f$. 

These endomorphisms are known as cellular automata (CA for short).
As proved by Moore \cite{Mo62} and Myhill \cite{My63}, cellular automata on the grids $G = \Z^d$ are \emph{surjunctive}, meaning that they cannot be both injective and nonsurjective.
In this sense, the dynamical system $(\Z^d, X)$ behaves like a finite set -- indeed, this property can be seen as a version of Dedekind-finiteness in a suitably defined category of shift spaces. 
Groups $G$ such that all injective cellular automata on $\Sigma^G$ are surjective are also called surjunctive, and Gottshalk's conjecture states that every group is surjunctive \cite{Go73}. Since finite groups are obviously surjunctive, and since proofs of surjunctivity are based on approximating infinite groups by finite groups or graphs, surjunctivity is considered a \emph{finiteness notion} for groups.

Gromov \cite{Gr99} proves a general result in an algebraic setting, showing in particular that every sofic group (in particular every amenable and residually finite group) is surjunctive. The term ``sofic'' is coined in \cite{We00} (derived from the Hebrew word for ``finite''), where another proof of the case of cellular automata is given. See \cite{We00} and exercises in \cite{CeCo10} for remarks on surjunctivity of cellular automata on subshifts.
See the paper of Gromov and \cite{CeCo11} for connections with algebra and the Ax-Grothendieck theorem. 

There are many other finiteness notions of discrete groups that can be fully characterized in terms of cellular automata: \emph{amenability} has several characterizations (see in particular \cite{BaKi16}), and \emph{torsion groups} can be characterized by subshifts definable by the dynamics of a certain family of cellular automata \cite{SaTo17}. See also \cite{CeCo10,CaKaTa16,MeSa17}.

The present paper studies another cellular automata based finiteness notion for groups, which we call \emph{nil-rigidity}. This notion is defined with respect to eventually trivial dynamics of cellular automata. Like surjunctivity, the property of nil-rigidity is one that holds trivially for finite groups, less trivially for a large class of infinite groups, and the proof for infinite groups is based on approximability of the group by finite graphs. Unlike surjunctivity where the very weak (and purely geometric) approximability assumption of soficness is enough, our proof requires strict algebraic assumptions -- residual finiteness and a slightly weakened variant of solvability. Nevertheless, like with surjunctivity, we do not know any counterexamples to nil-rigidity in the case of full shifts $\Sigma^G$. 

\subsection{Nilpotency and Nil-rigidity}

Let $0 \in \Sigma$ be a special symbol.
A cellular automaton $f$ on $X = \Sigma^G$ is weakly nilpotent, if for every $x \in X$ there exists $n \in \N$ with $f^n(x) = 0^G$; if there is a global bound on $n$, then $f$ is nilpotent.
It is known that every weakly nilpotent CA is nilpotent on any subshift; 
see \cite{Sa12c} for the case of subshifts over abelian groups, and Proposition~\ref{prop:WeakNilp} for the case of expansive systems over arbitrary groups. This is a strong, but easily proved, finiteness notion for general groups.

As a further weakening of nilpotency, we say the CA is asymptotically nilpotent if $f^n(x)$ converges toward $0^G$ for all $x \in X$ (in the product topology).
In \cite{GuRi08}, it was proved that every asymptotically nilpotent cellular automaton on a full $\Z$-shift is nilpotent.
This was extended to CA on the multidimensional full shifts $\Sigma^{\Z^d}$ by the first author in \cite{Sa12c}. The references \cite{GuRi08,Sa12c,GuRi10,Sa16} also deal with some more general subshifts of finite type, but no such results have been shown for groups that are not (locally virtually) abelian. In \cite{Sa17}, the first author termed a cellular automaton to be \emph{strictly asymptotically nilpotent}, or \emph{SAN} for short, if it is asymptotically nilpotent but not nilpotent, and asked if SAN cellular automata exist on full shifts $\Sigma^G$ on \emph{any} group $G$. In this paper, we call a group $G$ \emph{nil-rigid}, if $\Sigma^G$ does not support any SAN cellular automata for any finite alphabet $\Sigma$.

In addition to being a natural finiteness notion of groups, another strong motivation for a detailed study of nilpotency itself comes from the classical theory of one-dimensional cellular automata, i.e. dynamics of endomorphisms of $\Sigma^\Z$.
Namely, there are several strictly weaker notions of nilpotency of cellular automata, for which the corresponding nil-rigidity result is not true, by counter-examples which are often difficult to construct.
We mention nilpotency on $0$-finite configurations and nilpotency on periodic configurations, which mean that every configuration containing a finite number of nonzero symbols, or every periodic configuration, eventually evolves to $0^G$.
In the case $G = \Z$, neither are equivalent to nilpotency.
In the first case, this is witnessed by the minimum automaton on $\{0, 1\}^\Z$ defined by $f(x)_i = \min(x_i, x_{i+1})$, and in the second case by the construction in \cite{Ka92}.
By a classic example of Gacs, Kurdyumov and Levin \cite{GaKuLe78}, a cellular automaton on $\{0, 1\}^\Z$ can even be nilpotent toward $0^\Z$ on $0$-finite configurations and toward $1^\Z$ on $1$-finite configurations.
Another variant of nilpotency is unique ergodicity, or possessing a unique invariant Borel probability measure.
A cellular automaton $f$ that fixes the uniform point $0^G$ is uniquely ergodic if and only if the asymptotic density of $0$-states $\lim_{n \longrightarrow \infty} \frac{1}{n} | \{ 0 \leq k < n \;|\; f^k(x)_{1_G} = 0 \} |$ is $1$ for all $x \in \Sigma^G$.
By a result of the second author \cite{To15}, unique ergodicity does not imply nilpotency even on full $\Z$-shifts.

Another important aspect of nilpotency are the connections to recursion theory. Given the local rule of a cellular automaton, it is algorithmically undecidable whether it defines a nilpotent cellular automaton, even in the case of full $\Z$-shifts \cite{AaLe74,Ka92}. By the above results, the same holds for weak and asymptotic nilpotency. In other words, while asymptotic nilpotency is equivalent to nilpotency for cellular automata, there is no decidable characterization of local rules giving rise to this property. The proof of this result implies also that for any computable function $\phi : \N \to \N$, and any finite alphabet $\Sigma$ with more than one element, for any large enough $n \in \N$ there is a nilpotent cellular automaton $f : \Sigma^\Z \to \Sigma^\Z$ with neighborhood $\{-n,-n+1,...,n-1,n\}$, such that $f^{\phi(n)}(x) \neq 0^\Z$ for some configuration $x \in \Sigma^\Z$. 


\subsection{Our Results}

In this article, we provide a large class of examples of nil-rigid groups, and more generally prove nil-rigidity results for a large class of expansive dynamical systems.
In our formalism, the special symbol is replaced by a special fixed point $0 \in X$, making $(G, X, 0)$ a pointed dynamical system.
Our main result states that for a large class of such systems and all endomorphisms $f$ with $f^n(x) \longrightarrow 0$ for all $x \in X$, the convergence toward $0$ is uniform (which in the case that $G$ acts expansively implies $0$ is reached in finite time).
In this case, we say that the system $(G, X, 0)$ is \emph{nil-rigid}, so in terms of our earlier terminology, $G$ is nil-rigid if all pointed full shifts $(G, \Sigma^G, 0^G)$ are nil-rigid.


Since most groups are not nil-rigid on all expansive systems (even $\Z$ is not), it is necessary to impose some additional constraints on them.
As a natural generalization of full shifts, we consider expansive pointed systems where the set of \emph{homoclinic points} (the points asymptotic to $0$ in the group action) is dense, and which are \emph{$0$-gluing}.
This notion is related to specification, and states that any collection of orbit segments can be `glued' together into a single point, provided that the borders of the segments are close enough to the special point $0$.
It has been used in the context of subshifts at least in the preprint \cite{Sa16}.
We prove in Proposition~\ref{prop:ShadowingIsGluing} that all systems with the shadowing property are $0$-gluing, which provides a large class of examples.
In particular, this holds for all shifts of finite type.
By considering different weakenings of the $0$-gluing property, we can prove that some well known one-dimensional subshifts are nil-rigid.
The following result shows the principal content of our main technical results, Proposition~\ref{prop:OneTierLocal} and Theorem~\ref{thm:BigClass}, which are proved in sections \ref{sec:ClosureProps} through \ref{sec:GroupsWeGet}.

\vspace{\theorempreskipamount}
\noindent
\textsc{Theorem.}
\textit{
Let $G$ be a discrete group such that every finitely generated subgroup $H \leq G$ is residually finite and has a subnormal series $\{1\} = H_0 \triangleleft H_1 \triangleleft \cdots \triangleleft H_n = H$, where each quotient $H_{i+1} / H_i$ is locally virtually abelian.
Let $(G, X, 0)$ be an expansive $0$-gluing pointed dynamical system with dense homoclinic points.
Then every asymptotically nilpotent endomorphism $f : X \to X$ is nilpotent.
}
\vspace{\theorempostskipamount}

This class of groups includes all polycyclic groups such as the discrete Heisenberg group, all groups of polynomial growth, and all metabelian groups such as the lamplighter group $\Z_2 \wr \Z$. 
We show that it also contains several groups which are not locally virtually solvable.
It does not include free groups on two or more generators, whose nil-rigidity is left as an open question.
As a less technical corollary, we mention the following.

\vspace{\theorempreskipamount}
\noindent 
\textsc{Corollary.}
\textit{
Let $G$ be a residually finite virtually solvable group.
Let $(G, X, 0)$ be an expansive pointed dynamical system with the shadowing property and dense homoclinic points.
Then every asymptotically nilpotent endomorphism $f : X \to X$ is nilpotent.
}
\vspace{\theorempostskipamount}

In the zero-dimensional setting, expansivity of the group action means that $X$ is a subshift, the shadowing property means it is of finite type (defined by a finite set of forbidden patterns), and dense homoclinic points means the set of points with finite support is dense. In particular, the above result implies that if $G$ is residually finite and virtually solvable, and $X \subset \Sigma^G$ is a subshift of finite type where points with finite support are dense, then every cellular automaton on $X$ is nilpotent. This covers in particular the full shift $X = \Sigma^G$.

On the group $\Z$, much weaker gluing properties suffice for the result.
The following is a corollary of the rather technical Lemma~\ref{lem:Z}.

\vspace{\theorempreskipamount}
\noindent
\textsc{Theorem.}
\textit{
Let $(\Z, X, 0)$ be an expansive pointed $\Z$-dynamical system such that for each homoclinic point $x \in X$ and all $\epsilon > 0$, there exist two points $x^1, x^2 \in X$ such that for each $i \in \{1, 2\}$ and $n \in \Z$ there exists $k \in \Z$ with $d(n \cdot x^i, k \cdot x) < \epsilon$, and we can choose $k = 0$ for infinitely many positive $n$ for $i = 1$, and infinitely many negative $n$ for $i = 2$.
Then every asymptotically nilpotent endomorphism $f : X \to X$ is nilpotent.
}
\vspace{\theorempostskipamount}

We also show that in the case of two-dimensional SFTs, the so-called block-gluing property -- that any two valid rectangular patterns whose domains are separated by a constant can be glued to a common pattern -- suffices for nil-rigidity.

\vspace{\theorempreskipamount}
\noindent
\textsc{Theorem.}
\textit{
Let $X \subset \Sigma^{\Z^2}$ be a block-gluing subshift of finite type. Then every asymptotically nilpotent endomorphism $f : X \to X$ is nilpotent.
}
\vspace{\theorempostskipamount}


\subsection{Outline of Methods}

The result on one-dimensional cellular automata was proved in \cite{GuRi08} by exploiting the existence of a blocking word and the two-endedness of $\Z$ to prevent information flow under the dynamics of asymptotically nilpotent cellular automata.
The authors 
use the $0$-gluing property of the full shift, as they construct a configuration containing an infinite sequence of nested patterns that live for some number of steps under the action of the CA, send the central cell into a nonzero state and then disappear.
Such a configuration does not converge toward $0^\Z$, so the CA cannot be asymptotically nilpotent.

The new idea in \cite{Sa12c}, which proved the multidimensional case, was \emph{periodization}: if a finite pattern is repeated periodically in some direction on $\Z^d$, the result can be seen as a configuration on $\Z^{d-1}$ over a larger alphabet.
Since a cellular automaton preserves the set of configurations of a given period, one sees by induction on $d$ that an symptotically nilpotent CA will eventually send any given periodic configuration to $0^{\Z^d}$, and use this fact to construct the sequence of nested patterns.
The proof uses the density of homoclinic points and the $0$-gluing property of $\Sigma^{\Z^d}$, as well as the fact that the groups $\Z^d$ are residually finite, to transform an arbitrary finite pattern into a periodic configuration.

On a high level, our main arguments are somewhat similar to the aforementioned ones.
We first extend the result of \cite{GuRi08} to expansive $\Z$-dynamical systems satisfying a very weak variant of the $0$-gluing property using an auxiliary result from the preprint \cite{Sa16} of the first author.
This proof does not generalize beyond virtually cyclic groups. 
Then we show that the property of a group being nil-rigid on certain more well-behaved classes of systems is preserved under group extensions, given that the extension is residually finite.
The proofs rely on periodization and the construction of nested patterns.

The main new idea in this article is \emph{tiered dynamical systems}.
In our construction, we would like to handle the set of periodic points or homoclinic points as subsystems, and proceed inductively along a subnormal series of the group. For this, we need the systems constructed to inherit the properties of $0$-gluing and dense homoclinic points from the parent system, but they typically only inherit these properties in a weaker sense, as tiered dynamical systems.
This problem was not present in \cite{Sa12c} due to the simple direct product structure of $\Z^d$ and the especially strong gluing properties of full shifts.

Tiered dynamical systems consist of several nested subsets of an ambient state space, each of which may have a slightly different group acting on it.
In this model, orbit segments can be glued together and homoclinic points can be extracted, but the resulting point usually lies on a higher tier than the original. We prove that tiered dynamical systems have better closure properties for our purposes, and indeed extracting periodic or homoclinic points from a well-behaved tiered dynamical system still has the same good properties. This allows us to extend the periodization argument to a larger class of residually finite groups.

\section{Preliminaries}

In this article, $\Z_n = \Z/n\Z$ is the integers modulo $n$, and $\N = \{0, 1, 2, \ldots\}$.
For bi-infinite sequences, we may use the notation $x = \cdots x_{-3} x_{-2} x_{-1} . x_0 x_1 x_2 \cdots$, where the coordinate $0$ is immediately to the right of the period.
For an exact sequence of groups $1 \to K \to G \to H \to 1$, we say $G$ is $K$-by-$H$, and that $G$ is an extension of $H$ by $K$.
If $G$ has a finite-index subgroup isomorphic to $H$, we say it is virtually $H$.
These notions are extended to classes of groups in the natural way.
If $\mathcal{G}$ is a class of groups, we say $G$ is locally $\mathcal{G}$ if its finitely generated subgroups are in $\mathcal{G}$.

\subsection{Expansive Systems}

A \emph{pointed dynamical system} is a quadruple $(G, X, d, \rho, 0)$, where $G$ is a discrete group (not necessarily infinite, and not necessarily finitely generated), $(X, d)$ is a compact metric space, $\rho : G \times X \to X$ is a continuous left action of $G$ on $X$, and $0 \in X$ is a special point with $\rho(g, 0) = 0$ for all $g \in G$.
The action is denoted by $\rho(g, x) = g \cdot x$ for $g \in G$ and $x \in X$ whenever this creates no ambiguity. 
If the action, metric and special point are clear from the context, we denote the system simply by $(G, X)$.
We may also denote the special point by $0_X$ and the metric by $d_X$.
A \emph{morphism} between two systems $(G, X)$ and $(G, Y)$ with the same group is a continuous function $f : X \to Y$ with $f(0_X) = 0_Y$ and $f(g \cdot x) = g \cdot f(x)$ for all $g \in G$ and $x \in X$.

For a subset $K \subseteq G$, the set $\{ x \in X \;|\; \forall g \in K : g \cdot x = x \}$ of fixed points under $K$ is denoted $\fix{K}{X}$.
It is a closed, and thus compact, subset of $X$.
If $K \trianglelefteq G$ is a normal subgroup, the quotient group $G / K$ acts continuously on $\fix{K}{X}$ by $g K \cdot x = g \cdot x$, so $(G/K, \fix{K}{X})$ is a pointed dynamical system.

A pointed dynamical system $(G, X)$ is \emph{expansive} if there exists \emph{an expansivity constant} $0 < \Delta \leq 1$ such that $d_X(g \cdot x, g \cdot y) \leq \Delta$ for all $g \in G$ implies $x = y$. If $(G, X)$ is expansive, we define \emph{the canonical expansivity constant of $(G, X)$} by
\begin{equation}
\label{eq:Expansivity}
\Delta_X = \sup \{\Delta/2 \;|\; \Delta \mbox{ is an expansivity constant of } (G, X) \},
\end{equation}
and observe that $\Delta_X$ is an expansivity constant. Having a canonical expansivity constant is useful, since we will consider multiple dynamical systems on subspaces of the same metric space. We divide by $2$ since the supremum itself is not obviously an expansivity constant due to the choice of the half-open interval used to define expansivity, and this choice makes the expansivity constant behave better under limits, for our purposes.

A point $x \in X$ is \emph{homoclinic} (sometimes called \emph{$0$-finite} in symbolic dynamics) if for all $\epsilon > 0$ there exists a finite set $E \subseteq G$ such that $d_X(g \cdot x, 0_X) < \epsilon$ for all $g \in G \setminus E$.
For $\delta > 0$, the \emph{$\delta$-shadow} of a point $x \in X$ is the set $\Theta(\delta, x) = \{ g \in G \;|\; d_X(g \cdot x, 0_X) > \delta \}$.

The following lemmas will be used frequently in the course of this paper, sometimes without explicit mention.

\begin{lemma}
\label{lem:Nhood}
Let $(G, X)$ be an expansive pointed dynamical system, $Y$ a metric space and $f : X \to Y$ a continuous function.
Let $\Delta$ be the canonical expansivity constant of $(G, X)$.
For all $\epsilon > 0$ and $0 < \delta \leq \Delta$, there exists a finite set $N \subseteq G$ such that for all $x, y \in X$, the condition $d_X(g \cdot x, g \cdot y) \leq \delta$ for all $g \in N$ implies $d_Y(f(x), f(y)) < \epsilon$.
\end{lemma}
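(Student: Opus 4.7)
The plan is to recast the two conditions as subsets of the compact product $X \times X$ and invoke the finite intersection property. For each finite $N \subseteq G$, set
\[ U_N = \{(x,y) \in X \times X \mid d_X(g \cdot x, g \cdot y) \leq \delta \text{ for all } g \in N\}, \]
which is closed as a finite intersection of preimages of $[0,\delta]$ under the continuous maps $(x,y) \mapsto d_X(g \cdot x, g \cdot y)$. Set also
\[ V = \{(x,y) \in X \times X \mid d_Y(f(x), f(y)) < \epsilon\}, \]
which is open by joint continuity of $(x,y) \mapsto d_Y(f(x), f(y))$. The lemma asks for a finite $N$ with $U_N \subseteq V$.

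Next I would compute $\bigcap_{N} U_N$, where $N$ ranges over all finite subsets of $G$. This intersection equals $\{(x,y) \mid d_X(g \cdot x, g \cdot y) \leq \delta \text{ for all } g \in G\}$. Since $0 < \delta \leq \Delta$ and $\Delta$ is itself an expansivity constant, while any positive number bounded above by an expansivity constant is still one (the defining implication only gets stronger as the constant shrinks), this intersection is exactly the diagonal $\{(x,x) \mid x \in X\}$. The diagonal is contained in $V$ because $d_Y(f(x),f(x)) = 0 < \epsilon$ for every $x$.

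Therefore the family $\{U_N \setminus V\}_N$ of closed subsets of the compact space $X \times X$ has empty intersection. By compactness, some finite subfamily, indexed by $N_1,\ldots,N_k$, already has empty intersection. Taking $N = N_1 \cup \cdots \cup N_k$ and using the identity $U_{N_1 \cup N_2} = U_{N_1} \cap U_{N_2}$, I conclude $U_N \setminus V = \emptyset$, i.e.\ $U_N \subseteq V$, which is the required finite neighborhood.

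The only substantive point to verify is that $\delta$ is an expansivity constant whenever $0 < \delta \leq \Delta$, which follows directly from monotonicity of the expansivity condition in the constant. Apart from that bookkeeping, the argument is a textbook compactness/finite-intersection argument and presents no real obstacle; in particular it is insensitive to the cardinality of $G$, so the statement holds without any countability assumption on the acting group.
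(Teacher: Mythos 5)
Your proposal is correct and takes essentially the same compactness argument as the paper, merely phrased via the finite intersection property applied to the closed sets $U_N \setminus V$ rather than via a net $(x_N, y_N)$ indexed by finite subsets of $G$ and a limit point. Both hinge on the same key observation: since $\delta \leq \Delta$ and $\Delta$ is an expansivity constant, the set $\{(x,y) : d_X(g\cdot x, g\cdot y) \leq \delta \ \forall g \in G\}$ collapses to the diagonal, which sits inside $V$.
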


\begin{proof}
Suppose on the contrary that for each finite set $N \subseteq G$ there exist $x_N, y_N \in X$ with $d_X(g \cdot x_N, g \cdot y_N) \leq \delta$ for all $g \in N$, but $d_Y(f(x_N), f(y_N)) \geq \epsilon$.
Then we have a net $(x_N, y_N)_{N \subseteq G_d}$ in $X \times X$, where $N$ ranges over the finite subsets of $G$, and since $X \times X$ is compact, the net has a limit point $(x, y)$.
It satisfies $d_Y(f(x), f(y)) \geq \epsilon$ and $d_X(g \cdot x, g \cdot y) \leq \delta$ for all $g \in G$.
The latter condition implies $x = y$ since $\delta \leq \Delta$, a contradiction.
\end{proof}

In particular, for any $\delta \leq \Delta$, a point $x \in X$ is homoclinic if and only if it has a finite $\delta$-shadow, and only $0_X$ has an empty $\delta$-shadow.

\begin{lemma}
\label{lem:BoundedShadow}
Let $(G, X)$ be an expansive pointed dynamical system with expansivity constant $\Delta$.
For each finite $F \subseteq G$ and $0 < \delta \leq \Delta/2$, the set $Y = \{ x \in X \;|\; \Theta(\delta, x) \subseteq F \}$ is finite.
\end{lemma}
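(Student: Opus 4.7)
The plan is to use expansivity to show that the map $\phi : Y \to X^F$ given by $\phi(x) = (g \cdot x)_{g \in F}$ is injective with $\Delta$-separated image in the product metric, and then invoke compactness of $X^F$.

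First I would unpack the defining condition. If $x \in Y$, then for every $g \notin F$ we have $d_X(g \cdot x, 0_X) \leq \delta$. Take any two distinct $x, y \in Y$. For each $g \notin F$, the triangle inequality gives $d_X(g \cdot x, g \cdot y) \leq d_X(g \cdot x, 0_X) + d_X(0_X, g \cdot y) \leq 2\delta \leq \Delta$. Since $x \neq y$ and $\Delta$ is an expansivity constant, the negation of the expansivity condition yields some $h \in G$ with $d_X(h \cdot x, h \cdot y) > \Delta$, and by the previous computation such $h$ must lie in $F$.

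Next I would formalize the separation. Equip $X^F$ with the supremum metric $D((u_g)_g, (v_g)_g) = \max_{g \in F} d_X(u_g, v_g)$. The above shows $D(\phi(x), \phi(y)) > \Delta$ whenever $x \neq y$ in $Y$; in particular $\phi$ is injective. Since $F$ is finite and $X$ is compact, $(X^F, D)$ is a compact metric space, so any $\Delta$-separated subset is finite. Therefore $\phi(Y)$ is finite, and hence so is $Y$.

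There is really no serious obstacle here: the only subtlety is making sure the choice $\delta \leq \Delta/2$ is actually used (it guarantees $2\delta \leq \Delta$, which is exactly what pushes the discrepancy inside $F$), and that one does not need strict versus non-strict inequality gymnastics — the expansivity hypothesis is $\leq \Delta$, matching precisely the bound we derive outside $F$. No results beyond compactness of finite products and the definitions of expansivity and $\Theta(\delta, x)$ are needed.
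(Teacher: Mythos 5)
Your proof is correct and rests on the same two ingredients as the paper's: the triangle-inequality bound $d_X(g\cdot x, g\cdot y)\le 2\delta\le\Delta$ for $g\notin F$, and expansivity to push a large discrepancy into $F$. The paper packages the finiteness conclusion as a finite subcover of the (compact, since closed) set $Y$ in which each covering set is a singleton, whereas you package it as a $\Delta$-separated embedding of $Y$ into the compact product $X^F$; these are two phrasings of essentially the same argument.
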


\begin{proof}
For each $y \in Y$, let $B_y = \{ x \in Y \;|\; \forall g \in F : d_X(g \cdot x, g \cdot y) < \delta \}$.
These sets form an open cover of $Y$.
Since $Y$ is closed in $X$, it is compact, hence there is a finite set $Z \subseteq Y$ with $Y = \bigcup_{z \in Z} B_z$.
On the other hand, for each $z \in Z$ and $y \in B_z$ we have $d_X(g \cdot z, g \cdot y) < \Delta/2$ for $g \in F$, and $d_X(g \cdot z, g \cdot y) \leq d_X(g \cdot z, 0_X) + d_X(0_X, g \cdot y) \leq \Delta$ for $g \in G \setminus F$.
Thus $y = z$, so $Y = Z$ is finite.
\end{proof}

An endomorphism $f : X \to X$ of a pointed dynamical system $(G, X)$ is
\begin{itemize}
\item \emph{nilpotent}, if $f^n(X) = \{0_X\}$ for some $n \in \N$,
\item \emph{weakly nilpotent}, if for all $x \in X$ there exists $n \in \N$ with $f^n(x) = 0_X$,
\item \emph{asymptotically nilpotent}, if $\lim_{n \longrightarrow \infty} f^n(x) = 0_X$ for all $x \in X$, and
\item \emph{uniformly asymptotically nilpotent}, if it is asymptotically nilpotent and the convergence toward $0_X$ is uniform.
\end{itemize}
Asymptotic nilpotency is the weakest of these notions.
Note that for expansive systems, uniform asymptotic nilpotency is equivalent to nilpotency.
A point $x \in X$ is called \emph{mortal} if $f^n(x) = 0_X$ for some $n \in \N$.
Weak nilpotency means exactly that all points are mortal, and nilpotency is its uniform version.

\begin{definition}
\label{def:NRsystem}
A pointed dynamical system $(G, X)$ is \emph{nil-rigid} if every asymptotically nilpotent endomorphism of $(G, X)$ is uniformly asymptotically nilpotent.
\end{definition}

\subsection{Symbolic Dynamics}
\label{sec:SymbDyn}

As far as we know, the most interesting applications of our theorems are to cellular automata (CA) on subshifts on groups, so we give some definitions here, see also standard references such as \cite{LiMa95,CeCo10}. Subshifts are a large family of dynamical systems where we obtain nil-rigidity results, and examples of them are presented in Section~\ref{sec:FinalExamples} of the paper.

If $G$ is a discrete group and $\Sigma$ a finite set, called an \emph{alphabet}, then $\Sigma^G$ is the \emph{full $G$-shift over $\Sigma$}, and its elements are called \emph{configurations}.
For $s \in \Sigma$, the \emph{all-$s$ configuration} $x = s^G$ is defined by $x_g = s$ for all $g \in G$.
We define a left $G$-action on $\Sigma^G$ by $(g \cdot x)_h = x_{g^{-1} h}$, which is called the \emph{left shift}.
We also equip $\Sigma^G$ with the product topology, which makes it a compact metrizable space.
The left shift action is continuous in this topology.

A \emph{$G$-subshift} is a topologically closed set $X \subset \Sigma^G$ satisfying $G \cdot X = X$.
A \emph{$G$-block code} between two $G$-subshifts $Y \subset \Sigma^G$ and $Y \subset \Gamma^G$ is a continuous function $f : X \to Y$ that commutes with the left shifts, in the sense that $f(g \cdot x) = g \cdot f(x)$ holds for all $x \in X$ and $g \in G$.
A \emph{$G$-cellular automaton} (CA for short) is a $G$-block code from a $G$-subshift to itself. A subshift $X \subset \Sigma^G$ is a \emph{shift of finite type}, or \emph{SFT} for short, if there exists a clopen set $C \subset \Sigma^G$ such that for $x \in \Sigma^G$, we have $x \in X$ if and only if $g \cdot x \in C$ for all $g \in G$.
A \emph{sofic shift} is an image of an SFT under a $G$-block code.

Every $G$-subshift containing a uniform configuration $0^G$ is an expansive pointed $G$-dynamical system.
Up to topological conjugacy, they are exactly the zero-dimensional expansive pointed dynamical systems.
The morphisms between $G$-subshifts as dynamical systems are exactly the block codes.
A cellular automaton $f$ on $\Sigma^G$ is asymptotically nilpotent iff for each $x \in X$ and $g \in G$ there exists $n \in \N$ with $f^m(x)_g = 0$ for all $m \geq n$.

A \emph{pattern (on $G$)} is a function $P \in \Sigma^D$, where $D = D(P)$ is a finite subset of $G$, called the \emph{domain} of $P$.
Each pattern $P$ defines a \emph{cylinder set} $[P] = \{x \in \Sigma^G \;|\; x|_D = P\}$.
The clopen sets in $\Sigma^G$ are precisely the finite unions of cylinders, and form a basis for the topology.
Every subshift $X$ has a (possibly infinite) set $P_X$ of \emph{forbidden patterns} such that $X = \bigcap_{P \in P_X, g \in G} \{x \in \Sigma^G \;|\; x \notin g \cdot [P]\}$.
If $P_X$ can be chosen finite, then $X$ is an SFT.
Every cellular automaton $f : X \to X$ has a finite \emph{neighborhood} $N \subset G$ and a \emph{local function} $F : \Sigma^N \to \Sigma$ such that $f(x)_g = F((g^{-1} \cdot x)|_N)$ holds for all $x \in X$ and $g \in G$.
Note that we while do not require $G$ to be finitely generated, having a local rule means a CA $f$ can only ``see'' a finitely generated subgroup $H \leq G$, and its dynamics splits into a product system consisting of independent copies (one for each left coset of $H$) of the ``restriction'' of $f$ to $\Sigma^H$. 

If $G$ is a group and $X \subset \Sigma^G$ is a subshift, then a pattern $P \subset \Sigma^A$ is \emph{valid for $X$} (or simply \emph{valid} if $X$ is clear from the context) if there exists a point $x \in X$ such that $x|_A = P$, and in this case \emph{$P$ appears in $x$} or \emph{$x$ contains $P$}. We say \emph{$P$ appears in $x$ at $g \in G$} if $(g^{-1} \cdot x)|_A = P$, and then \emph{a translated copy of $P$ appears in $x$}, or \emph{$x$ contains a translated copy of $P$}. The \emph{support} of $x \in X$ is the set $\{ g \in G \;|\; x_g \neq 0 \}$.

If $S \subset G$ is a generating set, then we may define the \emph{(right) Cayley graph} with respect to $S$ as the labeled directed graph $(V, E, L)$, where $V = G$ is the set of vertices, $L = S$ is the set of edge labels, and $E = \{(g, g s, s) \;|\; g \in G, s \in S\}$ is the set of edges.
In other words, for each $g \in G$ and $s \in S$, there is an edge from $g$ to $g s$ with label $s$.
When the local rule of a cellular automaton looks for the neighbors of a cell $g \in G$, it looks at the cells with index $g h$, where $h \in G$ comes from a finite set.
This means that it is looking at the neighbors of $g$ in the Cayley graph, if the generating set $S$ is chosen properly.

Note that the characterization of cellular automata by local rules implies in particular that for any group $G$, the full shift $\Sigma^G$ has many endomorphisms -- every local rule defines a cellular automaton $f : \Sigma^G \to \Sigma^G$. In the case of subshifts of finite type (and sometimes more general classes of subshifts), the \emph{marker method} can be used to construct cellular automata, see \cite{BoLiRu88,Ho10} for some examples.






\subsection{Gluing}
\label{sec:Gluing}

In this section we define the notion of $0$-gluing, compare it to some other notions in the literature, and state some preliminary results about it and its variants. The weaker `variably' variant of the definition is only used in the case of virtually $\Z$ groups, but we give the definition for arbitrary groups. In the case of other groups, we only have results for systems that are $0$-gluing and have dense homoclinic points.

For two sets $A, E \subseteq G$, the \emph{$E$-border of $A$} is the set $\partial_E A = \{g \in A \;|\; E g \not\subseteq A\}$.

\begin{definition}
\label{def:Gluing}
Let $(G, X)$ be a pointed dynamical system.
A \emph{designation} in $(G, X)$ is a collection $\mathcal{A} = (A_i, x_i)_{i \in I}$, where $x_i \in X$ for all $i \in I$ and $(A_i)_{i \in I}$ is a partition of $G$.
If $(A_i)_{i \in I}$ is a disjoint family but not a partition, then we use the shorthand $(A_i, x_i)_{i \in I}$ for the designation $(A_i, x_i)_{i \in I} \cup (G \setminus \bigcup_{i \in I} A_i, 0_X)$.
We say $\mathcal{A}$ is a \emph{$(\delta, E)$-designation} for $\delta > 0$ and $E \subseteq G$, if for each $i \in I$ we have $d_X(g \cdot x_i, 0) < \delta$ for all $g \in \partial_E A_i$.

Let $\Gamma$ be a set of self-homeomorphisms of $X$ that fix $0_X$.
For $\epsilon > 0$, an \emph{$(\epsilon, \Gamma)$-realization of $(A_i, x_i)_{i \in I}$} is a pair $(y, W)$ containing a point $y \in X$ and a collection $W = (\gamma_i)_{i \in I}$ of elements of $\Gamma$ such that $d_X(g \cdot y, \gamma_i(g \cdot x_i)) < \epsilon$ whenever $g \in A_i$.
For a set $D \subseteq G$, an $(\epsilon, D)$-realization is an $(\epsilon, \Gamma)$-realization with $\Gamma$ being the set of functions $x \mapsto g \cdot x$ for $g \in D$; each $\gamma \in \Gamma$ is identified with the corresponding element of $D$.
An $\epsilon$-realization is an $(\epsilon, \Gamma)$-realization with $\Gamma = \{\mathrm{id}_X\}$, and consists only of the point $y$.

Given a set $\Lambda$ of self-homeomorphisms of $X$ that fix $0_X$, we say $(G, X)$ is \emph{$\Lambda$-$0$-gluing}, if there exists a finite set $\Gamma \subseteq \Lambda$ such that for all $\epsilon > 0$ there exist $\delta > 0$ and $E \subseteq G$ finite such that every $(\delta, E)$-designation in $(G, X)$ has an $(\epsilon, \Gamma)$-realization.
We say $(G, X)$ is \emph{variably $0$-gluing}, if we can choose $\Lambda$ to be the set of translations by all elements of $G$.
If we can choose $\Gamma = \{\mathrm{id}_X\}$, then $X$ is \emph{$0$-gluing}.
\end{definition}

In the case of a one-dimensional pointed subshift $X \subseteq \Sigma^\Z$ (whose special point is $0^\Z \in X$), $0$-gluing is equivalent to the existence of $r \geq 0$ such that if $v 0^r, 0^r w \in \Sigma^*$ are words occurring in $X$, then $v 0^r w$ also occurs in $X$; in other words, $0^r$ is a \emph{synchronizing word} for $X$.
The intuition is that the two words can be glued together by the parts that contain only $0$-symbols.



\emph{Specification} originates in \cite{Bo71} (see also \cite{Si74}), and states that any two orbit segments can be glued together into a periodic point (with some constant gluing time where we cannot control the orbit). There are many variants of this notion. While the definition of $0$-gluing bears some resemblance to specification, the two properties are incomparable even for expansive zero-dimensional $\Z$-systems with dense homoclinic points, and for all variants of specification that we know.

Consider first the subsystem of the ternary full shift $\{0, 1, 2\}^\Z$ defined by forbidding the patterns $1 0^n 1$ for all $n \geq 0$, where the special point is the all-$0$ configuration.
This system has a very strong specification property, since any two allowed patterns can be joined by inserting a $2$-symbol between them, but it is not $0$-gluing, since two occurrences of $1$ cannot be glued together using only $0$-symbols.
Conversely, consider the subsystem $X \subset \{0, 1\}^\Z$ defined by forbidding all patterns $0 1^n 0$ where $n \geq 1$ is not a power of 2.
This system does not have the specification property, since the patterns $0 1^{2^n + 1}$ and $0$ cannot be joined by a third pattern shorter than $2^n - 1$, but it is $0$-gluing, since all valid patterns of the form $0 w 0$ for $w \in \{0, 1\}^*$ can be concatenated freely. As for weaker specification notions, this system does not have non-uniform specification in the sense of \cite{Pa16} (for any gap function $f(n) < n$), and does not have almost specification with a constant mistake function $g(n)$.

For a group $G$ generated by a finite set $S$, the \emph{shadowing property} for a dynamical system $(G, X)$ was defined in \cite{OsTi14} as follows.
For all $\epsilon > 0$ there exists $\delta > 0$ such that for all functions $x : G \to X$ with $d_X(s \cdot x(g), x(s g)) < \delta$ for all $g \in G$ and $s \in S$, there exists $y \in X$ such that $d_X(x(g), g \cdot y) < \epsilon$ for all $g \in G$.
The function $x$ is called a \emph{$\delta$-pseudotrajectory} or \emph{pseudo-orbit}, and we say that the orbit of $y$ \emph{traces} the pseudo-orbit $x$. For this reason, shadowing is also known as the pseudo-orbit tracing property.
All $G$-subshifts of finite type have the shadowing property.
We now show that systems with the shadowing property are $0$-gluing.

\begin{proposition}
\label{prop:ShadowingIsGluing}
Let $(G, X)$ be a pointed dynamical system where $G$ is finitely generated.
If $(G, X)$ has the shadowing property, then it is $0$-gluing.
\end{proposition}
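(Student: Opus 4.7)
The plan is to turn any $(\delta', E)$-designation directly into a $\delta$-pseudo-orbit and then invoke the shadowing property. Given $\epsilon > 0$, I first apply shadowing to obtain $\delta > 0$ such that every $\delta$-pseudo-orbit is $\epsilon$-traced by an orbit. Using continuity of the action at $0_X$ and the finiteness of the generating set $S$ (which I take to be symmetric, $S = S^{-1}$, without loss of generality), I choose $\delta' \in (0, \delta/2)$ small enough that $d_X(z, 0_X) < \delta'$ implies $d_X(s \cdot z, 0_X) < \delta/2$ for every $s \in S$. Finally I set $E = S \cup \{1_G\}$.

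Now given any $(\delta', E)$-designation $(A_i, x_i)_{i \in I}$ (extended by $(G \setminus \bigcup_i A_i, 0_X)$ when the $A_i$ are only disjoint), I define a candidate pseudo-orbit $x : G \to X$ by $x(g) = g \cdot x_i$ whenever $g \in A_i$. The verification that this is a $\delta$-pseudo-orbit splits into two cases. If $g, sg \in A_i$ lie in the same block, then $x(sg) = sg \cdot x_i = s \cdot x(g)$ and the pseudo-orbit inequality is trivially satisfied. If instead $g \in A_i$ and $sg \in A_j$ with $j \neq i$, then since $sg \notin A_i$ and $s \in E$, we have $g \in \partial_E A_i$, so $d_X(g \cdot x_i, 0_X) < \delta'$; symmetrically, using $s^{-1} \in E$ (here is where $S = S^{-1}$ is used), we obtain $sg \in \partial_E A_j$ and $d_X(sg \cdot x_j, 0_X) < \delta'$. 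By the triangle inequality and the continuity choice of $\delta'$,
\[
d_X(s \cdot x(g), x(sg)) \leq d_X(s \cdot (g \cdot x_i), 0_X) + d_X(0_X, sg \cdot x_j) < \delta/2 + \delta/2 = \delta.
\]

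Applying the shadowing property to $x$ yields $y \in X$ with $d_X(x(g), g \cdot y) < \epsilon$ for every $g \in G$. For $g \in A_i$, this reads $d_X(g \cdot x_i, g \cdot y) < \epsilon$, so $y$ is precisely an $\epsilon$-realization of the designation in the sense of Definition~\ref{def:Gluing} with $\Gamma = \{\mathrm{id}_X\}$. Hence $(G, X)$ is $0$-gluing.

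I do not expect a serious obstacle here; the only subtle points are (i) making sure $E$ is large enough to detect boundary membership from \emph{both} sides of a transition $g \to sg$, which forces including $S^{-1}$ (or equivalently symmetrizing $S$), and (ii) using continuity of the finitely many generators' actions at the fixed point $0_X$ to upgrade the designation's $\delta'$-smallness of $g \cdot x_i$ to $\delta/2$-smallness of $sg \cdot x_i$. Both are routine, and neither requires anything beyond compactness of $X$ and the existence of the distinguished fixed point.
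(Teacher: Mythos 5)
Your proof is correct and takes essentially the same approach as the paper's: you convert the designation into a pseudo-orbit by $x(g) = g \cdot x_i$, verify the pseudo-orbit condition by the same two-case boundary argument (using the symmetry of $E$), and then trace it via the shadowing property. The only cosmetic difference is that you symmetrize $S$ and work with continuity of the generators at the fixed point $0_X$, whereas the paper sets $E = \{1\} \cup S \cup S^{-1}$ and uses uniform continuity of translation by elements of $E$; these are interchangeable.
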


\begin{proof}
We need to show that for all $\epsilon > 0$ there exists $\delta' > 0$ and a finite set $E \subset G$ such that all $(\delta', E)$-designations have an $\epsilon$-realization. We pick $E = \{1\} \cup S \cup S^{-1}$. Let $\epsilon > 0$ be arbitrary, and let $\delta > 0$ be as in the definition of shadowing. Pick $\delta' > 0$ so that $d_X(x, y) < \delta'$ implies $d_X(g \cdot x, g \cdot y) < \delta/2$ for all $g \in E$. Let $(A_i, x_i)_{i \in I}$ be a $(\delta', E)$-designation. Since $(A_i, x_i)_{i \in I}$ is a $(\delta', E)$-designation, we have $d_X(g \cdot x_i, 0) < \delta'$ for all $i$ and $g \in \partial_E A_i$.

Define $x : G \to X$ by $x(g) = g \cdot x_i$ when $g \in A_i$. We claim that $x$ is a $\delta$-pseudo-trajectory. To see this, let $g \in G$ be arbitrary. Let $i \in I$ be such that $g \in A_i$. First, suppose that $s g \in A_i$. Then $d_X(s \cdot x(g), x(s g)) = d_X(s g \cdot x_i, s g \cdot x_i) = 0$. If on the other hand $s g \notin A_i$, then $s g \in A_j$ for some $j \neq i$, and $g \in \partial_E A_i$ and $s g \in \partial_E A_j$ (because $E$ is symmetric). Then $d_X(s \cdot x(g), x(s g)) = d_X(s g \cdot x_i, s g \cdot x_j)$. We have $d_X(s g \cdot x_j, 0) < \delta' < \delta/2$ because $s g \in \partial_E A_j$. We have $d_X(g \cdot x_i, 0) < \delta'$ because $g \in \partial A_i$, and thus $d_X(s g \cdot x_i, 0) = d_X(s g \cdot x_i, s \cdot 0) < \delta/2$ by the choice of $\delta'$. By the triangle inequality,
\[ d_X(s g \cdot x_i, sg \cdot x_j) \leq d_X(s g \cdot x_i, 0) + d_X(0, s g \cdot x_j) < \delta \]

By the shadowing property, there is a point $y$ such that $d_X(x(g), g \cdot y) < \epsilon$ for all $g \in G$. Then $d_X(g \cdot y, g \cdot x_i) < \epsilon$ for all $g \in A_i$ because by the definition of $x$, $x(g) = g \cdot x_i$ whenever $g \in A_i$. Thus $y$ is an $\epsilon$-realization of $(A_i, x_i)_{i \in I}$.
\end{proof}

The converse direction, that $0$-gluing implies the shadowing property, is false in general. For example, if $X \subset \{1, \ldots, k\}^\Z$ is any subshift, then $0^\Z \cup X$ is a pointed subshift which is $0$-gluing, but it has the shadowing property if and only if $X$ does.

\begin{lemma}
  \label{lem:GluingImage}
  Let $(G, X)$ and $(G, Y)$ be pointed dynamical systems and $\pi : X \to Y$ a surjective morphism with $\pi^{-1}(0_Y) = \{ 0_X \}$.
  If $(G, X)$ is $0$-gluing, then so is $(G, Y)$.
\end{lemma}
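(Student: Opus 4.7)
The plan is to transfer the $0$-gluing property from $X$ to $Y$ by lifting designations upstairs, applying $0$-gluing in $X$, and pushing the resulting realization back down through $\pi$.

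The key preliminary observation I would establish first is a \emph{uniform smallness of preimages}: for every $\eta > 0$ there exists $\mu > 0$ such that $d_Y(y, 0_Y) < \mu$ implies $d_X(x, 0_X) < \eta$ for every $x \in \pi^{-1}(y)$. This is a compactness argument using $\pi^{-1}(0_Y) = \{0_X\}$: if it failed, one could extract a sequence $(x_n)$ in $X$ with $d_X(x_n, 0_X) \geq \eta$ but $\pi(x_n) \to 0_Y$, and any subsequential limit $x^* \in X$ would, by continuity of $\pi$, satisfy $\pi(x^*) = 0_Y$ yet $x^* \neq 0_X$, contradicting the hypothesis on $\pi$.

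Now, given $\epsilon > 0$, I would proceed as follows. Since $X$ is compact, $\pi$ is uniformly continuous, so pick $\eta > 0$ with $d_X(a,b) < \eta \Rightarrow d_Y(\pi(a), \pi(b)) < \epsilon$. Apply the $0$-gluing of $(G,X)$ to $\eta$, obtaining $\delta > 0$ and a finite $E \subseteq G$ such that every $(\delta, E)$-designation in $X$ admits an $\eta$-realization. Apply the preliminary observation to $\delta$ to obtain a corresponding $\mu > 0$, and set $\delta' = \mu$ and $E' = E$.

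Given any $(\delta', E')$-designation $(A_i, y_i)_{i \in I}$ in $Y$, use surjectivity of $\pi$ (and the axiom of choice) to pick $x_i \in \pi^{-1}(y_i)$, taking $x_i = 0_X$ whenever $y_i = 0_Y$. For each $g \in \partial_E A_i$ the designation property gives $d_Y(g \cdot y_i, 0_Y) < \mu$, and since $g \cdot x_i \in \pi^{-1}(g \cdot y_i)$ by equivariance, our observation yields $d_X(g \cdot x_i, 0_X) < \delta$. Hence $(A_i, x_i)_{i \in I}$ is a $(\delta, E)$-designation in $X$; let $x \in X$ be an $\eta$-realization. Setting $y = \pi(x)$ and using equivariance and uniform continuity, for any $g \in A_i$ we get $d_Y(g \cdot y, g \cdot y_i) = d_Y(\pi(g \cdot x), \pi(g \cdot x_i)) < \epsilon$, so $y$ is the required $\epsilon$-realization in $Y$.

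The only nontrivial step is the preliminary observation on small preimages; the rest is essentially a diagram chase combining $0$-gluing upstairs, equivariance, and uniform continuity. The assumption $\pi^{-1}(0_Y) = \{0_X\}$ is used in exactly one place, namely to ensure the compactness argument goes through — without it, a designation close to $0_Y$ in $Y$ might fail to lift to any designation close to $0_X$ in $X$, and the strategy would break down.
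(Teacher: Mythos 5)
Your proof is correct and takes essentially the same route as the paper's: the paper likewise composes uniform continuity of $\pi$, the $0$-gluing of $X$ for the resulting $\eta$, and a compactness argument producing a $\theta$ (your $\mu$) so that $d_Y(\pi(x),0_Y) < \theta$ forces $d_X(x,0_X) < \delta$, then lifts the designation, realizes it in $X$, and projects back. The only cosmetic difference is that you spell out the compactness argument for the small-preimage lemma, which the paper leaves implicit; also note that the choice $x_i = 0_X$ when $y_i = 0_Y$ is automatic, since $\pi^{-1}(0_Y) = \{0_X\}$.
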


\begin{proof}
  Let $\epsilon > 0$ be arbitrary, and let $\eta > 0$ be such that $d_X(x, y) < \eta$ implies $d_Y(\pi(x), \pi(y)) < \epsilon$ for all $x, y \in X$.
  Let $\delta > 0$ and $E \subseteq G$ be given by the $0$-gluing property of $(G, X)$ for $\eta$.
  By compactness and the assumption $\pi^{-1}(0_Y) = \{ 0_X \}$, there exists $\theta > 0$ such that $d_Y(\pi(x), 0_Y) < \theta$ implies $d_X(x, 0_X) < \delta$ for $x \in X$.
  Let $\mathcal{A} = (A_i, y_i)_{i \in I}$ be a $(\theta, E)$-designation on $(G, Y)$.
  For each $i \in I$, choose a point $x_i \in \pi^{-1}(y_i)$, and denote $\mathcal{B} = (A_i, x_i)_{i \in I}$, which is a designation on $(G, X)$.
  For each $i \in I$ and $g \in \partial_E A_i$, we have $d_Y(g \cdot y_i, 0_Y) < \theta$, which implies $d_X(g \cdot x_i, 0_X) < \delta$, so $\mathcal{B}$ is a $(\delta, E)$-designation, and hence has an $\eta$-realization $x \in X$.
  For each $i \in I$ and $g \in A_i$, we have $d_X(g \cdot x_i, g \cdot x) < \eta$, and then $d_Y(g \cdot y_i, g \cdot \pi(x)) < \epsilon$.
  Thus $\pi(x)$ is an $\epsilon$-realization of $\mathcal{A}$.
\end{proof}

A pointed $G$-subshift $X$ is called \emph{$0$-to-$0$ sofic}, if there exists a $G$-subshift of finite type $Y$ and a $G$-block code $\pi : Y \to X$ with $\pi^{-1}(0_X) = \{ 0_Y \}$.
By Proposition~\ref{prop:ShadowingIsGluing} and Lemma~\ref{lem:GluingImage}, every $0$-to-$0$ sofic shift is $0$-gluing.
We will briefly return to this class of subshifts in Section~\ref{sec:BlockGluing}.

We will now prove some auxiliary lemmas about designations and realizations in expansive systems. In the lemmas, `$\epsilon$ is small enough' implies that it is a function of $G$, $X$ and $\Gamma$.

\begin{lemma}
\label{lem:UniqGlue}
Let $(G, X)$ be an expansive pointed dynamical system, and let $\Gamma$ be a finite set of self-homeomorphisms of $X$.
Let $\epsilon > 0$, let $\mathcal{A} = (A_i, x_i)_{i \in I}$ be a designation, and let $x, y \in X$ and $W = (\gamma_i \in \Gamma)_{i \in I}$ be such that both $(y, W)$ and $(z, W)$ are $(\epsilon, \Gamma)$-realizations of $\mathcal{A}$.
If $\epsilon$ is small enough, then $y = z$.
\end{lemma}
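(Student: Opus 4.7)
The plan is to reduce the statement directly to the defining property of expansivity via the triangle inequality, exploiting the crucial hypothesis that the \emph{same} tuple $W = (\gamma_i)_{i \in I}$ is used in both realizations. Because $(A_i)_{i \in I}$ partitions $G$, every $g \in G$ lies in a unique $A_i$. The definition of $(\epsilon, \Gamma)$-realization then gives the two bounds
\[
d_X(g \cdot y, \gamma_i(g \cdot x_i)) < \epsilon \quad \text{and} \quad d_X(g \cdot z, \gamma_i(g \cdot x_i)) < \epsilon.
\]
The common point $\gamma_i(g \cdot x_i)$ lets the triangle inequality eliminate $\gamma_i$ entirely, yielding $d_X(g \cdot y, g \cdot z) < 2\epsilon$ for every $g \in G$.

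The next step is to invoke expansivity. Let $\Delta_X$ be the canonical expansivity constant of $(G, X)$, so that $d_X(g \cdot u, g \cdot v) \leq \Delta_X$ for all $g \in G$ implies $u = v$. Choosing $\epsilon$ small enough that $2\epsilon \leq \Delta_X$ (for instance $\epsilon \leq \Delta_X / 2$) turns the previous bound into the expansivity hypothesis applied to $(y, z)$, and we conclude $y = z$. Note that the threshold on $\epsilon$ depends only on the system $(G, X)$, as required.

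There is essentially no obstacle in this lemma: the finiteness of $\Gamma$ and even the choice of $\Gamma$ play no role, because the homeomorphisms $\gamma_i$ cancel between the two realizations that share the same $W$. The finiteness of $\Gamma$ will only become relevant in later uniqueness statements, where the two realizations are allowed to involve different tuples drawn from $\Gamma$, and one must use compactness of $\Gamma$ to obtain a uniform threshold. Here the argument is just triangle inequality plus expansivity.
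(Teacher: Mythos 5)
Your proof is correct and is essentially identical to the paper's: both use the triangle inequality through the common point $\gamma_i(g \cdot x_i)$ to get $d_X(g \cdot y, g \cdot z) \leq 2\epsilon$ for all $g \in G$, then invoke expansivity with $\epsilon \leq \Delta_X/2$. The concluding remark about the role of $\Gamma$'s finiteness is a side observation and does not affect the argument.
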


\begin{proof}
Let $\Delta > 0$ be the canonical expansivity constant of $X$, and suppose $\epsilon \leq \Delta / 2$.
For each $i \in I$ and $g \in A_i$, we now have
\[
d_X(g \cdot y, g \cdot z) \leq d_X(g \cdot y, \gamma_i (g \cdot x_i)) + d_X(\gamma_i (g \cdot x_i), g \cdot z) \leq \Delta
\]
Since the $A_i$ form a partition of $G$, this implies $y = z$.
\end{proof}

From now on, unless otherwise noted, we will assume that all realizations in expansive systems satisfy the conditions of Lemma~\ref{lem:UniqGlue} when the $\Gamma$-component $W$ is fixed.
This justifies the notation $R^W_\epsilon(\mathcal{A}) \in X$ for the $(\epsilon, \Gamma)$-realization with $W$ as the second component, and $R_\epsilon(\mathcal{A}) \in X$ for the $\epsilon$-realization, of a designation $\mathcal{A}$, provided that they exist.

\begin{lemma}
\label{lem:RegionsGlue}
Let $(G, X)$ be an expansive pointed dynamical system, and let $\Gamma$ be a finite set of self-homeomorphisms of $X$.
Let $\epsilon > 0$, and let $\mathcal{A} = (A_i, x_i)_{i \in I}$ and $\mathcal{B} = (B_i, x_i)_{i \in I}$ be two designations with the same points.
Assume that for each $i \in I$, we have $d_X(g \cdot x_i, 0_X) < \epsilon$ for all $g \in G \setminus (A_i \cap B_i)$.
If $\epsilon$ is small enough and the realizations $R^W_\epsilon(\mathcal{A})$ and $R^W_\epsilon(\mathcal{B})$ both exist, then they are equal.
\end{lemma}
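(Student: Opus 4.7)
The plan is to show that for every $g \in G$, the values $g \cdot y$ and $g \cdot z$ are uniformly close when $y = R^W_\epsilon(\mathcal{A})$ and $z = R^W_\epsilon(\mathcal{B})$, and then to invoke expansivity exactly as in Lemma~\ref{lem:UniqGlue}. Let $\Delta$ denote the canonical expansivity constant of $(G, X)$.

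Since $\Gamma$ is finite and each $\gamma \in \Gamma$ is continuous with $\gamma(0_X) = 0_X$, there is a uniform modulus of continuity at $0_X$: for every $\epsilon' > 0$ there exists $\theta > 0$ such that $d_X(x, 0_X) < \theta$ implies $d_X(\gamma(x), 0_X) < \epsilon'$ for all $\gamma \in \Gamma$. I will choose $\epsilon$ small enough that (i) $\epsilon \leq \Delta/4$, and (ii) the uniform modulus above produces $\epsilon' \leq \Delta/4$ from input bound $\epsilon$.

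Now fix an arbitrary $g \in G$, and let $i, j \in I$ be the (unique) indices with $g \in A_i$ and $g \in B_j$. If $i = j$, then by the two realization conditions and the triangle inequality,
\[
d_X(g \cdot y, g \cdot z) \leq d_X(g \cdot y, \gamma_i(g \cdot x_i)) + d_X(\gamma_i(g \cdot x_i), g \cdot z) < 2\epsilon \leq \Delta.
\]
If instead $i \neq j$, then $g \notin A_i \cap B_i$ and $g \notin A_j \cap B_j$, so by hypothesis $d_X(g \cdot x_i, 0_X) < \epsilon$ and $d_X(g \cdot x_j, 0_X) < \epsilon$. The choice of $\epsilon$ then gives $d_X(\gamma_i(g \cdot x_i), 0_X) < \epsilon'$ and $d_X(\gamma_j(g \cdot x_j), 0_X) < \epsilon'$, and chaining via $0_X$ we obtain
\[
d_X(g \cdot y, g \cdot z) \leq \epsilon + \epsilon' + \epsilon' + \epsilon \leq \Delta.
\]

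In either case $d_X(g \cdot y, g \cdot z) \leq \Delta$ for all $g \in G$, so expansivity forces $y = z$. There is no real obstacle here; the only care needed is that the modulus of continuity at $0_X$ be chosen \emph{uniformly over} $\Gamma$, which is why the hypothesis that $\Gamma$ is finite (and that each of its elements fixes $0_X$) is essential.
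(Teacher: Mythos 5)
Your proof is correct and takes essentially the same approach as the paper's: in both arguments one fixes an arbitrary $g$, splits into the cases $g \in A_i \cap B_i$ and $g \in A_i \cap B_j$ with $i \neq j$, chains through $\gamma_i(g\cdot x_i)$ (and through $0_X$ in the second case) to bound $d_X(g\cdot y, g\cdot z)$ by $\Delta$, and then invokes expansivity. Your explicit invocation of a uniform modulus of continuity at $0_X$ over the finite set $\Gamma$ is just a slightly more verbose phrasing of the paper's stipulation that $\epsilon$ be small enough that $d_X(x,0_X)<\epsilon$ implies $d_X(\gamma(x),0_X)\leq\Delta/4$ for all $\gamma\in\Gamma$.
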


\begin{proof}
Let $\Delta > 0$ be the canonical expansivity constant of $X$, and suppose $\epsilon \leq \Delta / 4$ is small enough that $d_X(x, 0_X) < \epsilon$ implies $d_X(\gamma(x), 0_X) \leq \Delta / 4$ for all $\gamma \in \Gamma$.
Denote $R^W_\epsilon(\mathcal{A}) = y$ and $R^W_\epsilon(\mathcal{B}) = z$, and let $g \in G$ be arbitrary.
If $g \in A_i \cap B_i$ for some $i \in I$, then $d_X(g \cdot y, g \cdot z) \leq d_X(g \cdot y, \gamma_i(g \cdot x_i)) + d_X(\gamma_i(g \cdot x_i), g \cdot z) \leq \Delta$.
Otherwise, we have $g \in A_i \cap B_j$ for some $i \neq j$, and $d_X(g \cdot y, g \cdot z) \leq d_X(g \cdot y, \gamma_i(g \cdot x_i)) + d_X(\gamma_i(g \cdot x_i), 0_X) + d_X(0_X, \gamma_j(g \cdot x_j)) + d_X(\gamma_j(g \cdot x_j), g \cdot z) \leq \Delta$.
This implies $y = z$.
\end{proof}

%


\begin{lemma}
\label{lem:FuncGlue}
Let $(G, X)$ be an expansive pointed dynamical system, let $f : X \to X$ be a morphism, and let $\Gamma$ be a finite set of self-homeomorphisms of $X$ that commute with $f$.
Let $\epsilon > 0$, let $\mathcal{A} = (A_i, x_i)_{i \in I}$ be a designation, and let $W = (\gamma_i \in \Gamma)_{i \in I}$.
If $\epsilon$ is small enough and $y = R^W_\epsilon(\mathcal{A})$ and $z = R^W_\epsilon((A_i, f(x_i))_{i \in I})$ both exist, then $z = f(y)$.
\end{lemma}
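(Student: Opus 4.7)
The plan is to show that $f(y)$ is itself an $(\epsilon', \Gamma)$-realization of the designation $(A_i, f(x_i))_{i \in I}$ with the same $W$ for some $\epsilon' \leq \Delta_X/2$, and then invoke Lemma~\ref{lem:UniqGlue} to conclude $z = f(y)$.

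First I would use uniform continuity. Since $X$ is compact and $f$ is continuous, $f$ is uniformly continuous, so there exists $\delta > 0$ such that $d_X(u,v) < \delta$ implies $d_X(f(u), f(v)) < \Delta_X/2$. I then require $\epsilon$ to be small enough that $\epsilon \leq \min(\delta, \Delta_X/2)$.

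Next, for any $i \in I$ and $g \in A_i$, the defining property of $y = R^W_\epsilon(\mathcal{A})$ gives $d_X(g \cdot y, \gamma_i(g \cdot x_i)) < \epsilon \leq \delta$. Applying $f$ and using its uniform continuity yields
\[
d_X\bigl(f(g \cdot y),\, f(\gamma_i(g \cdot x_i))\bigr) < \Delta_X/2.
\]
Now I would use the two commutation properties: since $f$ is a morphism it commutes with the $G$-action, so $f(g \cdot y) = g \cdot f(y)$ and $f(g \cdot x_i) = g \cdot f(x_i)$; and since $\gamma_i \in \Gamma$ commutes with $f$ by assumption, $f(\gamma_i(g \cdot x_i)) = \gamma_i(f(g \cdot x_i)) = \gamma_i(g \cdot f(x_i))$. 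Combining these rewrites the displayed inequality as
\[
d_X\bigl(g \cdot f(y),\, \gamma_i(g \cdot f(x_i))\bigr) < \Delta_X/2,
\]
which is exactly the statement that $(f(y), W)$ is a $(\Delta_X/2, \Gamma)$-realization of $(A_i, f(x_i))_{i \in I}$.

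Finally, since $z = R^W_\epsilon((A_i, f(x_i))_{i \in I})$ is an $(\epsilon, \Gamma)$-realization with $\epsilon \leq \Delta_X/2$, it is also a $(\Delta_X/2, \Gamma)$-realization, and so is $f(y)$ by the previous step. Both realizations share the same designation and the same $W$, so Lemma~\ref{lem:UniqGlue} applies (with $\Delta_X/2$ in place of $\epsilon$) and gives $z = f(y)$. There is no real obstacle here, the argument is essentially a direct computation; the only point requiring minor care is to pick $\epsilon$ small enough that both the uniform-continuity bound and the hypothesis of Lemma~\ref{lem:UniqGlue} are simultaneously available, which is precisely what ``$\epsilon$ is small enough'' in the statement is meant to absorb.
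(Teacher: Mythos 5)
Your proof is correct and follows essentially the same route as the paper's: establish via uniform continuity of $f$ that $(f(y), W)$ is a $(\delta', \Gamma)$-realization of $(A_i, f(x_i))_{i \in I}$ for a suitably small $\delta'$, use that $f$ commutes both with the $G$-action and with each $\gamma_i$ to rewrite the distances, and then invoke Lemma~\ref{lem:UniqGlue}. The only difference is cosmetic — you pin the threshold down explicitly as $\Delta_X/2$ and spell out the two commutation identities, where the paper leaves these implicit in the chain of equalities.
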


\begin{proof}
Let $0 < \epsilon < \delta$ be such that $d_X(x, x') < \epsilon$ implies $d_X(f(x), f(x')) < \delta$ for all $x, x' \in X$.
For each $i \in I$ and $g \in A_i$, we have $d_X(g \cdot y, \gamma_i (g \cdot x_i)) < \epsilon$, which implies $d_X(g \cdot f(y), \gamma_i (g \cdot f(x_i))) = d_X(f(g \cdot y), f(\gamma_i (g \cdot x_i))) < \delta$.
Then $(f(y), W)$ is a $(\delta, \Gamma)$-realization of $(A_i, f(x_i))_{i \in I}$.
For small enough $\delta$, Lemma~\ref{lem:UniqGlue} implies that the realization is unique, and since $(z, W)$ is also a $(\delta, \Gamma)$-realization of the same designation, we have $f(y) = z$.
\end{proof}

\section{Tiered Dynamical Systems}
\label{sec:Tiers}

In this section, we define the notion of tiered dynamical systems, which is used in the proof of our main result. We also extend the definitions of $0$-gluing, nilpotency and nil-rigidity into this context.

\begin{definition}
\label{def:Tiers}
A \emph{tiered dynamical system}, or TDS for short, is defined as a triple $\X = (X, (G_t, X_t)_{t \in \Di}, (\phi^{t'}_t)_{t \leq t' \in \Di})$, where
\begin{itemize}
\item $X$ is a compact metric space, called the \emph{ambient space}, containing a special point $0 \in X$,
\item $\Di$ is a 
directed set with least element $t_0 \in \Di$,
\item for each $t \in \Di$, $(G_t, X_t)$ is an expansive pointed dynamical system with $0 \in X_t \subseteq X$,
\item for each pair $t \leq t' \in \Di$ we have $X_t \subseteq X_{t'}$,
\item for all $t \leq t' \in \Di$, $\phi^{t'}_t : G_{t'} \to G_t$ is a finite-to-one surjective homomorphism with $\phi^t_t = \ID_{G_t}$ and $\phi^{t'}_t \circ \phi^{t''}_{t'} = \phi^{t''}_t$ for $t'' \geq t'$, and
\item for all $t \leq t'$, $x \in X_t$ and $g \in G_{t'}$ we have $\phi^{t'}_t(g) \cdot x = g \cdot x$.
\end{itemize}
We denote $G_{t_0} = G_0$, and call it the \emph{base group} of $\X$.
Similarly, $X_{t_0} = X_0$.
\end{definition}

The word \emph{tier} refers to either an element $t \in \Di$ of the directed set, the associated set $X_t$, or the system $(G_t, X_t)$.
The correct interpretation should always be clear from the context.
We note that the union of tiers $\bigcup_{t \in \Di} X_t$ may not be compact.
In most of our constructions, it is dense in the ambient space $X$, but even this is not guaranteed by the definition.

We usually denote by $\Delta_t$ the canonical expansivity constant (see \eqref{eq:Expansivity}) of the tier $(G_t, X_t)$.
We automatically have that $t \leq t'$ implies $\Delta_{t'} \leq \Delta_t$, since $X_{t'}$ contains $X_t$ and restricted to $X_t$, the action on $X_{t'}$ agrees with that on $X_t$ (through the homomorphism $\phi^{t'}_t$).

\begin{definition}
\label{def:TierEndo}
An \emph{evolution map}, or simply $\emph{evolution}$, of the system $\X$ in Definition~\ref{def:Tiers} is a continuous function $f : X \to X$ such that
for all $t \leq t' \in \Di$ and $x \in X_t$, the condition $f(x) \in X_{t'}$ implies $f(\phi^{t'}_t(g) \cdot x) = g \cdot f(x)$ for all $g \in G_{t'}$.
We say that a tier $t \in \Di$ is \emph{$f$-stabilizing}, if there exists $s(t) \geq t$ such that $f^k(X_t) \subseteq X_{s(t)}$ for all $k \in \N$.
The TDS $\X$ is $f$-stabilizing if each of its tiers is.
The quadruple $(X, (G_t, X_t)_{t \in \Di}, (\phi^{t'}_t)_{t \leq t'}, f)$, which we may denote by $(\X, f)$, is called an \emph{evolution of a tiered dynamical system}, or ETDS for short.


If $(G, X)$ is a pointed dynamical system, the single-tier system $(X, (G, X))$ is denoted by $\mathcal{U}(G, X)$.
If $f : X \to X$ is an endomorphism of $(G, X)$, we denote $\mathcal{U}(G, X, f) = (X, (G, X), f)$
\end{definition}

Of course, all endomorphisms of a single-tier system are evolution maps, which justifies our definition of $\mathcal{U}(G, X, f)$.
The main reason for our choice of the term ``evolution'' over ``endomorphism'' is that the set of evolution maps is in general not closed under composition.
It is even possible that $f \circ f$ fails to be an evolution map even though $f$ is.
Namely, if $(G, X)$ is any expansive dynamical system, take the single-tier system $\X = (X \cup Y, (G, X))$ with $Y$ a disjoint copy of $X$.
Then every continuous function $f$ that maps $X$ to $Y$ and vice versa is an evolution of $\X$, even though $(f \circ f)|_X$ can be any continuous function from $X$ to itself.
However, it can be shown that if two evolution maps preserve the union of tiers $\bigcup_{t \in \Di} X_t$ (in particular if each tier is stabilizing with respect to them), then their composition is an evolution map.
It would seem natural for an endomorphism of a tiered system to have this property.
We do not require it, as it would restrict our ability to carry out the constructions of Section~\ref{sec:Local}, and closure with respect to composition is not essential for the purposes of this article.
The second reason for the term ``evolution'' is that we see the iteration of an endomorphism as the space $X$ evolving in time, while the group action represents a spatial dimension, which is the standard interpretation in the context of cellular automata.

We introduce versions of $0$-gluing and dense homoclinic points for tiered systems.


\begin{definition}
\label{def:TierProperties}
  Let $\X = (X, (G_t, X_t)_{t \in \Di}, (\phi^{t'}_t)_{t \leq t'})$ be a TDS.
  We say that $t \in \Di$ is \emph{weakly $0$-gluing} if there exists $w(t) \geq t$ such that for all $\epsilon > 0$, there exist $\delta > 0$ and $E > 0$ finite such that every $(\delta, E)$-designation $(A_i, x_i)_{i \in I}$ on $(G_{w(t)}, X_{w(t)})$ with $x_i \in X_t$ for each $i \in I$ has an $\epsilon$-realization in $X_{w(t)}$.
  
  Let $f : X \to X$ be an evolution of $\X$.
  For each $t \in \Di$, let $\Lambda^t_f$ be the set of homeomorphisms $\gamma : X_t \to X_t$ that commute with $f$ and satisfy $\gamma(X_r) = X_r$ for all $r \leq t$ and $\gamma(0) = 0$.
  We say $t \in \Di$ is \emph{weakly $f$-variably $0$-gluing}, if there exists $w(t) \geq t$ and a finite set $\Gamma \subseteq \Lambda^{w(t)}_f$ such that for all $\epsilon > 0$, there exist $\delta > 0$ and $E > 0$ finite such that every $(\delta, E)$-designation $(A_i, x_i)_{i \in I}$ on $(G_{w(t)}, X_{w(t)})$ with $x_i \in X_t$ for each $i \in I$ has an $(\epsilon, \Gamma)$-realization in $X_{w(t)}$.
  
  We say that the system $\X$ is weakly ($f$-variably) $0$-gluing, if each of its tiers is.

  We say that $\X$ has \emph{weakly dense homoclinic points}, if for all $t \in \Di$, $x \in X_t$ and $\epsilon > 0$, there exists $t' \geq t$ and $y \in X_{t'}$ such that $d_X(x, y) < \epsilon$ and $y$ is homoclinic in $(G_{t'}, X_{t'})$.
\end{definition}

In general, we treat $s$ and $w$ as nondecreasing partial functions from the set of tiers $\Di$ to itself.
Statements like ``the tier $w(s(t))$ exists'' are interpreted as the tier $t$ being $f$-stabilizing and $s(t)$ having one of the weak $0$-gluing properties (the exact flavor should be clear from the context).

Definitions~\ref{def:Tiers},~\ref{def:TierEndo} and~\ref{def:TierProperties} are somewhat technical, but there is a motivation behind them.
Namely, we wish to define a class of objects that contains all expansive pointed dynamical systems, and allows us to extract certain subsystems that consist of periodic and homoclinic points.
The main issue is that depending on the exact definitions, the subsystems of periodic and homoclinic points tend to lose the important properties of $0$-gluing or dense homoclinic points, are not invariant under a given evolution map, or are nonexpansive or even noncompact.
Tiered systems are a solution to this problem: we can glue points together, extract homoclinic points and apply evolution maps, but it may require passing to a higher tier.
Examples of these constructions are given in Section~\ref{sec:ConstrExamples}.

\begin{definition}
\label{def:Shadows}
Let $\X = (X, (G_t, X_t)_{t \in \Di})$ be a TDS.
For $t \in \Di$, the \emph{$t$-shadow} of a point $x \in X_t$ is defined as $\Theta_t(x) = \Theta(\Delta_t, x) \subseteq G_t$, where $\Delta_t > 0$ is the canonical expansivity constant of $(G_t, X_t)$.
\end{definition}


\begin{remark}
\label{rem:D}
  Recall that $t \leq t' \in \Di$ implies $\Delta_{t'} \leq \Delta_t$.
  Then there exists a finite set $1_{G_t} \in D^{t'}_t \subseteq G_t$ such that $d_X(g^{-1} \cdot x, 0) \leq \Delta_t$ for all $g \in D^{t'}_t$ implies $d_X(x, 0) \leq \Delta_{t'}$ for $x \in X_t$.
  This means that $\Theta_t(x) \subseteq \phi^{t'}_t(\Theta_{t'}(x)) \subseteq D^{t'}_t \cdot \Theta_t(x)$.
  In particular, if one of $\Theta_t(x)$ or $\Theta_{t'}(x)$ is finite, then so is the other.
\end{remark}

We now define a few convenient classes of ETDSs.

\begin{definition}
\label{def:Classes}
Let $(\X, f)$ be an ETDS.
\begin{itemize}
\item If the TDS $\X$ is $f$-stabilizing and weakly $f$-variably $0$-gluing, then $(\X, f) \in \mathfrak{C}_1$.
\item If $\X$ is $f$-stabilizing, weakly $0$-gluing and has weakly dense homoclinic points, then $(\X, f) \in \mathfrak{C}_2$.
\item If $(\X, f) \in \mathfrak{C}_2$ and the groups of $\X$ are residually finite, then $(\X, f) \in \mathfrak{C}_3$.
\item If $(\X, f) \in \mathfrak{C}_2$ and contains a single tier, then $(\X, f) \in \mathfrak{C}_4$.
\end{itemize}
\end{definition}

Note that $\mathfrak{C}_3 \subset \mathfrak{C}_2 \subset \mathfrak{C}_1$ and $\mathfrak{C}_4 \subset \mathfrak{C}_2$.
Our primary interests are the class $\mathfrak{C}_4$, which consists of ordinary dynamical systems, and the class $\mathfrak{C}_1$, which contains systems with weaker gluing properties.
The other classes are used as auxiliary objects in the course of proving our main theorems.

We also extend the different definitions of nilpotency and nil-rigidity for evolution maps of tiered systems.

\begin{definition}
Let $(X, (G_t, X_t)_{t \in \Di}, f)$ be an ETDS.
Let $Y \subset X$ be a set of points.
We say that $f$ is
\begin{itemize}
\item \emph{nilpotent on $Y$}, if there exists $n \in \N$ such that $f^n(x) = 0$ for all $x \in Y$.
\item \emph{weakly nilpotent on $Y$}, if for all $x \in Y$ there exists $n \in \N$ such that $f^n(x) = 0$.
\item \emph{asymptotically nilpotent on $Y$}, if $\lim_{n \longrightarrow \infty} f^n(x) = 0$ for all $x \in Y$.
\item \emph{uniformly asymptotically nilpotent on $Y$}, if it is asymptotically nilpotent on $Y$ and the convergence toward $0$ is uniform.
\end{itemize}
If $Y$ is not mentioned, it is assumed to be the ambient space $X$.
\end{definition}

Note that an endomorphism $f$ of a pointed dynamical system $(G, X)$ is nilpotent if and only if $f$ is nilpotent on $X$ with respect to the tiered system $\mathcal{U}(G, X)$, and similarly for the other three properties.
Our goal is to prove that on a large class of groups $G$, all sufficiently well-behaved asymptotically nilpotent evolution maps of tiered $G$-systems exhibit uniform convergence.

\begin{definition}
  \label{def:NRtiergroup}  
  Let $\mathfrak{C}$ be a class of ETDSs, and let $\X = (X, (G_t, X_t)_{t \in \Di})$ be a tiered dynamical system.
  If every asymptotically nilpotent evolution map $f : X \to X$ with $(\X, f) \in \mathfrak{C}$ is nilpotent on each tier $X_t$, then $\X$ is \emph{nil-rigid on $\mathfrak{C}$}.
  A group $G$ is \emph{nil-rigid on $\mathfrak{C}$}, if every tiered system $\X$ with base group $G$ is nil-rigid on $\mathfrak{C}$.
\end{definition}

We note that each asymptotically nilpotent evolution map $f$ is only required to be nilpotent on each tier separately, and may not be nilpotent on the union of all tiers (although it must be weakly nilpotent on this set).
Also, the nil-rigidity of a single-tier system $\mathcal{U}(G, X)$ on any of the classes $\mathfrak{C}_i$ is equivalent to the nil-rigidity of $(G, X)$ according to Definition~\ref{def:NRsystem}.

%

We show that nilpotency and weak nilpotency are equivalent when $Y$ is a subsystem of one of the tiers. See \cite{MeSa17} for a similar result for semigroup actions by endomorphisms.

\begin{proposition}
\label{prop:WeakNilp}
Let $(X, (G_t, X_t)_{t \in \Di}, f)$ be an ETDS, and let $Y \subseteq X_t$ be a $G_t$-invariant closed subset of some $f$-stabilizing tier $X_t$.
If $f$ is weakly nilpotent on $Y$, then it is nilpotent on $Y$.
\end{proposition}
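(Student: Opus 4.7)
My plan is to argue by contradiction, combining Zorn's lemma with the Baire category theorem. Suppose $f$ is weakly nilpotent but not nilpotent on $Y$, and let $\mathcal{F}$ denote the family of nonempty closed $G_t$-invariant subsets $Y' \subseteq Y$ on which $f$ fails to be nilpotent. Then $Y \in \mathcal{F}$, and I will use Zorn's lemma to extract a minimal element $Y^* \in \mathcal{F}$, from which I will derive a contradiction.

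The delicate step is checking that chains in $(\mathcal{F}, \supseteq)$ have lower bounds. Given a descending chain $(Y_\alpha)$ in $\mathcal{F}$, its intersection $Y_\infty = \bigcap_\alpha Y_\alpha$ is automatically a nonempty closed $G_t$-invariant subset of $Y$, and I claim that $f$ is not nilpotent on $Y_\infty$ either. Since $X_t$ is $f$-stabilizing with $f^n(X_t) \subseteq X_{s(t)}$ for all $n$, a straightforward induction using the evolution identity $f(\phi^{s(t)}_t(g) \cdot x) = g \cdot f(x)$ shows that each $f^n(Y_\alpha)$ is a $G_{s(t)}$-invariant subset of $X_{s(t)}$. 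Since $f^n(Y_\alpha) \neq \{0\}$ by the assumption $Y_\alpha \in \mathcal{F}$, expansivity of $(G_{s(t)}, X_{s(t)})$ yields, for any fixed $0 < \Delta' < \Delta_{s(t)}$, some $z_\alpha \in f^n(Y_\alpha)$ with $d(z_\alpha, 0) \geq \Delta'$. The compact sets $K_\alpha = f^n(Y_\alpha) \cap \{z : d(z, 0) \geq \Delta'\}$ form a descending chain of nonempty sets, so $\bigcap_\alpha K_\alpha \neq \emptyset$ by compactness. Using the standard fact that $\bigcap_\alpha f^n(Y_\alpha) = f^n(Y_\infty)$ for descending chains of compact sets (via convergence of a subnet of preimages), I conclude $\emptyset \neq \bigcap_\alpha K_\alpha \subseteq f^n(Y_\infty) \cap \{z : d(z, 0) \geq \Delta'\}$, which forces $f^n(Y_\infty) \neq \{0\}$.

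With a minimal $Y^* \in \mathcal{F}$ fixed, I apply the Baire category theorem to the compact metric space $Y^*$: the closed increasing cover $Y^* = \bigcup_n M_n$ with $M_n = \{y \in Y^* : f^n(y) = 0\}$ must have some $M_{n_0}$ of nonempty interior, so there is a nonempty relatively open $V \subseteq Y^*$ with $f^{n_0}(V) = \{0\}$. Iterating the evolution identity gives $f^{n_0}(g \cdot y) = g^{(n_0)} \cdot f^{n_0}(y) = g^{(n_0)} \cdot 0 = 0$ for all $g \in G_t$ and $y \in V$, where $g^{(n_0)} \in G_{s(t)}$ is any preimage of $g$ under $\phi^{s(t)}_t$. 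Hence $f^{n_0}$ vanishes on the $G_t$-invariant open set $W = G_t \cdot V \subseteq Y^*$.

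The complement $Y^* \setminus W$ is a proper closed $G_t$-invariant subset of $Y^*$ (strict because $V \neq \emptyset$), so by minimality of $Y^*$ it does not lie in $\mathcal{F}$: either it is empty, or $f$ is nilpotent on it with some exponent $n_1$. In both cases $f^{\max(n_0, n_1)}(Y^*) = \{0\}$, contradicting $Y^* \in \mathcal{F}$. The main obstacle in executing this plan is the chain-closure verification, which is where expansivity enters in an essential way: without it, the nonzero parts of the $f^n(Y_\alpha)$ could in principle collapse toward $0$ in the intersection, and $Y_\infty$ might fall out of $\mathcal{F}$, breaking the Zorn step.
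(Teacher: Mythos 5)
Your proof is correct, but it takes a genuinely different route from the paper's. The paper argues directly by a compactness/diagonalization (K\"onig's-lemma-style) argument: assuming non-nilpotency, it produces points $x_k$ whose orbits are ``alive'' for $k$ steps, uses expansivity and Lemma~\ref{lem:Nhood} to track, backward in time, a bounded ``cause'' for each surviving step, shifts the $x_k$ accordingly, and extracts a limit point $y$ whose entire forward orbit stays bounded away from $0$ after suitable translations --- contradicting weak nilpotency. Your argument instead applies Zorn's lemma to produce a minimal nonempty closed $G_t$-invariant $Y^* \subseteq Y$ on which $f$ is not nilpotent, then uses the Baire category theorem and the $G_t$-action to carve out a nonempty $G_t$-invariant relatively open set on which $f^{n_0}$ vanishes, and finally invokes minimality on the closed complement. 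All the checks are sound: the chain-closure step correctly reduces to showing $f^n(Y_\infty) \neq \{0\}$ via expansivity plus the $G_{s(t)}$-invariance of $f^n(Y_\alpha)$ (which does follow from the evolution identity and $f$-stabilization, as you indicate), and the ``$\bigcap_\alpha f^n(Y_\alpha) = f^n(Y_\infty)$'' step for descending compact chains is correct. Both proofs exploit the same three ingredients --- expansivity, $G_t$-invariance, compactness --- and both need to pass to the group $G_{s(t)}$ to have invariance of the iterates. What distinguishes them: the paper's diagonalization only needs dependent choice, while yours invokes Zorn (full AC); on the other hand, your argument is more modular and makes the structural role of each hypothesis (invariance for Baire, expansivity for the Zorn chain-closure) unusually transparent, and your closing remark correctly identifies expansivity as the thing preventing the shadows from collapsing to $0$ in the limit.
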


\begin{proof}
Let $\Delta > 0$ be an expansivity constant for $(G_{s(t)}, X_{s(t)})$.
Denote by $Z = \bigcap_{k \geq 0} f^{-k}(X_{s(t)})$ the set of points whose forward orbit does not leave $X_{s(t)}$.
Note that $Y \subseteq Z$.
By Lemma~\ref{lem:Nhood}, there exists a finite set $E \subseteq G_{s(t)}$ with the following property: if $x \in Z$ is such that $d_X(f(x), 0) \geq \Delta$, then $d_X(g \cdot x, 0) > \Delta$ for some $g \in E$.

Suppose that $f$ is not nilpotent on $Y$.
Then for all $k \in \N$, there exists $x_k \in Y$ such that $d_X(f^k(x_k), 0) > \Delta$.
For all $0 \leq m \leq k$ there exists $g^k_m \in G_{s(t)}$ such that $d_X(g^k_m \cdot f^m(x_k), 0) > \Delta$ and $g^k_m (g^k_{m+1})^{-1} \in E$, and $g^k_k = 1_{G_{s(t)}}$.
Let $y_k = g^k_0 \cdot x_k \in Y$, and let $y \in Y$ be a limit point of the $y_k$.
Then there is a sequence $(g_m)_{m \in \N}$ of elements of $G_{s(t)}$ such that $g_0 = 1_{G_{s(t)}}$ and $g_m g_{m+1}^{-1} \in E$ and $d_X(g_m \cdot f^m(y), 0) \geq \Delta$ for all $m \in \N$.
In particular, this implies $f^m(y) \neq 0$.
Thus $f$ is not weakly nilpotent on $Y$.
\end{proof}

\section{Examples of constructions of tiered systems}
\label{sec:ConstrExamples}

In this section we show how tiered systems arise from normal systems when the acting group changes. The examples are meant to show why the different aspects of tiered systems are needed, but also illustrate the simple symbolic dynamical ideas behind the rather technical proofs later. The first is the $\mathrm{Fix}$-construction, which constructs a tiered system corresponding to the periodic points of the system along a normal subgroup, elaborated in Section~\ref{sec:Periods}. The second is the $\mathrm{Fin}$-construction, which constructs a tiered system corresponding to the points whose shadow is finite when projected to a quotient group. It is formally introduced in Section~\ref{sec:Finites}. The third is the the $\mathrm{Loc}$-construction, which extracts finitely generated subgroups of the acting group, and is defined in Section~\ref{sec:Local}.

\begin{example}[Example of the $\mathrm{Fix}$-construction]
\label{ex:Fix}
Let $X'$ be the set of homoclinic configurations in $\{0, 1\}^{\Z^2}$ where all connected components of $1$s are filled squares, and let $X$ be its topological closure, where we add degenerate squares -- quadrants, half-planes and the all-$1$ configuration. This subshift $X$ is sofic (and we could also use its SFT cover in the proof, by constructing it suitably). The dynamical system $(\Z^2, X)$ is $0$-gluing and has dense homoclinic points.

Let $p \geq 1$ be arbitrary, and consider the subshift $X_p = \{x \in X \;|\; (0,p) \cdot x = x\}$ of configurations of $X$ with vertical period $p$. With the notation introduced after Lemma~\ref{lem:BoundedShadow}, we have $X_p = \fix{\langle (0,p) \rangle}{X}$, and the quotient group $\Z^2 / \langle (0,p) \rangle \simeq \Z \times \Z_p$ acts on this set. The system $(\Z \times \Z_p, X_p)$ is $0$-gluing, but does not have dense homoclinic points. Namely, the all-$1$ point is in $X_p$, but any close enough $(0,p)$-periodic point contains a column full of $1$s, and thus infinitely many columns full of $1$s. We now construct a tiered system consisting of all $X_p$, which is both weakly $0$-gluing and has weakly dense homoclinic points. This is a special case of Corollary~\ref{cor:PeriodicC2}.

Let $\Di = \{1, 2, 3, \ldots\}$ with the partial order given by divisibility. For $p \in \Di$, let $G_p = \Z \times \Z_p$ with maps $\phi^{p'}_p(m, n + p' \Z) = (m, n + p \Z)$ for $p \vert p'$, and $X_p$ as above. The tiered system
\[ \X = (X, (G_p, X_p)_{p \in \Di}, (\phi^{p'}_p)_{p \vert p'}) \]
is weakly $0$-gluing with $w : \Di \to \Di$ the identity map.
We claim that it also has weakly dense homoclinic points.
Since $X$ has dense homoclinic points, given any $p \in \Di$ and $x \in X_p$ we can approximate $x$ by a homoclinic point $y \in X$. If we pick a large enough vertical period $p'$ with $p \vert p'$, then the point $z \in \{0, 1\}^{\Z^2}$ defined by
\[
z_{(i, j)} = \begin{cases}
  1, & \text{if $y_{(i, j + k p')} = 1$ for some $k \in \Z$,} \\
  0, & \text{otherwise}
\end{cases}
\]
is in $X_{p'}$, is homoclinic for the action of $\Z \times \Z_{p'}$, and can be made arbitrarily close to $x$.
\tqed
\end{example}



\begin{example}[Example of the $\mathrm{Fin}$-construction]
Let $X$ be the $\Z^2$-SFT over the alphabet $\{0, {\rig}, {\upp}, {\lef}, {\dow}\}$ consisting of configurations $x$ such that the directed graph with vertex set $\{\vec v \in \Z^2 \;|\; x_{\vec v} \neq 0\}$ and edge set $\{(\vec u, \vec v) \;|\; \vec u + x_{\vec u} = \vec v\}$ is a disjoint union of paths, where each element of $\{{\rig}, {\upp}, {\lef}, {\dow}\}$ is interpreted as an element of $\Z^2$ in the obvious way (e.g. ${\upp} = (0,1)$). This subshift is an SFT, so it has the shadowing property, and by Proposition~\ref{prop:ShadowingIsGluing} it is $0$-gluing.
It also has dense homoclinic points.

Let $t \in \N$ and denote $X_t = \{x \in X \;|\; \forall |m| > t, n \in \Z : x_{(m,n)} = 0\}$. In other words, $X_t$ consists of those configurations whose shadow -- the set of nonzero coordinates -- is contained in $[-t, t] \times \Z$. The subgroup $\{0\} \times \Z \simeq \Z$ acts expansively on each $X_t$, and the system $(\Z, X_t)$ is $0$-gluing, but does not have dense homoclinic points. Namely, if $x \in X_t$ is the point containing ${\upp}$s in the central column and $0$ elsewhere else, then any homoclinic point approximating $x$ must complete the path to a cycle. The completion must go through the central row, and thus the point is not in $X_t$ if it is close enough to $x$.

Consider now the tiered system
\[ \X = (X, (G_t, X_t)_{t \in \Di}, (\phi^{t'}_t)_{t \leq t'}) \]
where $\Di = \N$, $t \leq t'$ is the usual ordering of $\N$, $G_t = \{0\} \times \Z$ for all $t \in \Di$ and $\phi_t^{t'} = \ID$.
This system has weakly dense homoclinic points because $X$ does (every homoclinic point of $X$ is on some tier $X_t$), and is weakly $0$-gluing since every tier is $0$-gluing.
\tqed
\end{example}

\begin{example}[Example of the $\mathrm{Loc}$-construction]
\label{ex:Loc}
Let $G = \Z^\omega$ be the direct sum of a countably infinite number of copies of $\Z$ with generators $\vec e_1, \vec e_2, \ldots$.
Let $\Sigma = \N \cup \{\infty\}$ be the one-point compactification of $\N$, and for $t \in \N$, denote $\Sigma_t = \{0, \ldots, t\}$.
Define a set of configurations $X \subset \Sigma^G$ by the following rules.
\begin{enumerate}
\item For all $x \in X$, $1 < n < \infty$ and $\vec v \in G$, if $x_{\vec v} = n$, then $x_{\vec v + \vec e_n} > 0$.
\item For all $x \in X$, $n > 0$ and $\vec v \in G$, if $x_{\vec v + k e_1} = 1$ for all $1 \leq k \leq n$ and $x_{\vec v} \neq 1$, then $x_{\vec v} \geq n$.
\end{enumerate}
Denote $X_t = X \cap \Sigma_t^G$, so that the $X_t$ form an increasing chain of $G$-subshifts on larger and larger finite alphabets.
Then $\X = (X, (G, X_t)_{t \geq 1}, (\phi^{t'}_t)_{t \leq t'})$ is a tiered dynamical system, with each $\phi^{t'}_t$ being the identity map.
It can be verified that each tier $(G, X_t)$ is $0$-gluing, so the system as a whole is weakly $0$-gluing.
It also has weakly dense homoclinic points, since $X$ has dense homoclinic points and each homoclinic point that does not contain the symbol $\infty$ is found on some tier.

The idea of the $\mathrm{Loc}$-construction is to extract a subsystem whose acting group is finitely generated and which contains a specified homoclinic point.
Fix $t \geq 1$ and a homoclinic point $x \in X_t$, and let $H = \langle \{ \vec v \in G \;|\; x_{\vec v} > 0 \} \rangle$.
For $r \geq 1$, consider $Y_r = \{ y \in X_r \;|\; \forall \vec v \notin H : y_{\vec v} = 0 \}$.
Then $H$ acts expansively on each $Y_r$, and we may form the tiered system $(X, (H, Y_r)_{r \geq 1})$.
Also, we have $Y_r \subset \Sigma_t^G$ for all $r$ by rule 1, which implies $Y_r = Y_t$ for all $r \geq t$, so we only have finitely many distinct tiers.
Thus it makes sense to restrict our attention to the single-tier system $(H, Y_t)$, which contains the point $x$.

It can be shown that $(H, Y_t)$ is also $0$-gluing, but it does not necessarily have dense homoclinic points.
For example, if $x_{\vec e_1} > 0$, then $\langle \vec e_1 \rangle \leq H$, so in particular $Y_t$ contains the point $y \in X_1$ with $y_{k \vec e_1} = 1$ for all $k \in \Z$ and $y_{\vec v} = 0$ for all other $\vec v \in G$.
Since $x$ is homoclinic, $H$ is finitely generated, hence $H \leq \langle \vec e_1, \ldots, \vec e_m \rangle$ for some $m \geq 1$.
Any $H$-homoclinic point $z \in X$ sufficiently close to $y$ contains a finite run of $1$s on the first component of $G$ of length more than $m$, and thus satisfies $z_{k \vec e_1} > m$ for some $k \in \Z$ by rule 2, which implies $z \notin Y_t$.
\tqed
\end{example}




\section{Nil-Rigidity and Finiteness Properties}
\label{sec:ClosureProps}

The classes of nil-rigid groups enjoy several closure properties, some of which depend on the class of tiered systems under consideration.
In this section, we prove the nil-rigidity of finite groups, and show that virtually nil-rigid groups, finite-by-nil-rigid groups and subgroups of nil-rigid groups (on $\mathfrak{C}_4$) are themselves nil-rigid.

\begin{proposition}
  \label{prop:FiniteGroups}
  Let $(X, (G_t, X_t)_{t \in \Di}, f)$ be an ETDS with $G_0 = G$ finite.
  If $f$ is asymptotically nilpotent and $t \in \Di$ is $f$-stabilizing, then $f$ is nilpotent on $X_t$.
\end{proposition}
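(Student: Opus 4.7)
The plan is to reduce the statement to the triviality that an asymptotically nilpotent endomorphism of a finite discrete pointed dynamical system is nilpotent, by showing that the stabilizing tier $X_{s(t)}$ is already finite.

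First I would observe that every group $G_r$ in the TDS is finite. Indeed, by Definition~\ref{def:Tiers} the map $\phi^r_0 : G_r \to G_0$ is a surjective finite-to-one homomorphism and $G_0 = G$ is assumed finite, so $|G_r| \leq |G_0| \cdot \sup_{g \in G_0} |(\phi^r_0)^{-1}(g)| < \infty$. In particular, taking $s(t) \geq t$ provided by the hypothesis that $t$ is $f$-stabilizing (so $f^k(X_t) \subseteq X_{s(t)}$ for all $k \in \N$), the acting group $G_{s(t)}$ is finite.

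The key step, and the only one that is not entirely routine, is to prove that $X_{s(t)}$ is a finite set. Let $\Delta = \Delta_{s(t)}$ be the canonical expansivity constant, and define on $X_{s(t)}$ the auxiliary metric
\[
d'(x, y) = \max_{g \in G_{s(t)}} d_X(g \cdot x, g \cdot y),
\]
which is well-defined because $G_{s(t)}$ is finite. Clearly $d_X \leq d'$, and conversely, since each of the finitely many translations $x \mapsto g \cdot x$ is uniformly continuous on the compact space $X_{s(t)}$, a common modulus of continuity shows that $d'(x, y) \to 0$ whenever $d_X(x, y) \to 0$. Thus $d'$ induces the same topology as $d_X$ on $X_{s(t)}$, so $(X_{s(t)}, d')$ is compact. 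On the other hand, expansivity means precisely that $d'(x, y) \leq \Delta$ implies $x = y$, so distinct points are at $d'$-distance greater than $\Delta$. A compact metric space in which distinct points are uniformly separated must be finite, which yields $|X_{s(t)}| < \infty$.

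Once $X_{s(t)}$ is finite it carries the discrete topology, so for every $x \in X_t \subseteq X_{s(t)}$ the forward orbit $(f^k(x))_{k \in \N}$ lies in this finite set, and asymptotic nilpotency $f^k(x) \to 0$ forces $f^k(x) = 0$ for all sufficiently large $k$. Since $X_t$ is itself finite, taking the maximum of these bounds over the finitely many points of $X_t$ gives a single $N \in \N$ with $f^N(X_t) = \{0\}$, as desired. (Alternatively, once finiteness is established one could invoke Proposition~\ref{prop:WeakNilp} after noting that asymptotic nilpotency on a finite discrete space coincides with weak nilpotency, but this is no shorter.) The only real obstacle is the metric-equivalence-plus-expansivity argument that upgrades finiteness of the group to finiteness of the space; everything else is formal.
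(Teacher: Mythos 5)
Your proof is correct and follows the same route as the paper's: deduce that $G_{s(t)}$ is finite, that a finite group acting expansively on a compact space forces the space to be discrete and hence finite, and then conclude nilpotency trivially. The paper states the discreteness step without elaboration, whereas you spell it out via the auxiliary maximum metric $d'$; this is a legitimate way to fill in the same argument.
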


\begin{proof}
  Since the group $G_{s(t)}$ is finite and acts expansively on $X_{s(t)}$, the topology of $X_{s(t)}$ is discrete, and as it is also compact, it is finite.
  Then $f$ is clearly nilpotent on $X_t$.
\end{proof}

\begin{corollary}
  Every finite group is nil-rigid on the class of stabilizing ETDSs.
\end{corollary}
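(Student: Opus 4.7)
The plan is to observe that this corollary is essentially immediate from Proposition~\ref{prop:FiniteGroups} applied tier by tier. Let $\X = (X, (G_t, X_t)_{t \in \Di}, (\phi^{t'}_t)_{t \leq t'})$ be an arbitrary TDS with finite base group $G_0 = G$, and let $f$ be an asymptotically nilpotent evolution of $\X$ such that $(\X, f)$ lies in the stabilizing class; by definition this means that every tier of $\X$ is $f$-stabilizing. To establish nil-rigidity on this class according to Definition~\ref{def:NRtiergroup}, I need to show that $f$ is nilpotent on each tier $X_t$.

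Fix any $t \in \Di$. By hypothesis $t$ is $f$-stabilizing, and the base group $G_0 = G$ is finite, so Proposition~\ref{prop:FiniteGroups} applies verbatim and yields that $f$ is nilpotent on $X_t$. Since $t \in \Di$ was arbitrary, $\X$ is nil-rigid on the class of stabilizing ETDSs, and since $\X$ was an arbitrary tiered system with base group $G$, the finite group $G$ itself is nil-rigid on this class.

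There is no real obstacle here; the entire content of the corollary is in Proposition~\ref{prop:FiniteGroups}. The only point worth flagging is that the proposition's hypothesis demands only that the base group $G_0$ be finite, which is exactly what the corollary assumes; finiteness of the higher tiers $G_t$ (and hence the discreteness-plus-compactness argument showing $X_t$ is finite, used in the proof of the proposition) is automatic from the requirement in Definition~\ref{def:Tiers} that each $\phi^t_0 : G_t \to G_0$ be a finite-to-one surjective homomorphism.
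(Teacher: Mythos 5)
Your proof is correct and is exactly the argument the paper intends: the corollary follows immediately by applying Proposition~\ref{prop:FiniteGroups} to each tier of an arbitrary stabilizing ETDS with finite base group, and the paper presents it as a one-step consequence without a separate proof. Your closing remark correctly identifies that the finiteness of the higher groups $G_t$ (needed implicitly in the proposition's proof, which uses that $G_{s(t)}$ is finite) comes for free from the finite-to-one surjections $\phi^t_{t_0} : G_t \to G_0$ required by Definition~\ref{def:Tiers}.
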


\begin{lemma}
\label{lem:VirtualProp}
  Let $(\X, f)$ be an ETDS with $\X = (X, (G_t, X_t)_{t \in \Di})$.
  For each $t \in \Di$, let $H_t \leq G_t$ be a subgroup of finite index with $\phi^{t'}_t(H_{t'}) = H_t$ whenever $t \leq t'$.
  Then $\mathcal{Y} = (X, (H_t, X_t)_{t \in \Di})$ is a TDS and $f$ is its evolution.
  If $\X$ or one of its tiers is weakly $f$-variably $0$-gluing, weakly $0$-gluing and/or has weakly dense homoclinic points, then $\mathcal{Y}$ or the corresponding tier has the same properties.
\end{lemma}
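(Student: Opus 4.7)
The plan is to verify the conclusion of Lemma~\ref{lem:VirtualProp} item by item, exploiting that $\mathcal{Y}$ differs from $\X$ only by replacing each acting group $G_t$ by the finite-index subgroup $H_t$, while keeping the ambient space, all tier sets $X_t$, the factorization maps $\phi^{t'}_t$ (restricted to the $H_{t'}$'s), and the evolution $f$ intact. Most of the TDS axioms of Definition~\ref{def:Tiers} are therefore inherited directly: the inclusions, the base point, the partial order on $\Di$, and the identity $\phi^{t'}_t(h) \cdot x = h \cdot x$ transfer verbatim, and the restricted maps $\phi^{t'}_t|_{H_{t'}} : H_{t'} \to H_t$ are surjective by hypothesis and finite-to-one as restrictions of finite-to-one homomorphisms. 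The only genuine verification is expansivity of each $(H_t, X_t)$, which I would obtain by choosing coset representatives $g_1, \ldots, g_n$ of $G_t/H_t$ and using uniform continuity of the finitely many translations $g_i \cdot$: if $\Delta$ is an expansivity constant for $(G_t, X_t)$ and $\Delta' \leq \Delta$ is such that $d(x,y) \leq \Delta'$ implies $d(g_i \cdot x, g_i \cdot y) \leq \Delta$ for every $i$, then $d(h \cdot x, h \cdot y) \leq \Delta'$ for every $h \in H_t$ upgrades via $g = g_i h$ to $d(g \cdot x, g \cdot y) \leq \Delta$ for every $g \in G_t$, forcing $x = y$. That $f$ remains an evolution of $\mathcal{Y}$ and that stabilization witnesses $s(t)$ carry over is immediate from the fact that the tier sets do not change, and weakly dense homoclinic points transfer because the $H_{t'}$-shadow of a point is contained in its $G_{t'}$-shadow.

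The main technical item is the preservation of the (variably) weakly $0$-gluing properties. Fix a tier $t$ that is weakly $0$-gluing in $\X$ with witness $w(t)$, and fix a transversal $T = \{t_1 = 1, \ldots, t_n\} \subseteq G_{w(t)}$ for $G_{w(t)}/H_{w(t)}$. I would take $w'(t) = w(t)$ in $\mathcal{Y}$ and, given $\epsilon > 0$, run $\X$'s gluing at some $\epsilon_0 \leq \epsilon$ to extract parameters $\delta > 0$ and a finite $E \subseteq G_{w(t)}$. The $\mathcal{Y}$-parameters $\delta' > 0$ and a finite $E' \subseteq H_{w(t)}$ would then be chosen so that any $(\delta', E')$-designation $(A_i, x_i)_{i \in I}$ on $(H_{w(t)}, X_{w(t)})$ with $x_i \in X_t$ lifts to a $(\delta, E)$-designation on $(G_{w(t)}, X_{w(t)})$ via the extension that places the blocks $t_j A_i$ on the $j$th coset carrying the translated points $t_j \cdot x_i$, which lie in $X_t$ because $X_t$ is $G_{w(t)}$-invariant by Definition~\ref{def:Tiers}. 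The $\epsilon_0$-realization in $X_{w(t)}$ obtained from $\X$'s gluing, restricted to the identity coset $j = 1$, would then be the desired $\epsilon$-realization of the original $H_{w(t)}$-designation. The weakly $f$-variably case goes through by the same extension with the transversal translations $t_j \cdot$ adjoined to the homeomorphism set $\Gamma$: these belong to $\Lambda^{w(t)}_f$ because $f$-stabilization makes them commute with $f$ on the relevant subspace, they preserve each lower tier $X_r$ for $r \leq w(t)$ by $G$-invariance, and they fix $0$.

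The delicate step, and the one I would check most carefully, is the boundary bookkeeping of the lifted designation. Since $E$ may contain elements outside $H_{w(t)}$, the naive $G_{w(t)}$-boundary of an $H_{w(t)}$-block can a priori engulf the entire block, so $E'$ needs to be chosen large enough to include enough $T$-conjugates of $E \cap H_{w(t)}$, and $\delta'$ small enough in terms of the uniform moduli of continuity of the finitely many transversal translations, for the extended designation to fall inside the $(\delta, E)$-window demanded by $\X$. In the plain $0$-gluing case the extension itself has to absorb the coset-crossing contributions into the translated points $t_j \cdot x_i$, while in the variably case those contributions are charged instead to the extra elements of $\Gamma$; in both cases the finite-index of $H_t$ in $G_t$ is precisely what keeps $E'$ and $\Gamma$ finite, which is the crux of why the lemma works.
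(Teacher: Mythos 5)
Your proof of the TDS axioms, expansivity, and the transfer of weakly dense homoclinic points follows the paper's approach; the latter justification is imprecise (the $H_{t'}$-shadow uses a possibly smaller expansivity constant, so it is not literally contained in the $G_{t'}$-shadow) but the conclusion is correct since a finite $G_{t'}$-shadow yields a finite $H_{t'}$-shadow by Lemma~\ref{lem:Nhood}.

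The gluing argument, however, has a genuine flaw. Your lifted designation partitions $G_{w(t)}$ into the sets $t_j A_i$ with the \emph{translated} points $t_j \cdot x_i$, which is a strictly finer partition than necessary, and the finer partition has strictly more $E$-boundary than it can afford. Concretely, take $g = t_j h$ with $h \in A_i$ and suppose $e g = t_{j'} h'$ with $h' \in A_i$ and $j' \neq j$ — a crossing between two cosets of $H$ that stays inside the \emph{same} index $i$. Then $g \in \partial_E(t_j A_i)$, but $h$ need not lie anywhere near $\partial_{E'}(A_i)$ for any $E'$; the original $(\delta', E')$-designation condition on $(A_i, x_i)$ gives you no control over $d_X(h \cdot x_i, 0)$. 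Moreover, the required inequality is $d_X\bigl(g \cdot (t_j \cdot x_i), 0\bigr) = d_X\bigl((t_j h t_j) \cdot x_i, 0\bigr) < \delta$, and the element $h t_j \notin H_{w(t)}$ appears between the two applications of $t_j$, so it cannot be controlled through the moduli of continuity of the finitely many transversal translations either. Adjoining the translations $t_j$ to $\Gamma$ in the variably case does not rescue this: $\Gamma$ only enters the definition of a realization, not the definition of a $(\delta, E)$-designation, so the lifted object may simply fail to be a designation the $\X$-side gluing can act on.

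The paper instead uses the \emph{coarser} partition $(L_{w(t)} A_i, x_i)_{i \in I}$, keeping the same point $x_i$ on the whole union of cosets $L_{w(t)} A_i$. Then a boundary point $g = k h \in \partial_E(L_{w(t)} A_i)$ forces $h \in \partial_{E'}(A_i)$ with $E' = H_{w(t)} \cap L_{w(t)}^{-1} E L_{w(t)}$ (the coset-crossings you worried about become interior points of $L_{w(t)} A_i$, so they simply disappear from the boundary), and the needed inequality $d_X\bigl((k h) \cdot x_i, 0\bigr) < \delta$ follows because $k \in L_{w(t)}$ is applied \emph{last} to the small point $h \cdot x_i$. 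No translations need to be added to $\Gamma$, and the same $w(t)$ and $\Gamma$ work for $\mathcal{Y}$. This is the step that your construction is missing.
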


\begin{proof}
  For $t \in \Di$, let $1_{G_t} \in L_t \subset G_t$ be a set of representatives for the left cosets of $H_t$ in $G_t$.
  Fix now a single tier $t \in \Di$, and let $\Delta > 0$ be the canonical expansivity constant for $(G_t, X_t)$. 
  Let $\epsilon > 0$ be such that $d_X(x, x') \leq \epsilon$ implies $d_X(k \cdot x, k \cdot x') \leq \Delta$ for all $k \in L_t$.
  Now, if $d_X(h \cdot x, h \cdot x') \leq \epsilon$ for all $h \in H_t$, then for each $g \in G_t$ we have $d_X(g \cdot x, g \cdot x') = d_X(k h \cdot x, k h \cdot x') \leq \Delta$, where $h \in H_t$ and $k \in L_t$ are such that $k h = g$.
  Thus $\epsilon$ is an expansivity constant for $(H_t, X_t)$.
  Since $t \in \Di$ was arbitrary, each system $(H_t, X_t)$ is expansive.
  
  The property $\phi^{t'}_t(H_{t'}) = H_t$ guarantees that the restriction of each $\phi^{t'}_t$ to $H_{t'}$ is a surjective homomorphism onto $H_t$.
  The other conditions of Definition~\ref{def:Tiers} are clear, so $\mathcal{Y}$ is a tiered dynamical system and $f$ is its evolution.
    
    Let $t \in \Di$, and suppose that $x \in X_t$ is homoclinic for the action of $G_t$.
    Then for all $\epsilon > 0$, there exists a finite set $E \subseteq G_t$ such that $d_X(g \cdot x, 0) < \epsilon$ for all $g \in G_t \setminus E$.
    We can then choose $E' = E \cap H_t$ as a witness for the homoclinicity of $x$ under $H_t$.
    Conversely, suppose that $x$ is homoclinic under the action of $H_t$.
    Given $\epsilon > 0$, let $\delta > 0$ be such that $d_X(y, 0_X) < \delta$ implies $d_X(k \cdot y, 0_X) < \epsilon$ for all $k \in L_t$.
    Let $E \subseteq H_t$ be a finite set with $d_X(h \cdot x, 0_X) < \delta$ for all $h \in H_t \setminus E$.
    Then $d_X(g \cdot x, 0_X) < \epsilon$ for all $g \in G_t \setminus L_t^{-1} E$.
    This shows that $x$ is homoclinic under the action of $G_t$ as well.
    In particular, if $\X$ has weakly dense homoclinic points, then so does $\mathcal{Y}$.
    
    Suppose that a tier $t \in \Di$ of $\X$ is weakly $f$-variably $0$-gluing, and let $w(t) \geq t$ and $\Gamma \subseteq \Lambda^{w(t)}_f$ be the higher tier and set of homeomorphisms given by this property.
    Let $\epsilon > 0$, and let $\delta > 0$ and $E \subseteq G_{w(t)}$ finite be given by the weak $f$-variable $0$-gluing property of $t$ for it.
    Let $\eta > 0$ be such that $d_X(x, 0) < \eta$ implies $d_X(k \cdot x, 0) < \delta$ for all $k \in L_{w(t)}$ and $x \in X_{w(t)}$, and define $E' = H_{w(t)} \cap L_{w(t)}^{-1} E L_{w(t)}$, which is also finite.
    
    Let $\mathcal{A} = (A_i, x_i)_{i \in I}$ be an $(\eta, E')$-designation on the system $(H_{w(t)}, X_{w(t)})$.
    Then the collection $(L_{w(t)} A_i)_{i \in I}$ is a partition of the group $G_{w(t)}$.
    We claim that $(L_{w(t)} A_i, x_i)_{i \in I}$ is a $(\delta, E)$-designation on $(G_{w(t)}, X_{w(t)})$.
    Let $i \in I$ and $k h \in \partial_E (L_{w(t)} A_i)$, where $k \in L_{w(t)}$ and $h \in A_i$.
    Then there exists $e \in E$ with $e k h \notin L_{w(t)} A_i$.
    By the definition of $E'$, there exists $q \in L_{w(t)}$ with $q^{-1} e k \in E'$.
    Then $q q^{-1} e k h \notin L_{w(t)} A_i$, which implies $q^{-1} e k h \notin A_i$, or $h \in \partial_{E'} (A_i)$.
    Since $\mathcal{A}$ is an $(\eta, E')$-designation, we have $d_X(h \cdot x_i, 0) < \eta$.
    It follows that $d_X(k h \cdot x_i, 0) < \delta$, and thus $(L_{w(t)} A_i, x_i)_{i \in I}$ is a $(\delta, E)$-designation on $(G_{w(t)}, X_{w(t)})$.
    
    
    By the definition of $\delta$ and $E$, the designation has an $(\epsilon, \Gamma)$-realization $(y, W)$, where $y \in X_{w(t)}$ and $W = (\gamma_i \in \Gamma)_{i \in I}$.
    Note that since $1_{G_{w(t)}} \in L_{w(t)}$, for each $i \in I$ we have $A_i \subseteq L_{w(t)} A_i$.
    This implies $d_X(h \cdot y, \gamma_i(h \cdot x_i)) < \epsilon$ for all $h \in A_i$.
    Thus $(y, W)$ is also an $(\epsilon, \Gamma)$-realization of $\mathcal{A}$.
    This means that $t$ is weakly $f$-variably $0$-gluing in $\mathcal{Y}$ as well, with the same choice of $w(t)$ and $\Gamma$.
  
    If $t$ is weakly $0$-gluing in $\X$, we can repeat the above argument with $\Gamma = \{\ID_{X_t}\}$, showing that it is weakly $0$-gluing also in $\mathcal{Y}$.
\end{proof}

\begin{lemma}
\label{lem:virtual}
  Let $\mathfrak{C}$ be one of the classes $\mathfrak{C}_1$, $\mathfrak{C}_2$, $\mathfrak{C}_3$ or $\mathfrak{C}_4$.
  If a group $G$ is virtually nil-rigid on $\mathfrak{C}$ (meaning it has a finite-index subgroup with this property), then it is nil-rigid on $\mathfrak{C}$.
\end{lemma}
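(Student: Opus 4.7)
The plan is to reduce the claim directly to the assumed nil-rigidity of the finite-index subgroup, using Lemma~\ref{lem:VirtualProp} to switch acting groups on all tiers simultaneously. Let $H \leq G$ be a finite-index subgroup that is nil-rigid on $\mathfrak{C}$, let $\X = (X, (G_t, X_t)_{t \in \Di}, (\phi^{t'}_t)_{t \leq t'})$ be a TDS with base group $G_0 = G$, and let $f$ be an asymptotically nilpotent evolution with $(\X, f) \in \mathfrak{C}$. The goal is to show that $f$ is nilpotent on each tier $X_t$.

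First I would lift $H$ to every tier by setting $H_t = (\phi^t_0)^{-1}(H) \leq G_t$. Since each $\phi^t_0 : G_t \to G_0$ is surjective, $[G_t : H_t] = [G : H] < \infty$, so each $H_t$ has finite index in $G_t$. The tower identity $\phi^{t'}_0 = \phi^t_0 \circ \phi^{t'}_t$ immediately yields $\phi^{t'}_t(H_{t'}) \subseteq H_t$ whenever $t \leq t'$, and the reverse inclusion follows by lifting an element of $H_t$ through the surjection $\phi^{t'}_t$ and observing that its preimage lies in $H_{t'}$ by the same identity. This is exactly the compatibility hypothesis required by Lemma~\ref{lem:VirtualProp}, which then produces the TDS $\mathcal{Y} = (X, (H_t, X_t)_{t \in \Di}, (\phi^{t'}_t|_{H_{t'}})_{t \leq t'})$, on which $f$ is still an evolution map, and all tier sets remain the same.

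Next I would verify that $(\mathcal{Y}, f) \in \mathfrak{C}$ by checking each of the properties defining the relevant class. Lemma~\ref{lem:VirtualProp} directly transfers weak $0$-gluing, weak $f$-variable $0$-gluing, and weakly dense homoclinic points from $\X$ to $\mathcal{Y}$. The $f$-stabilization property refers only to $f$ and the spaces $X_t$, which are unchanged, so it is automatic. For the class $\mathfrak{C}_3$, subgroups of residually finite groups are themselves residually finite, so residual finiteness of the $G_t$ descends to the $H_t$. For the class $\mathfrak{C}_4$, restricting the acting group on each tier does not change the number of tiers, so the single-tier condition is preserved. Finally, asymptotic nilpotency of $f$ is a purely topological statement about $f$ on $X$, independent of the group action, so $f$ remains asymptotically nilpotent as an evolution of $\mathcal{Y}$. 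Applying the nil-rigidity of $H$ to the ETDS $(\mathcal{Y}, f)$ yields nilpotency of $f$ on each $X_t$, and since the tiers of $\mathcal{Y}$ and $\X$ coincide as sets, this is the desired conclusion.

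There is no serious obstacle: the real content has already been packaged into Lemma~\ref{lem:VirtualProp}, and the argument reduces to routine bookkeeping. The only mildly delicate point is the compatibility condition $\phi^{t'}_t(H_{t'}) = H_t$, which one obtains cleanly from the commutativity of the maps $\phi^{s}_{r}$ in the definition of a TDS; all other verifications are immediate from the definitions of $\mathfrak{C}_1, \mathfrak{C}_2, \mathfrak{C}_3, \mathfrak{C}_4$.
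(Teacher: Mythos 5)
Your proof is correct and follows the same route as the paper's: lift $H$ to $H_t = (\phi^t_{t_0})^{-1}(H)$ on each tier, apply Lemma~\ref{lem:VirtualProp} to obtain $\mathcal{Y}$, check $(\mathcal{Y}, f) \in \mathfrak{C}$, and invoke the nil-rigidity of $H$. The only difference is that you explicitly verify the compatibility condition $\phi^{t'}_t(H_{t'}) = H_t$ required by Lemma~\ref{lem:VirtualProp}, which the paper leaves implicit; that is a small but genuine improvement in rigor.
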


\begin{proof}
  Let $H \leq G$ be a subgroup of finite index that is nil-rigid on $\mathfrak{C}$.
  Let $(\X, f) \in \mathfrak{C}$, where $\X = (X, (G_t, X_t)_{t \in \Di})$ with $G_0 = G$ and $f$ is asymptotically nilpotent.
  For $t \in \Di$, define $H_t = (\phi^t_{t_0})^{-1}(H)$, which is a subgroup of $G_t$ of finite index, and residually finite whenever $G_t$ is.
  Denote $\mathcal{Y} = (X, (H_t, X_t)_{t \in \Di})$.
  Then $(\mathcal{Y}, f) \in \mathfrak{C}$ by Lemma~\ref{lem:VirtualProp}.
  Since $H$ is nil-rigid on $\mathfrak{C}$, the map $f$ is nilpotent on each tier of $\mathcal{Y}$, and thus on each tier of $\X$.
\end{proof}

Lemma~\ref{lem:virtual} is the main reason for the definition of the `$f$-variable' flavor of the $0$-gluing property.
Namely, we would like to prove, as we do in Section~\ref{sec:Z}, that $\Z$ is nil-rigid for expansive variably $0$-gluing tiered systems, since this class includes many well-known subshifts like the even shift (see Example~\ref{ex:EvenShift} in Section~\ref{sec:FinalExamples}).
Once this result has been established, we would like to extend it to all groups that are virtually $\Z$.
Expansive actions of virtually $\Z$ groups include one-dimensional subshifts with local symmetries, and the endomorphisms of such systems are cellular automata that respect those symmetries; see Example~\ref{ex:VirtuallyZ} in Section~\ref{sec:FinalExamples}.
Unfortunately, the proof that we give for Lemma~\ref{lem:VirtualProp} is not valid for expansive variably $0$-gluing systems, since a translation of a group $G_t$ is not necessarily a translation of a given finite-index subgroup $H_t$.

\begin{lemma}
\label{lem:FinExt}
  Let $\mathfrak{C}$ be one of the classes $\mathfrak{C}_1$, $\mathfrak{C}_2$ or $\mathfrak{C}_3$, and let $G$ and $H$ be groups.
  If $G$ is nil-rigid on $\mathfrak{C}$ and there is a finite-to-one surjective homomorphism $\psi : H \to G$, then $H$ is nil-rigid on $\mathfrak{C}$.
\end{lemma}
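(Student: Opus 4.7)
The plan is to produce, from an arbitrary $(\X, f) \in \mathfrak{C}$ with $\X = (X, (H_t, X_t)_{t \in \Di}, (\phi^{t'}_t))$ of base group $H$ and $f$ asymptotically nilpotent, a new ETDS $(\mathcal{Y}, f) \in \mathfrak{C}$ whose base group is $G$ and which contains every tier of $\X$ among its own; nil-rigidity of $G$ on $\mathfrak{C}$ will then give $f$ nilpotent on every tier of $\mathcal{Y}$, and in particular on every $X_t$ with $t \in \Di$, as required.

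The construction is to \emph{prepend a trivial base tier}. Formally, set $\Di' = \{t_{-1}\} \sqcup \Di$ with $t_{-1}$ a new minimum, declare $(G_{t_{-1}}, X_{t_{-1}}) = (G, \{0\})$, and keep the tiers $(H_t, X_t)$ for $t \in \Di$ unchanged. The new transition homomorphisms $H_t \to G$ are $\psi \circ \phi^t_{t_0}$, which are finite-to-one surjective as compositions of such maps. Verifying the TDS axioms at $t_{-1}$ amounts to observing that $\{0\} \subseteq X_{t_0}$, that a one-point space is trivially expansive, and that $0$ is fixed by every group element; the required cocycle relation follows from $\phi^{t'}_{t_0} = \phi^t_{t_0} \circ \phi^{t'}_t$. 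The evolution-map condition for $f$ at the new tier reduces to $f(0) = 0$, which holds because $f$ is a morphism of pointed systems.

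I then check $(\mathcal{Y}, f) \in \mathfrak{C}$. Every tier in $\Di$ keeps its $\X$-witnesses verbatim, so it suffices to verify the relevant properties at the single new tier $t_{-1}$, and there they all hold trivially: the tier is $f$-stabilizing with $s(t_{-1}) = t_{-1}$ (since $f(0) = 0$); it is weakly $0$-gluing and weakly $f$-variably $0$-gluing with $w(t_{-1}) = t_{-1}$, $\Gamma = \{\ID\}$, and realization $y = 0$, because every designation at $t_{-1}$ has every base point $x_i$ equal to $0$; and the only point $0$ is itself homoclinic. For the class $\mathfrak{C}_3$ one additionally needs $G$ residually finite, which is the standard fact that a residually finite group modulo a finite normal subgroup is residually finite: given $g \in H \setminus K$ with $K := \ker \psi$, separate each of the finitely many elements of the coset $gK$ from $1_H$ by a finite-index normal subgroup of $H$, intersect these, and multiply by $K$ to obtain a finite-index normal subgroup of $H$ containing $K$ but not $g$; its image in $G = H/K$ does the job.

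Applying nil-rigidity of $G$ on $\mathfrak{C}$ to $(\mathcal{Y}, f)$ then yields $f$ nilpotent on every tier of $\mathcal{Y}$, and in particular on every $X_t$ for $t \in \Di$, which is what is required for $H$ to be nil-rigid on $\mathfrak{C}$. There is no essentially hard step: the whole point of choosing $X_{t_{-1}} = \{0\}$ is to avoid having to transfer any gluing or homoclinic-density witnesses to the new tier. A more natural-looking choice such as $\fix{K}{X_{t_0}}$ would force one to argue that the $\X$-witnesses at $t_0$ descend to $t_{-1}$, and, in the $f$-variable flavour, that the accompanying self-homeomorphisms from $\Lambda^{w(t_{-1})}_f$ preserve $\fix{K}{X_{t_0}}$ — all of which the trivial base sidesteps. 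The same reason explains why the argument cannot cover the single-tier class $\mathfrak{C}_4$ that is excluded from the hypothesis: adding a tier is exactly what the construction does.
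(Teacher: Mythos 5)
Your proof is correct and is essentially the paper's own argument: prepend a trivial base tier $(G,\{0\})$ with transition maps $\psi \circ \phi^t_{t_0}$, check the augmented ETDS stays in $\mathfrak{C}$ (the paper leaves this step to the reader; you spell it out, including the residual finiteness of $G$ needed for $\mathfrak{C}_3$), and invoke nil-rigidity of $G$. One minor nit: $f(0) = 0$ is not part of the definition of an evolution map of a TDS, but it does follow from asymptotic nilpotency (continuity plus $f^n(0) \to 0$ gives $f(0) = \lim_n f^{n+1}(0) = 0$), and that is what your choice $s(t_{-1}) = t_{-1}$ actually relies on.
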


\begin{proof}
  Let $(\X, f) \in \mathfrak{C}$, where $\X = (X, (H_t, X_t)_{t \in \Di})$ is a TDS with $H_0 = H$ and $f$ is asymptotically nilpotent.
  Let $r \notin \Di$ be a new element, and let $\VDi = \Di \cup \{r\}$ be a new directed set with $r < t$ for each $t \in \Di$.
  Define $H_r = G$ and $X_r = \{0_X\}$, and let the system $(H_r, X_r)$ have the trivial dynamics.
  Then $X_r \subseteq X_t$ for all $t \in \Di$.
  Define also $\phi^t_r = \psi \circ \phi^t_{t_0}$ for $t \in \Di$.
  Then $\mathcal{Y} = (X, (H_t, X_t)_{t \in \VDi})$ is a TDS and $f$ is its evolution.
  It is easy to check that $(\mathcal{Y}, f) \in \mathfrak{C}$, and since the bottom group is $H_r = G$, $f$ is nilpotent on each tier of $\mathcal{Y}$.
  Then $f$ is also nilpotent on each tier of $\X$.
\end{proof}

Finally, we show that nil-rigidity on single-tier systems is inherited by subgroups using a standard coset construction.
The countability of $G/H$ is a technical restriction that could be lifted if we considered uniform spaces instead of metric spaces.
We are not able to easily extend the result beyond $\mathfrak{C}_4$ due to the larger groups $H_t$, which should have counterparts in the new system with base group $G$.

\begin{lemma}
\label{lem:Subgroups}
  Let $H \leq G$ be groups such that the number of cosets of $H$ in $G$ is at most countable.
  If $G$ is nil-rigid on $\mathfrak{C}_4$, then so is $H$.
\end{lemma}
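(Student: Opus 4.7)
The plan is a coset induction: extend the given $H$-system to a $G$-system, apply nil-rigidity of $G$ to the extension, and transfer nilpotency back. Fix a transversal $T \subseteq G$ with $1_G \in T$ for the left cosets $G/H$; this is countable by hypothesis. Let $X$ be the space of equivariant functions $x : G \to Y$ satisfying $x(g h) = h^{-1} \cdot x(g)$, naturally identified with $Y^T$ via restriction. Equip $X$ with a product metric $d_X(x, y) = \sum_{t \in T} w_t \, d_Y(x(t), y(t))$ for a strictly positive summable weight sequence $(w_t)_{t \in T}$; this makes $X$ a compact metric space. The rule $(g \cdot x)(g') = x(g^{-1} g')$ defines a continuous $G$-action, and the constant function $0_X \equiv 0_Y$ serves as the special fixed point.

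Next I would verify that $(G, X) \in \mathfrak{C}_4$. Expansivity: for each $s \in T$ the evaluation $x \mapsto x(s)$ intertwines the action of the conjugate subgroup $s H s^{-1}$ with the $H$-action on $Y$, so the expansivity constant of $(H, Y)$ propagates coordinate-by-coordinate through the weighted metric. Dense homoclinic points: approximate any $x \in X$ by replacing the values $(x(s))_{s \in T_0}$ on a finite $T_0 \subseteq T$ by nearby $H$-homoclinic elements and setting the rest to $0_Y$; using the cocycle defined by $g^{-1} t = \tau(g, t) \, h(g, t)$ with $\tau(g, t) \in T$ and $h(g, t) \in H$, a short check shows the result has a finite $G$-shadow. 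The technical heart is $0$-gluing: given $\epsilon > 0$, choose a finite $T_0 \subseteq T$ whose complementary weights sum to less than $\epsilon/2$, and require $E \supseteq T_0 T_0^{-1}$. A $(\delta, E)$-designation $(A_j, x^j)$ on $X$ then induces, at each coordinate $s \in T$, an $H$-designation on $Y$ via the identity $((t h' s^{-1}) \cdot x^j)(t) = h' \cdot x^j(s)$ valid for all $t \in T_0$ and $h' \in H$; applying $0$-gluing of $(H, Y)$ independently at each $s$ and bundling the outputs produces the desired $\epsilon$-realization in $X$.

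With $(G, X) \in \mathfrak{C}_4$ in hand, given an asymptotically nilpotent $H$-endomorphism $f : Y \to Y$, define $\tilde f : X \to X$ by $\tilde f(x)(g) = f(x(g))$. Since $f$ commutes with $H$, $\tilde f$ is a well-defined $G$-endomorphism of $X$, and asymptotic nilpotency follows coordinate-wise using summability of the weights to control the tail. Hence $(\mathcal{U}(G, X), \tilde f) \in \mathfrak{C}_4$, and by nil-rigidity of $G$ some iterate $\tilde f^N$ is identically $0_X$. For any $y \in Y$, the point $x^y \in X$ defined by $x^y(h) = h^{-1} \cdot y$ for $h \in H$ and $x^y(g) = 0_Y$ for $g \notin H$ is well-defined since $0_Y$ is $H$-fixed, and satisfies $\tilde f^N(x^y)(1_G) = f^N(y) = 0_Y$, so $f$ is nilpotent on $Y$.

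The hard part will be the $0$-gluing of $(G, X)$. The twist cocycle $h(g, t)$ causes a single $G$-designation to unfold into a family of $H$-designations at the various coordinates $s \in T$, whose borders must all be simultaneously controlled. The choice $E \supseteq T_0 T_0^{-1}$ is precisely what ensures that whenever the $T_0$-translates of a point fall into different pieces $A_j$, the ambient $(\delta, E)$-designation already forces proximity to $0_X$ at the relevant coordinates, so the coordinate-wise outputs from $0$-gluing of $(H, Y)$ can be assembled into a coherent realization in $X$.
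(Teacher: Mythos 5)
Your construction is isomorphic to the paper's: the paper builds the lifted system as $X^{G/H}$ with the twisted action $(g \cdot x)_{qH} = h \cdot x_{kH}$ (where $g^{-1}q = kh^{-1}$), which is exactly the coinduced action written in the indexed-family picture rather than your equivariant-function picture, and uses the same weighted product metric, the same coordinatewise lift $\tilde f$, and the same verification of $\mathfrak{C}_4$ membership (expansivity from the base coordinate, dense homoclinic points by finitely many homoclinic coordinates padded with $0$, and $0$-gluing by inducing an $H$-designation at each coset and bundling the realizations). Both proofs then apply nil-rigidity of $G$ to $\tilde f$ and descend to $f$ via evaluation at the identity coset, so this is essentially the paper's argument, with your $0$-gluing step left as a correct but compressed sketch of the paper's case analysis.
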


\begin{proof}
  Let $\mathcal{U}(H, X, f) \in \mathfrak{C}_4$ be an asymptotically nilpotent ETDS.
  Choose a set of representatives $L = \{k_0, k_1, k_2, \ldots\} \subset G$ for the left cosets of $H$ with $k_0 = 1_G$.
  We assume that $d_X(x, y) < 1$ for all $x, y \in X$, using the metric $d_X(x, y) / (1 + d_X(x, y))$ in place of $d_X$ if this is not the case.
  Let $Y = X^{G/H}$ be the $G/H$-fold direct product of $X$, whose points are denoted $x = (x_{g H})_{g H \in G/H}$ or $x = (x_{k H})_{k \in L}$, equipped with the metric
  \[
    d_Y(x, y) = \sum_{k_n \in L} 2^{-n} d_X(x_{k_n H}, y_{k_n H})
  \]
  This metric defines the product topology on $Y$; note that $d_Y(x, y) \leq d_X(x_H, y_H)$.
  Let $\rho : H \times X \to X$ be the action of $H$ on $X$.
  We define an action $\tilde \rho : G \times Y \to Y$ by $\tilde \rho(g, x)_{q H} = \rho(h, x_{k H})$ for $g \in G$, $x \in Y$ and $q \in L$, where $h \in H$ and $k \in L$ are such that $g^{-1} q = k h^{-1}$.
  It can be checked that this indeed defines a left action, and we denote it by ${\cdot}$ as we did with $\rho$.
  In particular, we have $(h k^{-1} \cdot x)_H = h \cdot x_{k H}$ for all $h \in H$, $k \in L$ and $x \in Y$.
  
  Define a function $\tilde f : Y \to Y$ by $\tilde f ((x_{k H})_{k \in L}) = (f(x_{k H}))_{k \in L}$.
  Then $(G, Y)$ is a dynamical system and $\tilde f$ is its asymptotically nilpotent endomorphism.
  Any expansivity constant of $(H, X)$ is also an expansivity constant of $(G, Y)$.
  
  We show that $(G, Y)$ is $0$-gluing.
  Let $\epsilon > 0$ be arbitrary, let $N \geq 0$ be such that $\sum_{n > N} 2^{-n} \leq \epsilon/4$, and define $F = \{k_0, k_0^{-1}, \ldots, k_N, k_N^{-1}\}$.
  Let $0 < \delta < \epsilon/4$ and $1_H \in E \subseteq H$ be given for $\epsilon / 8$ by the $0$-gluing property of $(H, X)$.
  We may assume $E$ is symmetric, that is, $E = E^{-1}$, and that $\delta$ is small enough that $d_Y(x, 0_Y) < \delta$ implies $d_Y(k_n^{-1} \cdot x, 0_Y) < \epsilon / 8$ for all $n \leq N$.
  Let $\mathcal{A} = (A_j, x_j)_{j \in I}$ be a $(\delta, E \cup F)$-designation in $(G, Y)$.
  For $n \in \N$, denote $B^n_j = A_j k_n \cap H$, so that $(B^n_j)_{j \in I}$ forms a partition of $H$.
  We claim that $(B^n_j, (x_j)_{k_n H})_{j \in I}$ is a $(\delta, E)$-designation in $(H, X)$.
  Let $j \in I$ and $h \in \partial_E B^n_j$.
  Then there exists $e \in E$ with $e h \notin B^n_j$.
  Since $e h \in H$ and $h k_n^{-1} \in A_j$, this implies $h k_n^{-1} \in \partial_E A_j \subseteq \partial_{E \cup F} A_j$.
  Then we have $d_Y(h k_n^{-1} \cdot x_j, 0_Y) < \delta$, which implies $d_X(h \cdot (x_j)_{k_n H}, 0_X) = d_X((h k_n^{-1} \cdot x_j)_H, 0_X) < \delta$.
  
  Since $(B^n_j, (x_j)_{k_n H})_{j \in I}$ is a $(\delta, E)$-designation on the system $(H, X)$, it has an $\epsilon / 8$-realization $y^n \in X$.
  Define $y = (y^n)_{k_n \in L} \in Y$; we claim that $y$ is an $\epsilon$-realization of $\mathcal{A}$.
  Let $j \in I$ and $g \in A_j$.
  Let $n \leq N$, and let $k_{m(n)} \in L$ and $h_n \in H$ be such that $g^{-1} k_n = k_{m(n)} h_n^{-1}$.
  Suppose first that $k_n^{-1} g \in A_j$.
  Then $h_n = k_n^{-1} g k_{m(n)} \in B^{m(n)}_j$, and we have $d_X((g \cdot x_j)_{k_n H}, (g \cdot y)_{k_n H}) = d_X(h_n \cdot (x_j)_{k_{m(n)} H}, h_n \cdot y^{m(n)}) < \epsilon/8$ by the definition of $y^{m(n)}$.
  
  Suppose then that $k_n^{-1} g \notin A_j$.
  Since $k_n^{-1} \in F$, this implies $g \in \partial_{E \cup F} A_j$, so that $d_Y(g \cdot x_j, 0_Y) < \delta$.
  This implies $d_X((g \cdot x_j)_{k_n H}, 0_X) = d_X((k_n^{-1} g \cdot x_j)_H, 0_X) \leq d_Y(k_n^{-1} g \cdot x_j, 0_Y) < \epsilon/8$ by our choice of $\delta$.
  Let $i \in I$ be such that $k_n^{-1} g \in A_i$.
  Since $E \cup F$ is symmetric, we have $k_n^{-1} g \in \partial_{E \cup F} A_i$, and hence $d_X((g \cdot x_i)_{k_n H}, 0_X) < \epsilon / 8$ with a similar computation as for $x_j$.
  We also have $h_n \in B^{m(n)}_i$, and from this (as in the case $k_n^{-1} g \in A_j$) we obtain $d_X((g \cdot x_i)_{k_n H}, (g \cdot y)_{k_n H}) < \epsilon/8$.
  This implies $d_X((g \cdot x_j)_{k_n H}, (g \cdot y)_{k_n H}) < \frac{3}{8} \epsilon$.
  
  We have now shown $d_X((g \cdot x_j)_{k_n H}, (g \cdot y)_{k_n H}) < \frac{3}{8} \epsilon$ for each $n \leq N$, from which we compute $d_Y(g \cdot y, g \cdot x_j) < \sum_{n \leq N} 2^{-n} d_X((g \cdot y)_{k_n H}, (g \cdot x_j)_{k_n H}) + \epsilon / 4 < \epsilon$ using the definition of $N$.
  Thus $y$ is an $\epsilon$-realization of $\mathcal{A}$, and $(G, Y)$ is $0$-gluing.
  
  We show that $(G, Y)$ has dense homoclinic points.
  Let $x \in Y$ and $\epsilon > 0$ be arbitrary.
  Let $N \geq 0$ be such that $\sum_{n > N} 2^{-n} < \epsilon / 2$.
  For each $n \leq N$, let $y^n \in X$ be a homoclinic point with $d_X(x_{k_n H}, y^n) < \epsilon / 4$, and for $n > N$, let $y^n = 0_X$.
  Denoting $y = (y^n)_{k_n \in L} \in Y$, we then have $d_Y(x, y) < \epsilon$.
  We claim that $y$ is a homoclinic point in $(G, Y)$.
  Let $\delta > 0$, and choose $M \geq 0$ be such that $\sum_{n > M} 2^{-n} < \delta / 2$.
  For each $n \leq N$, we have a finite set $E_n \subseteq H$ with $d_X(h \cdot y^n, 0_X) < \delta / 4$ whenever $h \in H \setminus E_n$.
  Define the set $E = \{ k_m e k_n^{-1} \;|\; m \leq M, n \leq N, e \in E_n \} \subseteq G$, which is finite since each $E_n$ is.
  Let $g \in G \setminus E$.
  For $m \leq M$, we have $(g \cdot y)_{k_m H} = h_m \cdot y_{k_{p(m)} H}$, where $p(m) \geq 0$ and $h_m \in H$ are such that $g^{-1} k_m = k_{p(m)} h_m^{-1}$.
  If $p(m) \leq N$, we have $g = k_m h_m k_{p(m)}^{-1}$, which implies $h_m \notin E_{p(m)}$, and then $d_X(h_m \cdot y_{k_{p(m)} H}, 0_X) < \delta / 4$.
  On the other hand, $p(m) > N$ implies $h_m \cdot x_{k_{p(m)} H} = 0_X$.
  Then $d_Y(g \cdot y, 0_Y) < \sum_{m \leq M} 2^{-m} d_X((g \cdot y)_{k_m H}, 0_X) + \delta / 2 < \delta$, and thus $y$ is homoclinic.

  Since $\mathcal{U}(G, Y) \in \mathfrak{C}_4$ and $G$ is nil-rigid on $\mathfrak{C}_4$, the map $\tilde f$ is nilpotent on $Y$.
  Thus $f$ is nilpotent on $X$.
\end{proof}

\section{The Case of $\Z$}
\label{sec:Z}

It was shown in \cite{GuRi08} that on full $\Z$-shifts, asymptotically nilpotent cellular automata are nilpotent.
We generalize this result to tiered systems with at least three tiers, one of which is $f$-stabilizing and another satisfies a very weak $0$-gluing property.
This section is one of the places where we do not assume the entire system to be $f$-stabilizing.
For the purposes of our main theorem it would be enough to show the nil-rigidity of $\Z$ on $\mathfrak{C}_2$, but for the sake of proving the nil-rigidity of as many one-dimensional subshifts as possible we give a much stronger result, and for this reason the proof technique of \cite{GuRi08} cannot be directly applied.
Instead, we use a result from the preprint \cite{Sa16} by the first author.
The exact statement of the result is somewhat technical and involves several auxiliary concepts whose definitions we will not repeat here.

\begin{lemma}[Corollary of Theorem 3 in \cite{Sa16}]
\label{lem:Sparse}
Let $X \subset \{0, 1\}^{\Z^2}$ be a binary two-dimensional subshift with at least two points such that $\{ n \geq 0 \;|\; x_{(0,n)} = 1 \}$ is finite for all $x \in X$.
Then one of the following conditions holds for some $x \in X$.
\begin{itemize}
\item $x_{(0,0)} = 1$ and $x_{(m, n)} = 0$ for all $m \in \Z$ and $n < 0$.
\item There exist $r \geq 0$ and a sequence of coordinates $(m_k, n_k)_{k \in \Z}$ such that $m_k < m_{k+1} < m_k + r$, $|n_k - n_{k+1}| \leq r$ and $x_{(m_k, n_k)} = 1$ for all $k \in \Z$, and $x_{(m, n)} = 1$ implies $|m - m_k|, |n - n_k| \leq r$ for some $k \in \Z$.
\end{itemize}
\end{lemma}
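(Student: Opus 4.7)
My plan is to derive the statement from Theorem~3 of \cite{Sa16}, preceded by a small compactness reduction that disposes of the degenerate cases. I first unpack the hypothesis: since $X$ is shift-invariant, applying the stated condition to every translate $(a,b)\cdot x$ of a configuration $x\in X$ yields that for every $x\in X$, every column $c\in\Z$, and every $K\in\Z$ the set $\{n\geq K : x_{(c,n)}=1\}$ is finite. Equivalently, in every column of every configuration in $X$ the $1$-set is bounded above (though not necessarily finite, as a sparse sequence of $1$s escaping to $-\infty$ remains consistent with the hypothesis). This is the one-sided column-sparseness to which Theorem~3 applies.

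Next I run a short case analysis to reduce to the substantive regime. If some $x\in X$ has its $1$-set contained in a half-plane $\{n\geq n_0\}$ -- in particular, if its support is globally finite -- then shifting $x$ vertically so that its lowest row of $1$s is at $n=0$, and then horizontally to put one such $1$ at the origin, produces a configuration witnessing case~(a). So I may assume that for every $x\in X$ the $1$s extend to $n=-\infty$ along at least one column.

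Under this standing assumption I invoke Theorem~3 of \cite{Sa16}, which provides a structural dichotomy for two-dimensional subshifts with this flavour of column-sparseness. Its non-degenerate alternative yields a configuration whose $1$-set lies within a uniformly bounded sup-neighborhood of a bi-infinite sequence of $1$-positions $(m_k,n_k)_{k\in\Z}$ with strictly increasing, uniformly bounded horizontal gaps and uniformly bounded vertical fluctuations. This is exactly case~(b); the parameter $r$ is read off directly from the uniform bounds furnished by the theorem.

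The substantive content lies entirely in Theorem~3 itself. The main obstacle is to guarantee, under column-boundedness together with the failure of case~(a), the existence of a genuine bi-infinite quasi-one-dimensional ``fat line'' of $1$s of uniformly bounded width. The standard route is to pass to a minimal subsystem of $X$, extract a limit configuration that extremizes an appropriate geometric quantity -- for instance the vertical coordinate of the lowest visible $1$ inside a prescribed horizontal window -- and then exploit recurrence inside the minimal subsystem to propagate this structure bi-infinitely in the horizontal direction while uniformly controlling the horizontal drift, the vertical fluctuation, and the width of the $1$-neighborhood around the resulting staircase.
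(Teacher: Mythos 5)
The paper gives no proof of this lemma: it is stated as a black-box corollary of Theorem~3 of \cite{Sa16} applied ``sideways'' (with the roles of rows and columns exchanged), and your proposal takes the same route by deferring the substantive content to that theorem. One small logical slip worth noting: the failure of case~(a) (no $x\in X$ with $1$-set bounded below) does \emph{not} imply that in every $x\in X$ the $1$s extend to $n=-\infty$ along a single column -- a configuration supported on the diagonal $\{(k,-k) : k\geq 0\}$ has its $1$-set unbounded below but has at most one $1$ per column. You never use that claim afterward (you immediately invoke Theorem~3), so the overall argument is unaffected, but the preliminary reduction is also unnecessary: the full dichotomy, including case~(a), is exactly what the cited theorem delivers, rather than something you need to dispose of before applying it.
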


The second condition intuitively states that in some configuration of $X$, there is a two-way infinite `path' of nonzero cells that spans the entire horizontal space, and every nonzero cell occurs near this path.
Note also that we apply Theorem 3 of~\cite{Sa16} ``sideways'', as the original result concerns subshifts where $\{ n \geq 0 \;|\; x_{(n,0)} = 1 \}$ is always finite.


\begin{definition}
  \label{def:HomoRec}
  Let $(X, (\Z, X_t)_{t \in \Di}, (\ID_\Z)_{t \leq t'}, f)$ be an ETDS whose groups are all equal to $\Z$.
  For $t \in \Di$, let $\Lambda^t_f$ be the set of homeomorphisms $\gamma : X_t \to X_t$ that commute with $f$ and satisfy $\gamma(X_r) = X_r$ for all $r \leq t$ and $\gamma(0_X) = 0_X$.
  A tier $t \in \Di$ has the \emph{$f$-variable homoclinic recurrence property}, if there exist a tier $w(t) \geq t$ and a finite set $\Gamma \subseteq \Lambda^{w(t)}_f$ such that for each homoclinic point $x \in X_t$, all $\epsilon > 0$ and $c \in \{1, -1\}$, there exists a designation $(A_i, -a_i \cdot x)_{i \in I}$ that has an $(\epsilon, \Gamma)$-realization in $X_{w(t)}$, and satisfies $a_i \in A_i \subseteq \Z$ for all $i \in I$ and $\mathrm{sign}(a_i) = c$ for infinitely many $i \in I$.
  
  Let $(X, (G_t, X_t)_{t \in \Di}, (\phi^{t'}_t)_{t \leq t'} f)$ be an ETDS whose base group is virtually $\Z$.
  We say a tier $t \in \Di$ has the $f$-variable homoclinic recurrence property, if there exists $w(t) \geq t$ and a finite index subgroup $\Z \simeq H \leq G_{w(t)}$ such that tier $t$ of the restricted ETDS $(X, (H, X_r)_{r \leq w(t)}, f)$ has that property with the same choice of $w(t)$.
\end{definition}

In the restricted system, the acting group of each tier is $H$, and the action on lower tiers $X_r$ is defined through the homomorphisms $\phi^{w(t)}_r$.
The fact that the restricted system is an ETDS follows from Lemma~\ref{lem:virtual}.

The idea of this property is that if a homoclinic point $x \in X_t$ satisfies $f^n(x) = q \cdot x$ for some $n \geq 1$ and $q \in \Z$, i.e. is a \emph{spaceship} in cellular automata terminology, then we can glue perturbed copies of $x$ into a single point $y \in X_{w(t)}$ that also satisfies $f^n(y) = q \cdot y$, and where infinitely many of the perturbed copies travel across the origin in the $f$-trajectory.
This means that $f^k(y)$ does not approach $0_X$ as $k$ grows, so $f$ is not asymptotically nilpotent.
Using Lemma~\ref{lem:Sparse}, we can always find such a homoclinic point in an asymptotically nilpotent $\Z$-system that is not nilpotent, and thus reach a contradiction.
We define the property in the language of designations and realizations so that the lemmas of Section~\ref{sec:Gluing} can be used in the proof of the following result.

\begin{lemma}
\label{lem:Z}
  Let $(X, (\Z, X_t)_{t \in \Di}, (\mathrm{ID}_\Z)_{t \leq t'}, f)$ be an ETDS, and suppose that $t \in \Di$ is $f$-stabilizing and $s(t)$ has the $f$-variable homoclinic recurrence property.
  If $f$ is asymptotically nilpotent on $X_{w(s(t))}$, then it is nilpotent on $X_t$.
\end{lemma}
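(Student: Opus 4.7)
The plan is to argue by contradiction and suppose that $f$ is not nilpotent on $X_t$. Since $t$ is $f$-stabilizing, the forward orbit of every point of $X_t$ lies in $X_{s(t)}$, so I can take $\Delta = \Delta_{s(t)}$ as a uniform ``nonzero'' threshold throughout. I would encode the spacetime behavior of $f$-orbits of points in $X_t$ as a two-dimensional binary subshift $Y \subseteq \{0,1\}^{\Z^2}$, defined as the $\sigma$-orbit closure of the maps
\[
(m,n) \;\longmapsto\; \mathbf{1}\!\left[d_X\bigl(m \cdot f^n(x),\, 0_X\bigr) > \Delta\right]
\]
for $x \in X_t$ and $n \geq 0$ (extending by $0$ for $n < 0$). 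Then $Y$ is a genuine $\Z^2$-subshift, containing the all-zero point; and since $f$ is not nilpotent on $X_t$, Proposition~\ref{prop:WeakNilp} yields a non-mortal point of $X_t$, which produces a nonzero element of $Y$. Hence $|Y| \geq 2$.

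The first technical step is to verify the column-finiteness hypothesis of Lemma~\ref{lem:Sparse}: for every $y \in Y$, the set $\{n \geq 0 : y_{(0,n)} = 1\}$ is finite. This will follow from asymptotic nilpotency of $f$ on $X_{w(s(t))} \supseteq X_{s(t)}$ together with compactness of $X_{s(t)}$: a spacetime cell $(0,n)$ that is $1$ in $y$ corresponds via the defining limit to $a_k \cdot f^{b_k + n}(x_k)$ whose image under $\pi$ stabilizes; extracting a subsequential limit point in $X_{s(t)}$ and applying asymptotic nilpotency forces the column to vanish for large $n$. I can then apply Lemma~\ref{lem:Sparse} to $Y$. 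Case~1 would give some $y \in Y$ with $y_{(0,0)} = 1$ but all rows $n < 0$ identically zero; I would rule this out via expansivity and continuity, since an all-zero row of a limiting spacetime diagram forces the associated limit points in $X_{s(t)}$ to equal $0_X$, and $f(0_X) = 0_X$ propagates this to all later rows, contradicting $y_{(0,0)}=1$. Thus case~2 must hold, yielding a bi-infinite chain $(m_k, n_k)_{k \in \Z}$ in some $y \in Y$ with $m_k < m_{k+1} < m_k + r$, $|n_{k+1}-n_k|\leq r$, and every nonzero cell of $y$ within distance $r$ of the chain.

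From this combinatorial chain I would extract a homoclinic \emph{spaceship}: a homoclinic point $x \in X_{s(t)}$ and integers $N \geq 1$, $Q \in \Z \setminus \{0\}$ with $f^N(x) = Q \cdot x$, where $\mathrm{sign}(Q)$ matches the direction in which the chain drifts in time. This uses compactness of $Y$ and $X_{s(t)}$ and a pigeonhole argument on the bounded-radius patterns that can appear around the $(m_k, n_k)$: two indices whose local patterns and relative $(m,n)$-offsets agree give a periodicity relation which, pulled back through the definition of $Y$, produces the required $x$ with $f^N(x) = Q \cdot x$; the confinement of the nonzero cells to an $r$-neighborhood of the chain ensures that $x$ is homoclinic. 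With the spaceship in hand, I would apply the $f$-variable homoclinic recurrence property of $s(t)$ with $c = \mathrm{sign}(Q)$: it provides a designation $(A_i, -a_i \cdot x)_{i \in I}$ with $a_i \in A_i$, $\mathrm{sign}(a_i) = c$ for infinitely many $i \in I$, and an $(\epsilon,\Gamma)$-realization $y^* \in X_{w(s(t))}$. Using that the elements of $\Gamma$ commute with $f$ and fix each tier, Lemmas~\ref{lem:UniqGlue}, \ref{lem:RegionsGlue} and \ref{lem:FuncGlue} let me track the iterates $f^{kN}(y^*)$: each spaceship copy placed near $a_i$ travels at speed $Q/N$, and the infinitely many copies with $\mathrm{sign}(a_i) = \mathrm{sign}(Q)$ each sweep past the origin at some time $kN$, so $d_X(f^{kN}(y^*), 0_X)$ cannot tend to $0$. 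This contradicts asymptotic nilpotency of $f$ on $X_{w(s(t))}$ and completes the argument. The main obstacle will be the spaceship-extraction step, and in particular matching $\mathrm{sign}(Q)$ to the direction $c$ used in the homoclinic recurrence property, so that the glued configuration genuinely witnesses a failure of asymptotic nilpotency rather than producing spaceships that drift harmlessly away from the origin.
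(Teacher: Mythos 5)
Your plan follows the paper's proof closely in its overall architecture: encode spacetime as a binary $\Z^2$-subshift, invoke Lemma~\ref{lem:Sparse}, extract a spaceship from the chain, glue infinitely many copies via the $f$-variable homoclinic recurrence property with $c = \mathrm{sign}(Q)$, and contradict asymptotic nilpotency using Lemmas~\ref{lem:UniqGlue}, \ref{lem:RegionsGlue} and \ref{lem:FuncGlue}. The spaceship extraction, sign-matching, and gluing steps you describe match the paper's Claim~\ref{cl:Path} and subsequent analysis.

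However, there is a genuine gap in the way you set up $Y$. You define $Y$ as the $\sigma$-orbit closure of spacetime diagrams of \emph{forward} orbits, extended by zeros for all $n < 0$. With this definition, $Y$ trivially contains Case 1 configurations of Lemma~\ref{lem:Sparse}: take any $x \in X_t$ with $d_X(x,0_X) > \Delta$; its padded spacetime diagram, shifted horizontally so that a nonzero cell sits at $(0,0)$, has $y_{(0,0)} = 1$ and every row $n < 0$ identically zero. So Lemma~\ref{lem:Sparse}, which only asserts that one of the two conditions holds for \emph{some} configuration, gives you nothing new — it may just report Case 1, which you already know. Your proposed argument for ruling out Case 1 — ``an all-zero row of a limiting spacetime diagram forces the associated limit points in $X_{s(t)}$ to equal $0_X$'' — does not apply to those zero rows, because they are artificial padding and do not arise as shadows of actual points of $X_{s(t)}$; there are no ``associated limit points'' for them, so $f(0_X) = 0_X$ has nothing to propagate.

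The paper avoids this by defining $Y$ from \emph{two-way} trajectories $T = \{(x_n)_{n\in\Z}\;|\;x_n \in X_{s(t)},\,x_{n+1} = f(x_n)\}$, obtained via an inverse-limit compactness argument from the non-nilpotency of $f$ on $X_t$. For such trajectories, a zero row really does correspond to the point $0_X$, so zero rows propagate upward (this is the content of the last bullet of Claim~\ref{cl:Subshift}, which produces the constant $L$ with the backward-neighbor property); that property rules out Case 1 uniformly, leaving Case 2 as the only possibility. You would need to replace your forward-orbit-plus-padding construction with the two-way-trajectory construction (or an equivalent device that gives each row of $y$ a legitimate preimage under $f$) for the appeal to Lemma~\ref{lem:Sparse} and your subsequent ruling out of Case 1 to be sound.
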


\begin{proof}
  We assume for contradiction that $f$ is not nilpotent on $X_t$.
  Let $\Gamma \subseteq \Lambda^{w(s(t))}_f$ be the finite set of homeomorphisms given by the $f$-variable homoclinic recurrence property of $X_{s(t)}$.
  Let $\Delta = \Delta_{w(s(t))}$ be the canonical expansivity constant of $X_{w(s(t))}$, which is also an expansivity constant for $X_t$ and $X_{s(t)}$.
  Denote by $\Theta = \Theta_{w(s(t))}$ the shadow function of $X_{w(s(t))}$.
  Let
  \[
    T = \{ (x_n)_{n \in \Z} \;|\; \forall n \in \Z: x_n \in X_{s(t)}, x_{n+1} = f(x_n) \}
  \]
  be the set of two-way trajectories of $f$ within $X_{s(t)}$.
  For $x \in X_{w(s(t))}$, we interpret $\Theta(x)$ as an element of $\{0, 1\}^\Z$ by its indicator function, and for $(x_n) \in T$, we define a two-dimensional configuration $y = \Theta((x_n)) \in \{0, 1\}^{\Z^2}$ by the formula $y_{(m,n)} = \Theta(x_n)_m$.
  Define also $Y = \overline{ \Theta(T) } \subseteq \{0, 1\}^{\Z^2}$.
  
  \begin{claim}
  \label{cl:Subshift}
    The set $Y$ is a $\Z^2$-subshift with at least two points.
    For all $y \in Y$ there exists $(x_n)_{n \in \Z} \in T$ with $\Theta((x_n)) \leq y$ in the cellwise order, and $d_X(m \cdot x_n, 0_X) \geq \Delta$ whenever $y_{(m, n)} = 1$.
    The set $\{ n \geq 0 \;|\; y_{(0,n)} = 1 \}$ is finite for all $y \in Y$.
    There exists $L \geq 0$ such that $y_{(m, n)} = 1$ implies $y_{(m + \ell, n-1)} = 1$ for some $\ell \in \{-L, \ldots, L\}$.
  \end{claim}
  
  In the second claim, we say $(x_n)$ is an \emph{associated trajectory} of $y$.
  
  \begin{proof}
    First, $\Theta(T)$ is invariant under translations, since they correspond to shifting a trajectory and/or acting by an element of $\Z$ to each point of a trajectory, which results in another trajectory.
    As the closure of a translation-invariant set, $Y$ is a subshift.
    
    Since $f$ is not nilpotent on $X_t$, for all $k \in \N$ there exists $x_k \in X_t$ such that $f^k(x_k) \neq 0$, and we may assume $d(f^k(x_k), 0) > \Delta$.
    We then have $f^k(x_k) \in X_{s(t)}$ for all $k \in \N$ by $f$-stability.
    The sequence $(f^k(x_k))_{k \in \N}$ has a limit point $x \in X_{s(t)}$, which is the central point $z_0$ of some trajectory $(z_n)_{n \in \Z} \in T$.
    Then $d_X(x, 0) \geq \Delta$, $y = \Theta((z_n)) \in Y$ and $y_{(i, 0)} = 1$ for some $i \in \Z$.
    Since $0^{\Z^2} \in Y$, this implies that $Y$ contains at least two configurations.
    
    The second claim is clear if $y \in \Theta(T)$.
    If $y \notin \Theta(T)$, then there is a sequence $((x^k_n)_{n \in \Z})_{k \in \N}$ of trajectories with $\Theta((x^k_n)) \longrightarrow y$ as $k \longrightarrow \infty$, and by passing to a subsequence we may assume that the sequence itself converges in $T$.
    By the properties of the shadow function, $\Theta(\lim_k (x_n^k)) \leq \lim_k \Theta((x_n^k)) = y$ holds in the cell-wise order, and $y_{(m, n)} = 1$ implies $d_X(m \cdot x_n, 0_X) \geq \Delta$.
    We may choose $(x_n) = \lim_k (x_n^k)$.
    
    Let $y \in Y$ be arbitrary, and let $(x_n) \in T$ be an associated trajectory.
    Since $x_0 \in X_{s(t)}$ and $f$ is asymptotically nilpotent on $X_{s(t)}$, there exists $N \in \N$ such that $d(x_n, 0) = d(f^n(x_0), 0) \leq \Delta$ for all $n \geq N$.
    This means that $y_{(0, n)} = 0$ for all $n \geq N$.
    
    Finally, suppose that for all $L \geq 0$ there exists $y^L \in Y$ such that $y^L_{(0,1)} = 1$ but $y^L_{(\ell, 0)} = 0$ for all $\ell \in \{-L, \ldots, L\}$.
    Let $y \in Y$ be a limit point of the $y^L$, which then satisfies $y_{(0,1)} = 1$ and $y^L_{(\ell, 0)} = 0$ for all $\ell \in \Z$.
    If $(x_n) \in T$ is an associated trajectory, this implies $x_0 = 0$ by expansivity, and then $x_1 = f(x_0) = 0$, a contradiction.
  \end{proof}
  
  We can now apply Lemma~\ref{lem:Sparse} and obtain a configuration $y \in Y$ satisfying one of the two conditions.
  The existence of $L$ in Claim~\ref{cl:Subshift} implies that $y$ does not satisfy the first condition, so it satisfies the second with some radius $r \geq 0$ and path $(m_k, n_k)_{k \in \Z}$ that is increasing in the first coordinate.
  We now extract another path that is increasing in the second coordinate.
  
  \begin{claim}
  \label{cl:Path}
    For all $k \in \Z$ there exists $p \in \{-(r + 2) L, \ldots, (r + 2) L\}$ such that $n_{k + p} < n_k$.
  \end{claim}
  
  \begin{proof}
    Assume on the contrary that we have $n_{k + p} \geq n_k$ wherever $|p| \leq (r+2)L$.
    Since $y_{(m_k, n_k)} = 1$, Claim~\ref{cl:Subshift} implies that some $\ell_1 \in \{-L, \ldots, L\}$ satisfies $y_{(m_k + \ell_1, n_k - 1)} = 1$.
    Inductively we find a sequence $(\ell_q)_{q \geq 1}$ of integers $\ell_q \in \{-L, \ldots, L\}$ satisfying $y_{(m_k + K_q, n_k - q)} = 1$, where $K_q = \sum_{i = 1}^q \ell_i$.
    Note that $|K_q| \leq q L$.
    Choosing $q = r+1$, we have $y_{(m_k + K, n_k - r - 1)} = 1$ where $|K| \leq (r+1) L$.
    
    There now exists $k' \in \Z$ with $|m_k + K - m_{k'}| \leq r$ and $|n_k - r - 1 - n_{k'}| \leq r$, and the latter inequality implies $n_k > n_{k'}$.
    Since $n_{k + p} \geq n_k$ whenever $|p| \leq (r + 2) L$, we must have $|k - k'| > (r+2)L$.
    But then $|m_k - m_{k'}| > (r+2)L$, which implies $|K| > (r+2)L - r = (r+1)L$, a contradiction.
  \end{proof}
  
  Using Claim~\ref{cl:Path}, we construct a sequence $(k_i)_{i \in \N}$ by setting $k_0 = 0$ and defining $k_{i+1} = k_i + p$, where $|p| \leq (r + 2)L$ is such that $n_{k_i + p} < n_{k_i}$.
  We obtain another path $(m_{k_i}, n_{k_i})_{i \in \N}$ where both sequences $(m_{k_i})_i$ and $(n_{k_i})_i$ have bounded differences, and the latter is strictly decreasing.
  By shifting $(m_{k_i}, n_{k_i})$ to the origin and taking a limit in $Y$, we obtain a two-way infinite path $(m_{k_i}, n_{k_i})_{i \in \Z}$ with these properties.
  Then the reversed path $(m_{-k_i}, n_{-k_i})_{i \in \Z}$ is strictly increasing in the second coordinate.
  We may even assume $n_{-k_i} = i$, since the limit configuration cannot contain empty rows.
  Finally, since the set $\{ -k_i \;|\; i \in \Z \}$ is syndetic in $\Z$ by construction, there exists $s \in \N$ such that $y_{(m, i)} = 1$ implies $|m - m_{-k_i}| \leq s$.
  In a slight abuse of notation, we redefine $(m_k)_{k \in \Z} = (m_{-k_i})_{i \in \Z}$, so that $(m_k)$ has bounded differences and $y_{(m_k, k)} = 1$ for all $k \in \Z$.
  We may also assume $m_0 = 0$.
  Note that the sequence $(m_k)$ may not be monotonic anymore.
  
  Let $(x_k)_{k \in \Z} \in T$ be associated with $y$.
  Since the $1$-symbols of each row of $y$ are within a bounded distance from each other, Lemma~\ref{lem:BoundedShadow} implies that the points $x_k$ come from translates of a finite set.
  Thus there exist $n \geq 1$ and $q \in \Z$ with $x_{k + n} = q \cdot x_k$ for all $k \in \N$.
  We may also assume $m_{k + n} = m_k - q$ for all $k \in \Z$.
  If $q = 0$, then $x_0$ is an $f$-periodic point distinct from $0_X$, which contradicts the asymptotic nilpotency of $f$, so we assume $q \neq 0$.
  In the language of cellular automata, $x_0$ is now a spaceship.
  
  
  Let $\epsilon > 0$ be small, and let $0 < \delta \leq \epsilon$ be such that $d_X(z, z') < \delta$ implies $d_X(f^n(z), f^n(z')) < \epsilon$ for $z, z' \in X_{w(s(t))}$.
  By the $f$-variable homoclinic recurrence property of $s(t)$ applied to the homoclinic point $x_0$, $\delta$ and $c = \mathrm{sign}(q)$, there exist a designation $\mathcal{A} = (A_i, -a_i \cdot x_0)_{i \in I}$ and a $(\delta, \Gamma)$-realization $(z, W)$ in $X_{w(s(t))}$.
  Furthermore, $a_i$ has the same sign as $q$ for infinitely many $i \in \N$.
  The situation is visualized in Figure~\ref{fig:GlueTraj}.
  
  Consider now the designation $(-q + A_i, (q - a_i) \cdot x_0)_{i \in I}$.
  We claim that $(f^n(z), W)$ and $(q \cdot z, W)$ are both $(\epsilon, \Gamma)$-realizations for it, which implies $f^n(z) = q \cdot z$ by Lemma~\ref{lem:FuncGlue}, as long as $\epsilon$ is small enough.
  For this, let $m \in \Z$ be arbitrary.
  If $m \in -q + A_i$ for some $i \in I$, then $m + q \in A_i$, and we have $d_X((m + q) \cdot z, \gamma_i((m + q - a_i) \cdot x_0)) < \delta$.
  Since $\delta \leq \epsilon$, this directly implies that $(q \cdot z, W)$ is an $(\epsilon, \Gamma)$-realization.
  On the other hand, we have $d_X(m \cdot f^n(z), \gamma_i(m \cdot ((q - a_i) \cdot x_0))) = d_X(m \cdot f^n(z), \gamma_i((m - a_i) \cdot x_n)) = d_X(f^n(m \cdot z), f^n(\gamma_i((m - a_i) \cdot x_0))) < \epsilon$ by the definition of $\delta$.
  Thus $(f^n(z), W)$ is also an $(\epsilon, \Gamma)$-realization, and we have shown $f^n(z) = q \cdot z$.
  
  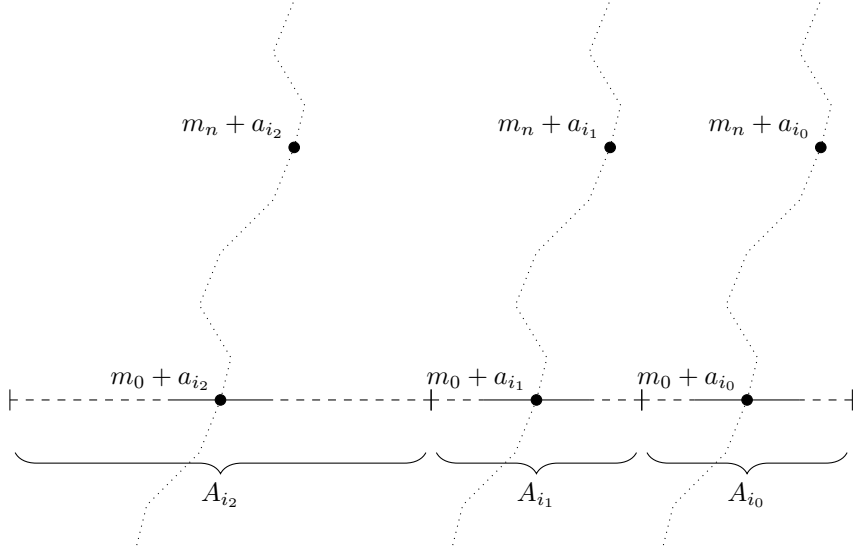
\begin{figure}[ht]
  \begin{center}
  \begin{tikzpicture}[scale=1.4]
  
    \foreach \p/\w in {-3/1.5,0/0.5,2/0.5}{
      \coordinate (coordm\p) at (\p,0);
      \draw [fill] (coordm\p) circle (0.05cm);
      \draw [fill] ($(coordm\p)+(0.7,2.4)$) circle (0.05cm);
      \draw (\p,0) -- ++(0.5,0);
      \draw (\p,0) -- ++(-0.5,0);
      \pgfmathsetmacro{\dl}{\p-\w-0.5}
      \pgfmathsetmacro{\dr}{\p+\w+0.5}
      \draw (\dl,-0.1) -- ++(0,0.2);
      \draw (\dr,-0.1) -- ++(0,0.2);
      \draw [dashed] (\dl,0) -- ++(\w,0);
      \draw [dashed] (\dr,0) -- ++(-\w,0);
      \draw [dotted] (coordm\p) -- ++(-.2,-.5) -- ++(-.5,-.5) -- ++(-.1,-.4);
      \draw [dotted] (coordm\p) -- ++(.1,.4) -- ++(-.3,.5) -- ++(.2,.5) -- ++(.5,.5) -- ++(.2,.5) -- ++(.1,.4) -- ++(-.3,.5) -- ++(.2,.5);
    }
    
    \node [above left] at (coordm-3) {$m_0 + a_{i_2}$};
    \node [above left] at (coordm0) {$m_0 + a_{i_1}$};
    \node [above left] at (coordm2) {$m_0 + a_{i_0}$};
    
    \node [above left] at ($(coordm-3)+(0.7,2.4)$) {$m_n + a_{i_2}$};
    \node [above left] at ($(coordm0)+(0.7,2.4)$) {$m_n + a_{i_1}$};
    \node [above left] at ($(coordm2)+(0.7,2.4)$) {$m_n + a_{i_0}$};
    
    \node [below] at (-3, -0.7) {$A_{i_2}$};
    \draw [decorate,decoration={brace,amplitude=8pt}] (-1.05,-0.5) -- (-4.95,-0.5);
    \node [below] at (0, -0.7) {$A_{i_1}$};
    \draw [decorate,decoration={brace,amplitude=8pt}] (0.95,-0.5) -- (-0.95,-0.5);
    \node [below] at (2, -0.7) {$A_{i_0}$};
    \draw [decorate,decoration={brace,amplitude=8pt}] (2.95,-0.5) -- (1.05,-0.5);
    
    
  
  \end{tikzpicture}
  \end{center}
  \caption{The construction of the realization $(z, W)$. The dotted lines are translates of the path $(m_k, k)_{k \in \Z}$. The solid line in $A_i$ marks the shadow of $x_0$. The indices $i_0$, $i_1$ and $i_2$ are arbitrary elements of $I$. Depending on $c$, there are guaranteed to be infinitely many coordinates $a_i$ either to the left or to the right.}
  \label{fig:GlueTraj}
  \end{figure}
  
  Take an arbitrary index $i \in I$ with $a_i$ having the same sign as $q$, and let $a_i = k q + r$, where $k \geq 0$ and $0 \leq r < |q|$.
  We have $f^{k n}(z) = k q \cdot z = (a_i - r) \cdot z$, and hence $d_X(r \cdot f^{k n}(z), 0_X) = d_X(a_i \cdot z, 0_X) \geq d_X(\gamma_i(x), 0_X) - d_X(a_i \cdot z, \gamma_i(x)) > d_X(\gamma_i(x), 0_X) - \epsilon$ since $a_i \in A_i$.
  If $\epsilon$ is small enough, then $d_X(r \cdot f^{k n}(z), 0_X)$ is bounded away from zero for all such $k$, and then so is $d_X(f^{k n}(z), 0_X)$.
  As there are infinitely many choices for $i$, we can also choose $k$ from infinitely many possibilities, and hence $f$ is not asymptotically nilpotent.
\end{proof}

We now extend this result to all groups that are virtually $\Z$.

\begin{corollary}
  \label{cor:Z}
  Let $(X, (G_t, X_t)_{t \in \Di}, (\phi^{t'}_t)_{t \leq t'}, f)$ be an ETDS whose base group is virtually $\Z$.
  If a tier $t \in \Di$ is $f$-stabilizing, $s(t)$ has the $f$-variable homoclinic recurrence property, and $f$ is asymptotically nilpotent on $X_{w(s(t))}$, then it is nilpotent on $X_t$.
\end{corollary}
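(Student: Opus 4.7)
The plan is to reduce to Lemma~\ref{lem:Z} via the finite-index $\Z$-subgroup $H \leq G_{w(s(t))}$ furnished by Definition~\ref{def:HomoRec} at tier $s(t)$. By that definition, tier $s(t)$ of the restricted ETDS $\mathcal{Y} = (X, (H, X_r)_{r \leq w(s(t))}, (\ID_H)_{r \leq r'}, f)$, in which $H$ acts on each $X_r$ via $\phi^{w(s(t))}_r|_H$ followed by the original action of $G_r$, already has the $f$-variable homoclinic recurrence property in the $\Z$-sense and with the same choice of $w(s(t))$. So if $\mathcal{Y}$ is genuinely a $\Z$-ETDS and the remaining hypotheses of Lemma~\ref{lem:Z} transfer to it, we are done.

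First I would verify that $\mathcal{Y}$ is indeed a $\Z$-ETDS. The only nontrivial point is expansivity of each $(H, X_r)$: since $H$ has finite index in $G_{w(s(t))}$ and $\phi^{w(s(t))}_r$ is surjective, the image $\phi^{w(s(t))}_r(H)$ has finite index in $G_r$, so the short finite-coset argument at the start of the proof of Lemma~\ref{lem:VirtualProp} produces an expansivity constant for the action of $H$ on $X_r$ (which factors through this image). The remaining axioms of Definition~\ref{def:Tiers} and Definition~\ref{def:TierEndo} hold immediately, since the connecting homomorphisms across tiers of $\mathcal{Y}$ are identities and the action of $H$ is inherited from the original one.

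Finally I would transfer the remaining hypotheses of Lemma~\ref{lem:Z} to $\mathcal{Y}$ and invoke it. The $f$-stability of tier $t$ with stabilizer $s(t)$ is a purely set-theoretic statement $f^k(X_t) \subseteq X_{s(t)}$ that makes no reference to the acting group, and asymptotic nilpotency of $f$ on $X_{w(s(t))}$ depends only on the map $f$ and the ambient metric; both transfer verbatim. Lemma~\ref{lem:Z} then supplies $n \in \N$ with $f^n(X_t) = \{0_X\}$, which is exactly nilpotency of $f$ on $X_t$ in the original ETDS. The only mild obstacle is conceptual rather than computational: the restriction $\phi^{w(s(t))}_r|_H$ need not be injective, so the $H$-action on lower tiers may be non-faithful; this is absorbed by the finite-index expansivity argument and does not interact with the metric definitions of nilpotency or of the homoclinic recurrence property.
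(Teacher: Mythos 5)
Your proposal is correct and follows the paper's own proof essentially verbatim: the paper also passes to the restricted ETDS $\mathcal{Y} = (X, (H, X_r)_{r \leq w(s(t))}, f)$ furnished by Definition~\ref{def:HomoRec}, observes that $f$-stability of tier $t$ transfers (being purely set-theoretic), and applies Lemma~\ref{lem:Z}. The extra details you supply — verifying expansivity of each $(H, X_r)$ via the finite-coset argument and noting that non-faithfulness of the $H$-action on lower tiers is harmless — are exactly the points the paper outsources to the remark following Definition~\ref{def:HomoRec} and to Lemma~\ref{lem:VirtualProp}.
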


\begin{proof}
  Let $\Z \simeq H \leq G_{w(s(t))}$ be a finite index subgroup such that tier $s(t)$ of the restricted ETDS $\mathcal{Y} = (X, (H, X_r)_{r \leq w(s(t))}, f)$ has the $f$-variable homoclinic recurrence property.
  Then the tier $t$ is $f$-stabilizing also in $\mathcal{Y}$.
  The claim follows from Lemma~\ref{lem:Z} applied to the system $\mathcal{Y}$.
\end{proof}

Finally, we show that systems in the class $\mathfrak{C}_1$ with a virtually $\Z$ base group have the $f$-variable homoclinic recurrence property, tying it more closely to our general framework.

\begin{lemma}
  \label{lem:HomoRecC1}
  Let $(X, (G_t, X_t)_{t \in \Di}, f) \in \mathfrak{C}_1$ be an ETDS with a virtually $\Z$ base group.
  Then each tier $t \in \Di$ has the $f$-variable homoclinic recurrence property.
\end{lemma}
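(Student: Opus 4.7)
My plan is to reduce to the weak $f$-variable $0$-gluing packaged into $\mathfrak{C}_1$ by lifting a designation living on a finite-index $\Z$-subgroup to one on the ambient group via a coset transversal. First I would fix $w(t) \geq t$ and $\Gamma \subseteq \Lambda^{w(t)}_f$ witnessing the weak $f$-variable $0$-gluing of tier $t$ in $\X$. Since $\phi^{w(t)}_{t_0}$ has finite kernel and $G_0$ is virtually $\Z$, the group $G_{w(t)}$ is itself virtually $\Z$: passing to the preimage of a finite-index $\Z \leq G_0$ yields a subgroup $H_0 \leq G_{w(t)}$ fitting in $1 \to K \to H_0 \to \Z \to 1$ with $K$ finite, and then any lift of a generator of the quotient $\Z$ generates an infinite cyclic subgroup $H$ of $H_0$ of index $|K|$. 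This supplies a finite-index $\Z \simeq H \leq G_{w(t)}$ with which to form the restricted ETDS $(X, (H, X_r)_{r \leq w(t)}, f)$. Because $\Lambda^{w(t)}_f$ is defined intrinsically from $f$ and the subspaces $X_r$, the set $\Gamma$ continues to lie in the analogous $\Lambda^{w(t)}_f$ of the restricted system, and I would use it as the corresponding witness there.

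Now fix a homoclinic $x \in X_t$, an $\epsilon > 0$, and a sign $c \in \{+1, -1\}$. Apply weak $f$-variable $0$-gluing for $\epsilon$ to obtain $\delta_1 > 0$ and a finite $E \subseteq G_{w(t)}$. Pick a transversal $L = \{l_1 = 1, l_2, \ldots, l_k\}$ of $H$ in $G_{w(t)}$, and form the finite set $\hat E = \{h \in H \mid el = l' h \text{ for some } e \in E,\, l, l' \in L\}$. Choose $\delta' > 0$ with $d_X(z, 0) < \delta' \Rightarrow d_X(l \cdot z, 0) < \delta_1$ for every $l \in L$, and use the homoclinicity of $x$ to pick a finite $S \subseteq H$ with $d_X(h \cdot x, 0) < \delta'$ for all $h \in H \setminus S$. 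Identify $H$ with $\Z$, let $R$ satisfy $S \cup \hat E \subseteq [-R, R]$, and set $a_i = 10 R c i$ and $A_i = (a_i - 5R,\, a_i + 5R]$ for $i \in \Z$. Then $(A_i)_{i \in \Z}$ partitions $\Z$, each $a_i$ lies in $A_i$, infinitely many $a_i$ carry sign $c$, and the $\hat E$-boundary of $A_i$ avoids the set $a_i + [-R, R]$. Hence the $H$-designation $\mathcal A = (A_i, -a_i \cdot x)_{i \in \Z}$ is $(\delta', \hat E)$.

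Finally I would lift $\mathcal A$ to the $G_{w(t)}$-designation $\tilde{\mathcal A} = (L \cdot A_i,\, -a_i \cdot x)_{i \in \Z}$, which partitions $G_{w(t)}$ since $L$ is a transversal. For any $g = l a \in \partial_E (L \cdot A_i)$ with $l \in L$ and $a \in A_i$, an $e \in E$ witnessing the boundary condition together with the unique decomposition $el = l' h$ (with $l' \in L$ and $h \in \hat E$) forces $h a \notin A_i$, so $a \in \partial_{\hat E} A_i$; hence $d_X((a a_i^{-1}) \cdot x, 0) < \delta'$, and multiplying on the left by $l$ gives $d_X(g \cdot (-a_i \cdot x), 0) < \delta_1$, showing that $\tilde{\mathcal A}$ is $(\delta_1, E)$. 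Weak $f$-variable $0$-gluing then provides an $(\epsilon, \Gamma)$-realization $(z, W)$ of $\tilde{\mathcal A}$ in $X_{w(t)}$, and since $1 \in L$ gives $A_i \subseteq L \cdot A_i$, the same pair $(z, W)$ is an $(\epsilon, \Gamma)$-realization of $\mathcal A$, which is exactly what the $f$-variable homoclinic recurrence property demands. The delicate step is the boundary bookkeeping: one must identify the correct finite $\hat E \subseteq H$ and calibrate $\delta'$ so that $\partial_{\hat E} A_i$-closeness at the $H$-level upgrades to $\partial_E(L \cdot A_i)$-closeness at the $G_{w(t)}$-level through multiplication by the transversal; once that is set up, the sparse arithmetic progression $a_i = 10 R c i$ in $\Z$ carries the rest essentially for free.
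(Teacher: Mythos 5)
Your proof is correct and uses the same core construction as the paper: spacing out translated copies of the homoclinic point $x$ along a finite-index $\Z$-subgroup to form the required designation. The only organizational difference is that the paper invokes Lemma~\ref{lem:VirtualProp} to transfer the weak $f$-variable $0$-gluing property to the restricted $\Z$-system, and then builds the designation entirely inside that system with the transferred $\delta$ and $E \subset \Z$, whereas you bypass that lemma and inline the equivalent lifting step (constructing the $H$-designation $\mathcal{A}$, lifting it to the $G_{w(t)}$-designation $\tilde{\mathcal{A}}$ via the transversal $L$, applying the ambient $0$-gluing, and restricting back since $1 \in L$). Both routes are valid; the paper's is slightly shorter by delegating the coset bookkeeping to an already-proved lemma, while yours is more self-contained. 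Your identification of $H \simeq \Z$ of finite index via a lift of a generator of $H_0/K$ is also correct, and is a minor variant of the paper's splitting argument for the surjection onto the free group $F$.
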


\begin{proof}
  Let $t \in \Di$ be an arbitrary tier, and let $w(t) \geq t$ be given by the weak $f$-variable $0$-gluing property.
  The base group $G_0$ has a finite index subgroup $F$ isomrphic to $\Z$.
  Let $K = (\phi^{w(t)}_{t_0})^{-1}(F)$ be the preimage of this group in $G_{w(t)}$.
  The restriction $\phi^{w(t)}_{t_0} : K \to F$ is a homomorphism onto a free group, so it splits, and we have an injection $\psi : F \to G_{w(t)}$ that is its right inverse.
  The image $\psi(F) \simeq F \simeq \Z$ has finite index in $G_{w(t)}$.
  
  Consider the restricted system $(X, (\Z, X_r)_{r \leq w(t)}, f)$, where we have identified $\psi(F)$ with $\Z$.
  By Lemma~\ref{lem:virtual}, it is an ETDS, and the tier $t$ is weakly $f$-variably $0$-gluing with the same choice of $w(t)$ as in the original system.
  We claim that tier $t$ of this system has the $f$-variable homoclinic recurrence property.
  For this, let $\Gamma \subseteq \Lambda^{w(t)}_f$ be the finite set of homeomorphisms given by the $0$-gluing property of $t$.
  Let $x \in X_t$ be a homoclinic point, and let $\epsilon > 0$ and $c \in \{1, -1\}$ be arbitrary.
  Let $\delta > 0$ and $E \subset \Z$ be given for $\epsilon$ by the $0$-gluing property of $t$.
  We may assume $E = \{-M, \ldots, M\}$ for some $M \in \N$.
  Let $N \in \N$ be such that $d_X(n \cdot x, 0_X) < \delta$ whenever $|n| > N$, and define $B = \{-M-N, \ldots, M+N\}$.
  Let $I = \Z$, and for $i \in I$, let $a_i = i (2 (M + N) + 1)$ and $A_i = a_i + B$.
  Consider the designation $\mathcal{A} = (A_i, -a_i \cdot x)_{i \in I}$.
  For $i \in I$ and $n \in \partial_E A_i$, we have $|n - a_i| > N$, and hence $d_X(n \cdot (-a_i \cdot x), 0_X) = d_X((n - a_i) \cdot x, 0_X) < \delta$.
  This means $\mathcal{A}$ has an $(\epsilon, \Gamma)$-realization.
  Furthermore, we have $a_i \in A_i$ for all $i \in I$, and infinitely many of the $a_i$ have the same sign as $c$.
  Hence tier $t$ has the $f$-variable homoclinic recurrence property in the restricted system, and by definition also in the original system.
\end{proof}

The nil-rigidity of all virtually $\Z$ groups on $\mathfrak{C}_1$ now directly follows.

\begin{theorem}
  \label{thm:Z}
  Every virtually $\Z$ group is nil-rigid on $\mathfrak{C}_1$.
\end{theorem}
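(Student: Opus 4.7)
The plan is to observe that this theorem is essentially a repackaging of the two preceding results, so the proof amounts to checking that every hypothesis of Corollary~\ref{cor:Z} is automatically present in the setting of Theorem~\ref{thm:Z}. Let $G$ be a virtually $\Z$ group, let $\X = (X, (G_t, X_t)_{t \in \Di}, (\phi^{t'}_t)_{t \leq t'})$ be an arbitrary TDS with base group $G_0 = G$, and let $f$ be an asymptotically nilpotent evolution of $\X$ such that $(\X, f) \in \mathfrak{C}_1$. By Definition~\ref{def:NRtiergroup}, it suffices to show that $f$ is nilpotent on each tier $X_t$.

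Fix $t \in \Di$. Unpacking the definition of $\mathfrak{C}_1$, the system $\X$ is $f$-stabilizing, so in particular $t$ is $f$-stabilizing and the tier $s(t)$ exists; moreover every tier of $\X$ is weakly $f$-variably $0$-gluing. Since $(\X, f) \in \mathfrak{C}_1$ and the base group is virtually $\Z$, Lemma~\ref{lem:HomoRecC1} applies to the tier $s(t)$ and yields that $s(t)$ has the $f$-variable homoclinic recurrence property, together with the corresponding tier $w(s(t))$. Finally, asymptotic nilpotency of $f$ on the ambient space $X$ trivially restricts to asymptotic nilpotency on the subset $X_{w(s(t))} \subseteq X$.

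At this point all three hypotheses of Corollary~\ref{cor:Z} are in place, so the corollary directly produces the conclusion that $f$ is nilpotent on $X_t$. Since $t \in \Di$ was arbitrary, $\X$ is nil-rigid on $\mathfrak{C}_1$, and since $\X$ was an arbitrary TDS with base group $G$, the group $G$ is nil-rigid on $\mathfrak{C}_1$.

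There is really no obstacle at this stage of the argument, because the conceptual work has already been carried out upstream: the one-dimensional spaceship extraction via Lemma~\ref{lem:Sparse} and the gluing construction in the proof of Lemma~\ref{lem:Z} supply the core $\Z$-case; Corollary~\ref{cor:Z} lifts it to virtually $\Z$ base groups using the splitting of $\phi^{w(s(t))}_{t_0}$ restricted to a copy of $\Z$; and Lemma~\ref{lem:HomoRecC1} extracts the homoclinic recurrence data from weak $f$-variable $0$-gluing. Theorem~\ref{thm:Z} is the clean statement one obtains once these pieces are aligned; the only thing to verify, as above, is that every element of the definition of $\mathfrak{C}_1$ feeds the right hypothesis into the correct lemma.
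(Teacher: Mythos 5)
Your proof is correct and is exactly the paper's intended argument: the paper states that Theorem~\ref{thm:Z} ``now directly follows'' from Lemma~\ref{lem:HomoRecC1} and Corollary~\ref{cor:Z}, and your write-up simply spells out how the hypotheses of $\mathfrak{C}_1$ feed into those two results. No issues.
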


Note that virtually $\Z$ groups are also nil-rigid on $\mathfrak{C}_2$, $\mathfrak{C}_3$ and $\mathfrak{C}_4$, as these classes are contained in $\mathfrak{C}_1$.
For example, this result holds for the infinite dihedral group, and all groups of the form $\Z \times F$, where $F$ is finite.

\section{Periodic Points and Mortality}
\label{sec:Periods}

This section contains two key ideas: nested gluing (Lemma~\ref{lem:Towers} and Lemma~\ref{lem:Mortal}) and mortalization by periodization (Lemma~\ref{lem:Periodization} and Corollary~\ref{cor:PeriodicC2}).
The former technique was used in \cite{GuRi08} to prove that if $f$ is an asymptotically nilpotent cellular automaton on a $\Z$-full shift, then $f$ sends all mortal homoclinic points to $0_X$ in a bounded number of steps.
The idea is to glue together infinitely many mortal configurations, each of which wanders away from $0_X$ at some point in its $f$-orbit.
The resulting configuration wanders away from $0_X$ infinitely often, contradicting the asymptotic nilpotency of $f$.
The latter technique was used in \cite{Sa12c} in the context of $\Z^d$-full shifts to prove that under the iteration of an asymptotically nilpotent cellular automaton $f$, the shadow of a point can only spread a finite distance in the direction of a basis vector of $\Z^d$.
The idea is that a configuration that is periodic along all basis vectors except one is essentially a configuration of a $\Z$-full shift on a larger alphabet, and $f$ simulates a one-dimensional cellular automaton on such configurations.
Since asymptotically nilpotent $\Z$-cellular automata are nilpotent by the results of \cite{GuRi08}, such configurations are mortal under $f$.
The technique of nested gluing can be modified to prove the desired result.

We begin by formalizing the idea of periodization in the setting of tiered dynamical systems.
Let $K \trianglelefteq G$ be groups.
We study the periodic points of a TDS $\X = (X, (G_t, X_t)_{t \in \Di}, (\phi^{t'}_t)_{t \leq t'})$ with $G_0 = G$.
Our aim is to construct a new tiered system consisting of the $K$-periodic points of the original system, and show that it retains the relevant properties of $\X$.
In particular, we show that if $G$ is residually finite and $\X \in \mathfrak{C}_3$, then the system of periodic points is in $\mathfrak{C}_2$.

For $t \in \Di$, denote $K_t = (\phi^t_{t_0})^{-1}(K)$, and let $\VVDi_t$ be the set of finite-index normal subgroups of $K_t$ that are normal also in $G_t$, ordered by reverse inclusion, so that the bottom element is $K_t$. Denote $\VVDi = \{ (t, F) \;|\; t \in \Di, F \in \VVDi_t \}$, ordered by the relation defined by $(t, F) \leq (t', F')$ iff $t \leq t'$ and $\phi^{t'}_t(F') \subseteq F$.
Then $\VVDi$ is a directed set with bottom element $(t_0, K)$.
For $(t, F) \leq (t', F') \in \VVDi$, define the finite-to-one homomorphism $\phi^{(t', F')}_{(t, F)} : G_{t'} / F' \to G_t / F$ by $g F' \mapsto \phi^{t'}_t(g) F$.
Then the triple $\mathcal{Y} = (X, (G_t / F, \fix{F}{X_t})_{(t, F) \in \VVDi}, (\phi^{(t', F')}_{(t, F)}))$ is a TDS with bottom tier $(G / K, \fix{K}{X_0})$.
Namely, if $(t, F) \leq (t', F') \in \VVDi$, then $\phi^{t'}_t(F') \subseteq F$, so that $\fix{F}{X_t} \subseteq \fix{\phi^{t'}_t(F')}{X_t} \subseteq \fix{F'}{X_{t'}}$.
Each tier $(K_t, \fix{F}{X_t})$ has the same expansivity constant as $(G_t, X_t)$.
The other conditions of Definition~\ref{def:Tiers} are easy to verify.

\begin{definition}
Let $\X = (X, (G_t, X_t)_{t \in \Di}, (\phi^{t'}_t)_{t \leq t'})$ be a TDS and $K \trianglelefteq G_0$ a normal subgroup. Then the TDS
\[ \mathcal{Y} = (X, (G_t / F, \fix{F}{X_t})_{(t, F) \in \VVDi}, (\phi^{(t', F')}_{(t, F)})) \]
defined in the two paragraphs above is denoted by $\fix{K}{\X}$.
\end{definition}

We now prove that the system of $K$-periodic points respects evolution maps and the sets of stabilizing and weakly $0$-gluing tiers.
For this, we define the notion of a shadow projected onto a quotient group.
Recall the definition of the $t$-shadow function $\Theta_t$ from Definition~\ref{def:Shadows}.


\begin{definition}
\label{def:KShadow}
Let $\X = (X, (G_t, X_t)_{t \in \Di}, (\phi^{t'}_t)_{t \leq t'})$ be a TDS, and let $K \trianglelefteq G$ be a normal subgroup.
Let $\pi_K : G \to G/K$ be the natural projection.
Denote $\psi^K_t = \pi_K \circ \phi^t_{t_0} : G_t \to G/K$.
For $x \in X_t$, define $\Theta^K_t(x) = \psi^K_t(\Theta_t(x)) \subseteq G/K$.
This set is called the \emph{$(K,t)$-shadow} of $x$.
\end{definition}

These functions are clarified in Figure~\ref{fig:diagram}.

\begin{figure}[htp]
  
  \begin{center}
    \begin{tikzpicture}
      
      \matrix (m) [matrix of math nodes,
                   row sep=3em,
                   column sep=4em,
                   minimum width=2em,
                   nodes={anchor=center}] {

        X_t & G_t & \\
        X_0 & G   & G/K \\
      };
      
      \path [->]
      (m-1-1) edge [double] node [above] {$\Theta_t$} (m-1-2)
              edge [double,bend left=65] node [above] {$\Theta^K_t$} (m-2-3)
      (m-1-2) edge node [left] {$\phi^t_{t_0}$} (m-2-2)
              edge node [above right] {$\psi^K_t$} (m-2-3)
      (m-2-1) edge [double] node [below] {$\Theta_{t_0}$} (m-2-2)
      (m-2-2) edge node [below] {$\pi_K = \psi^K_{t_0}$} (m-2-3)
      ;
    
    \end{tikzpicture}
  \end{center}
  
  \caption{The functions defined in Definition~\ref{def:KShadow}. Double arrows denote functions to subsets of the target.}
  \label{fig:diagram}
  
\end{figure}
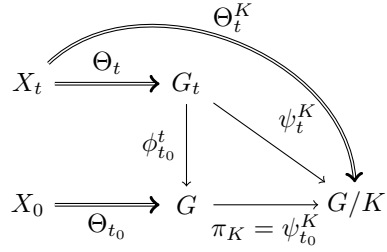

\begin{lemma}
\label{lem:PeriodicProps}
Let $\X = (X, (G_t, X_t)_{t \in \Di})$ be a TDS and $f : X \to X$ its evolution map, and let $K \trianglelefteq G_0$.
Then $f$ is also an evolution map of $\fix{K}{\X}$.
If a tier $t \in \Di$ is $f$-stabilizing or weakly $0$-gluing, then $(t, F)$ has the same property in the system $\fix{K}{\X}$ for each $F \in \VVDi_t$ with $s(t, F) = (s(t), (\phi^{s(t)}_t)^{-1}(F))$, and similarly for $w$.
\end{lemma}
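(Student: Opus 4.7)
The three claims will be verified in turn, all relying on a preliminary observation: preimage along the surjective homomorphism $\phi^{t'}_t$ takes $\VVDi_t$ into $\VVDi_{t'}$ (it preserves finite index, normality in $K_{t'}$, and normality in $G_{t'}$), and any $x \in \fix{F}{X_t}$ is automatically fixed by $F' := (\phi^{t'}_t)^{-1}(F)$ when regarded as an element of $X_{t'}$, because the $G_{t'}$-action on $X_t$ factors through $\phi^{t'}_t$. With this in hand, that $f$ is an evolution of $\fix{K}{\X}$ is essentially formal: the bonding maps $\phi^{(t',F')}_{(t,F)}$ are defined as quotients of the $\phi^{t'}_t$, so the required equivariance $f(\phi^{(t',F')}_{(t,F)}(gF') \cdot x) = gF' \cdot f(x)$ (when $f(x)$ happens to lie in $\fix{F'}{X_{t'}}$) descends directly from the evolution property of $f$ in $\X$.

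For the $f$-stabilization claim I set $s(t,F) := (s(t), (\phi^{s(t)}_t)^{-1}(F))$ and, given $x \in \fix{F}{X_t}$, use $f$-stabilization of $\X$ to locate the orbit $(f^k(x))_{k \geq 0}$ inside $X_{s(t)}$; then induction on $k$ shows $f^k(x) \in \fix{F'}{X_{s(t)}}$, where $F' = (\phi^{s(t)}_t)^{-1}(F)$. The base case is the preliminary observation, and for the inductive step, applying the evolution property of $f$ at the pair $s(t) \leq s(t)$ to any $h \in F'$ gives $h \cdot f^{k+1}(x) = f(h \cdot f^k(x)) = f(f^k(x)) = f^{k+1}(x)$, using $f^k(x), f^{k+1}(x) \in X_{s(t)}$ and the inductive hypothesis in the middle equality.

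For the weak $0$-gluing claim I set $w(t,F) := (w(t), F')$ with $F' = (\phi^{w(t)}_t)^{-1}(F)$, let $\pi : G_{w(t)} \to G_{w(t)}/F'$ denote the projection, and, given $\epsilon > 0$, extract $\delta > 0$ and finite $E \subseteq G_{w(t)}$ from weak $0$-gluing of $t$ in $\X$ at scale $\epsilon$ (first shrinking $\epsilon$ to accommodate Lemma~\ref{lem:UniqGlue}). Setting $\bar E := \pi(E)$, a $(\delta, \bar E)$-designation $(\bar A_i, x_i)_{i \in I}$ on $\fix{K}{\X}$ at tier $(w(t), F')$ with $x_i \in \fix{F}{X_t}$ (hence in $\fix{F'}{X_{w(t)}}$ by the preliminary observation) lifts to $(A_i, x_i)_{i \in I}$ on $(G_{w(t)}, X_{w(t)})$ via $A_i := \pi^{-1}(\bar A_i)$. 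Because $\pi(E) = \bar E$, the border inclusion $\partial_E A_i \subseteq \pi^{-1}(\partial_{\bar E} \bar A_i)$ holds, and $F'$-fixedness of $x_i$ yields $g \cdot x_i = \pi(g) \cdot x_i$, so the lift is a $(\delta, E)$-designation and therefore has an $\epsilon$-realization $y \in X_{w(t)}$ by weak $0$-gluing in $\X$.

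The hard part, as I anticipate it, is verifying that $y$ itself lies in $\fix{F'}{X_{w(t)}}$, so that it legitimately realizes the original designation in $\fix{K}{\X}$. My plan is a uniqueness argument: for each $h \in F'$, normality of $F'$ in $G_{w(t)}$ gives $gh \in A_i$ whenever $g \in A_i$, and then $d_X(g \cdot (h \cdot y), g \cdot x_i) = d_X(gh \cdot y, gh \cdot x_i) < \epsilon$ using the realization inequality at $gh$ together with $h \cdot x_i = x_i$. Thus $h \cdot y$ is also an $\epsilon$-realization of the lifted designation, and Lemma~\ref{lem:UniqGlue} (applied with $\Gamma = \{\mathrm{id}\}$) forces $h \cdot y = y$. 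Reinterpreting $y$ in $\fix{F'}{X_{w(t)}}$ with the quotient action then exhibits it as an $\epsilon$-realization of $(\bar A_i, x_i)_{i \in I}$ in $\fix{K}{\X}$, completing the proof.
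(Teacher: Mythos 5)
Your proposal is correct and follows essentially the same route as the paper's own proof: the same choices $s(t,F) = (s(t), (\phi^{s(t)}_t)^{-1}(F))$ and $w(t,F) = (w(t), (\phi^{w(t)}_t)^{-1}(F))$, the same observation that a $(\delta,\bar E)$-designation on the quotient tier with $\bar E = \pi(E)$ lifts to a $(\delta, E)$-designation on $(G_{w(t)}, X_{w(t)})$ via taking unions of cosets, and the same border-inclusion computation. The one stylistic variation is in verifying $y \in \fix{F'}{X_{w(t)}}$: you invoke Lemma~\ref{lem:UniqGlue} by exhibiting $h \cdot y$ as a second $\epsilon$-realization of the lifted designation, whereas the paper writes out the expansivity triangle-inequality estimate $d_X(gh \cdot y, g \cdot y) \leq d_X(gh \cdot y, gh \cdot x_i) + d_X(g \cdot x_i, g \cdot y) < \Delta$ directly — but Lemma~\ref{lem:UniqGlue} is itself that same expansivity estimate in disguise, so the two are interchangeable (and your remark about shrinking $\epsilon$ to meet its hypothesis matches the paper's choice $\epsilon < \Delta/2$). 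Your explicit induction for $f$-stabilization spells out what the paper compresses into one sentence.
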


\begin{proof}
Let $(t, F) \leq (t', F') \in \VVDi$, and suppose a point $x \in \fix{F}{X_t}$ satisfies $f(x) \in \fix{F'}{X_{t'}}$.
For each $g \in F'$ we have $\phi^{t'}_t(g) \in F$, which implies $g \cdot f(x) = f(\phi^{t'}_t(g) \cdot x) = f(x)$, and then $f(x) \in \fix{F'}{X_{t'}}$.
Hence $f$ is an evolution of $\fix{K}{\X}$.
If $t$ is $f$-stabilizing, we can choose $t' = s(t)$ to verify the claim about $s(t, F)$.

We then assume $t$ is weakly $0$-gluing, and prove that $(t, F)$ has the same property.
Let $\Delta > 0$ be an expansivity constant for $(G_{w(t)}, X_{w(t)})$.
Let $0 < \epsilon < \Delta / 2$ be arbitrary, and let $\delta > 0$ and $E \subseteq G_{w(t)}$ be given by the weak $0$-gluing property of $\X$ for $t$ and $\epsilon$.
Denote $F' = (\phi^{w(t)}_t)^{-1}(F)$.
Then $E' = \{e F' \;|\; e \in E\} \subset G_{w(t)} / F'$ is a finite set.
The tier of $\VVDi$ in which we will construct realizations is $w(t, F) = (w(t), F')$.
Let $\mathcal{A} = (A_i, x_i)_{i \in I}$ be a $(\delta, E')$-designation in $(G_{w(t)} / F', \fix{F'}{X_{w(t)}})$ with $x_i \in \fix{F}{X_t}$ for all $i \in I$, so that $(A_i)_{i \in I}$ forms a partition of $G_{w(t)} / F'$.

Denote $B_i = \bigcup A_i \subset G_{w(t)}$.
Then $(B_i)_{i \in I}$ forms a partition of $G_{w(t)}$.
Let $i \in I$ and $g \in \partial_E B_i$.
Then $g F' \in A_i$, and $e g \notin B_i$ for some $e \in E$, which implies $e g F' = (e F')(g F') \notin A_i$, and since $e F' \in E'$, we have $g F' \in \partial_{E'} A_i$.
Then $d_X(g \cdot x_i, 0) = d_X(g F' \cdot x_i) < \delta$.
Thus $(B_i, x_i)_{i \in I}$ is a $(\delta, E)$-designation in $(G_{w(t)}, X_{w(t)})$, and since $x_i \in X_t$ for all $i \in I$, it has an $\epsilon$-realization $y \in X_{w(t)}$.
We claim that $y \in \fix{F'}{X_{w(t)}}$, and for that, let $h \in F'$ and $g \in G_{w(t)}$ be arbitrary, and let $i \in I$ be such that $g h \in B_i$.
Then $d_X(g h \cdot y, g \cdot y) \leq d_X(g h \cdot y, g h \cdot x_i) + d_X(g \cdot x_i, g \cdot y) < \Delta$, and since $g$ was arbitrary, we have $h \cdot y = y$.
We have shown that $\fix{K}{\X}$ is weakly $0$-gluing.
\end{proof}

\begin{lemma}
\label{lem:Periodization}
Let $\X = (X, (G_t, X_t)_{t \in \Di})$ be a TDS, and let $K \trianglelefteq G_0$.
Suppose a tier $t \in \Di$ is weakly $0$-gluing and $G_{w(t)}$ is residually finite.
There exists a finite set $N \subseteq G / K$ such that for each homoclinic point $y \in X_t$ and $\epsilon > 0$, there exists $F \in \VVDi_{w(t)}$ and a point $z \in \fix{F}{X_{w(t)}}$ that satisfies $d_X(y, z) < \epsilon$ and $\Theta^K_{w(t)}(z) \subseteq N \cdot \Theta^K_{w(t)}(y)$.
\end{lemma}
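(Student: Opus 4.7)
The plan is to construct $z$ as an $F$-periodic realization of a designation $\mathcal{A}$ built from shifted copies of $y$ on a partition of $G_{w(t)}$ into translates of a fundamental domain for $F$, for a carefully chosen $F \in \VVDi_{w(t)}$ obtained via residual finiteness. The $F$-periodicity will be forced by an $F$-equivariance of $\mathcal{A}$ together with Lemma~\ref{lem:UniqGlue}, and the shadow bound will come from Lemma~\ref{lem:Nhood}.

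First I would fix $N$ once and for all: applying Lemma~\ref{lem:Nhood} to the identity map on $X_{w(t)}$ with source tolerance $\Delta_{w(t)}$ and target tolerance $\Delta_{w(t)}/2$ yields a finite set $N_0 \subseteq G_{w(t)}$ satisfying $\Theta(\Delta_{w(t)}/2, x) \subseteq N_0^{-1}\cdot \Theta_{w(t)}(x)$ for every $x \in X_{w(t)}$. Define $N = \pi_K \phi^{w(t)}_{t_0}(N_0^{-1}) \subseteq G/K$; this depends only on $\X$, $K$, and $t$.

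Given homoclinic $y \in X_t$ and $\epsilon > 0$, set $\epsilon' = \min(\epsilon, \Delta_{w(t)}/2)$ and let $\delta', E'$ be the weak $0$-gluing constants at tier $t$ for $\epsilon'$. By residual finiteness of $G_{w(t)}$, pick $F \in \VVDi_{w(t)}$ together with a fundamental domain $D \ni 1$ for $F$ in $G_{w(t)}$ such that (i) $\Theta_{w(t)}(\delta', y) \cap \partial_{E'}(D) = \emptyset$, and (ii) for every $f \in F \setminus \{1\}$ and every $d \in \partial_{f^{-1} E' f}(D)$, the element $fdf$ does not lie in $\Theta_{w(t)}(\delta', y)$. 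The designation $\mathcal{A} = (fD, f \cdot y)_{f \in F}$ partitions $G_{w(t)}$, has all its points in $X_t$, and the identity $\partial_{E'}(fD) = f \cdot \partial_{f^{-1} E' f}(D)$ together with (i) and (ii) verify that it is a $(\delta', E')$-designation, since $g = fd \in \partial_{E'}(fD)$ gives $g \cdot (f \cdot y) = (fdf) \cdot y$. Weak $0$-gluing then produces an $\epsilon'$-realization $z \in X_{w(t)}$.

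The $F$-periodicity of $z$ follows from a crucial symmetry: for each $h \in F$ the translated designation $h \cdot \mathcal{A} = (hfD, hf \cdot y)_{f \in F}$ coincides with $\mathcal{A}$ after the reindexing $f \mapsto hf$ of $F$, so $h \cdot z$ is also an $\epsilon'$-realization of $\mathcal{A}$, and Lemma~\ref{lem:UniqGlue} (valid once $\epsilon'$ is below a threshold depending only on $\Delta_{w(t)}$) forces $h \cdot z = z$. Taking $g = 1 \in D$ and $f = 1$ in the realization inequality gives $d_X(z, y) < \epsilon' \leq \epsilon$. For the shadow bound, if $g \in \Theta_{w(t)}(z) \cap fD$, the realization inequality yields $d_X((gf) \cdot y, 0) > \Delta_{w(t)} - \epsilon' \geq \Delta_{w(t)}/2$, so $gf \in N_0^{-1} \Theta_{w(t)}(y)$ and $g \in N_0^{-1} \Theta_{w(t)}(y) \cdot f^{-1}$; since $f \in F \subseteq K_{w(t)}$, applying $\psi^K_{w(t)}$ annihilates the $f^{-1}$ factor and delivers $\Theta^K_{w(t)}(z) \subseteq N \cdot \Theta^K_{w(t)}(y)$.

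The main obstacle will be establishing (ii) for a general residually finite $G_{w(t)}$, especially when $K$ has infinite index in $G$ so that $F$ has infinite index in $G_{w(t)}$ and $D$ is infinite: then $\partial_{f^{-1}E'f}(D)$ can be infinite and the equation $fdf = t$ may have many solutions in a general non-abelian group. The argument will have to exploit the finiteness of $\Theta_{w(t)}(\delta', y)$ and choose $D$ and $F$ in tandem (shrinking $F$ via residual finiteness once $D$ is fixed) so that the set of ``bad'' $f \in G_{w(t)}$ to avoid is in fact finite.
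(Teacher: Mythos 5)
The proposal captures the overall strategy — periodize $y$ by gluing $F$-indexed copies of it, with $F$ found by residual finiteness — but two parts of the execution are wrong or incomplete, and both are places where the paper's actual proof does something materially different.

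First, the $F$-periodicity argument is incorrect as stated. If $z$ is an $\epsilon'$-realization of $(A_i,x_i)_{i\in I}$, then for $g\in A_ih^{-1}$ one has $gh\in A_i$, so $d_X(g\cdot(h\cdot z),\, g\cdot(h\cdot x_i)) = d_X(gh\cdot z,\, gh\cdot x_i)<\epsilon'$; that is, $h\cdot z$ is a realization of $(A_ih^{-1},\, h\cdot x_i)_{i\in I}$, \emph{not} of $(hA_i,\, h\cdot x_i)_{i\in I}$. Your designation uses left translates $A_f=fD$, so $h\cdot z$ realizes $(fDh^{-1},\,hf\cdot y)_f$, which does not reindex to $\mathcal{A}=(fD,\,f\cdot y)_f$ unless $Dh^{-1}=hD$ for all $h\in F$. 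To make the symmetry/uniqueness argument go through you would need regions of the form $A_f=Bf^{-1}$ (right translation by $f^{-1}$), so that $A_fh^{-1}=A_{hf}$; this is precisely the shape $A_k=EE'k^{-1}$ used in the paper. (The paper in fact does not invoke Lemma~\ref{lem:UniqGlue} at all here: it proves $k\cdot z=z$ directly by expansivity, checking $d_X(gk\cdot z,\,g\cdot z)\le\Delta$ for each $g$, which sidesteps the bookkeeping above — but either route requires the right-translate convention.)

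Second, conditions (i) and (ii) are the real work, and you correctly flag in your last paragraph that they are not established. The difficulty is genuine and not resolved by the outline. The paper avoids it entirely by \emph{not} using a full partition of $G_{w(t)}$: since $y$ is homoclinic, its shadow beyond $\delta$ is contained in a finite set $E'$, so only the finitely many coordinates in $EE'$ matter. The designation places copies of $y$ only on the small disjoint sets $A_k=EE'k^{-1}$ and assigns $0_X$ to the (infinite) complement, which automatically satisfies the border condition since $g\cdot 0_X=0_X$. Disjointness of the $A_k$ is then a single condition on the \emph{fixed finite} set $M=(E')^{-1}E^{-1}EE'$, namely $F\cap M\subseteq\{1\}$, which residual finiteness of $G_{w(t)}$ gives immediately. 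In contrast, your choice of a fundamental domain $D$ for the (generally infinite-index) subgroup $F\le G_{w(t)}$ forces $D$ to be infinite, and the boundary sets $\partial_{f^{-1}E'f}(D)$ can be infinite, while the conditions (i)--(ii) range over all $f\in F\setminus\{1\}$. Making the ``bad set of $f$'' finite is not a routine step and is, in effect, the content of the paper's much simpler choice of regions. Your definition of $N$ via Lemma~\ref{lem:Nhood} and the resulting shadow bound are essentially the same as the paper's and are correct; but the two points above are genuine gaps, not cosmetic ones.
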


In particular, the last condition implies that $z$ is homoclinic in the system $(G_{w(t)} / F, \fix{F}{X_{w(t)}})$.

\begin{proof}
The idea is to use the residual finiteness of $G_{w(t)}$ and the weak $0$-gluing property of $t$ to glue infinitely many shifted copies of $y$ into a single point that is $F$-periodic for some sparse enough finite-index subgroup $F \trianglelefteq K_{w(t)}$.
This argument is visualized in the right half of Figure~\ref{fig:PeriodDenseFinites}.

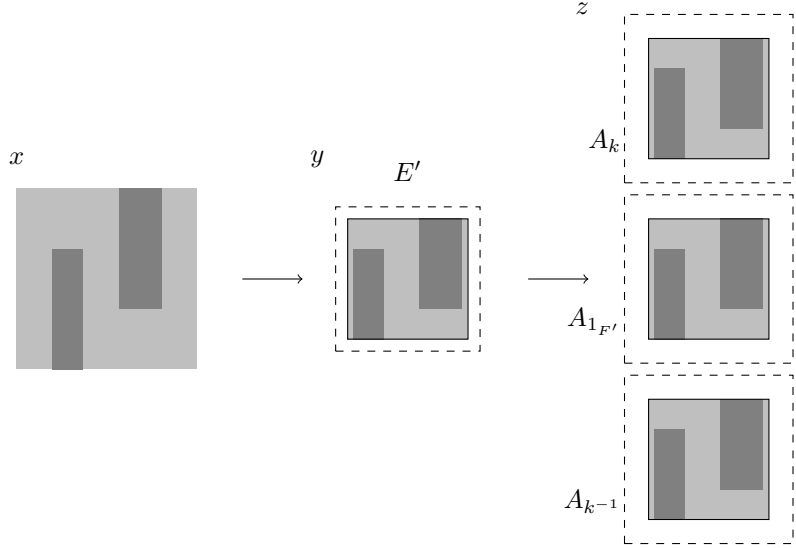
\begin{figure}

  \begin{center}
    \begin{tikzpicture}[scale=0.8]

      \coordinate (xc) at (0,0);
      \coordinate (yc) at (5,0);
      \coordinate (zc) at (10,0);
      
      \fill [black!25] ($(xc)-(1.5,1.5)$) rectangle ++(3,3);
      \fill [black!50] ($(xc)-(0.9,1.5)$) rectangle ++(0.5,2);
      \fill [black!50] ($(xc)-(-0.2,0.5)$) rectangle ++(0.7,2);

      \node at ($(xc)+(-1.5,2)$) {$x$};

      \path [->] (2.25,0) edge (3.25,0);

      \fill [black!25] ($(yc)-(1,1)$) rectangle ++(2,2);
      \fill [black!50] ($(yc)-(0.9,1)$) rectangle ++(0.5,1.5);
      \fill [black!50] ($(yc)-(-0.2,0.5)$) rectangle ++(0.7,1.5);
      \draw ($(yc)-(1,1)$) rectangle ++(2,2);
      \draw [dashed] ($(yc)-(1.2,1.2)$) rectangle ++(2.4,2.4);

      \node at ($(yc)+(-1.5,2)$) {$y$};
      \node at ($(yc)+(0,1.8)$) {$E'$};

      \path [->] (7,0) edge (8,0);

      \foreach \y in {-3,0,3}{
        \fill [black!25] ($(zc)-(1,1+\y)$) rectangle ++(2,2);
        \fill [black!50] ($(zc)-(0.9,1+\y)$) rectangle ++(0.5,1.5);
        \fill [black!50] ($(zc)-(-0.2,0.5+\y)$) rectangle ++(0.7,1.5);
        \draw ($(zc)-(1,1+\y)$) rectangle ++(2,2);
        \draw [dashed] ($(zc)-(1.4,1.4+\y)$) rectangle ++(2.8,2.8);
      }

      \node at ($(zc)+(-2.1,4.5)$) {$z$};
      \node [left] at ($(zc)+(-1.3,-3.7)$) {$A_{k^{-1}}$};
      \node [left] at ($(zc)+(-1.3,-0.7)$) {$A_{1_{F'}}$};
      \node [left] at ($(zc)+(-1.3,2.3)$) {$A_k$};

    \end{tikzpicture}
  \end{center}

  \caption{Visualization of the construction of homoclinic points in $\fix{K}{\X}$. The vertical axis represents $F$, and the horizontal axis represents $G_t / F$.}
  \label{fig:PeriodDenseFinites}
\end{figure}

Let $\Delta > 0$ be the canonical expansivity constant for $(G_{w(t)}, X_{w(t)})$, and let $N' \subseteq G_{t(w)}$ be finite such that for $x \in X_{w(t)}$, the condition $d_X(b^{-1} \cdot x, 0) \leq \frac{3}{2} \Delta$ for all $b \in N'$ implies $d_X(x, 0) \leq \Delta$, which is possible since $\frac{3}{2} \Delta$ is also an expansivity constant for $(G_{w(t)}, X_{w(t)})$.
We define $N = \psi^K_{w(t)}(N')$.
We may also assume $\epsilon < \Delta / 2$.
Let $0 < \delta < \epsilon$ and $E \subseteq G_{w(t)}$ be given by the weak $0$-gluing property of $\X$ for $t$ and $\epsilon$.
Let $E' \subseteq G_{w(t)}$ be a finite set with $d_X(g \cdot y, 0) < \delta$ whenever $g \in G_{w(t)} \setminus E'$.
We may assume $1_{G_{w(t)}} \in E \subseteq E'$.

Denote $M = (E')^{-1} E^{-1} E E'$, which is a finite set.
Let $F' \trianglelefteq G_{w(t)}$ be a finite-index subgroup such that $F' \cap M \subseteq \{1_{G_{w(t)}}\}$.
Such an $F'$ exists, since $G_{w(t)}$ is residually finite.
Define $F = F' \cap K_{w(t)} \in \VVDi_{w(t)}$.
For $k \in F$, let $A_k = E E' k^{-1}$.
We define $\mathcal{A} = (A_k, k \cdot y)_{k \in F}$, and claim that it is a $(\delta, E)$-designation in $(G_{w(t)}, X_{w(t)})$.
First, if $k, k' \in F$ are such that $A_k \cap A_{k'} \neq \emptyset$, then $k^{-1} k' \in M$, which implies $k' = k$ by our choice of $F'$.
Thus the sets $A_k$ are disjoint.
Let then $k \in F$ and $g \in \partial_E A_k$.
Then $e g \notin A_k = E E' k^{-1}$ for some $e \in E$, so in particular $g k \notin E'$.
This implies $d_X(g k \cdot y, 0) < \delta$ by the definition of $E'$.

Since $\mathcal{A}$ is a $(\delta, E)$-designation whose points belong to $X_t$, it has an $\epsilon$-realization $z \in X_{w(t)}$.
For all $e \in E E'$ and $k \in F$, we have $e k \in A_{k^{-1}}$, which implies
\begin{equation}
  \label{eq:EEk}
  d_X(e k \cdot z, e \cdot y) < \epsilon
\end{equation}
We now claim that $z \in \fix{F}{X_{w(t)}}$, and for that, let $k \in F$ and $g \in G_{w(t)}$ be arbitrary.
If $g \in A_h$ for some $h \in F$, then $g = e h^{-1}$ for some $e \in E E'$, and we have $d_X(e h^{-1} k \cdot z, e h^{-1} \cdot z) \leq d_X(e h^{-1} k \cdot z, e \cdot y) + d_X(e \cdot y, e h^{-1} \cdot z) < \Delta$ by~\eqref{eq:EEk} and the fact that $\epsilon < \Delta / 2$.
On the other hand, if $g \notin A_h$ for all $h \in F'$, then $g k \notin A_h$ for all $h \in F$, and we have $d_X(g k \cdot z, g \cdot z) \leq d_X(g k \cdot z, 0) + d_X(0, g \cdot z) < \Delta$.
Since $g \in G_{w(t)}$ was arbitrary, we have $k \cdot z = z$ by the expansivity of $(G_{w(t)}, X_{w(t)})$, and thus $z \in \fix{F}{X_{w(t)}}$.


Next, we prove $\Theta^K_{w(t)}(z) \subseteq N \cdot \Theta^K_{w(t)}(y)$, with the definition $N = \psi^K_{w(t)}(N')$.
For that, let $g \in G_{w(t)}$ be such that $g K \notin N \cdot \Theta^K_{w(t)}(y)$, which is equivalent to $d_X(b^{-1} g h \cdot y, 0) \leq \Delta$ for all $b \in N'$ and $h \in K_{w(t)}$.
We claim that this implies $d_X(b^{-1} g h \cdot z, 0) \leq \frac{3}{2} \Delta$ for all $b \in N'$ and $h \in K_{w(t)}$.
Namely, if $g h \notin A_k$ for all $k \in F$, then $d_X(b^{-1} g h \cdot z, 0) < \epsilon < \Delta/2$, since $z$ is an $\epsilon$-realization of $\mathcal{A}$.
On the other hand, if $b^{-1} g h \in A_k$ for some $k \in F$, then $b^{-1} g h = e k^{-1}$ for some $e \in E E'$.
Then $b^{-1} g h k = e \in A_1$, and since $h k \in K_{w(t)}$, we have $d_X(b^{-1} g h \cdot z, 0) \leq d_X(b^{-1} g h \cdot z, b^{-1} g h k \cdot y) + d_X(b^{-1} g h k \cdot y, 0) < \Delta + \epsilon < \frac{3}{2} \Delta$.
By the definition of $N'$, we now obtain $d_X(g h \cdot z, 0) \leq \Delta$ for all $h \in K_{w(t)}$, which implies $g K \notin \Theta^K_{w(t)}(z)$.
Thus we have shown $\Theta^K_{w(t)}(z) \subseteq N \cdot \Theta^K_{w(t)}(y)$.

Finally, we note that $z$ is $\epsilon$-close to $y$: since $1_{G_{t(e)}} \in E E'$, we have $d_X(z, y) < \epsilon$ by \eqref{eq:EEk}.
\end{proof}

\begin{corollary}
\label{cor:PeriodicC2}
  Let $(\X, f) \in \mathfrak{C}_3$ be an ETDS with bottom group $G$, and let $K \trianglelefteq G_0$.
  Then $(\fix{K}{\X}, f) \in \mathfrak{C}_2$.
\end{corollary}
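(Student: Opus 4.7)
By Lemma~\ref{lem:PeriodicProps}, $f$ is an evolution of $\fix{K}{\X}$ and every tier is both $f$-stabilizing and weakly $0$-gluing, so the only part of membership in $\mathfrak{C}_2$ that remains to be verified is that $\fix{K}{\X}$ has weakly dense homoclinic points. The plan is to approximate an arbitrary point by a homoclinic point of $\X$ (using weakly dense homoclinic points of $\X$), periodize it using Lemma~\ref{lem:Periodization} (where residual finiteness from $\mathfrak{C}_3$ is essential), and finally verify that the resulting periodic point is homoclinic in the quotient system by controlling its projected shadow.

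In detail, I fix a tier $(t, F_0) \in \VVDi$, a point $x \in \fix{F_0}{X_t}$, and $\epsilon > 0$. First, I use the weakly dense homoclinic points of $\X$ to find $t'' \geq t$ and $y \in X_{t''}$, homoclinic in $(G_{t''}, X_{t''})$, with $d_X(x, y) < \epsilon/2$. Then I apply Lemma~\ref{lem:Periodization} to $y$ at tier $t''$ to obtain $F_1 \in \VVDi_{w(t'')}$ and $z \in \fix{F_1}{X_{w(t'')}}$ with $d_X(y, z) < \epsilon/2$ and the shadow bound $\Theta^K_{w(t'')}(z) \subseteq N \cdot \Theta^K_{w(t'')}(y)$ for the finite set $N$ supplied by the lemma. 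The triangle inequality gives $d_X(x, z) < \epsilon$.

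Because the tier $(w(t''), F_1)$ need not automatically be $\geq (t, F_0)$ in $\VVDi$, I will shrink $F_1$ to $F_1' = F_1 \cap (\phi^{w(t'')}_t)^{-1}(F_0)$. Since $F_0 \trianglelefteq G_t$ lies inside $K_t$ and $K_{w(t'')} \trianglelefteq G_{w(t'')}$, this $F_1'$ is a finite-index subgroup of $K_{w(t'')}$ normal in $G_{w(t'')}$, hence an element of $\VVDi_{w(t'')}$, and by construction $\phi^{w(t'')}_t(F_1') \subseteq F_0$, which witnesses $(t, F_0) \leq (w(t''), F_1')$. Since $F_1' \subseteq F_1$, the point $z$ belongs to $\fix{F_1'}{X_{w(t'')}}$ as well.

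The main obstacle is the last step: verifying that $z$ is homoclinic when viewed as a point of $(G_{w(t'')}/F_1', \fix{F_1'}{X_{w(t'')}})$. The argument I have in mind is that from homoclinicity of $y$ and Remark~\ref{rem:D}, $\Theta_{w(t'')}(y)$ is finite, so $\Theta^K_{w(t'')}(y)$ is finite, hence so is $\Theta^K_{w(t'')}(z)$ by the shadow bound, meaning $\Theta_{w(t'')}(z)$ is contained in finitely many $K_{w(t'')}$-cosets; because $F_1'$ has finite index in $K_{w(t'')}$, each such coset splits into finitely many $F_1'$-cosets, and so the projection of $\Theta_{w(t'')}(z)$ to $G_{w(t'')}/F_1'$ is finite, which is exactly homoclinicity of $z$ in the quotient system. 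This shadow-projection step is where the design choice behind $\VVDi$ -- periods being finite-index in $K_t$ -- becomes essential, and it is the main conceptual point I expect to have to check carefully.
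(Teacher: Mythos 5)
Your proof is correct and follows essentially the same route as the paper's: Lemma~\ref{lem:PeriodicProps} handles $f$-stabilization and weak $0$-gluing, and weak density of homoclinic points is obtained by approximating with an ambient homoclinic point of $\X$, periodizing via Lemma~\ref{lem:Periodization}, and applying the triangle inequality. You additionally verify two points the paper leaves implicit: shrinking $F_1$ to $F_1' = F_1 \cap (\phi^{w(t'')}_t)^{-1}(F_0)$ so that $(w(t''), F_1') \geq (t, F_0)$ in $\VVDi$ (which the definition of weakly dense homoclinic points requires), and spelling out why $z$ is homoclinic in the quotient tier via the finiteness of $\Theta^K_{w(t'')}(z)$ together with the finite index of $F_1'$ in $K_{w(t'')}$; the latter is exactly the unproved remark that follows Lemma~\ref{lem:Periodization} in the paper, and both additions are accurate.
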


\begin{proof}
  Denote $\X = (X, (G_t, X_t)_{t \in \Di})$.
  Let $(t, F) \in \VVDi$ be arbitrary.
  By Lemma~\ref{lem:PeriodicProps}, $(t, F)$ is $f$-stabilizing and weakly $0$-gluing.
  Let then $x \in \fix{F}{X_t}$ be arbitrary, and let $\epsilon > 0$.
  By the weakly dense homoclinic points of $\X$, there exist $t' \geq t$ and a homoclinic point $y \in X_{t'}$ with $d_X(x, y) < \epsilon / 2$.
  This is visualized in the left half of Figure~\ref{fig:PeriodDenseFinites}.
  Since $(\X, f) \in \mathfrak{C}_3$, the group $G_{w(t')}$ is residually finite.
  By Lemma~\ref{lem:Periodization}, there exists $F' \in \VVDi_{w(t')}$ and a homoclinic point $z \in \fix{F'}{X_{w(t')}}$ with $d_X(y, z) < \epsilon / 2$, and we have $d_X(x, z) < \epsilon$.
  Thus $\fix{K}{\X}$ has weakly dense homoclinic points.
\end{proof}

Recall Example~\ref{ex:Fix}, where we applied the $\mathrm{Fix}$-construction to a $\Z^2$-subshift, obtaining a tier with acting group $\Z \times \Z_p$ for each $p \geq 1$.
In that example, we could also have used the horizontal translation action of $\Z$ on every tier, because the normal subgroup $\{0\} \times \Z$ along which we periodize is a direct summand. When dealing with group extensions that do not split, using a chain of groups seems unavoidable. We illustrate this by an example.

\begin{example}[Need for multiple groups]
Consider the discrete Heisenberg group $G = \langle x, y \;|\; x [x,y] = [x,y] x, y [x,y] = [x,y] y\rangle$, let $X = \{0, 1\}^G$ be the binary full shift on $G$, and let $\X = \mathcal{U}(G, X) \in \mathfrak{C}_3$ be the associated single-tier system.
Denote the commutator by $z = [x,y]$, and let $K = \langle z \rangle \simeq \Z$.
Then $K = Z(G)$ is normal in $G$, so we can periodize along it.
Corollary~\ref{cor:PeriodicC2} implies that $\fix{K}{\X} \in \mathfrak{C}_2$.
In this example, we have $G/K \simeq \Z^2$, and the extension is not split.
The finite-index subgroups of $K$ are exactly $K_p = \langle z^p \rangle$ for $p \geq 1$, each of which is normal in $G$, so that the directed set $\VVDi$ is isomorphic to $\{1, 2, \ldots\}$ with the divisibility relation.
The group acting on the tier $\fix{K_p}{X}$ is $G/K_p \simeq \Z_p \rtimes \Z^2$, and in fact $(G/K_p, \fix{K_p}{X})$ is isomorphic to the binary full shift on $\Z_p \rtimes \Z^2$ as a pointed dynamical system, since $X$ is the full shift on $G$.
\tqed
\end{example}

We will now use the above properties of the system $\fix{K}{\X}$ to prove that if the quotient $G/K$ is nil-rigid on $\mathfrak{C}_2$, then an asymptotically nilpotent evolution map cannot transfer information arbitrarily far, except along the subgroup $K$.

\begin{lemma}
\label{lem:Towers}
Let $K \trianglelefteq G$ be two groups such that $G/K$ is nil-rigid on $\mathfrak{C}_2$, and let $(X, (G_t, X_t)_{t \in \Di}, f) \in \mathfrak{C}_3$ be an asymptotically nilpotent ETDS with $G_0 = G$.
Then for all $\epsilon > 0$ and arbitrarily large $t \in \Di$, there exists a finite set $R \subseteq G/K$ such that for all $x \in X_t$ with $\Theta^K_t(x) \cap R = \emptyset$, we have $d_X(f^n(x), 0) \leq \epsilon$ for every $n \in \N$.
\end{lemma}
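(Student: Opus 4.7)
The plan is to apply the $\mathrm{Fix}$-construction along $K$: by Corollary~\ref{cor:PeriodicC2}, $(\fix{K}{\X}, f) \in \mathfrak{C}_2$, and since its base group is $G_0/K = G/K$, which is nil-rigid on $\mathfrak{C}_2$ by hypothesis, $f$ is nilpotent on every tier $\fix{F}{X_t}$ of $\fix{K}{\X}$ with $(t, F) \in \VVDi$. I would then argue by contradiction: fix an arbitrarily large $t \in \Di$, and assume some $\epsilon > 0$ witnesses failure, so that for every finite $R \subseteq G/K$ there is an $x_R \in X_t$ with $\Theta^K_t(x_R) \cap R = \emptyset$ and $d_X(f^{n_R}(x_R), 0) > \epsilon$ for some $n_R \in \N$. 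A first reduction is that the times $n_R$ can be taken unbounded in $R$: given any $N^* \in \N$, applying Lemma~\ref{lem:Nhood} to each of $f^0, \ldots, f^{N^*}$ produces a finite $N \subseteq G_t$ such that $\Theta_t(x) \cap N = \emptyset$ forces $d_X(f^n(x), 0) < \epsilon/2$ for all $n \leq N^*$, and since $\Theta^K_t = \psi^K_t \circ \Theta_t$, taking $R \supseteq \psi^K_t(N)$ forces $n_R > N^*$.

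The heart of the argument is a nested-gluing construction carried out inside $\fix{K}{\X}$. I would inductively build $K$-periodic points $z_k \in \fix{F_k}{X_{t_k}}$, with $(t_k, F_k) \in \VVDi$ increasing in $k$, together with times $n_1 < n_2 < \cdots$ and elements $g_i$ satisfying $d_X(g_i \cdot f^{n_i}(z_k), 0) > \epsilon/4$ for every $i \leq k$. At step $k+1$, I would choose a finite $R_k \subseteq G/K$ absorbing the current shadow of $z_k$, the thickening constant $N^{-1}$ from Lemma~\ref{lem:Periodization}, and the $f^{n_{k+1}}$-continuity neighborhood from Lemma~\ref{lem:Nhood}; the failure assumption then produces a fresh $x_{k+1} \in X_t$ with $\Theta^K_t(x_{k+1}) \cap R_k = \emptyset$ and an excursion at some $n_{k+1} > n_k$. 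Approximating $x_{k+1}$ by a homoclinic point using weakly dense homoclinic points, periodizing via Lemma~\ref{lem:Periodization} into a $K$-periodic $z'_{k+1}$ whose $(K, \cdot)$-shadow still avoids the shadow of $z_k$, and refining $(t_{k+1}, F_{k+1}) \in \VVDi$ above both $(t_k, F_k)$ and the tier of $z'_{k+1}$ puts $z_k$ and $z'_{k+1}$ on the same tier of $\fix{K}{\X}$. The weak $0$-gluing of $\fix{K}{\X}$ from Corollary~\ref{cor:PeriodicC2}, together with Lemmas~\ref{lem:RegionsGlue} and~\ref{lem:FuncGlue}, then glues the pair into $z_{k+1}$ inheriting both the old excursions and the new one. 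Passing to a compactness subsequential limit $z_\infty \in X$ yields $d_X(g_i \cdot f^{n_i}(z_\infty), 0) \geq \epsilon/8$ for infinitely many $i$ with $n_i \to \infty$, contradicting the asymptotic nilpotency of $f$ on $X$.

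The principal difficulty is shadow control during the homoclinic approximation and periodization: weakly dense homoclinic points provide only metric closeness at the identity, and a careless approximant of $x_{k+1}$ might introduce nonzero cells outside the controlled window, spoiling the shadow bound needed for the gluing. I would handle this by absorbing every relevant thickening and continuity constant into $R_k$ in advance, and by taking the homoclinic approximant fine enough that its shadow agrees with that of $x_{k+1}$ on the finite region to which $R_k$ lifts. A second, essentially unavoidable, issue is that the periods $F_k$ must genuinely refine at every step, so the contradiction cannot be drawn against weak nilpotency on a single fixed tier of $\fix{K}{\X}$ but only against asymptotic nilpotency of $f$ on the ambient space $X$.
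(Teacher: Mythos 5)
Your proposal is essentially correct but takes a genuinely different route from the paper's. The paper keeps the inductive gluing inside $\X$: it constructs a designation from the mortal $x_k$ and an arbitrary $y$ furnished by the failure assumption, realizes it in $X_t$ using the weak $0$-gluing of $\X$, and only \emph{afterwards} extracts a nearby homoclinic point and applies Lemma~\ref{lem:Periodization} once to produce the next mortal periodic point $x_{k+1}$. You instead periodize \emph{first}: approximate the fresh $x_{k+1}$ by a homoclinic point, periodize it via Lemma~\ref{lem:Periodization}, and then glue the resulting periodic point with the already-periodic $z_k$ \emph{inside} $\fix{K}{\X}$, using the weak $0$-gluing of $\fix{K}{\X}$ supplied by Corollary~\ref{cor:PeriodicC2}. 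Both orders are viable. The paper's order (glue, then periodize) lets $y$ be completely arbitrary, so there is no shadow control to do before the gluing, and the only approximation happens once on $z_0$; your order (periodize, then glue) needs a genuine argument that the homoclinic approximant and its periodization still have $(K,t)$-shadow disjoint from $z_k$'s activity region, which you correctly flag as the main difficulty and sketch how to absorb. Your variant also needs the extra observation that $z_k$ and $z'_{k+1}$ can be lifted to a common tier $(t'', F'')$ of $\VVDi$, which works because preimages and intersections of the $F$'s remain finite-index and normal in $G_{t''}$, but is worth saying.

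Two loose ends to close. First, your choice of $R_k$ as written is circular: it is supposed to absorb ``the $f^{n_{k+1}}$-continuity neighborhood,'' but $n_{k+1}$ is produced by the failure assumption \emph{after} $R_k$ is fixed. The paper avoids this by first taking $n$ to be the death time of the mortal previous point (here $z_k$), putting the full $n$-step activity region and a set $V$ (converting excursion-at-origin into shadow-membership for times $\leq n$) into $R$, and \emph{deducing} that the fresh excursion time $p$ exceeds $n$; the new time $n_{k+1}$ is then defined as $p$. Second, your invariant tracks excursions via group elements $g_i$, namely $d_X(g_i \cdot f^{n_i}(z_k), 0) > \epsilon/4$, but the final contradiction needs $d_X(f^{n_i}(z_\infty), 0)$ bounded below, not $d_X(g_i \cdot f^{n_i}(z_\infty), 0)$. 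This is only a contradiction of asymptotic nilpotency if the $g_i$ lie in a bounded set so that you can fix $g_i = g$ along a subsequence and use uniform continuity of the $g$-translation. Either argue that boundedness (they can be taken from a fixed finite $V$ given by Lemma~\ref{lem:Nhood}), or drop the $g_i$ entirely and track the excursion at the origin, as the paper's invariant $d_X(f^{n_m}(x_k), 0) > (1/2 + 2^{-k})\epsilon$ does.
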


\begin{proof}
  Denote $\X = (X, (G_t, X_t)_{t \in \Di})$.
  Assume on the contrary that the conclusion does not hold: for some $\epsilon > 0$, all large enough $t \in \Di$ and all finite $R \subseteq G/K$, there exists $x \in X_t$ and $n \in \N$ with $\Theta^K_t(x) \cap R = \emptyset$ and $d_X(f^n(x), 0) > \epsilon$.
  We will inductively construct a sequence $(t_k, x_k, n_k)_{k \geq 1}$, where $t_k \leq t_{k+1} \in \Di$, $x_k \in X_{t_k}$ is mortal, $\Theta^K_{t_k}(x_k)$ is finite, $n_k < n_{k+1} \in \N$ and $d_X(f^{n_m}(x_k), 0) > (1/2 + 2^{-k}) \epsilon$ holds for all $k \geq 1$ and $1 \leq m < k$.
  Then any limit point $x \in X$ of the $x_k$ will satisfy $d_X(f^{n_m}(x), 0) \geq \epsilon/2$ for all $m \in \N$, contradicting the asymptotic nilpotency of $f$ on the ambient space $X$.
  
  The idea of the construction is this.
  The shadow of each point $x_k$ spreads near the origin at the designated time steps $n_1, \ldots, n_{k-1}$, and $x_k$ dies off at some later time step $n$, since it is assumed mortal.
  Using our assumption, we find a point $y$ whose shadow stays far from that of $x_k$ for the first $n$ steps of its $f$-orbit and then spreads near the origin at some later time step $n_k$.
  We glue $x_k$ to $y$, extract a homoclinic point very close to the result, and periodize it using Lemma~\ref{lem:Periodization}.
  Since $G/K$ is nil-rigid on $\mathfrak{C}_2$, the periodized point is mortal, and we choose it as $x_{k+1}$.
  
  We start by defining $t_1$ large enough that the counterassumption holds for all $t \geq t_1$, and $x_1 = 0_X \in X_{t_1}$, which is trivially mortal and has a finite shadow.
  Suppose then that $t_k \in \Di$, $x_k \in X_t$ and $n_1 < \cdots < n_{k-1} \in \N$ have already been defined.
  Since $x_k$ is mortal, there exists $n > n_{k-1}$ such that $f^n(x_k) = 0$.
  Let $t = w(s(t_k))$, and let $\Delta_t$ be the canonical expansivity constant of $(G_t, X_t)$.
  Let $\eta \leq \min (\epsilon / 2^{k+2}, \Delta_t / 2)$, and let $\delta > 0$ and $E \subseteq G_t$ be given by the weak $0$-gluing property of $\X$ for $s(t_k)$ and $\eta$.
  Let $M \subseteq G_t$ be a finite set such that for $x \in X_t$, the condition $d_X(g \cdot x, 0) \leq \Delta_t$ for all $g \in M$ implies $d_X(x, 0) < \delta$.
  We may assume $1_{G_t} \in M$.
  
  For $x \in X_{t_k}$ and $m \in \N$, define
  \[
    A_m(x) = \bigcup_{q \leq m} E M^{-1} \cdot \Theta_t(f^q(x)) \subseteq G_t
  \]
  Intuitively, $A_m(x)$ is the set of coordinates in $G_t$ that the $f$-trajectory of $x$ affects in its first $m$ steps, plus the finite-width border $E M^{-1}$.
  Let $g \in \partial_E A_m(x)$, so that we have $e g \notin A_m(x)$ for some $e \in E$. 
  We claim that $d_X(g \cdot f^q(x), 0) < \delta$ for all $q \leq m$.
  Namely, we have $g \notin M^{-1} \cdot \Theta_t(f^q(x))$.
  For all $h \in M$, we then have $h g \notin \Theta_t(f^q(x))$, which implies $d_X(h g \cdot f^q(x), 0) \leq \Delta_t$.
  Then $d_X(g \cdot f^q(x), 0) < \delta$ by the definition of $M$, which is what we claimed.
  
  Let $N \subseteq G_t$ be a finite set such that for $x \in X_t$, the condition $d_X(f(x), 0) > \Delta_t$ implies $d_X(g \cdot x, 0) > \Delta_t$ for some $g \in N$; then we have $\Theta_t(f(x)) \subseteq (N^{-1}) \Theta_t(x)$.
  Let also $V \subseteq G_t$ be a finite set such that for $x \in X_t$, the condition $d_X(x, 0) > \epsilon$ implies $d_X(g \cdot x, 0) > \Delta_t$ for some $g \in V$.
  Denote
  \[
    R = \bigcup_{m \leq n} \psi_t(N^m \cdot (M E^{-1} A_n(x_k) \cup V)) \subseteq G/K
  \]
  which is a finite set.
  By our assumptions, there exists a point $y \in X_{t_k}$ and $p \in \N$ such that $d_X(f^p(y), 0) > \epsilon$ and $R \cap \Theta^K_t(y) = \emptyset$ (the assumptions concern the set $\Theta^K_{t_k}(y)$, but we can consider $\Theta^K_t(y)$ instead by Remark~\ref{rem:D}).
  
  Since $d_X(f^p(y), 0) > \epsilon$, there exists $g \in V$ with $d_X(g \cdot f^p(y), 0) > \Delta_t$, hence $g \in \Theta_t(f^p(y))$.
  Applying $\psi_t$, we have $\psi_t(g) \in \Theta^K_t(f^p(y)) \subseteq \psi_t((N^{-1})^p) \cdot \Theta^K_t(y)$, which implies $\psi_t(N^p \cdot V) \cap \Theta^K_t(y) \neq \emptyset$.
  From this and $R \cap \Theta^K_t(y) = \emptyset$ we deduce $p > n$.
  
  We now claim that $A_n(x_k) \cap A_n(y) = \emptyset$.
  Assume the contrary.
  Applying $\psi_t$, we obtain
  \[
    \psi_t(A_n(x_k)) \cap \psi_t(E M^{-1}) \cdot \Theta^K_t(f^m(y)) \neq \emptyset
  \]
  for some $m \leq n$.
  Since $\Theta^K_t(f^m(y)) \subset \psi_t((N^{-1})^m) \cdot \Theta^K_t(y)$, we have
  \[
    \psi_t(N^m M E^{-1} A_n(x_k)) \cap \Theta^K_t(y) \neq \emptyset
  \]
  which contradicts the condition $R \cap \Theta^K_t(y) = \emptyset$.
  
  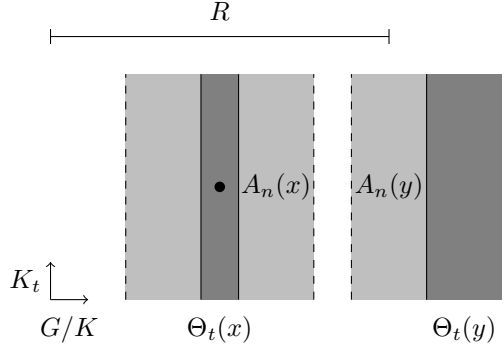
\begin{figure}
  \begin{center}
  \begin{tikzpicture}
  
    \fill [black!25] (1,0) rectangle (2,3);
    \fill [black!50] (2,0) rectangle (2.5,3);
    \fill [black!25] (2.5,0) rectangle (3.5,3);
    
    \fill [black!25] (4,0) rectangle (5,3);
    \fill [black!50] (5,0) rectangle (6,3);
    
    \draw [dashed] (1,0) -- ++(0,3);
    \draw (2,0) -- ++(0,3);
    \draw (2.5,0) -- ++(0,3);
    \draw [dashed] (3.5,0) -- ++(0,3);
    \draw [dashed] (4,0) -- ++(0,3);
    \draw (5,0) -- ++(0,3);
    
    \draw (0,3.5) -- (4.5,3.5);
    \draw (0,3.4) -- ++ (0,0.2);
    \draw (4.5,3.4) -- ++ (0,0.2);
    
    \fill (2.25,1.5) circle (0.07cm);
    
    \node [below] at (2.25,-0.1) {$\Theta_t(x)$};
    \node [below] at (5.5,-0.1) {$\Theta_t(y)$};
    \node at (3,1.5) {$A_n(x)$};
    \node at (4.5,1.5) {$A_n(y)$};
    \node [above] at (2.25,3.6) {$R$};
    
    \draw [->] (0,0) -- (0.5,0);
    \draw [->] (0,0) -- (0,0.5);
    \node [left] at (0,0.25) {$K_t$};
    \node [below] at (0.25,-0.1) {$G/K$};
  
  \end{tikzpicture}
  \end{center}
  \caption{The construction of the points $y$ and $z_0$ in Lemma~\ref{lem:Towers}. The dark regions are $\Theta_t(x)$ and $\Theta_t(y)$, and the light regions are $A_n(x)$ and $A_n(y)$. The black circle is $1_{G_t}$.}
  \label{fig:PeriodTower}
  \end{figure}
  
  We have now shown that $((A_n(x_k), f^m(x_k)), (A_n(y), f^m(y)))$ is a two-element $(\delta, E)$-designation in $(G_t, X_t)$ for each $m \leq n$, and both points belong to $X_{s(t_k)}$.
  By our choice of $\delta$ and $E$, the designation has an $\eta$-realization $z_m \in X_t$.
  Furthermore, if $\eta$ is chosen small enough, Lemma~\ref{lem:FuncGlue} implies $f(z_m) = z_{m+1}$ for all $m < n$; note that $n$ does not depend on $\eta$.
  We claim that $z_n = f^n(y)$, and for that, let $g \in G_t$ be arbitrary.
  If $g \in A_n(y)$, then $d_X(g \cdot z_n, g \cdot f^n(y)) < \eta \leq \Delta_t$.
  If $g \in A_n(x_k)$, then $d_X(g \cdot z_n, g \cdot f^n(y)) \leq d_X(g \cdot z_n, g \cdot f^n(x_k)) + d_X(0, g \cdot f^n(y)) < \eta + \Delta_t \leq \frac{3}{2} \Delta_t$ since $f^n(x_k) = 0$ and $g \notin \Theta_t(f^n(y))$.
  The same computation proves $d_X(g \cdot z_n, g \cdot f^n(y)) \leq \frac{3}{2} \Delta_t$ when $g \notin A_n(y) \cup A_n(x_k)$, and then we have $z_n = f^n(y)$ since $\frac{3}{2} \Delta_t$ is an expansivity constant of $(G_t, X_t)$.
  
  By extracting a homoclinic point close to $z_0$ and applying Lemma~\ref{lem:Periodization} to it, we can show that there exist $(t', F') \in \VVDi$ with $t' \geq t$ and a point $z \in \fix{F'}{X_{t'}}$ such that $d_X(f^m(z_0), f^m(z)) < \eta$ for all $m \leq p$.
  It is also guaranteed that $z$ is homoclinic in $(G_{t'}/F', \fix{F'}{X_{t'}})$, so that $\Theta^K_{t'}(z)$ is finite.
  We know from Corollary~\ref{cor:PeriodicC2} that $\fix{K}{\X} \in \mathfrak{C}_2$.
  Since $f$ is asymptotically nilpotent and $G/K$ is nil-rigid on $\mathfrak{C}_2$, the point $z$ is mortal.
  For $1 \leq m < k$, we have
  \begin{align*}
    {} & d_X(f^{n_m}(z), 0) \\
    \geq {} &
    d_X(f^{n_m}(x_k), 0) - d_X(f^{n_m}(z_0), f^{n_m}(x_k)) - d_X(f^{n_m}(z), f^{n_m}(z_0)) \\
    {} \geq {} & (1/2 + 2^{-k}) \epsilon - \eta - \eta > (1/2 + 2^{-k-1}) \epsilon
  \end{align*}
  Also, $d_X(f^p(z), 0) \geq d_X(f^p(z_0), 0) - d_X(f^p(z), f^p(z_0)) > \epsilon - \eta > (1/2 + 2^{-k-1}) \epsilon$.
  Thus we can choose $t_{k+1} = t'$, $x_{k+1} = z$ and $n_k = p > n > n_{k-1}$ and continue the induction.
  As stated, a limit point of the $x_k$ contradicts the asymptotic nilpotency of $f$.
\end{proof}

\begin{remark}
  \label{rem:PeriodTowers}
  The assumptions of Lemma~\ref{lem:Towers} can be slightly weakened: instead of requiring that $(X, (G_t, X_t)_{t \in \Di}, f) \in \mathfrak{C}_3$, it is enough for the ETDS to be $f$-stabilizing and weakly $0$-gluing, and the closure of the set of homoclinic points of $\fix{K}{\X}$ to contain each tier of $\X$.
  The only modification is that in the final paragraph, instead of using Lemma~\ref{lem:Periodization}, we can directly extract $z$ from $z_0$ using this new assumption.
\end{remark}

With essentially the same proof, we can also show that if all homoclinic points are mortal, the system is nilpotent.
This is our main method of actually proving the nilpotency of evolution maps when the acting group is not virtually cyclic.
The argument was already used in \cite{GuRi08} as part of the proof that asymptotically nilpotent cellular automata on one-dimensional full shifts are nilpotent.

\begin{lemma}
\label{lem:Holes}
Let $X$ be a pointed compact metric space and $f : X \to X$ an asymptotically nilpotent continuous function.
For each $\epsilon > 0$, there exists $n \in \N$ such that for all $x \in X$ we have $d_X(f^k(x), 0_X) < \epsilon$ for some $k \leq n$.
\end{lemma}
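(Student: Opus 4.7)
The plan is a standard compactness argument. Since $f$ is asymptotically nilpotent, for each fixed $x \in X$ there exists $n(x) \in \N$ such that $d_X(f^{n(x)}(x), 0_X) < \epsilon/2$. By continuity of $f^{n(x)}$, there is an open neighborhood $U_x \ni x$ such that $d_X(f^{n(x)}(y), 0_X) < \epsilon$ for every $y \in U_x$.

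The family $\{U_x\}_{x \in X}$ is an open cover of the compact space $X$, so we can extract a finite subcover $U_{x_1}, \ldots, U_{x_m}$. Setting $n = \max_{1 \leq i \leq m} n(x_i)$, any point $y \in X$ lies in some $U_{x_i}$, and then $k = n(x_i) \leq n$ satisfies $d_X(f^k(y), 0_X) < \epsilon$, as required.

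The argument is entirely routine; there is no substantive obstacle. The only mild subtlety is that the witness $k$ depends on $x$ (it is not claimed that $d_X(f^n(x), 0_X) < \epsilon$ holds uniformly at the single time step $n$, only at some earlier step bounded by $n$), which is precisely the weaker statement we need and is exactly what the finite subcover delivers.
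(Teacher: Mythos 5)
Your proof is correct and uses essentially the same compactness argument as the paper: the paper directly defines the open sets $X_n = \{x \mid d_X(f^n(x),0_X) < \epsilon\}$, notes they cover $X$ by asymptotic nilpotency, and extracts a finite subcover, while you arrive at the same cover via explicit neighborhoods $U_x$ (and the $\epsilon/2$ step is unnecessary since $f^{n(x)}$ is already continuous, but harmless).
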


\begin{proof}
For $n \in \N$, let $X_n = \{ x \in X \;|\; d_X(f^n(x), 0_X) < \epsilon \}$.
Each $X_n$ is open, and since $f$ is asymptotically nilpotent, they form a cover of $X$.
By compactness we have a finite subcover.
\end{proof}

\begin{lemma}
\label{lem:Mortal}
Let $G$ be a group, and let $(X, (G_t, X_t)_{t \in \Di}, f) \in \mathfrak{C}_2$ be an ETDS with $G_0 = G$.
Suppose that for all $t \in \Di$, every homoclinic point $x \in X_t$ is mortal.
Then $f$ is nilpotent on each tier $X_t$.
\end{lemma}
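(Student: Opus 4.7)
The plan is to adapt the nested gluing construction from the proof of Lemma~\ref{lem:Towers} almost verbatim, using the standing hypothesis that every homoclinic point is mortal in place of the $G/K$ nil-rigidity plus periodization step used there.

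First, I would fix $t \in \Di$ and suppose for contradiction that $f$ is not nilpotent on $X_t$. Since $X_t$ is closed and $G_t$-invariant inside the $f$-stabilizing tier $X_t$, Proposition~\ref{prop:WeakNilp} gives a non-mortal $x_0 \in X_t$, i.e., $f^n(x_0) \neq 0$ for every $n$. Letting $\Delta$ be the canonical expansivity constant of $X_{w(s(t))}$, expansivity together with the $G_{s(t)}$-action on $X_t$ through $\phi^{s(t)}_t$ produces, for each $n$, a translate $y_n \in X_t$ of $x_0$ with $d_X(f^n(y_n), 0) > \Delta$. Weakly dense homoclinic points then let me approximate each $y_n$ by a homoclinic $\tilde y_n$ in some tier $t' \geq t$ with $d_X(f^n(\tilde y_n), 0) > \Delta - \eta$; since $\tilde y_n$ has finite shadow, I can further translate it within a suitable common higher tier so that its $f$-trajectory up through time $n$ has support disjoint from any prescribed finite region.

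Next, I would inductively build a sequence $(x_k, n_k, t_k)$ of mortal homoclinic points $x_k \in X_{t_k}$ with $n_1 < n_2 < \cdots$ and $d_X(f^{n_m}(x_k), 0) > (1/2 + 2^{-k}) \Delta$ for each $m \leq k$. The inductive step mirrors that of Lemma~\ref{lem:Towers}: given $x_k$ with death time $M_k$, pick $n_{k+1} > M_k$, extract via the preceding machinery a homoclinic $\tilde y$ whose $f$-trajectory up through time $n_{k+1}$ is disjoint from that of $x_k$ while still having a large value at time $n_{k+1}$, glue $x_k$ and $\tilde y$ via weak $0$-gluing into a realization $z$ in a higher tier, use Lemma~\ref{lem:FuncGlue} to inherit the orbit bounds at each $n_m$, and finally replace $z$ by a homoclinic approximation $x_{k+1}$ via weakly dense homoclinic points. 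Unlike in Lemma~\ref{lem:Towers}, no periodization through Lemma~\ref{lem:Periodization} or appeal to nil-rigidity is needed; the mortality of $x_{k+1}$ is free from the standing hypothesis.

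Passing to a limit $x_* \in X$ then yields $d_X(f^{n_m}(x_*), 0) \geq \Delta/2$ for every $m \in \N$ by continuity. The genuine new difficulty compared with Lemma~\ref{lem:Towers} is that without an asymptotic nilpotency assumption on $f$, this lower bound alone is not yet a contradiction. The hard part of the plan is therefore to carry out the whole construction so that $x_*$ itself lies in a single tier $X_{t_*}$ and is $G_{t_*}$-homoclinic there; the standing mortality hypothesis will then force $f^n(x_*) = 0$ for some $n$, contradicting the lower bound at arbitrarily large $n_m$. Making this work requires stabilizing the sequence of tiers $t_k$ at a fixed $t_*$ chosen in advance and controlling the translations used at each step so that the successively added supports accumulate into a configuration with finite $G_{t_*}$-shadow in the limit, exploiting that the $G_{t_*}$-action may collapse many $G_0$-translates; this is the main technical hurdle and is where the tiered formalism (as opposed to working in a single fixed dynamical system) is essential.
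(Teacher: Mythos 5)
Your proposal correctly recognizes the structural parallel with Lemma~\ref{lem:Towers}: replace the periodization step with the standing hypothesis that homoclinic points are mortal, and run the nested gluing construction. But it departs from what actually works in two places, and the second one is a genuine gap that the plan cannot recover from.

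The smaller issue first: the paper locates the new piece $y$ not by translating a non-mortal point and approximating it, but via Lemma~\ref{lem:Holes}. Having defined $R$, it picks $\eta > 0$ so that $d_X(x,0_X) < \eta$ forces $\Theta_t(x) \cap R = \emptyset$, uses Lemma~\ref{lem:Holes} to get a bound $n$ within which every orbit passes $\eta$-close to $0$, finds $m > n$ and $x \in X_t$ with $d_X(f^m(x),0_X) > \epsilon$ (possible since $f$ is not nilpotent on $X_t$), and then sets $y = f^k(x)$ for the $k \leq n$ where the orbit is $\eta$-close to $0$, with $p = m-k$. This meets both requirements on $y$ at once --- small initial shadow and large value at time $p$ --- without any translation; your translate-then-approximate step risks moving the ``large at time $n$'' event away from the origin where you need it.

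The decisive issue: you correctly notice that the lemma as literally stated does not say $f$ is asymptotically nilpotent, and you try to close the hole by forcing the limit $x_*$ into a single tier $X_{t_*}$ as a homoclinic point so that mortality furnishes the contradiction. This cannot work. By design, $x_{k+1}$ is obtained by gluing $x_k$ to a fresh piece whose shadow-with-border is \emph{disjoint} from everything already present (that disjointness is condition (2) in the induction of Lemma~\ref{lem:Towers} and is what makes the gluing a valid designation), and the fresh piece contributes a nonzero shadow near the origin at time $n_k$. So the successive supports march outward rather than accumulate, and any limit point $x_*$ has infinite shadow in \emph{every} tier; by Remark~\ref{rem:D} the finite-to-one maps $\phi^{t'}_t$ cannot turn an infinite shadow into a finite one. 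There is no choice of $t_*$ making $x_*$ homoclinic in $X_{t_*}$. The intended contradiction is, exactly as in Lemma~\ref{lem:Towers}, with the asymptotic nilpotency of $f$ on the ambient space $X$: the limit satisfies $d_X(f^{n_m}(x_*),0_X) \geq \epsilon/2$ for all $m$, which is incompatible with $f^n(x_*) \to 0_X$. Asymptotic nilpotency of $f$ is an implicit standing hypothesis --- the lemma is only invoked in the paper where $f$ is asymptotically nilpotent, and the proof sketch uses it twice (once via Lemma~\ref{lem:Holes}, once at the limit). You should simply carry that hypothesis and use it rather than attempt to manufacture a homoclinic limit, which the geometry of the nested gluing construction forbids.
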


\begin{proof}[Proof sketch]
Assume that $f$ is not nilpotent on some tier $t \in \Di$.
The proof proceeds as in Lemma~\ref{lem:Towers}, with the following main differences.
First, we consider the $t$-shadows $\Theta_t(x)$ instead of the $(K,t)$-shadows $\Theta^K_t(x)$.
The points with finite $t$-shadow are exactly the homoclinic points of $(G_t, X_t)$.

Second, the existence of the point $y$ is proved as follows, once the set $R \subseteq G_t$ has been defined.
Let $\eta > 0$ be such that $d_X(x, 0_X) < \eta$ implies $R \cap \Theta_t(x) = \emptyset$ for each $x \in X_t$.
Let $n \in \N$ be given for $\eta$ by Lemma~\ref{lem:Holes}.
Since $f$ is not nilpotent on $X_t$, there exist $m > n$ and $x \in X_t$ with $d_X(f^m(x), 0_X) > \epsilon$.
For some $k \leq n$ we have $d_X(f^k(x), 0_X) < \eta$, and we choose $y = f^k(x)$ and $p = m - k$.

Third, the existence of a mortal homoclinic point $z \in X_{t'}$ follows from the weakly dense homoclinic points of $\X$ and the assumption that all homoclinic points are mortal.
\end{proof}

\begin{remark}
  \label{rem:Mortal}
  In the case of a single-tier ETDS $\mathcal{U}(G, X, f)$, we can slightly weaken the assumptions of Lemma~\ref{lem:Mortal}: instead of $\mathcal{U}(G, X, f) \in \mathfrak{C}_2$, it is enough that $(G, X)$ is $0$-gluing and for some $p \in \N$, the image system $(G, f^p(X))$ has weakly dense homoclinic points.
  In the proof, we choose the point $y$ from $f^p(X)$, and then extract a homoclinic point $y' \in f^p(X)$ close to $y$, using it in place of $y$.
  Since $x_k$ is homoclinic and we are operating in a single tier, any point close enough to $y$ can be glued to $x_k$.
  Then the point $z_0 \in X$ is already homoclinic.
\end{remark}

\section{Finite Points and Group Extensions}
\label{sec:Finites}

In this section, we consider the set of points with a finite $(K,t)$-shadow, and show that under the conditions of Lemma~\ref{lem:Towers}, they form an expansive system under the action of $K$.
In the construction, the tiers of the subsystem are formed by those points whose $(K,t)$-shadow is contained in a given finite subset of $G/K$.
This TDS can of course be constructed with no assumption on $f$, but the asymptotic nilpotency of $f$ and Lemma~\ref{lem:Towers} guarantee that $f$ is stabilizing on these tiers, and the tiers allow us to capture the set of all points with finite shadow, even though it may not be compact, so that the resulting system is in $\mathfrak{C}_3$.
We then use this system to show that nil-rigidity is (in a restricted sense) preserved under group extensions.

Suppose we have two groups $K \trianglelefteq G$ and a TDS $\X = (X, (G_t, X_t)_{t \in \Di})$ with $G_0 = G$, and denote $H = G/K$.
Recall from Remark~\ref{rem:D} the finite sets $D^{t'}_t \subseteq G_t$ for $t \leq t' \in \Di$, which satisfy $\phi^{t'}_t(\Theta_{t'}(x)) \subseteq D^{t'}_t \cdot \Theta_t(x)$ for all $x \in X_t$.
Recall also the functions $\psi^K_t : G_t \to H$ from Definition~\ref{def:KShadow}.
Define the directed set $\VVDi = \{ (t, E) \;|\; t \in \Di, E \subseteq H \text{~finite} \}$ with the order relation given by $(t, E) \leq (t', E')$ iff $t \leq t'$ and $\psi^K_t(D^{t'}_t) E \subseteq E'$, so that $(t_0, \emptyset)$ is the least element of $\VVDi$.
Note that in order for ${\leq}$ to be reflexive, we need to choose $D^t_t = \{1_{G_t}\}$ for all $t \in \Di$, which we can safely do.
For each $(t, E) \in \VVDi$, define $X_{(t,E)} = \{ x \in X_t \;|\; \Theta^K_t(x) \subseteq E \}$, which is a closed subset of $X_t$.
Then $(X, (K_t, X_{(t,E)})_{(t,E) \in \VVDi})$ is a tiered dynamical system, with the notation $K_t = (\phi^t_{t_0})^{-1}(K)$ from Section~\ref{sec:Periods}:
If $(t, E) \leq (t', E')$, then $X_{(t,E)} \subseteq X_{(t',E')}$ by the definition of the order relation on $\VVDi$.
The homomorphisms between the groups $K_t$ are restrictions of the maps $\phi^{t'}_t$, and satisfy the required properties.
For expansivity, let $\Delta_t > 0$ be the canonical expansivity constant of $(G_t, X_t)$, and let $L_t \subseteq G_t$ be a set of representatives for the left cosets of $K_t$ in $G_t$.
Let also $1_{G_t} \in N \subseteq G_t$ be a finite set such that for $x \in X_t$, the condition $d_X(g^{-1} \cdot x, 0) \leq \Delta_t$ for all $g \in N$ implies $d_X(x, 0) \leq \Delta_t / 2$.
For a finite $E \subseteq H$, there exists $\epsilon > 0$ such that $d_X(x, y) \leq \epsilon$ implies $d_X(g \cdot x, g \cdot y) \leq \Delta_t$ for all $g \in N (\psi_t^K)^{-1}(E) \cap L_t$.
It can be verified that $\epsilon$ is an expansivity constant for $(K_t, X_{(t, E)})$.

\begin{definition}
\label{def:FiniteSystem}
Let $\X = (X, (G_t, X_t)_{t \in \Di}, (\phi^{t'}_t)_{t \leq t'})$ be a TDS and $K \trianglelefteq G_0$ a normal subgroup.
Then the TDS $(X, (K_t, X_{(t,E)})_{(t,E) \in \VVDi})$ defined above is denoted by $\fin{K}{\X}$.
\end{definition}

\begin{lemma}
  \label{lem:ShadowFinites}
  Let $(\X, f) \in \mathfrak{C}_3$ be an asymptotically nilpotent ETDS with base group $G$, and let $K \trianglelefteq G$ be such that $G/K$ is nil-rigid on $\mathfrak{C}_2$.
  Then $(\fin{K}{\X}, f) \in \mathfrak{C}_3$.
\end{lemma}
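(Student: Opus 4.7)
The plan is to verify each of the four defining properties of $\mathfrak{C}_3$ for $\fin{K}{\X}$: residual finiteness of the acting groups, weakly dense homoclinic points, the $f$-stabilizing property, and weak $0$-gluing. Residual finiteness is immediate since each $K_t = (\phi^t_{t_0})^{-1}(K)$ is a subgroup of the residually finite $G_t$. For weakly dense homoclinic points, given $x \in X_{(t, E)}$ and $\epsilon > 0$, weakly dense homoclinic points in $\X$ produce $t' \geq t$ and a $G_{t'}$-homoclinic $y \in X_{t'}$ with $d_X(x, y) < \epsilon$; since the $K_{t'}$-shadow of $y$ is a subset of its finite $G_{t'}$-shadow, any finite $E' \subseteq G/K$ containing both $\psi^K_{t'}(\Theta_{t'}(y))$ and $\psi^K_t(D^{t'}_t) \cdot E$ (the latter inclusion enforcing $(t, E) \leq (t', E')$) places $y$ in $X_{(t', E')}$ as a $K_{t'}$-homoclinic point.

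For the $f$-stabilizing property, fix $(t, E) \in \VVDi$. This is where we invoke the hypothesis that $G/K$ is nil-rigid on $\mathfrak{C}_2$, through Lemma~\ref{lem:Towers}: pick $t' \geq s(t)$ sufficiently large and $\epsilon < \Delta_{t'}$ small enough to obtain a finite $R \subseteq G/K$ such that $\Theta^K_{t'}(z) \cap R = \emptyset$ implies $d_X(f^n(z), 0) \leq \epsilon$ for all $n$. For $x \in X_{(t, E)}$ and $h \in G/K$ outside the finite set $R^{-1} \cdot \psi^K_t(D^{t'}_t) \cdot E$, every $g \in G_{t'}$ with $\psi^K_{t'}(g) = h$ has $\Theta^K_{t'}(g \cdot x) \subseteq \psi^K_t(D^{t'}_t) \cdot E \cdot h^{-1}$ disjoint from $R$. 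The evolution property of $f$ combined with $f^n(x) \in X_{s(t)} \subseteq X_{t'}$ yields $f^n(g \cdot x) = g \cdot f^n(x)$, and Lemma~\ref{lem:Towers} then forces $g \notin \Theta_{t'}(f^n(x))$. Running over all such $g$ gives $h \notin \Theta^K_{t'}(f^n(x))$; setting $E' := (\{1\} \cup R^{-1}) \cdot \psi^K_t(D^{t'}_t) \cdot E$ and $s(t, E) := (t', E')$ yields $f^n(X_{(t, E)}) \subseteq X_{(t', E')}$ for all $n$.

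Weak $0$-gluing is the most involved step. For $(t, E_0) \in \VVDi$, let $w(t)$ witness the weak $0$-gluing of $t$ in $\X$, and for target accuracy $\epsilon$ let $\delta^*, E^* \subseteq G_{w(t)}$ be the parameters it provides. Use Lemma~\ref{lem:Nhood} to find a finite $M \subseteq G_{w(t)}$ containing $1$ such that $d_X(m \cdot y, 0) \leq \Delta_{w(t)}$ for all $m \in M$ implies $d_X(y, 0) < \delta^*$, and set $E_0^* := \psi^K_{w(t)}(M)^{-1} \cdot \psi^K_t(D^{w(t)}_t) \cdot E_0$ and $E_0^+ := (\{1\} \cup \psi^K_{w(t)}(E^*)) \cdot E_0^*$. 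Given a $(\delta, E_g^K)$-designation $(A_i, x_i)_{i \in I}$ on $(K_{w(t)}, X_{(t, E_0)})$, fix a set $L$ of left coset representatives for $K_{w(t)}$ in $G_{w(t)}$, let $\pi := \psi^K_{w(t)}$ and $F := L \cap \pi^{-1}(E_0^+)$, define $\tilde A_i := F \cdot A_i$ using the unique decomposition $g = \ell k$ of $g \in \pi^{-1}(E_0^+)$, and fill $G_{w(t)} \setminus \pi^{-1}(E_0^+)$ with the base point $0$. A border element $g = \ell k \in \partial_{E^*} \tilde A_i$ satisfies $eg = \ell' \cdot (k' k)$, where $\ell' \in L$ is the coset representative of $e\ell K_{w(t)}$ and $k' := (\ell')^{-1} e \ell \in K_{w(t)}$, so $eg \notin \tilde A_i$ splits into: (A) $\ell' \notin F$, in which case the inclusion $\pi(E^*) E_0^* \subseteq E_0^+$ forces $\pi(\ell) \notin E_0^*$ and Lemma~\ref{lem:Nhood} gives $d_X(g \cdot x_i, 0) < \delta^*$; or (B) $\ell' \in F$ and $k' k \notin A_i$, in which case choosing $E_g^K \supseteq \{(\ell')^{-1} e \ell : e \in E^*,\ \ell \in F\}$ makes $k \in \partial_{E_g^K} A_i$, so $d_X(k \cdot x_i, 0) < \delta$, and uniform continuity at $0$ of the finitely many translations $y \mapsto \ell \cdot y$ for $\ell \in F$ promotes this to $d_X(g \cdot x_i, 0) = d_X(\ell \cdot (k \cdot x_i), 0) < \delta^*$ provided $\delta$ was chosen small enough. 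The extended designation is thus a $(\delta^*, E^*)$-designation in $(G_{w(t)}, X_{w(t)})$ with points in $X_t$, so $\X$'s weak $0$-gluing supplies an $\epsilon$-realization $y \in X_{w(t)}$; because the designation is $0$ outside $\pi^{-1}(E_0^+)$, choosing $\epsilon < \Delta_{w(t)}$ forces $\Theta^K_{w(t)}(y) \subseteq E_0^+$, i.e., $y \in X_{(w(t), E_0^+)}$, so we set $w(t, E_0) := (w(t), E_0^+)$.

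The main obstacle is this last border analysis: transferring a $K$-border condition on the original designation into a $G$-border condition on the extension requires controlling how the coset combinatorics interacts with the $K$-action. The crucial maneuver is to use \emph{left} rather than right coset representatives, so that the offending translate $eg$ decomposes as $\ell' \cdot (k' k)$, with $k'$ acting by ordinary left multiplication in $K_{w(t)}$; this is absorbed by enlarging the border set $E_g^K$. Right coset representatives would introduce a conjugation of $k$ into the $K$-component of $eg$, which is not a left translation and cannot be captured by the standard border $\partial_{E_g^K}$, blocking the argument in the non-abelian case.
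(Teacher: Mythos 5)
Your proof follows the paper's overall structure exactly: all four defining properties of $\mathfrak{C}_3$ are verified, with residual finiteness, weakly dense homoclinic points, and the $f$-stabilizing argument (via Lemma~\ref{lem:Towers}) handled essentially as in the paper. The observation about left versus right coset representatives is also the right one; the paper makes the same choice, and for the same reason. However, your weak $0$-gluing argument has a structural gap that the paper's version avoids, and it stems from the otherwise attractive idea of restricting the extended designation to $\pi^{-1}(E_0^+)$ and filling the complement with $0$.

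The gap is that $E_0^+$ depends on $\epsilon$. You define $E_0^* = \psi^K_{w(t)}(M)^{-1} \psi^K_t(D^{w(t)}_t) E_0$ where $M$ is chosen via Lemma~\ref{lem:Nhood} to produce a $\delta^*$-bound, and $\delta^*$ comes from the weak $0$-gluing parameters of $\X$ \emph{for the target accuracy $\epsilon$}; the set $E^*$ similarly depends on $\epsilon$. Hence $E_0^+$, and with it $w(t, E_0) := (w(t), E_0^+)$, grows as $\epsilon \to 0$. But Definition~\ref{def:TierProperties} quantifies $w(t)$ \emph{before} $\epsilon$: the realization must land in a fixed higher tier regardless of accuracy. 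Your argument ``the designation is $0$ outside $\pi^{-1}(E_0^+)$, so $\Theta^K_{w(t)}(y) \subseteq E_0^+$'' therefore places the realization in an $\epsilon$-dependent tier and does not establish weak $0$-gluing. The paper sidesteps this by covering all of $G_{w(t)}$ with $B_i = L_{w(t)} A_i$ and bounding $\Theta^K_{w(t)}(y)$ \emph{through the $x_i$'s}: for $g$ with $\psi^K_{w(t)}(g)$ outside a buffer $\psi^K_t(M_t) E$ chosen from the expansivity constants alone, $d_X(g \cdot x_i, 0) < \Delta_{w(t)}/2$ and $d_X(g \cdot y, g \cdot x_i) < \epsilon \leq \Delta_{w(t)}/2$, so $g \notin \Theta_{w(t)}(y)$. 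Your proof can be repaired the same way — keep the $\epsilon$-dependent $E_0^+$ for the \emph{construction} of the extended designation but bound the realization's shadow by a fixed set using a buffer $M'$ depending only on $\Delta_{w(t)}$ — but as written this step fails.

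A second, smaller issue: for the realization $y$ of the extension to be a realization of the original $K_{w(t)}$-designation, you need $A_i \subseteq \tilde A_i = F A_i$, i.e.\ $1_{G_{w(t)}} \in F = L \cap \pi^{-1}(E_0^+)$, i.e.\ $1_{G/K} \in E_0^+$. Nothing in your definitions guarantees this when $1_{G/K} \notin E_0$; if $K_{w(t)}$ lands in the background region, the realization matches $0$ there, not the $x_i$. This is a one-line fix (adjoin $1_{G/K}$ to $E_0^+$, which does not disturb case~(A) since $\pi(E^*) E_0^* \subseteq E_0^+$ still holds), but it should be stated. You also omit the (trivial but needed) check that $f$ is an evolution of $\fin{K}{\X}$.
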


\begin{proof}
  Denote $\X = (X, (G_t, X_t)_{t \in \Di}, (\phi^{t'}_t)_{t \leq t'})$.
  We first note that as a subgroup of $G_t$, each $K_t$ is residually finite.
  For $t \in \Di$, let $L_t \subseteq G_t$ be a set of representatives for the left cosets of $K_t$ in $G_t$.
  For a set $E \subset H$, denote $L^E_t = L_t \cap /\psi_t^K)^{-1}(E)$.
  Then $L^E_t$ is finite whenever $E$ is, in particular when $(t, E) \in \VVDi$.
  Fix $(t, E) \in \VVDi$.
  
  First, we prove the weak $0$-gluing property of $\fin{K}{\X}$.
  Let $M_t \subseteq G_{w(t)}$ be a finite set containing $(D^{w(t)}_t)^{-1}$ such that for $x \in X_t$, the condition $d_X(h^{-1} \cdot x, 0) \leq \Delta_t$ for all $h \in M_t$ implies $d_X(x, 0) \leq \Delta_{w(t)} / 2$.
  We claim that $w(t, E) = (w(t), \psi_t^K(M_t) E)$ is a suitable definition of the function $w$ on $\VVDi$.
  Since $(D^{w(t)}_t)^{-1} \subseteq M_t$, we have $(w(t), \psi_t^K(M_t) E) \geq (t, E)$ in the directed set $\VVDi$.
  Let then $0 < \epsilon \leq \Delta_{w(t)} / 2$ be arbitrary, and let $\delta > 0$ and $F \subseteq G_{w(t)}$ be given by the weak $0$-gluing property of $\X$ for $t$ and $\epsilon$.
  We may assume that $1_{G_{w(t)}} \in F$.
  Let $N \subseteq H$ be a finite set such that for all $x \in X_{(t, E)}$ and $g \in G_{w(t)} \setminus \bigcap_{e \in F} e^{-1} \cdot (\psi_{w(t)}^K)^{-1}(N)$ we have $d_X(g \cdot x, 0) < \delta$.
  We can find $N$ with Lemma~\ref{lem:Nhood}.
  Let $\eta > 0$ be such that for all $x \in X_{w(t)}$ and $g \in L^N_{w(t)}$, the condition $d_X(x, 0) < \eta$ implies $d_X(g \cdot x, 0) < \delta$.
  Define $D = (L^N_{w(t)})^{-1} F L^N_{w(t)} \cap K_{w(t)}$.
  Let $\mathcal{A} = (A_i, x_i)_{i \in I}$ be an $(\eta, D)$-designation in the tier $(K_{w(t)}, X_{w(t), \psi_t^K(M_t) E})$ such that $x_i \in X_{(t, E)}$ for all $i \in I$.
  Note that $(A_i)_{i \in I}$ is a partition of $K_{w(t)}$.

  For $i \in I$, define $B_i = L_{w(t)} A_i$, so that $(B_i)_{i \in I}$ forms a partition of $G_{w(t)}$.
  We claim that $(B_i, x_i)_{i \in I}$ is a $(\delta, F)$-designation in $(G_{w(t)}, X_{w(t)})$.
  For that, let $g a \in \partial_F B_i$ with $g \in L_{w(t)}$ and $a \in A_i$.
  Our aim is to prove
  \begin{equation}
    \label{eq:gaDist}
    d_X(g a \cdot x_i, 0) < \delta
  \end{equation}
  First, if $g \notin L^N_{w(t)}$, then $g a \notin (\psi_{w(t)}^K)^{-1}(N)$, which implies~\eqref{eq:gaDist} by our choice of $N$ and because $1_{G_{w(t)}}$.
  Suppose then that $g \in L^N_{w(t)}$.
  Since $g a \in \partial_F B_i$, there exists $e \in F$ with $e g a \notin B_i$.
  If $e g a \notin (\psi_{w(t)}^K)^{-1}(N)$, then $g a \notin \bigcap_{e \in F} e^{-1} \cdot (\psi_{w(t)}^K)^{-1}(N)$, which again implies~\eqref{eq:gaDist}.
  We thus assume $e g a \in (\psi_{w(t)}^K)^{-1}(N)$.
  Let $h \in L^N_{w(t)}$ be such that $h^{-1} e g a \in K_{w(t)}$, which implies $h^{-1} e g \in D$ since $a \in K_{w(t)}$.
  Also, we have $h^{-1} e g a \notin A_i$, since $e g a \notin B_i$.
  This means that $a \in \partial_D A_i$, and since $\mathcal{A}$ is an $(\eta, D)$-designation, we obtain $d_X(a \cdot x, 0) < \eta$.
  This is turn implies~\eqref{eq:gaDist} by our choice of $\eta$.
  The above argument is visualized in Figure~\ref{fig:FiniteGluing}.
  
  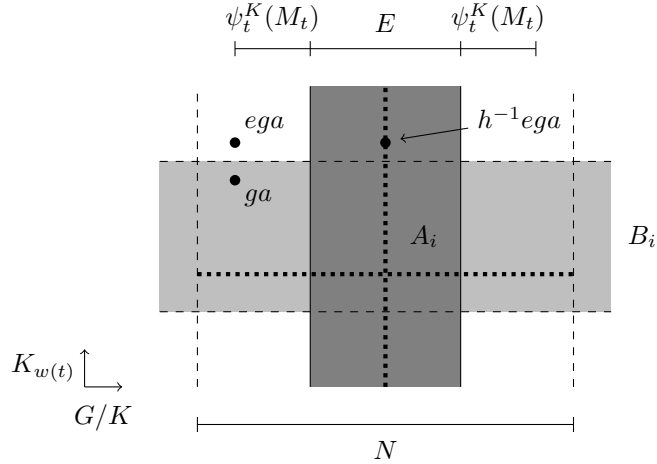
\begin{figure}[ht]
  \begin{center}
  \begin{tikzpicture}
    
    \coordinate (dE) at (-0.5,0);
    \coordinate (dW) at (4.5,0);
    \coordinate (uE) at (-0.5,4);
    \coordinate (uW) at (4.5,4);
    
    \coordinate (dL) at (0,0);
    \coordinate (dl) at ($(dL) + (1,0)$);
    \coordinate (dc) at ($(dl) + (1,0)$);
    \coordinate (dr) at ($(dc) + (1,0)$);
    \coordinate (dR) at ($(dr) + (1,0)$);
    \coordinate (uL) at (0,4);
    \coordinate (ul) at ($(uL) + (1,0)$);
    \coordinate (uc) at ($(ul) + (1,0)$);
    \coordinate (ur) at ($(uc) + (1,0)$);
    \coordinate (uR) at ($(ur) + (1,0)$);

    \fill [black!25] ($(dL) + (-1,1)$) rectangle ($(uR) + (1,-1)$);    
    \fill [black!50] (dl) rectangle (ur);
    
    \draw (dl) -- (ul);
    \draw (dr) -- (ur);
    
    \draw [dashed] (dE) -- (uE);
    \draw [ultra thick, dotted] (dc) -- (uc);
    \draw [dashed] (dW) -- (uW);
    
    \draw ($(uL) + (0,0.4)$) -- ++(0,0.2);
    \draw ($(ul) + (0,0.4)$) -- ++(0,0.2);
    \draw ($(ur) + (0,0.4)$) -- ++(0,0.2);
    \draw ($(uR) + (0,0.4)$) -- ++(0,0.2);
    \draw ($(uL) + (0,0.5)$) -- ($(uR) + (0,0.5)$);
    
    \node [above] at ($(uL) + (0.5,0.6)$) {$\psi_t^K(M_t)$};
    \node [above] at ($(uc) + (0,0.6)$) {$E$};
    \node [above] at ($(ur) + (0.5,0.6)$) {$\psi_t^K(M_t)$};
    
    
    \draw [dashed] ($(dL) + (-1,1)$) -- ($(dR) + (1,1)$);
    \draw [dashed] ($(uL) + (-1,-1)$) -- ($(uR) + (1,-1)$);
    
    \node [right] at ($(dR) + (1.1,2)$) {$B_i$};
    \node [right] at ($(dc) + (0.2,2)$) {$A_i$};
    
    \draw ($(dL) + (-0.5,-0.4)$) -- ++(0,-0.2);
    \draw ($(dR) + (0.5,-0.4)$) -- ++(0,-0.2);
    \draw ($(dL) + (-0.5,-0.5)$) -- ($(dR) + (0.5,-0.5)$);
    \draw [ultra thick,dotted] ($(dL) + (-0.5,1.5)$) -- ($(dR) + (0.5,1.5)$);
    
    \node [below] at ($(dc) + (0,-0.6)$) {$N$};
    
    \fill (0,2.75) circle (0.07);
    \node [below right] at (0,2.75) {$g a$};
    \fill (0,3.25) circle (0.07);
    \node [above right] at (0,3.25) {$e g a$};
    \fill (2,3.25) circle (0.07);
    \node (x) [above right] at (3.1,3.25) {$h^{-1} e g a$};
    \draw [->] (x) -- (2.2,3.3); 
    
    \coordinate (cmp) at (-2,0);
    \draw [->] (cmp) -- ++(0.5,0);
    \draw [->] (cmp) -- ++(0,0.5);
    \node [left] at ($(cmp) + (0,0.25)$) {$K_{w(t)}$};
    \node [below] at ($(cmp) + (0.25,-0.1)$) {$G/K$};
  
  \end{tikzpicture}
  \end{center}
  \caption{The construction of the designation $(B_i, x_i)_{i \in I}$ in Lemma~\ref{lem:ShadowFinites}. The vertical thick dotted line is $K_{w(t)}$, the horizontal thick dotted line is $L^N_{w(t)}$, and each set $A_i$ is a subset of $K_{w(t)}$. The shadows of the $x_i$ are contained in the dark gray region. The light gray region is $B_i$.}
  \label{fig:FiniteGluing}
  \end{figure}
  
  We have that $(B_i, x_i)_{i \in I}$ is a $(\delta, F)$-designation in $(G_{w(t)}, X_{w(t)})$, and since each $x_i \in X_{(t,E)} \subseteq X_t$, it has an $\epsilon$-realization $y \in X_{w(t)}$.
  We claim that $y \in X_{w(t), \psi_t^K(M_t) E}$, and for that, let $g \in G_{w(t)}$ be such that $\psi_t(g) \notin \psi_t^K(M_t) E$.
  Then for each $h \in M_t$ and $i \in I$ we have $h^{-1} g \notin (\psi_t^K)^{-1}(E)$, which implies $d_X(h^{-1} g \cdot x_i, 0) \leq \Delta_t$ since $\Theta^K_t(x_i) \subseteq E$.
  By our choice of $M_t$, this implies $d_X(g \cdot x_i, 0) < \Delta_{w(t)} / 2$.
  Then we have $d_X(g \cdot y, 0) \leq d_X(g \cdot y, g \cdot x_i) + d_X(g \cdot x_i, 0) \leq \Delta_{w(t)}$ for the $i \in I$ with $g \in B_i$.
  This implies $g \notin \Theta^K_{w(t)}(y)$.
  We have shown that $y \in X_{w(t), \psi_t^K(M_t) E}$, and this proves that $\fin{K}{\X}$ is weakly $0$-gluing.

  Next, we show that $\fin{K}{\X}$ has weakly dense homoclinic points.
  Let $(t, E) \in \VVDi$, $x \in X_{(t,E)}$ and $\epsilon > 0$.
  Since $\X$ has weakly dense homoclinic points, there exists $t' \geq t$ and a homoclinic point $y \in X_{t'}$ with $d_X(x, y) < \epsilon$.
  Since $y$ is homoclinic, it has a finite $(K,t')$-shadow $E' \subseteq H$, and thus $y \in X_{t', E'}$.
  We can choose $E'$ so that $(t, E) \leq (t', E')$.
  
  Consider finally the map $f$, which is clearly an evolution of $\fin{K}{\X}$.
  We claim that the system is $f$-stabilizing.
  Choose $\epsilon = \Delta_{s(t)}$ and apply Lemma~\ref{lem:Towers} to obtain a tier $t' \geq s(t)$ and a finite set $R \subseteq G / K$ such that for all $x \in X_{t'}$ with $\Theta^K_{t'}(x) \cap R = \emptyset$ we have $d_X(f^n(x), 0_X) \leq \Delta_{s(t)}$ for all $n \in \N$.
  We may assume $\psi_t^K(D^{s(t)}_t) \subseteq R^{-1}$.
  By Remark~\ref{rem:D}, we have $\Theta_{s(t)}(x) \subseteq \phi^{t'}_{s(t)}(\Theta_{t'}(x)) \subseteq D^{t'}_{s(t)} \cdot \Theta_{s(t)}(x)$ for $x \in X_{s(t)}$.
  Then $\Theta^K_{s(t)}(f^n(x)) \subseteq R^{-1} \psi_{s(t)}^K(D^{t'}_{s(t)}) \cdot \Theta^K_{s(t)}(x)$ for all $x \in X_t$ and $n \in \N$, which means that the tier $(t, E)$ is $f$-stabilizing with the choice $s(t, E) = (s(t), R^{-1} \psi_{s(t)}^K(D^{t'}_{s(t)}) E)$.
  The condition $\psi_t^K(D^{s(t)}_t) \subseteq R^{-1}$ guarantees $s(t, E) \geq (t, E)$ in the directed set $\VVDi$.
\end{proof}

In the $\mathrm{Fix}$-construction, gluing never takes us to a higher tier, so if each tier of $\X$ is $0$-gluing by itself, then the same holds for $\fix{K}{\X}$ for each $K \trianglelefteq G$. In the zero-dimensional case, this is also true for $\fin{K}{\X}$, but in the case of general expansive actions we do not have a symbolic representation, and thus do not know how to consistently pick a metric where gluing never takes us just above the threshold we use to define shadows. We give an example of a subshift where gluing can take us above the threshold, by deliberately picking a bad metric. We have no examples of expansive systems where weak $0$-gluing is needed for an essential reason.

\begin{example}[Need for weak $0$-gluing]
Consider the two-dimensional binary full shift $X = \{0, 1\}^{\Z^2}$.
For $n \in \N$, define $D_n = \{(i,j) \in \Z^2 \;|\; |i| + |j| = n\}$, and for $x, y \in X$, let $D(x,y) = \{n \in \N \;|\; \exists \vec v \in D_n : x_{\vec v} \neq y_{\vec v}\}$.
Then $d_X(x, y) = \sum_{n \in D(x,y)} 2^{-n/2}$ defines a metric in $X$ that induces the standard product topology.
If two configurations disagree only at the origin, then their $d_X$-distance is $1$, so the canonical expansivity constant of $X$ is $\Delta_X = 1/2$.
We define a single-tier system $\X = \mathcal{U}(\Z^2, X) \in \mathfrak{C}_3$.

Denote the vertical subgroup by $K = \{0\} \times \Z \leq \Z^2$, and consider the tiered system $\fin{K}{\X}$, where $\Z$ acts on every tier $X_E$ for $E \subset \Z^2/K \simeq \Z$ by vertical translations.
By Lemma~\ref{lem:ShadowFinites}, $\fin{K}{\X}$ is weakly $0$-gluing.
On the other hand, we can show that for $n \geq 3$, no tier $(\Z, X_{[-n, n]})$ is $0$-gluing by itself.
Let $m \geq 0$, and define configurations $x, y \in X$ by $x_{(n-2, -2 k)} = 1$ for $k \in \{0, 1, \ldots, m-1\}$, and $y_{(n-2, 2 m + 2 k + 1)} = 1$ for all $k \in \{0, 1, \ldots, m-1\}$, and put $0$ everywhere else.
Then $x, y \in X_{[-n, n]}$ since $d_X(\vec v \cdot x, 0_X) \leq \sum_{k = 2}^{m + 1} 2^{-k} < 1/2$ for all $\vec v \notin [-n, n] \times \Z$, and similarly for $y$.
Define $z \in X$ as the cellwise maximum of $x$ and $y$, which is also the $\epsilon$-realization of a suitable designation in $\fin{K}{\X}$ containing $x$ and $y$, where $\epsilon$ can be chosen arbitrarily small depending on $m$.
Then we have $d_X((0, n+1) \cdot z, 0_X) = (\sum_{k=0}^{m-1} 2^{-(2 k + 3)/2}) + (\sum_{k=0}^{m-1} 2^{-m - k - 1}) > 1/2$, which implies $(0, n+1) \in \Theta(\Delta_X, z)$, and thus $z \notin X_{[-n, n]}$.
\tqed
\end{example}

We are now able to prove that a residually finite extension of a group that is nil-rigid on $\mathfrak{C}_2$ by a group that is nil-rigid on $\mathfrak{C}_3$ is itself nil-rigid on $\mathfrak{C}_3$.
For this, we fix an exact sequence $1 \to K \to G \stackrel{\psi}{\to} H \to 1$, where $G$ is residually finite and $K$ and $H$ are nil-rigid on $\mathfrak{C}_3$ and $\mathfrak{C}_2$ respectively.
We assume that $K \trianglelefteq G$, and $H = G / K$, so that $K = \mathrm{Ker}(\psi)$.
We also fix an ETDS $(\X, f) \in \mathfrak{C}_3$ with bottom group $G$ such that $f$ is asymptotically nilpotent, and denote $\X = (X, (G_t, X_t)_{t \in \Di})$.

Consider the system $\fin{K}{\X}$.
Since the quotient group $H$ is nil-rigid on $\mathfrak{C}_2$, we can apply Lemma~\ref{lem:ShadowFinites} and obtain $\fin{K}{\X} \in \mathfrak{C}_3$.
Since the group $K$ is nil-rigid on $\mathfrak{C}_3$, we know that $f$ is nilpotent on each tier $X_{(t, E)}$ of $\fin{K}{\X}$.
Since each homoclinic point $x \in X_t$ has a finite $(K,t)$-shadow, it belongs to one of these tiers, and is thus mortal.
We can then apply Lemma~\ref{lem:Mortal} to show that $f$ is nilpotent on each tier $X_t$.
We have proved the following result.

\begin{theorem}
\label{thm:BigClosure}
  Suppose we have an exact sequence $1 \to K \to G \to H \to 1$ of groups such that $G$ is residually finite, $K$ is nil-rigid on $\mathfrak{C}_3$, and $H$ is nil-rigid on $\mathfrak{C}_2$.
  Then $G$ is nil-rigid on $\mathfrak{C}_3$.
\end{theorem}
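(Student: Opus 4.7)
The plan is essentially to execute the recipe already outlined in the paragraph preceding the theorem, in which the heavy lifting is distributed over the $\mathrm{Fin}$-construction of Section~\ref{sec:Finites} together with Lemmas~\ref{lem:ShadowFinites} and~\ref{lem:Mortal}. Concretely, I would fix an ETDS $(\X, f) \in \mathfrak{C}_3$ with $\X = (X, (G_t, X_t)_{t \in \Di})$, $G_0 = G$, and $f$ asymptotically nilpotent, and aim to show that $f$ is nilpotent on each tier $X_t$.

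The first step is to pass to the companion system $\fin{K}{\X}$ of points with finite $(K,t)$-shadow (Definition~\ref{def:FiniteSystem}); its base group is $K$. The hypothesis that $H = G/K$ is nil-rigid on $\mathfrak{C}_2$ is exactly the assumption needed to invoke Lemma~\ref{lem:ShadowFinites}, which yields $(\fin{K}{\X}, f) \in \mathfrak{C}_3$. (Implicitly the residual finiteness of $G$ carries over to $K_t \leq G_t$ in every tier, which is what is needed for membership in $\mathfrak{C}_3$.)

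The second step uses the nil-rigidity of $K$ on $\mathfrak{C}_3$: applied to $(\fin{K}{\X}, f)$, it gives that $f$ is nilpotent on each tier $X_{(t,E)}$ of $\fin{K}{\X}$.

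The third and final step is to lift this conclusion from $\fin{K}{\X}$ back to $\X$ via Lemma~\ref{lem:Mortal}. The hypotheses of that lemma require $(\X, f) \in \mathfrak{C}_2$ (immediate since $\mathfrak{C}_3 \subset \mathfrak{C}_2$) and that every homoclinic point of every tier be mortal. Given a homoclinic point $x \in X_t$, its shadow $\Theta_t(x) \subseteq G_t$ is finite, and so its image $\Theta^K_t(x) = \psi^K_t(\Theta_t(x))$ is a finite subset $E \subseteq H$; hence $x \in X_{(t, E)}$, a tier of $\fin{K}{\X}$. By the previous step $f$ is nilpotent on $X_{(t, E)}$, so in particular $x$ is mortal. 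Lemma~\ref{lem:Mortal} then delivers nilpotency of $f$ on each $X_t$, establishing nil-rigidity of $G$ on $\mathfrak{C}_3$.

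There is essentially no genuine obstacle beyond checking that the hypotheses of the two key lemmas line up: Lemma~\ref{lem:ShadowFinites} needs asymptotic nilpotency of $f$ and nil-rigidity of the quotient on $\mathfrak{C}_2$ (both given), while Lemma~\ref{lem:Mortal} needs mortality of homoclinic points (supplied by nil-rigidity of the kernel applied to the $\mathrm{Fin}$-construction). The conceptual content of the theorem has been pre-packaged into those two lemmas; the proof itself is simply the three-step composition above.
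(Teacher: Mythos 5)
Your proof is correct and follows exactly the same route as the paper's: pass to $\fin{K}{\X}$, invoke Lemma~\ref{lem:ShadowFinites} (using nil-rigidity of $H$ on $\mathfrak{C}_2$) to land in $\mathfrak{C}_3$, apply nil-rigidity of $K$ to get nilpotency on each tier $X_{(t,E)}$, observe that every homoclinic point lies on some such tier and is therefore mortal, and finish with Lemma~\ref{lem:Mortal}. The side remark about residual finiteness passing to $K_t \leq G_t$ is the same observation made inside the proof of Lemma~\ref{lem:ShadowFinites}.
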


With a similar idea, we show the nil-rigidity of potentially infinite direct sums of groups that are nil-rigid on $\mathfrak{C}_2$.

\begin{proposition} 
  \label{prop:DirectSum}
  Let $G$ be a residually finite direct sum of groups that are nil-rigid on $\mathfrak{C}_2$.
  Then $G$ is nil-rigid on $\mathfrak{C}_3$.
\end{proposition}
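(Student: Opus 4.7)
The plan is to combine a finite induction using Theorem~\ref{thm:BigClosure} with an adaptation of its proof strategy that exploits the residual finiteness of finite subdirect sums. First, I would prove by induction on $n \geq 1$ that any finite direct product $H = G_{i_1} \times \cdots \times G_{i_n}$ of groups from the family is nil-rigid on $\mathfrak{C}_3$. Each $G_i$ is residually finite as a subgroup of the residually finite $G$, so $H$ is residually finite. For the inductive step, apply Theorem~\ref{thm:BigClosure} to the split exact sequence $1 \to G_{i_2} \times \cdots \times G_{i_n} \to H \to G_{i_1} \to 1$: the kernel is $\mathfrak{C}_3$-nil-rigid by induction (with base case $n = 1$ being immediate since $\mathfrak{C}_3 \subset \mathfrak{C}_2$), while the quotient $G_{i_1}$ is $\mathfrak{C}_2$-nil-rigid by hypothesis.

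For the main statement, let $(\X, f) \in \mathfrak{C}_3$ be an asymptotically nilpotent ETDS with base group $G = \bigoplus_{i \in I} G_i$. By Lemma~\ref{lem:Mortal} it suffices to show that every homoclinic $x \in X_t$ is mortal. Since $\Theta_t(x)$ is finite, choose a finite $F \subseteq I$ with $\phi^t_{t_0}(\Theta_t(x)) \subseteq H_F := \bigoplus_{i \in F} G_i$, and set $K := K_F = \bigoplus_{i \notin F} G_i$, so that $G/K \cong H_F$. The plan from here is to mirror the proof of Theorem~\ref{thm:BigClosure} applied to this exact sequence.

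The apparent obstacle is that Lemma~\ref{lem:ShadowFinites}, on which Theorem~\ref{thm:BigClosure} relies, requires $H_F = G/K$ to be nil-rigid on $\mathfrak{C}_2$, while the first step only supplies $\mathfrak{C}_3$-nil-rigidity. The key observation that bypasses this is that $H_F$ is itself residually finite, being a finite direct product of residually finite groups. Consequently the tier groups $G_{t'}/F'$ appearing in $\fix{K}{\X}$ are finite-index extensions of $H_F$ and therefore residually finite, which upgrades the conclusion of Corollary~\ref{cor:PeriodicC2} from $\fix{K}{\X} \in \mathfrak{C}_2$ to $\fix{K}{\X} \in \mathfrak{C}_3$. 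The periodization step in the proof of Lemma~\ref{lem:Towers}, where mortality of the constructed $K$-periodic point is deduced from nil-rigidity of $G/K$, therefore goes through using only $\mathfrak{C}_3$-nil-rigidity of $H_F$. Re-running the proof of Lemma~\ref{lem:ShadowFinites} with this weakened hypothesis yields $(\fin{K}{\X}, f) \in \mathfrak{C}_3$.

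The hardest part of the plan will be concluding mortality of $x$ from this. The point $x$ lies in the tier $X_{(t, \Theta^K_t(x))}$ of $\fin{K}{\X}$, whose base group is $K = K_F$, again an infinite direct sum of $\mathfrak{C}_2$-nil-rigid groups, so directly invoking the proposition on $K$ would be circular. I expect the resolution to require analyzing this one tier: since points there have bounded shadow in the $G/K$-direction and the ambient system $\X$ has dense homoclinic points, one should approximate $x$ by points whose dynamics factors through a finite subdirect sum of $K$, reducing mortality of $x$ to the finite-product nil-rigidity established in the first step.
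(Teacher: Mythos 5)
Your proposal takes a different route from the paper and has a genuine gap that you yourself flag at the end. The paper does not decompose $G$ into one finite sub-sum $H_F$ against the entire infinite complement $K_F$. Instead, for \emph{each} index $i \in I$, it considers $K_i = \ker(\pi_i) \trianglelefteq G$, so that $G/K_i \cong H_i$ is $\mathfrak{C}_2$-nil-rigid by hypothesis, and applies the bounded-spreading computation from the final paragraph of Lemma~\ref{lem:ShadowFinites} (itself an application of Lemma~\ref{lem:Towers}) to obtain a finite set $R^i_t \subseteq H_i$ with $\Theta^{K_i}_{s(t)}(f^n(x)) \subseteq R^i_t \cdot \Theta^{K_i}_{s(t)}(x)$ for all $n$. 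For a homoclinic $x$, the trajectory shadows $\Theta_{s(t)}(f^n(x))$ lie in a fixed finitely generated subgroup, hence project nontrivially into only finitely many summands $H_i$, $i \in J$; combining the per-summand bounds confines $\Theta_{s(t)}(f^n(x))$ to a single finite set for all $n$. Lemma~\ref{lem:BoundedShadow} then forces the orbit of $x$ to be finite, whence asymptotic nilpotency gives $f^m(x) = 0_X$, and Lemma~\ref{lem:Mortal} concludes. This never requires nil-rigidity of any infinite sub-sum and never invokes $\fin{K}{\X}$ directly.

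The gap in your plan is Step~4. Having chosen a finite $F$ and formed $\fin{K_F}{\X}$, the tier containing $x$ has base group $K_F = \bigoplus_{i \notin F} G_i$, which is again an infinite direct sum, so you cannot invoke the proposition being proved. Your suggested fix --- approximating $x$ by points whose dynamics factors through a finite sub-sum of $K_F$ --- is not supported by anything you have established: while the initial shadow of the homoclinic $x$ meets only finitely many summands, you have given no control over how the shadow spreads \emph{inside} $K_F$ under iteration of $f$. That is exactly the content you would need, and it is precisely what the paper extracts by running the $K_i$-argument separately for every summand (where, crucially, the bound for $i$ outside the finitely many relevant indices is trivial: an empty $K_i$-shadow stays empty). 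Your Step~1 (nil-rigidity of finite products via Theorem~\ref{thm:BigClosure}) is correct but turns out to be unnecessary for the paper's argument, and the residual-finiteness upgrade of $\fix{K}{\X}$ to $\mathfrak{C}_3$ in your Step~3, while plausible, does not help close the gap since the obstruction sits in $K_F$, not in $G/K_F$.
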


\begin{proof}
  Let $G = \bigoplus_{i \in I} H_i$, where each $H_i$ is nil-rigid on $\mathfrak{C}_2$.
  Let $(\X, f) \in \mathfrak{C}_3$ be an asymptotically nilpotent ETDS with $\X = (X, (G_t, X_t)_{t \in \Di})$ and $G_0 = G$.
  For each $i \in I$, let $K_i \trianglelefteq G$ be the kernel of the projection map $\pi_i : G \to H_i$, so that we have an exact sequence $1 \to K_i \to G \to H_i \to 1$.
  Consider the system $\fix{K_i}{\X} \in \mathfrak{C}_2$ whose base group is $H_i$.
  Since $H_i$ is nil-rigid on $\mathfrak{C}_2$, we see as in Lemma~\ref{lem:ShadowFinites} that for each $t \in \Di$ there exists a finite set $R^i_t \subseteq H_i$ such that $\Theta^{K_i}_{s(t)}(f^n(x)) \subseteq R^i_t \cdot \Theta^{K_i}_{s(t)}(x)$ for all $x \in X_t$ and $n \in \N$.
  
  For each $t \in \Di$, there exists a finite set $N_t \subseteq G_{s(t)}$ such that for all $x \in X_t$ and $n \in \N$ we have $\Theta_{s(t)}(f^n(x)) \subseteq N^n_t \cdot \Theta_{s(t)}(x)$.
  Fix a homoclinic point $x \in X_t$, and denote $F = \langle N_t \cdot \Theta_{s(t)}(x) \rangle \leq G_{s(t)}$.
  Then $\Theta_{s(t)}(f^n(x)) \subseteq F$ for all $n \in \N$.
  Since the image $\phi^{s(t)}_{t_0}(F) \leq G$ is finitely generated, there exists a finite set $J \subseteq I$ with $\phi^{s(t)}_{t_0}(F) \leq \bigoplus_{i \in J} H_i$.
  For each $i \in J$ and $n \in \N$, we have $\pi_i(\phi^{s(t)}_{t_0}(\Theta_{s(t)}(f^n(x)))) = \Theta^{K_i}_{s(t)}(f^n(x)) \subseteq R^i_t \cdot \Theta^{K_i}_{s(t)}(x) \subseteq H_i$.
  This implies
  \[
    \Theta_{s(t)}(f^n(x)) \subseteq (\phi^{s(t)}_{t_0})^{-1} \left( \bigoplus_{i \in J} R^i_t \cdot \Theta^{K_i}_{s(t)}(x) \right)
  \]
  The right-hand side is a finite set, since each component $R^i_t \cdot \Theta^{K_i}_{s(t)}(x)$ is finite and $\phi^{s(t)}_{t_0}$ is finite-to-one.
  Since the shadows $\Theta_{s(t)}(f^n(x))$ for $n \in \N$ are restricted to a finite set, Lemma~\ref{lem:BoundedShadow} implies $f^m(x) = f^n(x)$ for some $m < n$.
  We assumed $f$ to be asymptotically nilpotent, so $f^m(x) = 0_X$, that is, $x$ is mortal.
  Since $x$ was arbitrary, Lemma~\ref{lem:Mortal} implies that $f$ is nilpotent on each tier of $\X$.
\end{proof}

\section{Local Properties}
\label{sec:Local}

In this section, we study groups that are locally nil-rigid on some class of tiered systems, meaning that their finitely generated subgroups have this property.
Our goal is to show that such groups are nil-rigid as well.
For the class $\mathfrak{C}_4$, this follows quite easily from Lemma~\ref{lem:Nhood}.
Any finite subset of the group can be extended into a finite neighborhood of the evolution map along which the shadow of a point can spread, and let it generate a subgroup.
We consider the set of points whose shadow is contained in this subgroup, which is also a system in $\mathfrak{C}_4$.
By the nil-rigidity of the finitely generated subgroup, an asymptotically nilpotent evolution map of such a system is nilpotent.
Since every homoclinic point of the original system is contained in one of these `local' systems, it is mortal, and we apply Lemma~\ref{lem:Mortal} to conclude.

We present a slightly more general construction that can be applied to an arbitrary tiered system.
We choose a finitely generated subgroup and a finite subset of tiers, and form a new system from those.
This is used later in Section~\ref{sec:Abelian} to prove that locally virtually abelian groups are nil-rigid on $\mathfrak{C}_2$.
Like in the case of $\Z$, for a finitely generated abelian group it is enough to consider a finite number of well-behaved tiers to show that all homoclinic points are mortal, which means we can use the construction to prove the mortality of all homoclinic points of the original system.
What prevents us from extending the construction to a larger class of subgroups is that the finite neighborhood may be different on each tier, so we can only construct subsystems with finitely many tiers.

Suppose that we have an ETDS $(\X, f) \in \mathfrak{C}_2$ with $\X = (X, (G_t, X_t)_{t \in \Di})$.
We fix a finite nonempty subset $\VDi \subseteq \Di$ with a least and greatest element, the latter of which we denote by $r$, and let $H = G_r$ act on each tier $X_t$ for $t \in \VDi$ in the obvious way.
Let $\Delta$ be the canonical expansivity constant of $(H, X_r)$, and let $\Theta = \Theta_r$ be the shadow function of $(H, X_r)$, as defined in Definition~\ref{def:Shadows}.

Fix a finite set $F \subseteq H$ such that for all $x \in X_r$, the condition $d_X(g \cdot x, 0) \leq \frac{3}{2} \Delta$ for all $g \in F$ implies $d_X(x, 0) \leq \Delta$, and $\Theta(f(x)) \subseteq F \cdot \Theta(x)$ holds for all $x \in X_{s(t)} \cap f^{-1}(X_{s(t)})$ for each $t \in \VDi$ with $s(t) \in \VDi$.
If $F$ has these properties, we say that it is \emph{suitable for $\VDi$ and $f$}, or just for $\VDi$ if it satisfies the first condition.
We can extend any finite subset of $H$ into a suitable set using Lemma~\ref{lem:Nhood}, recalling that $\frac{3}{2} \Delta$ is also an expansivity constant of $(H, X_r)$.
We now construct an ETDS $(\mathcal{Y}, f)$ as follows.
Let $K_F = \langle F \rangle$ be the subgroup of $H$ generated by $F$.
For $t \in \VDi$, denote by $X_{F, t} \subseteq X_t$ the set of those $x \in X_t$ with $\Theta(x) \subseteq K_F$.
The set $X_{F,t}$ is $K_F$-invariant and closed in $X_t$.
Let $N \subseteq G_r$ be a finite set such that for $x \in X_r$, the condition $d_X(g \cdot x, 0) \leq \Delta$ for all $g \in N$ implies $d_X(x, 0) \leq \Delta/2$, and let $0 < \epsilon \leq \Delta$ be such that $d_X(x, y) < \epsilon$ implies $d_X(g^{-1} \cdot x, g^{-1} \cdot y) \leq \Delta$ for all $x, y \in X_r$ and $g \in N$.
Then $\epsilon$ is an expansivity constant for $(K_F, X_{F,t})$.
The directed set of the system is $\VDi$, its every group is $K_F$, and its tiers are $X_{F,t}$ for $t \in \VDi$.
The homomorphisms $\phi$ are identity maps on $K_F$.
It is easy to see that $f$ is an evolution of $\mathcal{Y}$.

\begin{definition}
Let $\X = (X, (G_t, X_t)_{t \in \Di})$ be a TDS and $\VDi \subseteq \Di$ a finite nonempty subset with greatest and least elements, and let $F \subseteq G$ be suitable for $\VDi$.
The TDS $\mathcal{Y}$ defined above is denoted by $\loc{\VDi, F}{\X}$.
If $\Di = \VDi$ is a singleton, we denote $\loc{\VDi, F}{\X} = \loc{F}{\X}$.
\end{definition}

\begin{lemma}
  \label{lem:LocalSystem}
  Let $\X = (X, (G_t, X_t)_{t \in \Di}, f)$ be an ETDS and $\VDi \subseteq \Di$ as above, and let $F \subseteq G$ be suitable for $\VDi$ and $f$.
  If a tier $t \in \VDi$ is $f$-stabilizing in $\X$ and $s(t) \in \VDi$, then the same holds in $\loc{\VDi, F}{\X}$.
  The analogous result holds for weakly $0$-gluing tiers and the function $w$.
\end{lemma}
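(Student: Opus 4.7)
For the $f$-stabilizing claim, I would set $s(t)$ in $\loc{\VDi, F}{\X}$ to agree with $s(t)$ in $\X$ and prove $f^k(X_{F,t}) \subseteq X_{F, s(t)}$ by induction on $k \geq 0$. The base case follows from $X_{F, t} \subseteq X_t \subseteq X_{s(t)}$ together with $\Theta(x) \subseteq K_F$ for $x \in X_{F, t}$. For the inductive step, both $f^k(x)$ and $f^{k+1}(x)$ lie in $X_{s(t)}$ by the $f$-stability of $t$ in $\X$, so $f^k(x)$ belongs to $X_{s(t)} \cap f^{-1}(X_{s(t)})$; since $t \in \VDi$ and $s(t) \in \VDi$, the defining property $\Theta(f(z)) \subseteq F \cdot \Theta(z)$ of a suitable $F$ then gives $\Theta(f^{k+1}(x)) \subseteq F \cdot \Theta(f^k(x)) \subseteq F \cdot K_F = K_F$, placing $f^{k+1}(x)$ in $X_{F, s(t)}$.

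For the weak $0$-gluing claim, I would take $w(t)$ in $\loc{\VDi, F}{\X}$ to be the $w(t) \in \VDi$ supplied by $\X$. Fix $\epsilon > 0$ with $\epsilon \leq \Delta/2$, and let $\delta_0 > 0$ and finite $E_0 \subseteq G_{w(t)}$ be provided by the weak $0$-gluing of $\X$ at $t$. Writing $\pi = \phi^r_{w(t)}$, the action of $K_F$ on $X_{F, w(t)}$ factors through the finite-to-one restriction $\pi|_{K_F}$. Fix a set-theoretic section $\sigma : \pi(K_F) \to K_F$ of $\pi|_{K_F}$, and given a $(\delta, E)$-designation $(A_i, x_i)_{i \in I}$ on $(K_F, X_{F, w(t)})$ with $x_i \in X_{F, t}$, lift it to a designation on $(G_{w(t)}, X_{w(t)})$ by setting $B_i = \{g \in \pi(K_F) : \sigma(g) \in A_i\}$ and appending $G_{w(t)} \setminus \pi(K_F)$ with point $0$ via the shorthand convention of Definition~\ref{def:Gluing}. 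With $E$ chosen finite and large enough to contain $\ker(\pi|_{K_F})$ together with a $\sigma$-preimage of $E_0$, and $\delta > 0$ sufficiently small, the lift is a $(\delta_0, E_0)$-designation whose points lie in $X_t$; weak $0$-gluing of $\X$ then yields an $\epsilon$-realization $y \in X_{w(t)}$, which is also an $\epsilon$-realization of the original designation, since the $K_F$-action factors through $\pi$ and any ambiguity across $\ker(\pi|_{K_F})$-fibers is absorbed by the $\delta$-slack forced by $E \supseteq \ker(\pi|_{K_F})$.

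It remains to verify that $y \in X_{F, w(t)}$, i.e., $\Theta(y) \subseteq K_F$, which is where the other defining property of suitability (the topological one) is used. For $g \in G_r \setminus K_F$ it suffices to bound $d_X(hg \cdot y, 0) \leq \tfrac{3}{2}\Delta$ for every $h \in F$, after which suitability forces $d_X(g \cdot y, 0) \leq \Delta$ and hence $g \notin \Theta(y)$. Since $F \subseteq K_F$ and $g \notin K_F$, the product $hg$ lies outside $K_F$ and so outside $\Theta(x_i) \subseteq K_F$ for every $i$, giving $d_X(hg \cdot x_i, 0) \leq \Delta$; combining with the realization bound $d_X(hg \cdot y, hg \cdot x_i) < \epsilon \leq \Delta/2$ on the block containing $hg$ (or the trivial bound on the extra $0$-labeled part), the triangle inequality yields $d_X(hg \cdot y, 0) \leq \tfrac{3}{2}\Delta$. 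The main obstacle is the lifting calibration in the previous paragraph: choosing $E$ and $\delta$ so that the $E$-border in $K_F$ truly controls the $E_0$-border in $G_{w(t)}$ through $\sigma$, analogous to the transversal argument in Lemma~\ref{lem:VirtualProp} but complicated by the non-injectivity of $\pi|_{K_F}$, which forces $E$ to absorb $\ker(\pi|_{K_F})$ so that designations looking inconsistent across individual $\pi$-fibers are nonetheless approximately consistent within $\delta$.
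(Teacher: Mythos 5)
Your proof of the $f$-stabilizing part is correct and essentially identical to the paper's (modulo an obvious typo in the paper's statement of the inductive step).

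The weak $0$-gluing part, however, has a genuine gap. Your ``lift'' labels only $\pi(K_F) \subseteq G_{w(t)}$ with nontrivial points, setting $B_i = \sigma^{-1}(A_i)$ and labeling $G_{w(t)} \setminus \pi(K_F)$ with $0_X$. The problem is the \emph{lateral} border of the $B_i$ against this $0$-labeled complement. If $E_0 \not\subseteq \pi(K_F)$ — which we cannot rule out, since $E_0$ is handed to us by the weak $0$-gluing of $\X$ and can only be enlarged, not shrunk — then for any $e_0 \in E_0 \setminus \pi(K_F)$ and any $g \in B_i$ we have $e_0 g \notin \pi(K_F)$, so that $\partial_{E_0} B_i = B_i$. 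The designation condition then forces $d_X(g \cdot x_i, 0) < \delta_0$ for \emph{all} $g \in B_i$, which fails as soon as some $x_i$ has nonempty shadow (e.g.\ $\sigma(g) \in \Theta(x_i)$ for some $g \in B_i$). Your section $\sigma$ and the inclusion $\ker(\pi|_{K_F}) \subseteq E$ address the ``vertical'' ambiguity across fibers of $\pi$, but not this ``horizontal'' lateral-border issue — and the latter is the real obstacle. The paper's proof handles it by thickening transversally: it takes $B_i = L^{ED} A_i$, where $L$ is a transversal of $K_F$ in $H = G_r$ and $L^{ED} = L \cap EDK_F$ is a finite annulus, so that $B_i$ fills out a region $EDK_F$ around $K_F$. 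The border of $B_i$ then splits into an outer piece lying outside $DK_F$, where the shadow bound $\Theta(x_i) \subseteq K_F$ together with the defining property of $D$ forces $d_X(g \cdot x_i, 0) < \delta$, and an inner piece within $K_F$-cosets that pulls back via a transversal element $k \in L^{ED}$ to $\partial_{E'} A_i$, handled by the original designation. Without some such transversal thickening (here the relevant transversal being that of $K_F$ in $H$, not a section of $\pi|_{K_F}$), the lateral border cannot be controlled, so your argument as written does not go through.
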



\begin{proof}
  We first assume that $s(t) \in \VDi$, and show that the tier $X_{F,t}$ is $f$-stabilizing.
  Let $x \in X_{F,t} \subseteq X_t$.
  Then we have $f^k(x) \in X_{s(t)}$ for all $k \geq 0$.
  Furthermore, for all $y \in X_{F,s(t)}$ such that $f(y) \in X_{s(t)}$ we have $\Theta(f(x)) \subseteq F \cdot \Theta(x) \subseteq K_F$ by the suitability of $F$, which implies $f(y) \in X_{F,s(t)}$.
  By induction on $k$, this shows $f^k(x) \in X_{F,s(t)}$ for all $k \geq 0$.

  Next, we assume $w(t) \in \VDi$, and show that $X_{F,t}$ is weakly $0$-gluing.
  The proof resembles that of Lemma~\ref{lem:ShadowFinites}.
  Let $L \subseteq H$ be a set of representatives for the left cosets of $K_F$ in $H$, and for $B \subseteq H$, denote $L^B = L \cap B K_F$, which is finite whenever $B$ is. 
  Let $0 < \epsilon < \Delta / 2$ be arbitrary, and let $E \subseteq H$ and $\delta > 0$ be given by the weak $0$-gluing property of the tier $X_t$ for $\epsilon$.
  Note that $E$ should actually be a subset of $G_{s(t)}$, but we can lift it to $H$ via $\phi^r_{s(t)}$ without affecting the existence of realizations.
  Let $D \subseteq H$ be a finite set such that for $x \in X_t$, the condition $d_X(h^{-1} \cdot x, 0) \leq \Delta$ for all $h \in D$ implies $d_X(x, 0) < \delta$.
  Let $\eta > 0$ be such that $d_X(x, 0) < \eta$ implies $d_X(k \cdot x, 0) < \delta$ for all $k \in L^{E D}$.
  Denote $E' = K_F \cap (L^{E D})^{-1} E L^{E D}$.
  
  Let $\mathcal{A} = (A_i, x_i)_{i \in I}$ be an $(\eta, E')$-designation in $(K_F, X_{F,t})$.
  Denote $B_i = L^{E D} A_i$.
  We claim that $(B_i, x_i)_{i \in I}$ is a $(\delta, E)$-designation in $(H, X_t)$.
  First, the sets $B_i \subseteq H$ are clearly disjoint.
  Let then $i \in I$ and $g \in \partial_E B_i$ be arbitrary.
  Then there exists $e \in E$ with $e g \notin B_i$.
  Suppose first that $e g \notin E D K_F$.
  Then we have $g \notin D K_F$, which implies $h^{-1} g \notin K_F$ for all $h \in D$.
  Since $x_i \in X_{F,t}$, we obtain $d_X(h^{-1} g \cdot x, 0) \leq \Delta$ for all $h \in D$, implying $d_X(g \cdot x, 0) < \delta$.
  Suppose then that $e g \in E D K_F$, and let $h, k \in L^{E D}$ be such that $h^{-1} e g, k^{-1} g \in K_F$.
  Note that $h^{-1} e g \notin A_i$ and $k^{-1} g \in A_i$.
  Now we have $h^{-1} e k \in E'$, so that $k^{-1} g \in \partial_{E'} A_i$, which implies $d_X(k^{-1} g \cdot x, 0) < \eta$ since $\mathcal{A}$ is an $(\eta, E')$-designation.
  From this we obtain $d_X(g \cdot x, 0) < \delta$.
  
  Since $(B_i, x_i)_{i \in I}$ is a $(\delta, E)$-designation in $(H, X_t)$, it has an $\epsilon$-realization $y \in X_{w(t)}$.
  It remains to show that $y \in X_{F,w(t)}$.
  For that, let $h \in H \setminus K_F$ and $g \in F$ be arbitrary.
  There exists $x \in X_{F,t}$ (either one of the points $x_i$ or $0_X$) with $d_X(g h \cdot y, g h \cdot x) < \epsilon$.
  We also have $g h \notin K_F$, which implies $d_X(g h \cdot y, 0) \leq d_X(g h \cdot y, g h \cdot x) + d_X(g h \cdot x, 0) \leq \epsilon + \Delta < \frac{3}{2} \Delta$ since $\Theta(x) \subseteq K_F$.
  By the suitability of $F$, this implies $d_X(h \cdot y, 0) \leq \Delta$.
  Thus we have $\Theta(y) \subseteq K_F$, or equivalently, $y \in X_{F,w(t)}$.
\end{proof}

As shown by Example~\ref{ex:Loc}, in general the $\mathrm{Loc}$-construction does not preserve the property of having weakly dense homoclinic points.
For single-tier systems, the situation changes.

\begin{lemma}
\label{lem:SingleTierLocal}
  Suppose $\X = \mathcal{U}(G, X, f) \in \mathfrak{C}_4$ is a single-tier ETDS.
  Then $\loc{F}{\X} \in \mathfrak{C}_4$ for every large enough (with respect to inclusion) suitable set $F \subseteq G$.
\end{lemma}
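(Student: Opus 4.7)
The plan is to verify the three properties required for $\loc{F}{\X}$ to lie in $\mathfrak{C}_4$: its single tier must be $f$-stabilizing, weakly $0$-gluing, and have weakly dense homoclinic points. Since $\VDi = \{t_0\}$ is a singleton, we automatically have $s(t_0) = t_0 \in \VDi$ and $w(t_0) = t_0 \in \VDi$, so the first two properties follow directly from Lemma~\ref{lem:LocalSystem} as soon as $F$ is suitable for $\{t_0\}$ and $f$; for a single-tier system the weak $0$-gluing then reduces to ordinary $0$-gluing of $(K_F, X_F)$.

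The substantive step is to establish density of homoclinic points of $\loc{F}{\X}$ in $X_F$. My approach is to combine the dense homoclinic points of $\X$ with the $0$-gluing of $\X$ in a single gluing step. Fix a reference scale $\epsilon_0 \in (0, \Delta_X/4)$ and apply the $0$-gluing of $(G, X)$ at this scale to obtain constants $\delta_0 > 0$ and a finite set $E_0 \subseteq G$; declare $F$ \emph{large enough} if, in addition to being suitable, $F \supseteq E_0$. Given $x \in X_F$ and a target precision $\eta > 0$, use dense homoclinic points of $\X$ to select a homoclinic point $z \in X$ with $d_X(z, x)$ small. The crucial algebraic observation is that since $E_0 \subseteq F \subseteq K_F$ and $K_F$ is a subgroup, for every $g \in K_F$ we have $E_0 g \subseteq K_F$, so $\partial_{E_0} K_F = \emptyset$; hence the designation on $(G, X)$ placing $z$ on $K_F$ and $0_X$ on $G \setminus K_F$ is vacuously a $(\delta_0, E_0)$-designation. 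Let $y$ be its $\epsilon_0$-realization. Then $y$ agrees with $z$ to within $\epsilon_0$ on $K_F$ and with $0_X$ to within $\epsilon_0$ on $G \setminus K_F$, which forces $\Theta(y) \subseteq K_F$ (so $y \in X_F$), and $y$ inherits homoclinicity from $z$ and therefore is homoclinic in $\loc{F}{\X}$.

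The main obstacle is reconciling the fixed reference scale $\epsilon_0$, which determines $E_0$ and hence the ``large enough'' threshold on $F$, with the variable target precision $\eta$: the construction above only yields $d_X(x, y) \leq \epsilon_0 + d_X(z, x)$, and shrinking $\epsilon_0$ to handle smaller $\eta$ would in general enlarge $E_0$ beyond the chosen $F$. I expect the resolution to involve a second refinement step that uses the $0$-gluing of $\loc{F}{\X}$ already supplied by Lemma~\ref{lem:LocalSystem}, whose gluing parameters automatically live inside $K_F$: starting from the coarse homoclinic $y$, one sets up a designation internal to $(K_F, X_F)$ that combines $x$ on a large finite central region with $y$ on its complement, with the boundary condition on the $y$-side controlled by homoclinic decay and the boundary condition on the $x$-side controlled by the fact that $x$ and $y$ are $\epsilon_0$-close on the inner region. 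Verifying that this two-scale argument produces an arbitrarily fine homoclinic approximation of $x$ in $\loc{F}{\X}$, while requiring no enlargement of $F$ beyond the initial threshold containing $E_0$, is where I expect the bulk of the remaining technical work to lie.
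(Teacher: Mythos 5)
Your reduction of the first two properties to Lemma~\ref{lem:LocalSystem} is correct, and your first gluing step (placing $z$ on $K_F$ and $0_X$ elsewhere, with $\partial_{E_0} K_F = \emptyset$ ensuring the designation is vacuous on the $K_F$-side) does produce a homoclinic point $y \in X_F$. But the obstacle you flag is real, and the second step you sketch does not close it. The designation internal to $(K_F, X_F)$ that you propose — $x$ on a large finite central region $A$, $y$ on $K_F \setminus A$ — is never a valid $(\delta, E)$-designation for a general $x \in X_F$: the boundary condition on the $x$-side demands $d_X(g \cdot x, 0_X) < \delta$ for all $g \in \partial_E A$, and since $x$ need not be homoclinic, no choice of $A$ makes this hold. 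The claim that ``$x$ and $y$ are $\epsilon_0$-close on the inner region'' does not substitute for this: $d_X(x, y)$ being small controls the configurations only near the identity, not $d_X(g \cdot x, g \cdot y)$ at arbitrary $g \in A$. Replacing $x$ by the homoclinic approximant $z$ on the central region fails for a different reason: nothing forces $z \in X_F$, so $z$ is simply not available as a point of $(K_F, X_F)$.

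The paper avoids a two-scale argument entirely. It fixes the gluing scale at $\Delta/4$ once and for all, so $\delta$, $E$, and therefore the required lower bound on $F$ are independent of the target precision $\epsilon$; it also insists on $F \supseteq E \cup L$ for an auxiliary ``strengthening'' set $L$. Then, given $\epsilon$, it chooses an $\epsilon$-dependent finite set $N$ via Lemma~\ref{lem:Nhood}, takes a homoclinic $y$ that is $\eta$-close to $x$ for an $N$-dependent $\eta$, and designates $y$ on the enlarged region $M = K_F \cup N$ (with $0_X$ outside). The boundary $\partial_E M \subseteq N \setminus K_F$ is then controlled because $g \notin K_F$ forces $Lg \cap K_F = \emptyset$, so $d_X(hg \cdot x, 0_X) \leq \Delta$ for all $h \in L$, which by the definition of $L$ yields $d_X(g \cdot x, 0_X) \leq \delta/2$, and $d_X(g \cdot y, g \cdot x) < \delta/2$ by the choice of $\eta$. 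The $\Delta/4$-realization $z$ then agrees with $y$ to within $\Delta/4 < \Delta$ on $N$, which by the choice of $N$ and expansivity forces $d_X(z, y) \leq \epsilon/2$, hence $d_X(z, x) < \epsilon$; suitability of $F$ and the boundary estimates give $z \in X_F$. The move you are missing is enlarging the designated region from $K_F$ to $K_F \cup N$ so that expansivity along the finite set $N$ upgrades the fixed-scale realization to an arbitrarily fine approximation of $x$.
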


\begin{proof}
  Denote the tier set of $\X$ by $\Di = \{t_0\}$.
  Then $\loc{F}{\X}$ consists of the single tier $X_{F, t_0}$.
  By Lemma~\ref{lem:LocalSystem}, the single tier of $\loc{F}{\X}$ is $0$-gluing, so it suffices to show that it also has dense homoclinic points.
  Let $0 < \delta \leq \Delta/4$ and $E \subseteq G$ be given by the $0$-gluing property of $\X$ for $\Delta/4$.
  Let $L \subseteq G$ be a finite set such that for $x \in X$, the condition $d_X(g \cdot x, 0) \leq \Delta$ for all $g \in L$ implies $d_X(x, 0) \leq \delta / 2$.
  Choose the set $F$ so that $E \cup L \subseteq F$.
  
  Let $x \in X_{F, t_0}$ and $\epsilon > 0$ be arbitrary.
  Let $N \subseteq G$ be a finite set such that for $y, z \in X$, the condition $d_X(h \cdot y, h \cdot z) \leq \Delta$ for all $h \in N$ implies $d_X(y, z) \leq \epsilon/2$.
  Let $0 < \eta < \epsilon/2$ be such that $d_X(y, z) < \eta$ implies $d_X(g \cdot y, g \cdot x) < \delta/2$ for all $g \in N$.
  
  Since $\X \in \mathfrak{C}_4$, there exists a homoclinic point $y \in X$ with $d_X(x, y) < \eta$.
  Denote $M = K_F \cup N$, and consider the two-element designation $\mathcal{A} = ((M, y), (G \setminus M, 0_X))$ in $\X$.
  Since $E \subseteq F$, for each $k \in K_F$ we have $E k \subseteq K_F$, so $\partial_E M \subseteq N \setminus K_F$.
  For $g \in N \setminus K_F$, we have $d_X(g \cdot y, g \cdot x) < \delta / 2$ by our choice of $\eta$, as well as $d_X(h g \cdot x, 0) \leq \Delta$ for all $h \in L$ since $L \subseteq F$.
  The latter condition implies $d_X(g \cdot x, 0) \leq \delta/2$, so we obtain $d_X(g \cdot y, 0) < \delta$.
  Thus $\mathcal{A}$ is a $(\delta, E)$-designation, and has a $\Delta/4$-realization $z \in X$.
  We have $d_X(h \cdot z, h \cdot y) < \Delta/4 < \Delta$ for all $h \in N$, which implies $d_X(z, y) \leq \epsilon/2$ by our choice of $N$, and hence $d_X(x, z) < \epsilon$.
  
  We now claim that $z \in X_{F, t_0}$, and for that, let $g \in G \setminus K_F$ be arbitrary.
  If $g \notin M$, then $d_X(z, 0) \leq \Delta$ by construction.
  If $g \in M$, then $g \in N \setminus K_F$, and we have $d_X(g \cdot z, g \cdot y) < \Delta/4$ by construction of $z$ and $d_X(g \cdot y, g \cdot x) < \delta \leq \Delta/4$ by the definition of $\eta$, as well as $d_X(g \cdot x, 0) \leq \Delta$ since $x \in X_{F, t_0}$.
  This implies $d_X(g \cdot z, 0) \leq \frac{3}{2} \Delta$.
  We now have $d_X(g \cdot z, 0) \leq \frac{3}{2} \Delta$ for all $g \in G \setminus K_F$, so the suitability of $F$ gives $d_X(g \cdot z, 0) \leq \Delta$ for all $g \in G \setminus K_F$, and hence $z \in X_{F, t_0}$.
  
  Finally, since $z$ is a realization of a designation consisting of finitely many homoclinic points, it is homoclinic in $\X$ and hence in $\loc{F}{\X}$.
  We have shown that $\loc{F}{\X}$ has dense homoclinic points.
\end{proof}

On the restricted class $\mathfrak{C}_4$ consisting of single-tier systems, local nil-rigidity is equivalent to nil-rigidity.
This follows easily from the above construction.

\begin{proposition}
  \label{prop:OneTierLocal}
  If a group $G$ is locally nil-rigid on $\mathfrak{C}_2$ or $\mathfrak{C}_4$, then it is nil-rigid on $\mathfrak{C}_4$.
  The same holds if it is locally nil-rigid on $\mathfrak{C}_3$ and locally residually finite.
\end{proposition}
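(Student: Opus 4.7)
The plan is to fix an asymptotically nilpotent ETDS $\mathcal{U}(G,X,f) \in \mathfrak{C}_4$ and show $f$ is nilpotent on $X$. Since $\mathfrak{C}_4 \subseteq \mathfrak{C}_2$, Lemma~\ref{lem:Mortal} reduces this to proving that every homoclinic point $x \in X$ is mortal under $f$. So all that remains is a method to kill an individual homoclinic point, which the $\mathrm{Loc}$-construction is tailor-made for.

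Given such a homoclinic $x$, I would use Lemma~\ref{lem:Nhood} together with the ``large enough'' constant furnished by Lemma~\ref{lem:SingleTierLocal} to pick a finite set $F \subseteq G$ satisfying three properties at once: (i) $\Theta(x) \subseteq F$; (ii) $F$ is suitable for the one-tier indexing set and $f$, so that $F$-translates control the shadow spread of $f$ and the expansivity inflation condition in the definition of suitability holds; and (iii) $F$ is large enough in the sense of Lemma~\ref{lem:SingleTierLocal}, ensuring $\loc{F}{\X} \in \mathfrak{C}_4$. Because $\Theta(x) \subseteq F \subseteq K_F := \langle F \rangle$, the point $x$ lies in the unique tier $X_{F,t_0}$ of $\loc{F}{\X}$, and asymptotic nilpotency of $f$ on $X$ restricts to asymptotic nilpotency of $f$ on $X_{F,t_0}$.

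Now $K_F$ is a finitely generated subgroup of $G$, so each of the three hypotheses of the proposition applies to it. In the $\mathfrak{C}_4$ case the local hypothesis directly yields nil-rigidity of $K_F$ on $\mathfrak{C}_4$. In the $\mathfrak{C}_2$ case we use the inclusion $\mathfrak{C}_4 \subseteq \mathfrak{C}_2$ to apply local nil-rigidity of $G$ on $\mathfrak{C}_2$ to $\loc{F}{\X}$. In the $\mathfrak{C}_3$ case local residual finiteness of $G$ makes $K_F$ residually finite, so $\loc{F}{\X} \in \mathfrak{C}_3$, allowing the hypothesis to apply. In every case $f$ is nilpotent on the single tier $X_{F,t_0}$, and in particular $x$ is mortal. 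Since $x$ was arbitrary, Lemma~\ref{lem:Mortal} completes the argument.

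The only genuine subtlety is the simultaneous fulfillment of (i)--(iii) in the choice of $F$: each of the three conditions asks $F$ to contain some finite set determined by $X$, $f$, and the metric, so any sufficiently large finite $F \supseteq \Theta(x)$ works, with no competing obligations. I therefore expect this step to be the most technical book-keeping in the proof but not to present a real obstacle; the conceptual work has already been done in Lemmas~\ref{lem:SingleTierLocal} and~\ref{lem:Mortal}.
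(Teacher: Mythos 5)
Your proposal matches the paper's proof essentially line for line: fix a large enough suitable $F$ containing the shadow of a given homoclinic point, pass to $\loc{F}{\X}$, use Lemma~\ref{lem:SingleTierLocal} (and, implicitly, Lemma~\ref{lem:LocalSystem}) to place it in the right class, invoke nil-rigidity of the finitely generated subgroup $K_F$ to kill the point, and finish with Lemma~\ref{lem:Mortal}. The case distinction among $\mathfrak{C}_2$, $\mathfrak{C}_3$, $\mathfrak{C}_4$ is handled identically, so this is the same argument rather than an alternative route.
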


\begin{proof}
  Let $\mathfrak{C} \in \{\mathfrak{C}_2, \mathfrak{C}_3, \mathfrak{C}_4\}$, let $(\X, (G, X), f) \in \mathfrak{C}_4$ be a single-tier ETDS, and denote its index set by $\Di = \{t_0\}$.
  Let $F \subseteq G$ be a large enough suitable set for $\Di$ and $f$, and consider the system $\loc{F}{\X}$ with acting group $K_F = \langle F \rangle$.
  It follows from Lemma~\ref{lem:LocalSystem} and Lemma~\ref{lem:SingleTierLocal} that $\loc{F}{\X} \in \mathfrak{C}$; in the case $\mathfrak{C} = \mathfrak{C}_3$, note that $K_F$ is residually finite.
  Since $K_F$ is nil-rigid on $\mathfrak{C}$ as a finitely generated subgroup of $G$, the restriction of $f$ to $\loc{F}{\X}$ is nilpotent.
  Every homoclinic point of $(G, X)$ belongs to $\loc{F}{\X}$ for some choice of $F$, so it is mortal, and Lemma~\ref{lem:Mortal} implies that $f$ is nilpotent on $X$.
\end{proof}

\section{Locally Virtually Abelian Groups}
\label{sec:Abelian}

We now use the results of the previous sections to show that every locally virtually abelian group is nil-rigid on $\mathfrak{C}_2$.
The main motivation behind this class is that Theorem~\ref{thm:BigClosure} and the fact that all abelian groups are nil-rigid on $\mathfrak{C}_2$ together imply that all residually finite solvable groups are nil-rigid on $\mathfrak{C}_3$, and restricting to finitely generated abelian groups is not enough to establish this result with our techniques.
We consider the less natural but larger class of locally virtually abelian groups; since tiered systems based on abelian groups generally contain nonabelian groups, this does not introduce any additional complications to the proofs.

It is again enough to show that homoclinic points are mortal, after which Lemma~\ref{lem:Mortal} provides the desired result.
We thus choose a homoclinic point $x$ from an arbitrary tier, and construct a certain local subsystem $\loc{\VDi,F}{\X}$ where $F$ contains the shadow of $x$.
We then use the fact that $\langle F \rangle$ is virtually $\Z^d$ for some $d \in \N$ to produce a system with base group $\Z^d$.
We argue as in Lemma~\ref{lem:Towers} that the shadows of homoclinic points in $\loc{\VDi,F}{\X}$ can spread only by a finite distance in each cardinal direction of $\Z^d$ under the iteration of $f$.
More specifically, we use Lemma~\ref{lem:Periodization} (periodization) to produce a tiered system with base group $\Z$, apply Lemma~\ref{lem:Z} to show that the periodized points are mortal, and glue them together to produce a point that contradicts the asymptotic nilpotency of $f$.
The main technical difficulty is that the local subsystem has only finitely many tiers, while our proof of Lemma~\ref{lem:Towers} requires a potentially infinite chain of tiers depending on the distribution of homoclinic points.
Our solution is to iteratively construct a single designation with infinitely many partition elements, which is possible since we only consider the homoclinic points of a single tier.

\begin{lemma}
  \label{lem:Zk}
  Let $(X, (\Z^d, X_t)_{t \in \Di}, (\mathrm{ID}_{\Z^d})_{t \leq t'}, f)$ be an ETDS such that the tier $r = w(s(w(t)))$ exists.
  If $f$ is asymptotically nilpotent on $X_r$, then every homoclinic point in $X_t$ is mortal.
\end{lemma}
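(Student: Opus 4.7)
The plan is to induct on $d$. For the base case $d=0$ the group is trivial, so expansivity makes $X_r$ finite and discrete and asymptotic nilpotency on $X_r \supseteq X_t$ yields mortality of every point in $X_t$. For $d = 1$ I apply Corollary~\ref{cor:Z} with the tier $w(t)$ of our hypothesis playing the role of the corollary's ``$t$''. Indeed $w(t)$ is $f$-stabilizing, and the argument of Lemma~\ref{lem:HomoRecC1} (which uses only weak $f$-variable $0$-gluing of the target tier, a property implied by weak $0$-gluing via $\Gamma = \{\mathrm{id}\}$) supplies the $f$-variable homoclinic recurrence property for $s(w(t))$. Asymptotic nilpotency on $X_{w(s(w(t)))} = X_r$ is given, so Corollary~\ref{cor:Z} yields $f$ nilpotent on $X_{w(t)} \supseteq X_t$, whence every homoclinic point in $X_t$ is mortal.

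For the inductive step $d \geq 2$, I first observe that the inductive hypothesis together with Lemma~\ref{lem:Mortal} shows $\Z^{d-1}$ is nil-rigid on $\mathfrak{C}_2$: a $\mathfrak{C}_2$-ETDS automatically satisfies Lemma~\ref{lem:Zk}'s hypothesis at every tier (giving mortality of homoclinic points on each tier), and Lemma~\ref{lem:Mortal} upgrades mortality to nilpotency. By Lemma~\ref{lem:virtual}, every virtually $\Z^{d-1}$ group is then nil-rigid on $\mathfrak{C}_2$.

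Now fix a homoclinic $x \in X_t$ and suppose for contradiction it is not mortal. The strategy is to show the orbit shadow $S := \bigcup_{n \geq 0} \Theta_{w(t)}(f^n(x))$ is contained in a finite subset of $\Z^d$: by Lemma~\ref{lem:BoundedShadow} the orbit of $x$ then lies in a finite subset of $X_{w(t)} \subseteq X_r$, so asymptotic nilpotency on $X_r$ forces $f^n(x) = 0_X$ for some $n$, contradicting non-mortality. To bound $S$ I would work in each coordinate direction $K_i = \langle e_i \rangle$ (so $\Z^d/K_i \cong \Z^{d-1}$) separately, adapting the proof of Lemma~\ref{lem:Towers}: its conclusion, specialized to our setting, provides a finite $R_i \subseteq \Z^d/K_i$ such that for $y \in X_{w(t)}$ with $\Theta^{K_i}_{w(t)}(y) \cap R_i = \emptyset$ one has $d_X(f^n(y), 0) \leq \Delta_{w(t)}/2$ for every $n$. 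Substituting $y = g \cdot x$ for $g \in \Z^d$, using $f^n(g \cdot x) = g \cdot f^n(x)$ and $\Theta^{K_i}_{w(t)}(g \cdot x) = \Theta^{K_i}_{w(t)}(x) - \pi_{K_i}(g)$ (as $\Z^d$ is abelian), and contrapositing, we get $\pi_{K_i}(S) \subseteq \Theta^{K_i}_{w(t)}(x) - R_i$, a finite subset of $\Z^d/K_i$. Intersecting these constraints across $i = 1, \ldots, d$ bounds $S$ in $\Z^d$, completing the argument.

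The main obstacle is the adaptation of Lemma~\ref{lem:Towers} to our setting, since its statement requires $(\X, f) \in \mathfrak{C}_3$ while we have only the local tier hypotheses at $t$, $w(t)$, $s(w(t))$ and $r$. Remark~\ref{rem:PeriodTowers} weakens the requirement to $\X$ being $f$-stabilizing and weakly $0$-gluing and to homoclinic points of $\fix{K_i}{\X}$ being dense in each tier. I would localize further to the finite chain $t < w(t) < s(w(t)) < r$, on which the needed $f$-stability and weak $0$-gluing hold, and supply the density step by direct periodization via Lemma~\ref{lem:Periodization} (applicable because $\Z^d$ is residually finite), which produces homoclinic points of $\fix{F}{\X}$ for suitable finite-index $F \leq K_i$ approximating any point of the relevant tiers. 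The mortality step in Lemma~\ref{lem:Towers}'s proof, which uses $G/K$-nil-rigidity on $\mathfrak{C}_2$ to kill off glued periodized configurations, is here supplied by the nil-rigidity of virtually $\Z^{d-1}$ groups established above, combined with Lemma~\ref{lem:PeriodicProps} transferring the needed $f$-stabilizing and weak $0$-gluing properties into $\fix{F}{\X}$.
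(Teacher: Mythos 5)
Your approach is genuinely different from the paper's, and it has a serious gap at its central step.

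The paper does \emph{not} induct on $d$. It fixes $i$ and periodizes along the rank-$(d-1)$ subgroup $K_i = \mathrm{Ker}(\pi_i)$, so that the quotient is $\Z$, and then invokes the $\Z$-case Lemma~\ref{lem:Z} to conclude mortality of the periodized points; Lemma~\ref{lem:Z} only needs the three-step chain $t \le s(t) \le w(s(t))$ to exist, which is exactly what the hypothesis of Lemma~\ref{lem:Zk} provides. You instead take $K_i = \langle e_i \rangle$ (rank $1$) with quotient $\Z^{d-1}$ and try to supply the quotient's nil-rigidity by induction, then appeal to Lemma~\ref{lem:Towers} (via Remark~\ref{rem:PeriodTowers}). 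This is a legitimate-looking ``dual'' decomposition, but it runs straight into the obstruction that motivated the paper to invent a new argument for Lemma~\ref{lem:Zk} rather than reusing Lemma~\ref{lem:Towers}: the proof of Lemma~\ref{lem:Towers} is not a ``finite-chain'' argument. Each inductive step there constructs a tier $t_{k+1} \ge w(s(t_k))$ (it glues, extracts a homoclinic approximation and periodizes, each of which can force an ascent), so it consumes a potentially unbounded increasing chain of stabilizing and $0$-gluing tiers. The ETDS in Lemma~\ref{lem:Zk} only guarantees these properties along the four tiers $t \le w(t) \le s(w(t)) \le r$, and you cannot ``localize to a finite chain'' and retain the conclusion of Remark~\ref{rem:PeriodTowers}: for the restricted system to be $f$-stabilizing and weakly $0$-gluing, the top tier $r$ would need to satisfy $s(r), w(r) \in \{t, w(t), s(w(t)), r\}$, i.e. $s(r) = w(r) = r$, which is not given. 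Nothing in your ``supply the density step by direct periodization'' sentence addresses the tier-escalation; Lemma~\ref{lem:Periodization} is already used inside Lemma~\ref{lem:Towers}'s proof, and it is precisely one of the sources of escalation.

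What the paper does instead is the key new idea, and it is exactly what your plan is missing: it pre-constructs \emph{all} the periodized points $y^p$ upfront, all living on the single tier $X_{w(t)}$, establishes their mortality once and for all via Lemma~\ref{lem:Z} applied to $\fix{K_i}{\X}$, and then assembles a \emph{single designation with infinitely many pieces} $(B_k, x_k)_{k \ge 1}$ inside $X_{s(w(t))}$ whose realization in $X_r$ violates asymptotic nilpotency. No iteration over tiers is needed, because the nested-regions bookkeeping (conditions 1 and 2 in the proof, Claim~\ref{cl:OrbitOfz}) is done inside that one designation. Without some replacement for this mechanism your plan cannot close. Two secondary, fixable issues: your invocation of the inductive hypothesis requires a $\mathfrak{C}_2$-ETDS with base $\Z^{d-1}$ to have all acting groups literally $\Z^{d-1}$ with identity $\phi$-maps, which it need not; you would at minimum have to pass through Lemma~\ref{lem:VirtualProp} as the paper does in Proposition~\ref{prop:LocallyVirtuallyAbelian}, and handle that the surjective finite-to-one self-maps of $\Z^{d-1}$ need not be the identity. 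But these are minor compared to the Lemma~\ref{lem:Towers} gap.
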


This proof is somewhat similar to Lemma~\ref{lem:Towers}, so we reuse some of the notation and arguments used there.

\begin{proof}
  Denote the TDS by $\X = (X, (\Z^d, X_t)_{t \in \Di})$.
  Let $\Delta$ be the canonical expansivity constant of $X_r$.
  If $d \leq 1$, then the claim holds by Proposition~\ref{prop:FiniteGroups} and Lemma~\ref{lem:Z}, so we assume $d \geq 2$.
  Fix $i \in \{1, \ldots, d\}$, let $\pi_i : \Z^d \to \Z$ be the projection to the $i$th coordinate, and denote $K_i = \mathrm{Ker}(\pi_i) \simeq \Z^{d-1}$.
  Let $\Theta : X_r \to 2^{\Z^d}$ be the $r$-shadow function and $\Theta_i : X_r \to 2^\Z$ the $(K_i, r)$-shadow function of $X_r$.
  Recall that $\vec v \in \Theta(x)$ means $d_X(\vec v \cdot x, 0_X) > \Delta$ for $x \in X_r$ and $\vec v \in \Z^d$.
  
  We claim that there exists $p \in \N$ such that $\Theta_i(f^n(x)) \subseteq [-p, p] + \Theta_i(x)$ for all $n \in \N$ and all homoclinic points $x \in X_t$, which means that the $r$-shadow of a homoclinic point can only spread for a bounded number of steps along the $i$th coordinate.
  Namely, if this holds for all $i \in \{1, \ldots, d\}$, then $\Theta(f^n(x)) \subseteq P + \Theta(x)$ for all $n \in \N$ and a finite set $P \subset \Z^d$.
  By Lemma~\ref{lem:BoundedShadow}, the set $\{ f^n(x) \;|\; n \in \N \}$ is finite, so $x$ must be mortal, proving the original claim.
  We assume for contradiction that such a $p$ does not exist, so that for all $p \in \N$ there exists a homoclinic point $x^p \in X_t$ and $n(p) \in \N$ with $\Theta_i(x^p) \cap [-2 p, 2 p] = \emptyset$ and $0 \in \Theta_i(f^{n(p)}(x^p))$.
  By replacing $x^p$ with $\vec v \cdot x^p$ for a suitable vector $\vec v \in \Z^d$ with $\vec v_i = 0$, we may assume $\vec 0 \in \Theta(f^{n(p)}(x^p))$, so that $d_X(f^{n(p^p)}(x), 0_X) > \Delta$.
  
  We now periodize each $x^p$ along a subgroup of $K_i$.
  More explicitly, we consider the system of periodic points $\fix{K_i}{\X}$ with base group $\Z^d / K_i \simeq \Z$.
  Let then $p \in \N$, and let $\epsilon_p > 0$ be small enough that $d_X(x^p, y) < \epsilon_p$ implies 
  $d_X(f^n(x^p), f^n(y)) < \Delta/2$ for all $y \in X_t$ and $n \leq n(p)$.
  By Lemma~\ref{lem:Periodization}, for each $p$ there exists a finite-index subgroup $H_p \leq K_i$ and a point $y^p \in \fix{H_p}{X_{w(t)}}$ that is homoclinic in $\fix{K_i}{\X}$ and satisfies $d_X(x^p, y^p) < \epsilon_p$.
  For large enough $p$, we also have $\Theta_i(y) \cap [-p, p] = \emptyset$, since $\Theta_i(y) \subseteq N + \Theta_i(x)$ for some finite set $N \subset \Z$ that does not depend on $p$.
  In particular, we have $d_X(f^{n(p)}(y^p), 0_X) \geq \Delta / 2$.
  
  We claim that each of the points $y^p$ is mortal.
  By Lemma~\ref{lem:PeriodicProps}, each tier $(w(t), H_p)$ of $\fix{K_i}{\X}$ is $f$-stabilizing with $s(w(t), H_p) = (s(w(t_0)), H_p)$, and the latter is $0$-gluing with $w(s(w(t)), H_p) = (r, H_p)$.
  Thus Lemma~\ref{lem:Z} applied to the system $\fix{K_i}{\X}$ implies that $f$ is nilpotent on $\fix{H_p}{X_{w(t)}}$.
  Then $y^p \in \fix{H_p}{X_{w(t)}}$ is mortal, as claimed.
  Let $\ell(p) \in \N$ be such that $f^{\ell(p)}(y^p) = 0_X$.
  Clearly, we have $n(p) < \ell(p)$.
  
  Let $0 < \epsilon \leq \Delta / 4$ be small.
  Let $N \subset \Z^d$ be a finite set with $\Theta(f(x)) \subseteq -N + \Theta(x)$ for all $x \in X_r$.
  Let $\delta > 0$ and $E \subset \Z^d$ be given for $\epsilon$ by the weak $0$-gluing property of $s(w(t))$.
  Let $M \subset \Z^d$ be such that $d_X(\vec v \cdot x, 0_X) \leq \Delta$ for all $\vec v \in M$ implies $d_X(x, 0_X) < \delta$.
  We may assume $\vec 0 \in M$.
  For $x \in X_{w(t)}$ and $m \in \N$, we denote $A_m(x) = \bigcup_{q \leq m} E - M + \Theta(f^q(x))$.
  As in Lemma~\ref{lem:Towers}, for all $\vec v \in \partial_E A_m(x)$ and $q \leq m$ we have $d_X(\vec v \cdot f^q(x), 0_X) < \delta$.
  This fact is used below in the construction of several designations.
  
  We inductively construct a $(\delta, E)$-designation $\mathcal{B} = (B_k, x_k)_{k \geq 1}$ in $X_{s(w(t))}$, maintaining the following conditions for all $k \geq 1$.
  \begin{enumerate}
    \item There exists $p(k) \in \N$ with $x_k = y^{p(k)}$, and $\Theta(x_k) \subseteq B_k \subseteq A_{n(p(k))}(x_k)$.
    \item For $1 \leq j < k$ we have $A_{\ell(p(j))}(x_j) \cap A_{\ell(p(j))}(x_k) = \emptyset$.
  \end{enumerate}
  We also guarantee that $n(p(k)) < \ell(p(k)) < n(p(k+1))$ holds for all $k$.
  The idea is that each $y^{p(k)}$ is not close to $0_X$ at time $n(p(k))$, and dies off at time $\ell(p(k))$, making way for $y^{p(k+1)}$ to be far from the origin at time $n(p(k+1))$, and so on.
  
  Let $k \geq 1$.
  We assume that the partial designation $(B_j, x_j)_{1 \leq j < k}$ has been constructed (it is empty in the case $k = 1$), and proceed to choose $B_k$ and $x_k$.
  Let $V \subset \Z^d$ be a finite set with $d_X(x, 0_X) \geq \Delta / 2$ implying $d_X(\vec v \cdot x, 0_X) > \Delta$ for some $\vec v \in V$, when $x \in X_r$.
  Denote $n = \ell(p(k-1))$, and define
  \[
    R = \bigcup_{\substack{0 \leq m \leq k \\ 1 \leq j < k}} \pi_i(N \cdot m + ((M - E + A_n(x_j)) \cup V)) \subset \Z
  \]
  Since $R$ is finite, there exists $p(k) \in \N$ with $R \cap \Theta_i(y^{p(k)}) = \emptyset$.
  As in Lemma~\ref{lem:Towers}, we have $A_n(x_j) \cap A_n(y^{p(k)}) = \emptyset$ for all $j < k$, as well as $p(k) > n$.
  We choose $x_k = y^{p(k)}$ and $B_k = A_0(x_k)$.
  Then $k$ satisfies conditions 1 and 2.
  
  Since $\mathcal{B}$ is a $(\delta, E)$-designation in $X_{s(w(t))}$, it has an $\epsilon$-realization $z \in X_r$.
  We now consider the trajectory of $z$.
  For $q \geq 0$, let $k_q \geq 0$ be such that $\ell(p(k_q)) \leq q < \ell(p(k_q+1))$ (with the convention $\ell(p(0)) = 0$), and denote by $\mathcal{B}_q$ the designation $(A_{\ell(p(k_q))}(x_k), f^q(x_k))_{k \geq k_q}$.
  We know that the sets $A_{\ell(p(k_q))}(x_k)$ are disjoint for $k \geq k_q$ and $\vec v \in \partial_E A_{\ell(p(k_q))}(x_k)$ implies $d_X(\vec v \cdot f^q(x_k), 0_X) < \delta$, so $\mathcal{B}_q$ is a valid $(\delta, E)$-designation in $X_{s(w(t))}$.
  Denote its $\epsilon$-realization by $z_q \in X_r$, and note that $z = z_0$.
  
  \begin{claim}
    \label{cl:OrbitOfz}
    If $\epsilon$ is small enough, then for each $q \in \N$ we have $f(z_q) = z_{q+1}$.
  \end{claim}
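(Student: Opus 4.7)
The strategy will be to apply Lemma~\ref{lem:FuncGlue} to $z_q$ and then identify the resulting realization with $z_{q+1}$ via the uniqueness guaranteed by Lemma~\ref{lem:UniqGlue}, possibly after absorbing a ``dying'' region through Lemma~\ref{lem:RegionsGlue}. I will first choose $\epsilon > 0$ small enough that the hypotheses of Lemmas~\ref{lem:UniqGlue},~\ref{lem:RegionsGlue} and~\ref{lem:FuncGlue} are simultaneously satisfied in the tier $X_r$, taking $\Gamma = \{\ID_{X_r}\}$. By Lemma~\ref{lem:FuncGlue}, $f(z_q)$ is then the unique realization of the pushed-forward designation
\[
\mathcal{B}_q^+ := (A_{\ell(p(k_q))}(x_k), f^{q+1}(x_k))_{k \geq k_q}.
\]
The rest of the argument splits into two cases, depending on whether $k_{q+1} = k_q$ or $k_{q+1} = k_q + 1$; these are the only possibilities since $q \mapsto k_q$ is non-decreasing and the death times $\ell(p(k))$ are strictly increasing in $k$.

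If $k_{q+1} = k_q$, which occurs exactly when $q + 1 < \ell(p(k_q + 1))$, then $\mathcal{B}_q^+$ and $\mathcal{B}_{q+1}$ are literally the same designation (identical regions and identical points for every $k \geq k_q = k_{q+1}$), so both $f(z_q)$ and $z_{q+1}$ are realizations of the same object, and Lemma~\ref{lem:UniqGlue} forces $f(z_q) = z_{q+1}$.

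If $k_{q+1} = k_q + 1$, which happens precisely when $q + 1 = \ell(p(k_q + 1))$, then $\mathcal{B}_q^+$ still carries a $k = k_q$ term with point $f^{q+1}(x_{k_q}) = 0_X$ (as $q + 1 > \ell(p(k_q))$ and $f^{\ell(p(k_q))}(x_{k_q}) = 0_X$); since this point coincides with the implicit complement point $0_X$, Lemma~\ref{lem:RegionsGlue} lets me absorb that region into the complement without changing the realization. The remaining regions for $k \geq k_{q+1}$ must then be compared with their enlargements $A_{\ell(p(k_{q+1}))}(x_k) \supseteq A_{\ell(p(k_q))}(x_k)$ appearing in $\mathcal{B}_{q+1}$, which I handle by a second application of Lemma~\ref{lem:RegionsGlue}: the disjointness from condition~2 of the inductive construction of $\mathcal{B}$, together with the defining property of the set $R$ (which is engineered to control spread over the interval $(\ell(p(k_q)), \ell(p(k_{q+1}))]$), ensures that on each enlargement $A_{\ell(p(k_{q+1}))}(x_k) \setminus A_{\ell(p(k_q))}(x_k)$ the point $f^{q+1}(x_k)$ is close to $0_X$ relative to every other surviving index. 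Combining the two applications, $f(z_q)$ realizes $\mathcal{B}_{q+1}$, so Lemma~\ref{lem:UniqGlue} yields $f(z_q) = z_{q+1}$.

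The main obstacle is the second use of Lemma~\ref{lem:RegionsGlue} in the hard case: verifying smallness on the enlargements requires carefully unfolding the definition $A_m(x) = \bigcup_{q' \leq m}(E - M + \Theta(f^{q'}(x)))$ and exploiting the fact that the inductive choice of $p(k)$ via $R$ was designed precisely to separate the influence cones of distinct trajectories $x_k$, so that one trajectory's forward evolution over the time window between two consecutive deaths cannot intrude into another trajectory's region. Once this spatial–temporal separation is checked, the claim follows essentially by bookkeeping and the cited lemmas.
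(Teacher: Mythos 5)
Your proof takes essentially the same route as the paper's: the same case split on whether $k_{q+1} = k_q$ or $k_{q+1} = k_q + 1$, Lemma~\ref{lem:FuncGlue} to push the designation forward one time step, and Lemma~\ref{lem:RegionsGlue} together with the observation that a region whose associated point is $0_X$ may be absorbed into the implicit complement. The only difference is cosmetic — you absorb the dead $k_q$-region before enlarging the surviving regions, whereas the paper enlarges first (passing through $\mathcal{B}''$) and discards the dead term last — and this reordering is immaterial.
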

  
  \begin{proof}
    If $q + 1 < \ell(p(k_q+1))$, then the designations $\mathcal{B}_q$ and $\mathcal{B}_{q+1}$ have the same subsets of $\Z^d$, and each point of $\mathcal{B}_{q+1}$ is the $f$-image of the corresponding point of $\mathcal{B}_q$.
    Then Lemma~\ref{lem:FuncGlue} implies $f(z_q) = z_{q+1}$.
    
    Consider then the case $q + 1 = \ell(p(k_q + 1))$, where $k_{q+1} = k_q + 1$.
    Intuitively, $\mathcal{B}_{q+1}$ is obtained from $\mathcal{B}_q$ by applying $f$ to each point, enlarging the subsets of $\Z^d$ to accommodate them, and removing one of the subsets and the associated point, which has become $0_X$.
    More formally, consider the $(\delta, E)$-designation $\mathcal{B}' = (A_{\ell(p(k_q))}(x_k), f^{q+1}(x_k))_{k \geq k_q}$, which is obtained from $\mathcal{B}_q$ by applying $f$ to each point.
    If $\epsilon$ is small enough, Lemma~\ref{lem:FuncGlue} implies that $f(z_q)$ is an $\epsilon$-realization of $\mathcal{B}'$.
    Consider then the $(\delta, E)$-designation $\mathcal{B}'' = (A_{\ell(p(k_{q+1}))}(x_k), f^{q+1}(x_k))_{k \geq k_q}$, which is obtained from $\mathcal{B}'$ by enlarging the subsets of $\Z^d$.
    By Lemma~\ref{lem:RegionsGlue}, $f(z_q)$ is the $\epsilon$-realization of $\mathcal{B}''$ as well.
    Finally, since $f^{q+1}(x_{k_q}) = 0_X$, it is clear that $\mathcal{B}''$ and $\mathcal{B}_{q+1}$ have the same realizations, so $f(z_q)$ is the $\epsilon$-realization of $\mathcal{B}_{q+1}$.
    Lemma~\ref{lem:UniqGlue} implies $f(z_q) = z_{q+1}$.
  \end{proof}
  
  For all $k \geq 1$, denoting $p = p(k)$, the point $f^{n(p)}(y^p)$ is $\epsilon$-close to $z_{n(p)}$, since $\vec 0 \in A_{\ell(p)}$.
  This implies $d_X(z_{n(p)}, 0_X) \geq d_X(f^{n(p)}(y^p), 0_X) - d_X(z_{n(p)}, f^{n(p)}(y^p)) \geq \Delta / 4$, which contradicts the asymptotic nilpotency of $f$ on $X_r$.
\end{proof}

\begin{lemma}
  \label{lem:AbelByFinite}
  Every finitely generated finite-by-(virtually abelian) group is virtually $\Z^d$ for some $d \in \N$.
\end{lemma}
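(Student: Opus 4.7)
My plan is to proceed via two reductions followed by a commutator-trick construction. Suppose $G$ is finitely generated with a finite normal subgroup $F \trianglelefteq G$ such that $G/F$ is virtually abelian. First I would pass to a finite-index subgroup in two stages: pick a finite-index abelian subgroup $A/F \leq G/F$ and replace $G$ by the preimage $A$, which is finitely generated and whose quotient by $F$ is a finitely generated abelian group, hence of the form $\Z^d \times T$ with $T$ finite; pulling back the $\Z^d$ factor lets me assume $G/F \simeq \Z^d$ for some $d \in \N$. Second, since $\Aut(F)$ is finite, the conjugation homomorphism $G \to \Aut(F)$ has finite-index kernel $C$, inside which $F$ is central. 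Replacing $G$ by $C$ (whose image in the $\Z^d$ of the previous step is again a finite-index, hence free, subgroup of $\Z^d$), I reduce to the case where $1 \to F \to G \to \Z^d \to 1$ is a central extension with $F$ finite.

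The crux is then to exhibit an abelian finite-index subgroup of $G$. I would choose lifts $t_1, \ldots, t_d \in G$ of a basis of $\Z^d$. The commutators $c_{ij} := [t_i, t_j]$ lie in the central finite subgroup $F$, and a short induction using centrality gives the identity $t_i^a t_j^b = c_{ij}^{ab} t_j^b t_i^a$ for all $a, b \in \Z$. Taking $N$ to be the exponent of $F$ then yields $t_i^N t_j^N = t_j^N t_i^N$ for all $i, j$, so the subgroup $L := \langle t_1^N, \ldots, t_d^N \rangle$ is abelian. Its image in $G/F = \Z^d$ contains $N \Z^d$, so $LF$ has index $N^d$ in $G$, and since $F$ is finite, $L$ itself has finite index in $G$.

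To finish, I note that $L$ is a finitely generated abelian group with $L/(L \cap F)$ isomorphic to a subgroup of $\Z^d$ that contains $N\Z^d$, hence free of rank $d$, while $L \cap F$ is finite. Thus $L \simeq \Z^d \times T$ for some finite $T$, and the $\Z^d$ direct factor is a finite-index subgroup of $L$, and therefore also of $G$. I expect no real obstacle: the only thing to keep straight is that ``virtually $\Z^d$'' is preserved under finite-index inclusions, which is immediate because a finite-index $\Z^d$-subgroup of $L$ remains finite index in $G$ via the chain of reductions above.
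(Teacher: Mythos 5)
Your proof is correct in substance but takes a noticeably different route from the paper's. The paper skips the reduction to a central extension entirely: it lifts a basis of a finite-index $\Z^d \leq A$ to elements $g_1,\dots,g_d \in G$, observes that for any $g \in \psi^{-1}(A)$ one has $g^{-1} g_i g \in g_i K$ because the quotient $A$ is abelian (so the conjugacy class of each $g_i$ inside $\psi^{-1}(A)$ is finite), deduces that the joint centralizer of the $g_i$ has finite index, and then takes a common power $g_i^m$ landing in that centralizer. What your argument buys in exchange for the extra reduction step is a very concrete mechanism for the commutation, namely the identity $t_i^a t_j^b = c_{ij}^{ab} t_j^b t_i^a$ and the exponent of the centre; what the paper's argument buys is avoiding any appeal to $\Aut(F)$ or to central extensions at all, since the finite-conjugacy-class observation already does the work.

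One small technical slip worth fixing: after replacing $G$ by $C = \ker(G \to \Aut F)$, the group $F$ need not be a subgroup of $C$ (it is iff $F$ is abelian), so you cannot literally say ``inside which $F$ is central'' or write the new extension as $1 \to F \to G \to \Z^d \to 1$. What is true is that $F \cap C = Z(F)$, which \emph{is} central in $C$, and $C/(F \cap C) \cong CF/F$ is a finite-index (hence free rank-$d$) subgroup of $\Z^d$. Relabelling $F := Z(F)$ and $G := C$ then gives the central extension you want, and the rest of your commutator argument goes through unchanged. This is a cosmetic fix, not a gap in the strategy.
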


\begin{proof}
  Suppose we have a short exact sequence $1 \to K \to G \stackrel{\psi}{\to} H \to 1$, where $K$ is finite and $A \leq H$ is abelian of finite index.
  Since $G$ is finitely generated, so is $A$, so it has a finite-index subgroup isomorphic to $\Z^d = \langle \vec e_1, \ldots, \vec e_d \rangle$.
  Let $g_i \in \psi^{-1}(\vec e_i)$ for each $i$, and let $C \trianglelefteq \psi^{-1}(A)$ be the centralizer of $\{g_1, \ldots, g_d\}$ in $\psi^{-1}(A)$.
  Since $g^{-1} g_i g \in \psi^{-1}(\vec e_i) \subseteq g_i K$ for all $g \in \psi^{-1}(A)$, $C$ has finite index in $\psi^{-1}(A)$.
  For some $m \geq 1$ we have $g_i^m \in C$ for all $i$, and then $\langle g_1^m, \ldots, g_d^m \rangle \simeq \Z^d$ has finite index in $G$.
\end{proof}

\begin{proposition}
  \label{prop:LocallyVirtuallyAbelian}
  Every locally virtually abelian group is nil-rigid on $\mathfrak{C}_2$.
\end{proposition}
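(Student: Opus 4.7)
The plan is to show that every homoclinic point of every tier of an asymptotically nilpotent $(\X, f) \in \mathfrak{C}_2$ with base group $G$ is mortal, after which Lemma~\ref{lem:Mortal} immediately yields nil-rigidity on $\mathfrak{C}_2$. Writing $\X = (X, (G_t, X_t)_{t \in \Di})$, I would fix an arbitrary tier $t \in \Di$ and a homoclinic point $x \in X_t$, and argue that $x$ is mortal by localizing to a $\Z^d$-subsystem and invoking Lemma~\ref{lem:Zk}.

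The first step is to assemble a finite chain of tiers adequate for Lemma~\ref{lem:Zk}. Since $\X \in \mathfrak{C}_2$, the functions $s$ and $w$ are defined on all of $\Di$, so I set $t_1 = t$, $t_2 = w(t_1)$, $t_3 = s(t_2)$, $t_4 = w(t_3)$ and take $\VDi = \{t_1, t_2, t_3, t_4\} \subseteq \Di$. Because $\phi^{t_4}_{t_0}$ is finite-to-one and $x$ is $G_t$-homoclinic, the $t_4$-shadow $\Theta_{t_4}(x) \subseteq G_{t_4}$ is finite; using Lemma~\ref{lem:Nhood} I enlarge it to a finite $F \subseteq G_{t_4}$ suitable for $\VDi$ and $f$, and form the local ETDS $\loc{\VDi, F}{\X}$ with base group $K_F = \langle F \rangle$. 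Lemma~\ref{lem:LocalSystem} then ensures that $t_1$ and $t_3$ are weakly $0$-gluing and $t_2$ is $f$-stabilizing in this local system, with $w$ and $s$ taking the same values as in $\X$, so $w(s(w(t_1))) = t_4$ exists in $\loc{\VDi, F}{\X}$.

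Next I would descend to an honest $\Z^d$-action. The group $K_F$ is finitely generated, its image $\phi^{t_4}_{t_0}(K_F) \leq G$ is finitely generated and hence virtually abelian by the hypothesis on $G$, and the restriction of $\phi^{t_4}_{t_0}$ to $K_F$ has finite kernel; thus $K_F$ is finitely generated and finite-by-(virtually abelian), so by Lemma~\ref{lem:AbelByFinite} it contains a finite-index subgroup $P \simeq \Z^d$ for some $d \in \N$. Since every group in $\loc{\VDi, F}{\X}$ is $K_F$ and every connecting homomorphism is the identity, Lemma~\ref{lem:VirtualProp} applies with the choice $H_{t_i} = P$ at every tier, producing a new ETDS $(\mathcal{Y}, f)$ with base group $\Z^d$, identity connecting maps, and the required $w$- and $s$-stability on the chain $t_1, t_2, t_3, t_4$.

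Finally, Lemma~\ref{lem:Zk} applies to $(\mathcal{Y}, f)$ at the tier $t_1$: since $r = w(s(w(t_1))) = t_4$ exists and $f$ is asymptotically nilpotent on the ambient space (and therefore on $X_{F, t_4}$), every $P$-homoclinic point of $X_{F, t_1}$ is mortal. The point $x$ lies in $X_{F, t_1}$ by the choice of $F$, and its $P$-shadow is finite because $P$ has finite index in $K_F$ and $\Theta_{t_4}(x)$ is finite, so $x$ is $P$-homoclinic and hence mortal. The main obstacle I anticipate is the bookkeeping in the first two steps, namely arranging $\VDi$, $F$, and the subgroup $P$ so that the weak $0$-gluing and $f$-stabilization properties, identity connecting homomorphisms, and asymptotic nilpotency all line up simultaneously at the correct tiers; once this packaging is verified, the rest of the argument is an immediate quotation of earlier results.
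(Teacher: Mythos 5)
Your proof is correct and follows essentially the same route as the paper: fix a homoclinic point on an arbitrary tier, build the four-tier chain $\{t, w(t), s(w(t)), w(s(w(t)))\}$, localize via $\loc{\VDi, F}{\X}$ with $F \supseteq \Theta_{w(s(w(t)))}(x)$, use Lemma~\ref{lem:AbelByFinite} plus Lemma~\ref{lem:VirtualProp} to pass to a $\Z^d$-action, invoke Lemma~\ref{lem:Zk} for mortality, and conclude with Lemma~\ref{lem:Mortal}. The small extra detail you supply about the finite kernel of $\phi^{t_4}_{t_0}$ restricted to $K_F$ (making $K_F$ finite-by-(virtually abelian)) is exactly the content the paper compresses into ``as a finitely generated subgroup of $G_r$''.
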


\begin{proof}
  Let $G$ be such a group, and let $(\X, f) \in \mathfrak{C}_2$ be an asymptotically nilpotent ETDS with $\X = (X, (G_t, X_t)_{t \in \Di})$ and $G_0 = G$.
  We claim that for every tier $t \in \Di$, every homoclinic point $x \in X_t$ is mortal.
  Denote $\VDi = \{t, w(t), s(w(t)), w(s(w(t)))\}$ and $r = w(s(w(t)))$.
  Let $F \subset G_r$ be a finite set containing $\Theta_r(x)$ and suitable for $\VDi$ and $f$, and construct the system $\loc{\VDi,F}{\X}$.
  As a finitely generated subgroup of $G_r$, $K_F = \langle F \rangle$ is finite-by-(virtually abelian), hence virtually $\Z^d$ for some $d \in \N$ by Lemma~\ref{lem:AbelByFinite}.
  We use Lemma~\ref{lem:VirtualProp} to pass to into a TDS that has the same tiers, but every acting group is $\Z^d$. Then the tier $w(s(w(t)))$ still exists, so by Lemma~\ref{lem:Zk}, every homoclinic point in $X_{F,t}$ is mortal, so in particular $x$ is.
  Since $x$ was arbitrary, Lemma~\ref{lem:Mortal} implies that $f$ is nilpotent on each tier.
\end{proof}

\begin{corollary}
  Every abelian group is nil-rigid on $\mathfrak{C}_2$.
\end{corollary}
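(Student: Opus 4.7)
The plan is to observe that this corollary is essentially immediate from Proposition~\ref{prop:LocallyVirtuallyAbelian}, which was just established. The key point is the inclusion of classes: every abelian group is in particular locally virtually abelian, since any finitely generated subgroup of an abelian group is itself abelian (and trivially virtually abelian, as it is a finite-index subgroup of itself).

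More explicitly, let $G$ be abelian and let $H \leq G$ be any finitely generated subgroup. Then $H$ is abelian, hence virtually abelian. By the definition given in the preliminaries ("$G$ is locally $\mathcal{G}$ if its finitely generated subgroups are in $\mathcal{G}$"), this shows that $G$ is locally virtually abelian. Proposition~\ref{prop:LocallyVirtuallyAbelian} then yields that $G$ is nil-rigid on $\mathfrak{C}_2$.

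There is essentially no obstacle here; the entire content has already been absorbed into the proof of Proposition~\ref{prop:LocallyVirtuallyAbelian}. The corollary is stated separately only to highlight the concrete consequence for the reader, since abelian groups are the motivating class (cf. the earlier discussion of $\Z^d$-results from \cite{Sa12c}) and since the chain $\text{abelian} \subset \text{virtually abelian} \subset \text{locally virtually abelian}$ is the one that connects our framework to those prior results. No separate argument is required.
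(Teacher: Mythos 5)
Your proposal is correct and matches the paper exactly: the corollary is stated without a separate proof, as it is an immediate consequence of Proposition~\ref{prop:LocallyVirtuallyAbelian} together with the observation that abelian groups are locally virtually abelian.
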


\section{The Class of Nil-Rigid Groups}
\label{sec:GroupsWeGet}

If a group $G$ is nil-rigid on $\mathfrak{C}_4$, then all expansive $0$-gluing pointed dynamical systems $(G, X)$ with dense homoclinic points are nil-rigid, meaning that each asymptotically nilpotent endomorphism $f : X \to X$ is nilpotent.
We can now show this property for a large class of groups.

\begin{theorem}
  \label{thm:BigClass}
  Let $G$ be a residually finite group with a subnormal series $\{1_G\} = H_0 \triangleleft H_1 \triangleleft \cdots \triangleleft H_n = G$, where each factor $H_{k+1} / H_k$ is locally virtually abelian.
  Then $G$ is nil-rigid on $\mathfrak{C}_3$.
\end{theorem}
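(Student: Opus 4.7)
The plan is to proceed by induction on the length $n$ of the subnormal series. The base case $n = 0$ gives $G = \{1_G\}$, which is finite and thus nil-rigid on $\mathfrak{C}_3$ by Proposition~\ref{prop:FiniteGroups} (every ETDS in $\mathfrak{C}_3$ is stabilizing on each tier by definition). For the inductive step, I would peel off the top factor of the series and apply Theorem~\ref{thm:BigClosure} to the short exact sequence
\[
1 \longrightarrow H_{n-1} \longrightarrow G \longrightarrow G/H_{n-1} \longrightarrow 1.
\]

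To invoke Theorem~\ref{thm:BigClosure}, three conditions must be checked: (i) $G$ itself is residually finite, which is given by hypothesis; (ii) the kernel $H_{n-1}$ is nil-rigid on $\mathfrak{C}_3$; and (iii) the quotient $G/H_{n-1}$ is nil-rigid on $\mathfrak{C}_2$. Condition (iii) is immediate: by the assumption on the series, $G/H_{n-1} = H_n/H_{n-1}$ is locally virtually abelian, so Proposition~\ref{prop:LocallyVirtuallyAbelian} shows it is nil-rigid on $\mathfrak{C}_2$. For condition (ii), I would verify that $H_{n-1}$ satisfies the hypotheses of the theorem with the shorter series $\{1_G\} = H_0 \triangleleft H_1 \triangleleft \cdots \triangleleft H_{n-1}$: each quotient $H_{k+1}/H_k$ for $k < n-1$ is unchanged and thus locally virtually abelian, and $H_{n-1}$ is residually finite because it is a subgroup of the residually finite group $G$. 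The inductive hypothesis then yields that $H_{n-1}$ is nil-rigid on $\mathfrak{C}_3$.

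With all three conditions in place, Theorem~\ref{thm:BigClosure} directly delivers that $G$ is nil-rigid on $\mathfrak{C}_3$, closing the induction.

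There is no real obstacle here beyond bookkeeping; all the heavy lifting has already been done in Proposition~\ref{prop:LocallyVirtuallyAbelian} (which encodes the one-dimensional result together with the periodization and $\mathrm{Loc}$-constructions for $\Z^d$) and Theorem~\ref{thm:BigClosure} (which encodes the $\mathrm{Fin}$-construction and the group-extension closure). The only point that merits care is ensuring that the inductive hypothesis applies to $H_{n-1}$, namely that a subgroup of a residually finite group is residually finite and that truncating a subnormal series preserves the property that each factor is locally virtually abelian; both facts are standard.
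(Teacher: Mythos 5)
Your proof is correct and follows exactly the same induction-plus-$\text{Theorem}$~\ref{thm:BigClosure} route as the paper; the only difference is that you have spelled out the bookkeeping (residual finiteness of $H_{n-1}$ as a subgroup, truncation of the series) that the paper leaves implicit.
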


\begin{proof}
  For $n = 0$, the result follows from Proposition~\ref{prop:FiniteGroups}.
  For $n \geq 1$, we have an exact sequence $1 \to H_{n-1} \to G \to H_n / H_{n-1} \to 1$.
  The factor $H_n / H_{n-1}$ is nil-rigid on $\mathfrak{C}_2$ by Proposition~\ref{prop:LocallyVirtuallyAbelian}, and $H_{n-1}$ is residually finite as a subgroup of $G$, and thus nil-rigid on $\mathfrak{C}_3$ by the induction hypothesis.
  Theorem~\ref{thm:BigClosure} implies the desired result.
\end{proof}

\begin{corollary}
  \label{cor:Solvables}
  Every residually finite solvable group is nil-rigid on $\mathfrak{C}_3$.
\end{corollary}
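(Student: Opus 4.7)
The plan is to derive Corollary~\ref{cor:Solvables} as an immediate consequence of Theorem~\ref{thm:BigClass}. The only thing that needs checking is that a residually finite solvable group satisfies the hypothesis of the theorem, namely the existence of a subnormal series with locally virtually abelian factors.

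By the definition of solvability, $G$ admits a subnormal series $\{1_G\} = H_0 \triangleleft H_1 \triangleleft \cdots \triangleleft H_n = G$ in which every factor $H_{k+1}/H_k$ is abelian. I would then observe that any abelian group is in particular locally virtually abelian: every finitely generated subgroup is itself abelian, hence trivially virtually abelian (it is a finite-index subgroup of itself). Thus the given subnormal series already witnesses the hypothesis of Theorem~\ref{thm:BigClass}.

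Applying Theorem~\ref{thm:BigClass} to this series yields that $G$ is nil-rigid on $\mathfrak{C}_3$, which is the desired conclusion. There is no genuine obstacle here — the entire content of the corollary lies in Theorem~\ref{thm:BigClass}; the corollary just repackages it under the more familiar hypothesis of solvability, after noting the trivial inclusion (abelian) $\subseteq$ (locally virtually abelian).
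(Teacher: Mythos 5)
Your proof is correct and is exactly the argument the paper has in mind: the corollary is stated without a separate proof immediately after Theorem~\ref{thm:BigClass}, precisely because the only step needed is the trivial observation that abelian groups are locally virtually abelian, so a solvable series already satisfies the theorem's hypothesis.
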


Write $\mathfrak{G}$ for the class of groups defined in Theorem~\ref{thm:BigClass}, and $\mathfrak{G}^*$ for the class of groups that are locally $\mathfrak{G}$.
By Proposition~\ref{prop:OneTierLocal}, all groups in $\mathfrak{G}^*$ are nil-rigid on $\mathfrak{C}_4$.

\begin{corollary}
  The following groups are nil-rigid on $\mathfrak{C}_4$: 
  \begin{itemize}
  \item every linear group (that is, subgroup of some $GL(n,F)$ with $n \geq 1$ and $F$ any field) that does not contain a free group,
  \item every (abelian-by-polycyclic)-by-finite group,
  \item every finitely generated group of polynomial growth,
  \item every virtually nilpotent group,
  \item every wreath product $A \wr P$ where $A$ is abelian and $P$ is locally virtually polycyclic.
  \end{itemize}
\end{corollary}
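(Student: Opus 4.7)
The plan is to reduce each of the five cases to Proposition~\ref{prop:OneTierLocal}, by verifying that the group in question belongs to $\mathfrak{G}^*$ (equivalently, that each of its finitely generated subgroups satisfies the hypotheses of Theorem~\ref{thm:BigClass}). The unifying observation I will use is that any finitely generated group $H$ with an abelian normal subgroup $A \trianglelefteq H$ whose quotient $H/A$ is virtually polycyclic already lies in $\mathfrak{G}$: residual finiteness of $H$ follows from the classical theorems of Hall, Roseblade, and Jategaonkar on finitely generated abelian-by-polycyclic groups, and concatenating $\{1\} \triangleleft A$ with a subnormal series for $H/A$ (polycyclic on a finite-index subgroup, then a finite factor at the top) gives a subnormal series whose factors are abelian, cyclic, and at worst finite --- all of them locally virtually abelian.

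With this tool in hand, items (3) and (4) are straightforward: Gromov's theorem identifies finitely generated groups of polynomial growth as virtually nilpotent, and an intersection argument shows that every finitely generated subgroup of a virtually nilpotent group is virtually nilpotent, hence polycyclic-by-finite and covered by the unifying observation. For item (1), each finitely generated subgroup $H$ of a linear group without a free subgroup is itself linear and free-subgroup-free, so by the Tits alternative $H$ is virtually solvable; Mal'cev's theorem then forces it to be polycyclic-by-finite. Item (2) is immediate, since a finitely generated subgroup of an (abelian-by-polycyclic)-by-finite group is again of the same form.

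For item (5), given a finitely generated $H \leq A \wr P$ I would project to $P$: the image is finitely generated and hence virtually polycyclic by the hypothesis on $P$, while the kernel $H \cap A^{(P)}$ is abelian as a subgroup of $A^{(P)}$, so $H$ is abelian-by-(virtually polycyclic) and the unifying observation applies. The main obstacle is precisely that the abelian normal subgroup $H \cap A^{(P)}$ is typically not finitely generated, so residual finiteness cannot be obtained from the naive version of Hall's theorem; however, the Hall--Roseblade--Jategaonkar theorem does not require finite generation of the abelian base, and once this point is acknowledged, constructing the subnormal series and verifying that its factors are locally virtually abelian reduce to routine bookkeeping.
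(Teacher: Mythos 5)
Your overall strategy matches the paper's: reduce to finitely generated subgroups via Proposition~\ref{prop:OneTierLocal} and verify $\mathfrak{G}$-membership. Items (2)--(4) are handled correctly and essentially as in the paper, and for item (5) your direct application of the ``unifying observation'' to the finitely generated subgroup $H$ itself is actually slightly cleaner than the paper's route, which passes through the (possibly non-finitely-generated) intermediate group $A \wr P'$ and then invokes Lemma~\ref{lem:Subgroups}.

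However, item (1) contains a genuine error. You assert that for a finitely generated virtually solvable linear group $H$, ``Mal'cev's theorem then forces it to be polycyclic-by-finite.'' This is false. Mal'cev's theorem states that finitely generated linear groups are residually finite; it does not upgrade ``virtually solvable'' to ``polycyclic-by-finite'' over an arbitrary field. The Baumslag--Solitar group $BS(1,2) = \langle a, b \mid b a b^{-1} = a^2 \rangle$ is finitely generated, metabelian, and linear over $\mathbb{Q}$ (send $a$ to the elementary shear and $b$ to $\mathrm{diag}(2,1)$), yet its normal abelian subgroup $\mathbb{Z}[1/2]$ is not finitely generated, so it is not polycyclic-by-finite. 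Moreover, it is not clear that a finitely generated virtually solvable linear group is even abelian-by-(virtually polycyclic) --- the Lie--Kolchin--Mal'cev structure theorem only yields (nilpotent-by-abelian)-by-finite --- so your unifying observation is not the right tool here. The fix is simple and is what the paper does: a virtually solvable group automatically has a subnormal series whose factors are abelian or finite (hence locally virtually abelian), so the \emph{only} thing that needs to be imported from Mal'cev's theorem is local residual finiteness, after which Theorem~\ref{thm:BigClass} applies directly without any polycyclicity.
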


\begin{proof}
Every linear group is locally residually finite by Malcev's theorem \cite{Ma40}. A linear group that does not contain a free group is locally virtually solvable by Tits' alternative \cite{Ti72}. Every (abelian-by-polycyclic)-by-finite group is locally residually finite by \cite{Ja74,Ro76}, and these groups are virtually solvable by form. Every finitely generated group of polynomial growth is virtually nilpotent by \cite{Gr81}.

To see that all virtually nilpotent groups are nil-rigid on $\mathfrak{C}_4$, note that by Lemma~\ref{lem:virtual} we only need to show this for nilpotent groups. Every nilpotent group is locally polycyclic (see e.g. Theorem~2.13 of \cite{We09}), so the claim follows from Proposition~\ref{prop:OneTierLocal}.

Consider then the wreath product $A \wr P$.
By Proposition~\ref{prop:OneTierLocal}, it suffices to prove that each finitely generated subgroup $G \leq A \wr P$ is nil-rigid on $\mathfrak{C}_4$.
Such a $G$ embeds into $A \wr P'$ for some finitely generated, and hence virtually polycyclic, subgroup $P' \leq P$.
The group $A \wr P'$ is abelian-by-(virtually polycyclic), hence (abelian-by-polycyclic)-by-finite and nil-rigid on $\mathfrak{C}_4$ by the second claim.
The nil-rigidity of $G$ follows from Lemma~\ref{lem:Subgroups}.
\end{proof}

In particular, the discrete Heisenberg group and the lamplighter group $\Z_2 \wr \Z$ are nil-rigid on $\mathfrak{C}_4$. Note that (abelian-by-polycyclic)-by-finite also covers every metabelian group.


A free group on two or more generators is not in the class $\mathfrak{G}^*$.
To see this, suppose that $A = H_0 \triangleleft H_1 \triangleleft \cdots \triangleleft H_n = H \leq F$ is the shortest subnormal series that witnesses $H \in \mathfrak{G}$ for a finitely generated non-abelian free subgroup $H$ of such a free group $F$.
Since subgroups of free groups are free, each $H_i$ is a free group on one generator, i.e. $\Z$, which implies that $H$ is solvable, a contradiction.
In spite of this, the two-generator free group $F_2$ seems like a natural candidate for a nil-rigid group -- indeed, one might think nil-rigidity of all virtually free groups could be proved similarly to the nil-rigidity of $\Z$, but we are unable to do this.

\begin{question}
  Is the free group $F_2$ nil-rigid on $\mathfrak{C}_4$?
\end{question}

Another interesting question is whether Corollary~\ref{cor:Solvables} can be extended to all solvable groups.
We do not know the answer even on the weaker class $\mathfrak{C}_4$.
Our proof techniques are strongly based on periodization, which requires the existence of many finite-index subgroups.
It is not clear how to proceed in the absence of residual finiteness.

\begin{question}
  Is every solvable group nil-rigid on $\mathfrak{C}_4$?
\end{question}

On the other hand, on a general residually finite group, such as a nonabelian free group, periodization is possible.
It seems plausible that periodization, together with some new ideas that strengthen or replace the arguments of Section~\ref{sec:Finites}, could be used to prove the nil-rigidity of some well-behaved subclass of residually finite groups.

\begin{question}
  Is every residually finite group nil-rigid on $\mathfrak{C}_4$?
\end{question}

We also do not have counterexamples for general groups, but here our techniques do not apply at all.
We are not even aware of any countable groups $G$ such that the full shift $\Sigma^G$ is not nil-rigid, and this subcase is of particular interest.

\begin{question}
\label{q:Every}
  Is every group nil-rigid on $\mathfrak{C}_4$?
\end{question}

One natural class of examples of independent interest are cellular automata on group sets \cite{Mo11,Wa17}.

\begin{question}
Let $G \curvearrowright A$ be a transitive group action. Let $\Sigma \ni 0$ be a finite alphabet and let $G$ act on $\Sigma^A$ by $(g \cdot x)_a = x_{g^{-1} \cdot a}$. When is the system $(G, \Sigma^A)$ nil-rigid?
\end{question}

Of particular interest are actions arising from automorphism groups of locally finite graphs. A theorem of Trofimov \cite[Theorem~1]{Tr85} generalizes Gromov's theorem from groups to vertex-transitive graphs of polynomial growth, saying that after quotienting out finite blocks of imprimitivity, any such graph admits a vertex-transitive action by a finitely generated virtually nilpotent group with finite stabilizers. However, we have not been able to prove nil-rigidity in this case. See Example~\ref{ex:Trofimov} for a special case.

\section{Examples and Counterexamples}
\label{sec:FinalExamples}

In this section, we list some examples of dynamical systems (most of which are zero-dimensional) for which our results imply something meaningful, as well as some counterexamples for potential strengthenings of our results, and some connections with other notions. Our main application of interest are cellular automata on full shifts on groups, and indeed we consider the following to be the most important corollary.

\begin{example}
\label{ex:IrreducibleSFT}
Let $G$ be a countable group and let $X \subset \Sigma^G$ be a strongly irreducible SFT (for example, $X = \Sigma^G$). If $G \in \mathfrak{G}^*$, then all asymptotically nilpotent cellular automata on $X$ are nilpotent. To see this, observe that if $f : X \to X$ is asymptotically nilpotent, then in particular $0^G \in X$. By strong irreducibility, finite points are dense in $X$. Since $X$ is an SFT, it has the shadowing property, which implies $0$-gluing by Proposition~\ref{prop:ShadowingIsGluing}.
\tqed
\end{example}

In the case $G = \Z^2$, the weaker assumption of block-gluing is enough. See Section~\ref{sec:BlockGluing}.

\begin{example}
Let $M$ be a countable monoid that appears as a submonoid of a countable group $G \in \mathfrak{G}^*$, suppose $\Sigma \ni 0$ is a finite alphabet and consider the space $\Sigma^M$ with the product topology. For $m \in M, A \subset M$, define $D_{m,A}(x) : \Sigma^M \to \Sigma^A$ by $D_{m,A}(x)_a = x_{ma}$, and let $f : \Sigma^M \to \Sigma^M$ be a function satisfying $f(x)_m = F(D_{m,A}(x))$ for some finite $A \subset M$ and function $F : \Sigma^A \to \Sigma$. If $f$ is asymptotically nilpotent on $\Sigma^M$ (towards $0^M$), then it is nilpotent. To see this, observe that $f$ naturally induces an asymptotically nilpotent endomorphism of $\Sigma^G$. From this, we obtain in particular the nil-rigidity of cellular automata on $\N^d$.
\tqed
\end{example}

The next example highlights a relatively concrete class of groups in $\mathfrak{G}^*$ that are not virtually nilpotent.
It generalizes a construction used in \cite{Sa17a} to produce counterexamples to the Tits alternative in the group $\Aut(\Sigma^\Z)$, and is also somewhat similar to the constructions presented in \cite{dCoMa07}.

\begin{example}
\label{ex:WeirdGroup}
Let $K$ and $H$ be residually finite countable groups with $|K| \geq 2$.
We construct a new residually finite group $G$, which can be thought of as the residually finite version of the restricted wreath product $K \wr H$, as follows.
Denote
\[
  X = \bigcup_{\substack{A \trianglelefteq_f K \\ B \trianglelefteq_f H}} (K / A)^{H / B}
\]
where $A$ ranges over the finite-index normal subgroups of $K$, and $B$ the finite-index normal subgroups of $H$.
For fixed $A$ and $B$, elements of the set $(K / A)^{H / B}$ are functions from $H/B$ to $K/A$.
To each element $g \in K \cup H$ we associate a permutation $\psi_g$ of $X$ as follows.
If $g \in K$ and $f \in (K / A)^{H / B}$, then $\psi_g(f)(H) = g \cdot f(H)$ and $\psi_g(f)(b H) = f(b H)$ whenever $b H \neq H$; intuitively, $\psi_g$ acts on the element at the origin of $H/B$ using multiplication by $g$, and leaves the other elements unchanged.
If $g \in H$, then $\psi_g(f)(b H) = f(g^{-1} b H)$ for all $b H \in H/B$; this means that $\phi_g$ permutes the coordinates of $f$ using multiplication by $g$.
Let $G \leq \mathrm{Sym}(X)$ be the group of permutations generated by $\psi(K \cup H)$.

We make some observations about this group.
First, it is residually finite, since it is defined by its action on the finite sets $(K/A)^{H/B}$.
Second, if $K$ and $H$ are finitely generated, so is $G$.
Third, the map $\psi$ restricts to an embedding of both $K$ and $H$ into $G$.
This last property is not as trivial, but it follows from the residual finiteness of the groups.

Define a function $\phi : K \cup H \to H$ by $\phi(h) = h$ for $h \in H$ and $\phi(k) = 1_H$ for $k \in K$.
Then $\phi$ extends into a homomorphism from $G$ to $H$, and its kernel $\ker(\phi) \trianglelefteq G$ is isomorphic to a subgroup of the infinite direct sum $K^\omega$ and contains copies of $K^n$ for all finite $n$.
If $K$ has a property P that is preserved under finite products and subgroups, then $\ker(\phi)$ is locally P, and $G$ is locally-P-by-$H$.
In particular, if $K = \Z^5 \rtimes A_5$ is the semidirect product of $\Z^5$ and the alternating group $A_5$ that permutes its axes, and $H = \Z$, then $G$ is (locally virtually abelian)-by-abelian, hence $G \in \mathfrak{G}$.
We could also choose $K = A_5$ and $G = \Z$ to obtain a (locally finite)-by-abelian group in $\mathfrak{G}$.

On the other hand, we claim that if $K$ has a nonsolvable finite factor and $H$ is infinite, as is the case in both examples above, then $G$ is not virtually solvable.
Suppose it were, and take a solvable subgroup $L \leq G$ of finite index $n \geq 1$.
Let $A \trianglelefteq K$ be a subgroup of finite index such that $F = K/A$ is not solvable, and let $B \trianglelefteq H$ have finite index at least $n$.
Then the action of $G$ on $F^{H/B}$ induces a surjective homomorphism from $G$ to $F \wr (H/B)$, which sends $L$ to a solvable subgroup $E$ of index at most $n$.
Consider the subgroup $F^n \leq F \wr (H/B)$, and note that $E \cap F^n$ has index at most $n$ in $F^n$ as well, and that this group is solvable since $E$ is.
Now, for each $i \in \{1, \ldots, n\}$ the projection map $\pi_i : E \cap F^n \to F$ is nonsurjective, since $F$ is not solvable.
But then the index of $E \cap F^n$ in $F^n$ is at least $2^n$, a contradiction.
\tqed
\end{example}

\begin{example}
\label{ex:VirtuallyZ}
Consider the sofic shift $X \subset \{0, 1, 2\}^\Z$ defined by the forbidden patterns $1 0^n 1$ and $2 0^n 2$ for all $n \in \N$.
Let $h : X \to X$ be the automorphism that changes every $1$ into $2$ and vice versa.
Let $G = \Z \times \Z_2$, which is nil-rigid on $\mathfrak{C}_1$, and let $(G, X)$ be the dynamical system whose action is defined by $(1, \overline{0}) \cdot x = \sigma(x)$ and $(0, \overline{1}) \cdot x = h(x)$.
It is not hard to see that any family of homoclinic points of $X$ can be glued together, possibly after applying the function $h$ to some of them.
This means that $(G, X)$ has the $f$-variable homoclinic recurrence property, so it is nil-rigid by Corollary~\ref{cor:Z}.
We also note that the system has dense periodic points and dense homoclinic points.

Let $f : X \to X$ be the cellular automaton that shifts every $1$ to the right and every $2$ to the left, and turns to $0$ those $1$-$2$ pairs that would collide or pass through each other.
Since $1$s and $2$s alternate in all configurations of $X$, the CA $f$ is asymptotically nilpotent, but it is not nilpotent.
This does not contradict the nil-rigidity of the system, since $f$ does not commute with $h$ and thus is not an endomorphism of $(G, X)$.
\tqed
\end{example}

\begin{example}
Consider the ternary full shift $X = \{0,1,2,3\}^\Z$, and the following cellular automaton $f : X \to X$: $2$-symbols move to the right, and $3$-symbols move left, destroying both on collision, like in Example~\ref{ex:VirtuallyZ}. For $n \geq 2$, the pattern $01^n0$ is turned into $020^{n-2}30$, while lone $1$s are turned into $0$s. Consider the tiered system $(\{0,1,2,3\}^\Z, (\Z,\{0,1\}^\Z))$ (with just one tier $\{0,1\}^\Z$). This system is weakly $0$-gluing and has weakly dense homoclinic points. The map $f$ is asymptotically nilpotent on $\{0,1\}^\Z$, but it is not uniformly so. Indeed, while $f$ is an evolution of the TDS, it is not stabilizing, because $\{0,1\}^\Z$ is not mapped to any tier. If we add a second tier and consider the tiered system $(\{0,1,2,3\}^\Z, (\Z, [0,n]^\Z)_{n \in \{1,3\}})$ instead, then $f$ is a stabilizing evolution, and indeed the system is in $\mathfrak{C}_3$. We have that $f$ is non-uniformly asymptotically nilpotent on the first tier. Since $f$ is not asymptotically nilpotent on the second tier (which is the ambient space $X$), this example does not contradict our results.
\tqed
\end{example}

\begin{example}
\label{ex:EvenShift}
Consider the \emph{even shift} $X \subset \{0, 1\}^\Z$ defined by the forbidden patterns $1 0^{2 n + 1} 1$ for all $n \in \N$.
It has the $f$-variable homoclinic recurrence property for any endomorphism $f$, since every collection of homoclinic points can be glued together, possibly after translating some of them one step to the left.
Theorem~\ref{thm:Z} implies that it is nil-rigid.
This result was also claimed in \cite{Sa17}.

In the same vein, consider any \emph{coded system} $Y \subseteq \Sigma^\Z$, which is the closure of all bi-infinite concatenations of words drawn from a set $W \subseteq \Sigma^+$. Suppose that $0^k \in W$ for some $k \geq 1$, and there exists $n \geq 0$ such that every $w \in W \setminus 0^*$ satisfies $0^n \not\sqsubset w$.
For the same reason as above, Theorem~\ref{thm:Z} shows that $Y$ is nil-rigid. The assumption on $0$-symbols is necessary, since the sofic subshift in Example~\ref{ex:VirtuallyZ} is the coded system defined by $W = 0^* \cup 10^*2$.
\tqed
\end{example}

To showcase the power of Lemma~\ref{lem:Z}, we give examples of $\Z$-subshifts which are not $f$-variably $0$-gluing, but have the $f$-variable homoclinic recurrence property.

\begin{example}
Consider the subsystem $X \subset \Sigma^\Z$ defined by forbidding each word $w \in \Sigma^{k^2}$ containing more than $k$ nonzero symbols.
We claim that $X$ has the $f$-variable homoclinic recurrence property with respect to any endomorphism $f$.
Namely, if $v, w \in \Sigma^{n^2}$ are two words of the same length $n^2$ occurring in $X$, then $x = {}^\infty 0 v 0^{4 n^2} w 0^\infty \in X$, since any word occurring in $x$ that contains symbols from both $v$ and $w$ has length greater than $4 n^2$ and contains at most $2 n$ nonzero symbols.
We can iterate this construction to glue together any collection of homoclinic points.
For an example of a system that requires the use of nontrivial functions in $\Lambda_f$, take $\Sigma = \{0, 1\}$ and consider the intersection of $X$ with the even shift of Example~\ref{ex:EvenShift}, or take $\Sigma = \{0, 1, 2\}$ and consider the intersection of $X$ with the shift of Example~\ref{ex:VirtuallyZ} (replacing the group $\Z$ with $\Z \times \Z_2$ in the latter case).
By Corollary~\ref{cor:Z}, these systems are nil-rigid.
\tqed
\end{example}

\begin{example}
Consider the subshift $Y \subset \{0, 1\}^\Z$ defined as the shift orbit closure of the configurations $x^n = {}^\infty 0 . 1 0^n 1 0^{n+1} 1 0^{n+2} 1 \cdots$ for all $n \in \N$.
This subshift does not have the $f$-variable homoclinic recurrence property for any endomorphism $f$, since every configuration $y \in Y$ satisfies $y_i = 0$ for all negative $i \in \Z$ with large enough absolute value.
The right shift is an asymptotically nilpotent automorphism of $Y$ that is not nilpotent.

However, the union of $Y$ with its reversal does have the $f$-variable homoclinic recurrence property for every endomorphism $f$.
Up to shifting, the only homoclinic point of this system is $x = {}^\infty 0 1 0^\infty$, and depending on the choice of $\epsilon > 0$ and $c \in \{1, -1\}$, we can choose either $x^n$ or its reversal for some large enough $n$ as the realization of a suitable designation.
By Lemma~\ref{lem:Z}, this system is nil-rigid for any endomorphism.
The intuition is that if an endomorphism $f$ is not nilpotent, then $f(x) = q \cdot x$ for some $q \in \Z$, and if $n$ is larger than the radius of $f$, then $f(x^n) = q \cdot x^n$ holds as well, and analogously for the reversal of $x^n$.
Depending on the sign of $q$, one of these configurations will have infinitely many $1$-symbols crossing the origin under the repeated application of $f$.
\tqed
\end{example}

\begin{example}
\label{ex:Trofimov}
Consider the graph product $K_5 \boxtimes \Z$ where $K_5$ is the five-vertex clique and $\Z$ denotes the infinite line graph $(\Z, \{(a,b) \;|\; |a-b| = 1\})$. The wreath product $A_5 \wr \Z$ acts on this graph, and thus its colorings, in an obvious way (fix an origin copy of $K_5$, which $A_5$ permutes, and let $\Z$ shift along $\Z$). Let $\Sigma$ be a finite alphabet and consider the dynamical system $(\Sigma^{K_5 \boxtimes \Z}, A_5 \wr \Z)$.

Our results do not directly apply to this system for several reasons -- $A_5 \wr \Z$ is not residually finite, and there are essentially no homoclinic points since stabilizers are infinite. From infinite stabilizers one also obtains that the system is not $0$-gluing. 
Nevertheless, this system is nil-rigid: As Trofimov's theorem predicts, there are finite blocks of imprimitivity (namely the copies of $K_5$) such that quotienting them out yields a graph whose automorphism group is finitely generated and virtually nilpotent, and acts with finite stabilizers. In our case the graph is $\Z$, the homomorphism from $\Aut (K_5 \boxtimes \Z)$ to $\Aut (\Z)$ splits and we obtain an expansive action of $\Z$ on $\Sigma^{K_5 \boxtimes \Z}$. This action has dense homoclinic points and is $0$-gluing. Any endomorphism of $(\Sigma^{K_5 \boxtimes \Z}, A_5 \wr \Z)$ is of course also an endomorphism of this subaction, so the system is nil-rigid. \tqed
\end{example}

\begin{example}
\label{ex:UnitCircle}
Let $\mathbb{S}$ be the unit circle, which we identify with the interval $[0, 1)$ by the map $x \mapsto e^{2 \pi i x}$. The map $x \mapsto x^2$ is asymptotically nilpotent on $\mathbb{S}$ but not uniformly so. Let $G$ be an infinite group, denote $X = \mathbb{S}^G$, and consider the pointed dynamical system $(G, X)$ with the shift action and $0_X = 0^G$.
This system is $0$-gluing and has dense homoclinic points, but it is not expansive.
Define $f : X \to X$ by $f(x)_g = x_g^2$ for all $g \in G$.
Then $f$ is an asymptotically nilpotent endomorphism of $(G, X)$ which is not nilpotent.
This shows that expansiveness plays a major role in our results.
\tqed
\end{example}

\begin{example}
\label{ex:OnePoint}
Let $\dot \N = \N \cup \{\infty\}$ be the one-point compactification of $\N$. It is not hard to show that $\dot \N$ has no non-uniformly asymptotically nilpotent continuous self-maps. However, we claim that $\dot \N^\Z$ with the shift dynamics admits such an endomorphism, namely the function defined by
\[
  f(x)_i =
  \begin{cases}
    \max(0, \min(x_{i+1}, m)-1), & \text{if $x_{i-m} > 0$ for some $m \geq 1$,} \\
    \max(0, x_{i+1}-1), & \text{if such an $m$ does not exist,}
  \end{cases}
\]
with the interpretation that $m$ is always chosen minimal, and $\infty - 1 = \infty$.
It is easy to see that $f$ is continuous and commutes with the shift action of $\Z$.

On points of the form $\cdots 0 0 0 \infty 0 0 0 \cdots$, the function $f$ behaves as the shift map.
Since the $\infty$-symbol can pass over the origin arbitrarily late, $f$ is not uniformly asymptotically nilpotent.
Let then $x \in \dot \N^\Z$ be an arbitrary point, and suppose $x_i > 0$ for some $i \in \Z$.
For any $j \geq i$, we then have $f(x)_j \leq j - i$, which implies $f^{1+n}(x)_{j-n} \leq \max(0, j-i-n)$ for all $n \in \N$.
In the case $n = j-i$, we obtain $f^{1+j-i}(x)_i = 0$.
Thus for each $i \in \N$ there is at most one time step $n \in \N$ with $f^n(x)_i > 0$, so $f$ is asymptotically nilpotent.

This is another example of a system $(G, X)$ that is $0$-gluing and has dense homoclinic points, but is not nil-rigid.
As with Example~\ref{ex:UnitCircle}, it is not expansive.
However, for all $x \neq 0_X$ there exists $g \in G$ with $d_X(g \cdot x, 0_X) \geq 1$ (with a suitable choice of the metric $d_X$), which is a weaker version of expansivity for pointed systems.
The example shows that it is indeed too weak for our purposes, even in the case $G = \Z$.
\tqed
\end{example}

\begin{example}
While tiered dynamical systems were only defined as a technical tool, there are also some direct applications for the nilpotency of ETDS. Denote by $\ddot\Z$ the two-point compactification $\Z \cup \{-\infty,\infty\}$ of $\Z$ and consider the dynamical system $\ddot\Z^\Z$ under the shift dynamics $\sigma(x)_i = x_{i-1}$ and zero point $0^\Z$. This system is compact, but not expansive. It has, however, natural expansive subsystems $[a,b]^\Z$, which form the tiers of a system
\[ \X = (\ddot\Z^\Z, (\Z, [a,b]^\Z)_{a, b \in \Z, a \leq b}) \]
where the homomorphisms between the groups are identity maps, and the ordering is the inclusion order between the intervals $[a,b]$. Any endomorphism $f : \ddot\Z^\Z \to \ddot\Z^\Z$ under the dynamics of $\sigma$ induces an evolution of the tiered system $\X$. It induces a stabilizing one if and only if for all $a \in \N$ there exists $b \in \N$ such that $f^n([-a,a]^\Z) \subseteq [-b,b]^\Z$ for all $n \in \N$. It is clear that $\X$ is weakly $0$-gluing and has weakly dense homoclinic points because each tier has these properties separately. Thus from our results it follows that if $f$ is a stabilizing endomorphism of $(\ddot\Z^\Z, \sigma)$, and $f^n(x) \longrightarrow 0^\Z$ for all $x \in \ddot\Z^\Z$, then for all $a \in \N$ we have $f^n([-a,a]^\Z) = \{0^\Z\}$ for some $n \in \N$.

If we do not assume the stabilization property, then evolution maps need not even be uniformly asymptotically nilpotent on the subsystems $[-a,a]^\Z$.
Namely, let $g : \dot \N^\Z \to \dot \N^\Z$ be the non-uniformly asymptotically nilpotent endomorphism from Example~\ref{ex:OnePoint}.
Define $h : \ddot\Z^\Z \to \dot \N^\Z$ by
\[
  h(x)_i =
  \begin{cases}
    g(|x|)_i, & \text{if~} x_i \geq 0, \\
    \infty, & \text{if~} x_i < 0
  \end{cases}
\]
where the absolute value $|x|$ is defined cell-wise.
This function is clearly continuous and commutes with shifts, so the codomain extension $f = g \circ h : \ddot\Z^\Z \to \ddot\Z^\Z$ is an endomorphism of $\ddot\Z^\Z$.
Then $f$ is non-uniformly asymptotically nilpotent. \tqed
\end{example}

The class of endomorphisms of $\ddot\Z^\Z$ includes the \emph{sand automata} \cite{DeGuMa08}, which are those endomorphisms that preserve the positions of $\infty$ and $-\infty$-symbols, and commute with the vertical shift $\sigma^v(x)_i = x_i + 1$. 
However, the above claim does not directly translate to a nontrivial statement for sand automata, as they cannot be asymptotically nilpotent towards $0^\Z$ on any subsystem $[a, b]^\Z$ with $a < b$. 



We also mention that our results of course hold for expansive manifold homeomorphisms, e.g. toral automorphisms and the multiplication map on a solenoid. However, for examples of expansive manifold homeomorphisms that we know, our theorem says nothing nontrivial, as such systems tend to be very `endomorphism-rigid', and one can give direct algebraic and topological proofs of nil-rigidity.
For example, given any expansive homeomorphism on a compact space, any endomorphism is determined by a finite amount of information, namely any good enough approximation of the an endomorphism determines it uniquely. We do not know examples of expansive manifold homeomorphisms where nilpotency is undecidable when local rules are given in this way. On the other hand, nilpotency is algorithmically undecidable for cellular automata \cite{AaLe74,Ka92}.

\section{Block-gluing Two-dimensional SFTs are Nil-rigid}
\label{sec:BlockGluing}

It is a common phenomenon in multidimensional symbolic dynamics (i.e. symbolic dynamics on finitely generated abelian groups) that there is an immense jump in the complexity of dynamical properties between dimension one and two, and a curious smaller gap between dimensions two and three. We mention the famous question of whether a strongly irreducible subshift of finite type in dimension three always has dense periodic points. This is the case in dimension two \cite{Li03}, and in dimension one, even the SFT assumption can be removed \cite{Be88}.

We show in Example~\ref{ex:IrreducibleSFT} that in any dimension (in fact on any group), strongly irreducible SFTs are nil-rigid. In this section, we show that in dimension two, the weaker notion of block-gluing is enough, and leave open whether it suffices in higher dimensions. See \cite{BoRoSc10} for an overview of block-gluing and related mixing notions of $\Z^2$ subshifts.


Let $G$ be a group and $\mathcal{F}$ any set of subsets of $G$ which contains $G$ and is closed under (left) translations, pointwise limits and arbitrary intersections. For $R \in \N$, we say a subshift $X \subset \Sigma^G$ is $(\mathcal{F},R)$-gluing, if whenever $P \in \Sigma^A$ and $Q \in \Sigma^B$ are valid patterns in $X$, $A, B \in \mathcal{F}$ and
\[ d_G(A, B) = \min_{a \in A, b \in B} d_G(a, b) \geq R \]
then the union pattern $P \sqcup Q$ defined by $(P \sqcup Q)|_A = P$ and $(P \sqcup Q)|_B = Q$ is also valid in $X$. We say $X$ is \emph{$\mathcal{F}$-gluing}, if it is $(\mathcal{F},R)$-gluing for some $R \in \N$, which we call the \emph{gluing constant}. Here $d_G$ can be any left-invariant proper metric on $G$, as the choice only impacts the choice of the constant $R$; in the case of general groups we usually use the word metric, while on $\Z^2$ it is customary to use the $\ell_\infty$ metric.
For $A \subseteq G$, write $C_{\mathcal{F}}(A)$ for the unique smallest element of $\mathcal{F}$ containing $A$ as a subset, which exists since $G \in \mathcal{F}$ and $\mathcal{F}$ is closed under intersections.

Let $B_{m,n} = [0,m-1] \times [0,n-1] \subset \Z^2$ and $B_n = B_{n,n}$. These sets and their translates (i.e. all rectangles) are called \emph{blocks}. Let $\mathcal{B}$ be the closure of translates of the sets $B_{m,n}$ with respect to pointwise convergence. A $\Z^2$-SFT is called \emph{block-gluing} if it is $(\mathcal{B}, R)$-gluing for some $R \in \N$. Typically, the infinite blocks are not included in the definition, but a simple compactness argument shows that our definition is equivalent. We also use the following lemma (up to $\lambda = \omega+2$) to avoid cluttering our proof with compactness arguments; its proof is a simple compactness argument.

\begin{lemma}
\label{lem:OrdinalGluing}
 Let $X \subset \Sigma^G$ be a $(\mathcal{F}, R)$-gluing subshift. Let $\lambda$ be an ordinal, and for all $\alpha < \lambda$ suppose $B_\alpha \in \mathcal{F}$ and $d_G ( C_{\mathcal{F}} ( \bigcup_{\beta < \alpha} B_\beta ), B_\alpha ) \geq R$.
Let $P_\alpha \in \Sigma^{B_\alpha}$ be a valid pattern for each $\alpha < \lambda$. Then there exists $x \in X$ such that $x|_{B_\alpha} = P_\alpha$ for all $\alpha < \lambda$.
\end{lemma}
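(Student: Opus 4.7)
The plan is to reduce to the finite case via the standard compactness argument hinted at by the authors, and then handle the finite case by induction on the size of the index set.

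First, for each $\alpha < \lambda$, set $Y_\alpha = \{ x \in X \;|\; x|_{B_\alpha} = P_\alpha \}$. Each $Y_\alpha$ is a closed subset of the compact space $X$, since the map $x \mapsto x|_{B_\alpha}$ is continuous and we are taking the preimage of a single element. The goal $\bigcap_{\alpha < \lambda} Y_\alpha \neq \emptyset$ then follows from the finite intersection property, provided we show that for every finite $F = \{\alpha_1 < \alpha_2 < \cdots < \alpha_k\} \subset \lambda$ the intersection $\bigcap_{\alpha \in F} Y_\alpha$ is nonempty. This step avoids any transfinite induction and turns the lemma into a statement purely about finite collections.

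For the finite case, I would proceed by induction on $k$. The base $k = 1$ is just the validity of $P_{\alpha_1}$. For the inductive step, assume we have $x \in X$ with $x|_{B_{\alpha_i}} = P_{\alpha_i}$ for $i < k$, and set $A = C_{\mathcal{F}}\bigl(\bigcup_{i < k} B_{\alpha_i}\bigr) \in \mathcal{F}$. The pattern $Q = x|_A$ is valid in $X$ by construction. The key geometric observation is monotonicity of the closure operator: since $\bigcup_{i<k} B_{\alpha_i} \subseteq \bigcup_{\beta < \alpha_k} B_\beta$ and $\mathcal{F}$ is closed under intersections, we have $A \subseteq C_{\mathcal{F}}\bigl(\bigcup_{\beta < \alpha_k} B_\beta\bigr)$, so the hypothesis yields $d_G(A, B_{\alpha_k}) \geq R$. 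Applying $(\mathcal{F}, R)$-gluing to $Q$ and $P_{\alpha_k}$ produces a point $y \in X$ with $y|_A = Q$ and $y|_{B_{\alpha_k}} = P_{\alpha_k}$, hence $y|_{B_{\alpha_i}} = P_{\alpha_i}$ for all $i \leq k$.

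The main obstacle to watch out for is precisely the verification that the distance hypothesis, which is stated for the ``full'' cumulative closure over all ordinals below $\alpha_k$, still applies when we only glue the finitely many $B_{\alpha_i}$ with $i < k$. Monotonicity of $C_{\mathcal{F}}$ resolves this cleanly, but it is the only place where the specific form of $\mathcal{F}$ (being closed under intersections, so that $C_{\mathcal{F}}$ is a well-defined monotone closure operator) is used. Aside from this, the compactness of $X$ together with the closedness of each $Y_\alpha$ carries the argument across the ordinal $\lambda$ with no need for an actual transfinite recursion, and in particular limit stages never have to be addressed directly.
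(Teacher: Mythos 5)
Your proof is correct and realizes the paper's remark that ``its proof is a simple compactness argument'' in the natural way: reduce to finite subfamilies via the finite intersection property, then glue one block at a time, with the monotonicity of $C_{\mathcal{F}}$ guaranteeing that the distance hypothesis stated for the full cumulative closure still applies to the finite partial closure $A$. Since the paper gives no details, there is nothing substantive to compare beyond noting that your route (compactness first, then plain induction) cleanly sidesteps any explicit transfinite recursion, which is a reasonable way to organize the argument.
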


\begin{theorem}
\label{thm:Block}
Let $X \subset \Sigma^{\Z^2}$ be a block-gluing subshift of finite type. Then $X$ is nil-rigid.
\end{theorem}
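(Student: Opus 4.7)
The strategy is to build a tiered dynamical system $\X$ associated to $X$, whose tiers are the subsystems of lattice-periodic points of $X$, verify that $\X \in \mathfrak{C}_2$, and then apply the nil-rigidity of the abelian group $\Z^2$ on $\mathfrak{C}_2$ given by Proposition~\ref{prop:LocallyVirtuallyAbelian}. Let $\Di$ be the set of finite-index sublattices $L \leq \Z^2$ ordered by reverse inclusion (so the least element is $L = \Z^2$), and set $X_L = \fix{L}{X}$ with acting group $\Z^2/L$ (a finite group) and the natural quotient maps as connecting homomorphisms. Any cellular automaton $f : X \to X$ commutes with shifts, hence preserves each $X_L$, so $f$ is an evolution of $\X$ with every tier $f$-stabilizing.

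To verify weak $0$-gluing on a tier $L$, take $w(L) = L$: given a $(\delta, E)$-designation $(A_i, x_i)_{i \in I}$ on $X_L$ with $x_i \in X_L$, lift the partition $(A_i)$ of $\Z^2/L$ to an $L$-periodic partition $(\tilde A_i)$ of $\Z^2$ and define the candidate realization $y$ by $y|_{\tilde A_i} = x_i|_{\tilde A_i}$; since each $x_i$ is $L$-periodic, so is $y$. For $\delta$ small enough that $d(z, 0) < \delta$ forces $z$ to equal $0$ on a ball wider than the SFT window size, and $E$ large enough to capture all positions within window-distance of each patch boundary, the boundary condition makes each $x_i$ vanish identically on a band of more than window-width around the geometric boundary of $\tilde A_i$, and the symmetric argument applies to every adjacent $\tilde A_j$. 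Consequently every local SFT-window of $y$ either fits inside a single patch (where $y$ coincides with the valid configuration $x_i$) or lies entirely in the zero buffer straddling a patch boundary (where it coincides with a window of $0^{\Z^2}$), so $y \in X \cap \fix{L}{X} = X_L$ is the desired $\epsilon$-realization. For weakly dense homoclinic points, note that on any tier the acting group $\Z^2/L$ is finite, so every point automatically has finite shadow; the density in $X$ then reduces to the classical fact that block-gluing $\Z^2$-SFTs have dense periodic points, which follows from a pigeonhole argument on column types of a sufficiently large valid rectangular pattern combined with block-gluing to splice repeated columns (and then rows) into a doubly-periodic configuration.

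With $\X \in \mathfrak{C}_2$ established, Proposition~\ref{prop:LocallyVirtuallyAbelian} yields that any asymptotically nilpotent $f$ is nilpotent on every tier $X_L$, so every periodic point of $X$ is mortal. To promote this to nilpotency on all of $X$, one adapts the diagonal gluing argument of Lemma~\ref{lem:Towers} and Lemma~\ref{lem:Mortal}: assuming $f$ is asymptotically but not uniformly nilpotent, one inductively constructs a sequence $(x_k)$ of mortal periodic points whose $f$-orbits remain $\epsilon$-far from $0$ at prescribed strictly increasing times $(n_k)$, uses the $0$-gluing of the SFT together with the periodic $0$-buffer structure of the $x_k$ to glue them into a single point in $X$, and extracts a limit point whose forward orbit stays $\epsilon/2$-far from $0$ at every $n_k$, contradicting asymptotic nilpotency of $f$ on $X$.

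The main obstacle is the verification of weak $0$-gluing on the lattice tiers, since the partition sets $A_i \subseteq \Z^2/L$ of a designation need not be rectangles and block-gluing does not apply to them directly; this is overcome by exploiting the boundary condition of designations to force wide zero buffers around every patch and reducing local validity of the patched configuration to the two trivial cases above. A secondary obstacle is the bridging step, because $X$ itself cannot be included as a tier of $\X$ (the homomorphism from $\Z^2$ to any finite quotient is not finite-to-one, violating Definition~\ref{def:Tiers}), so the final step must replicate the proof technique of Lemma~\ref{lem:Mortal} using mortal periodic points in place of mortal homoclinic points as the building blocks of the diagonal construction.
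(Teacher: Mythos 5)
Your first two paragraphs (building the lattice-periodic TDS and verifying it is in $\mathfrak{C}_2$) are essentially correct but also essentially content-free: since every $\Z^2/L$ is finite and acts expansively on $\fix{L}{X}$, each tier is a finite set, so weak $0$-gluing, weakly dense homoclinic points, and $f$-stabilization are all automatic, and the conclusion that every periodic point is mortal follows immediately from Proposition~\ref{prop:FiniteGroups} without invoking Proposition~\ref{prop:LocallyVirtuallyAbelian}. The difficulty of the theorem is entirely concentrated in the step you flag as the ``secondary obstacle,'' and your proposed fix for it does not work.

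The gap is this. The inductive gluing in Lemma~\ref{lem:Mortal} (and Lemma~\ref{lem:Towers}) requires, at each stage, finding a point $y$ whose shadow is \emph{disjoint} from (the $A_n$-fattening of) the shadow of the current mortal building block $x_k$, so that the designation $((A_n(x_k), f^m(x_k)), (A_n(y), f^m(y)))$ is well-defined and realizable. A doubly-periodic point of $\Sigma^{\Z^2}$ has shadow equal to all of $\Z^2$ (unless it is $0^{\Z^2}$), so no such $y$ exists and the designation cannot even be formed. The ``periodic $0$-buffer structure'' you invoke is not justified (an arbitrary mortal periodic point, e.g.\ the all-$1$ configuration in a full shift, has no $0$-buffer at all), and even if the $x_k$ were engineered to have large $0$-regions within a fundamental domain, inserting $y$ there destroys periodicity, and a homoclinic point extracted from a mortal periodic point is not itself guaranteed to be mortal -- which is precisely why Lemma~\ref{lem:Mortal} is stated for mortal \emph{homoclinic} points. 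In short, mortality of all doubly-periodic points is very far from mortality of all homoclinic points, and no version of the diagonal gluing argument converts one into the other.

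The paper's actual route is quite different and deserves comparison. Rather than periodizing along the full lattice, it periodizes only along the vertical subgroup $K = \{0\} \times \Z$, working with $\fix{K}{\X}$ and $\fin{K}{\X}$. The relevant building blocks are vertically periodic configurations whose support meets only finitely many columns; these are \emph{homoclinic in $\fix{K}{\X}$} (their $K$-shadow, a subset of $\Z^2/K \simeq \Z$, is finite), so the gluing technology applies. Claim~\ref{cl:HomoclDenseBlock} shows these points are dense in $X$ using block-gluing and a pigeonhole on columns. Remark~\ref{rem:PeriodTowers} then controls the horizontal spread of shadows, making $\fin{K}{\X}$ a $0$-gluing tiered $\Z$-system to which Lemma~\ref{lem:Z} applies, yielding a uniform death time $m$ for configurations supported in a width-$(R-1)$ strip. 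Claim~\ref{cl:MortalDenseBlock} then uses an H-shaped block-gluing construction to produce mortal homoclinic configurations dense in $f^m(X)$, after which Remark~\ref{rem:Mortal} closes the argument. The one-dimensional finiteness in the $G/K$ direction is what makes the gluing possible, and doubly-periodic points destroy exactly that finiteness, so no amount of adaptation of Lemma~\ref{lem:Mortal} to periodic building blocks can substitute for this structure.
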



\begin{proof}
Let $R$ be a gluing constant, and assume without loss of generality that $R$ is also the maximal size of a forbidden pattern of $X$. Let $f : X \to X$ be an asymptotically nilpotent cellular automaton. Let $\X = \mathcal{U}(G, X)$ and let $K = \{0\} \times \Z \leq \Z^2$ be the vertical subgroup.

\begin{claim}
\label{cl:HomoclDenseBlock}
The homoclinic points of $\fix{K}{\X}$ are dense in $X$.
\end{claim}

\begin{proof}
Observe that the tiers of $\fix{K}{\X}$ correspond to nontrivial subgroups of $K$, and thus each vertically periodic point of $X$ is on some tier. Thus we need to show that every pattern of $X$ appears in some vertically periodic point whose support intersects only finitely many columns.

Let $n \in \N$ and let $P \in \Sigma^{B_n}$ be a valid pattern. We show that there is a point in $X$ which is vertically periodic, homoclinic in $\fix{K}{\X}$ and contains a copy of $P$ at the origin. For this, we apply Lemma~\ref{lem:OrdinalGluing} to obtain a point $x \in X$ containing a translate of $P$ at the coordinate $(0, (n+R-1)k)$ for all $k \in \Z$, and containing only zeroes outside the strip $[-R+1, n+R-2] \times \Z$. See Figure~\ref{fig:GluingInstructions}(a) for the gluing instructions.

Since $X$ is an SFT, we can use the pigeonhole principle to extract a periodic point from $x$: Pick $h_1, h_2 \in \Z$ such that $h_2 = h_1 + m(n+R-1)$ for some $m \geq 2$ and $x_{(-R+1, h_1) + B_{n + 2(R-1), n+R-2}} = x_{(-R+1, h_2) + B_{n + 2(R-1), n + R-2}}$. Define $y \in \Sigma^{\Z^2}$ by $y_{(a, h_1 + b (h_2 - h_1) + k)} = x_{(a, h_1 + k)}$ for all $a, b \in \Z$ and $0 \leq k < h_2 - h_1$.
Then $y \in X$ since all patterns of shape $B_R$ that occur in $y$ also occur on $x$, and $y$ contains a translate of $P$ since $h_2 - h_1 \geq 2(n+R-1)$. Finally, we can translate $y$ to force $P$ to occur at the origin. Since $P$ was arbitrary, the homoclinic points of $\fix{K}{\X}$ are dense in $X$.
\end{proof}

By Proposition~\ref{prop:ShadowingIsGluing}, $X$ is $0$-gluing. By Remark~\ref{rem:PeriodTowers} after Lemma~\ref{lem:Towers}, the $K$-shadow $\Theta_K(x)$ of a configuration $x \in X$ can only spread a bounded distance under the iteration of $f$. In the subshift $X$ we can interpret this concretely, as the shadow of a point is always at a bounded distance from its support in the Hausdorff metric: for any configuration $x \in X$ with support $A \subset \Z^2$, the support of $f^n(x)$ is contained in $A + [-C, C] \times \Z$ for all $n \in \N$. It follows that $\fin{K}{\X}$ is a $0$-gluing tiered $\Z$-system and $f$ is its asymptotically nilpotent evolution map. By Lemma~\ref{lem:Z}, $f$ is nilpotent on each tier of $\fin{K}{\X}$. In particular, there exists $m \in \N$ such that for all $x \in X$ whose support is contained in $[0, R-2] \times \Z$, we have $f^m(x) = 0^{\Z^2}$.

\begin{claim}
\label{cl:MortalDenseBlock}
Mortal homoclinic configurations are dense in $f^m(X)$.
\end{claim}

\begin{proof}
Let $[-r, r]^2 \subset \Z^2$ be a neighborhood of $f$, let $n \geq 3 r m$, and let $P \in \Sigma^{B_n}$ be any pattern that appears in a configuration of $f^m(X)$. Let $Q \in \Sigma^{B_n + [-r m, r m]^2}$ be a pre-image of $P$ under $f^m$, meaning that if $y \in X$ contains $Q$ at $\vec v$, then $f^m(y)$ contains $P$ at $\vec v + (rm, rm)$.

We glue $Q$ into a configuration whose support is shaped like the letter `H', with all four arms having thickness $R-1$ and the central bar containing a copy of $Q$. More explicitly, we construct a configuration $x \in X$ such that $x|_{B_n + [-r m, r m]^2} = Q$ and $x_{\vec v} = 0$ for all $v \in H \cup V$ where
\begin{align*}
  V & {} = ((-\infty, -r m - R] \cup [n - 1 + r m + R, \infty)) \times \Z \\
  H & {} = [-r m, n - 1 + r m] \times ((-\infty, -r m - R] \cup [n - 1 + r m + R, \infty))
\end{align*}
See Figure~\ref{fig:GluingInstructions}(b) for the gluing instructions.

Let $Y \subset X$ be the set of those configurations whose support is contained in $[t, t+R-2] \times \Z$ for some $t \in \Z$. We claim that there exists a finite set $T \subset \Z^2$ such that for all $\vec v \in \Z^2 \setminus T$, the pattern $x|_{\vec v + B_n + [-r m, r m]^2}$ occurs in some configuration of $Y$. Namely, if this is not the case, then we can shift $x$ vertically and extract a limit configuration $y \in X$ such that $y|_{\vec v + B_n + [-r m, r m]^2}$ does not occur in $Y$ for some $\vec v \in \Z^2$, and the support of $y$ is contained in the set $([-r m - R + 1, -r m - 1] \cup [n + r m, n + r m + R - 2]) \times \Z$. Since $X$ is defined by forbidden patterns of size at most $R \leq n$, we can erase one of the vertical stripes and obtain a configuration $z \in Y$ whose support is contained in either $[-r m - R + 1, -r m - 1] \times \Z$ or $[n + r m, n + r m + R - 2] \times \Z$. Since $n \geq 3 r m$, one of these choices satisfies $z|_{\vec v + B_n + [-r m, r m]^2} = y|_{\vec v + B_n + [-r m, r m]^2}$, a contradiction.

Recall that $f^m(Y) = \{ 0^{\Z^2} \}$.
Since $[-r m, r m]^2$ is a neighborhood of $f^m$, we have $f^m(x)_{\vec v} = 0$ for all $\vec v \in \Z^2 \setminus T$. In particular, $f^m(x)$ is homoclinic. It is also mortal, since it lies on some tier of $\fin{K}{\X}$, on which $f$ is nilpotent. By the definition of $Q$, the configuration $f^m(x)$ contains $P$. Since $P$ was an arbitrary (large enough) pattern occurring in $f^m(X)$, the claim follows.
\end{proof}

Remark~\ref{rem:Mortal} after Lemma~\ref{lem:Mortal} now shows that $f$ is nilpotent on $X$.
\end{proof}

\begin{figure}[ht]
\begin{center}
\begin{tikzpicture}[scale = 1.5]


\node at (-0.8,5.8) {(a)};

\fill [black!15] (-0.2,-0.1) rectangle (1.2,5.9);

\draw [fill=white] (0, 0)   rectangle (1, 1);
\draw [fill=white] (0, 1.2) rectangle (1, 2.2);
\draw [fill=white] (0, 2.4) rectangle (1, 3.4);
\draw [fill=white] (0, 3.6) rectangle (1, 4.6);
\draw [fill=white] (0, 4.8) rectangle (1, 5.8);
\draw (-0.2,-0.1) -- (-0.2,5.9);
\draw (1.2,-0.1) -- (1.2,5.9);

\node () at (0.5,2.9) {$P_0 = P$};
\node () at (0.5,4.1) {$P_1 = P$};
\node () at (0.5,1.7) {$P_2 = P$};
\node () at (0.5,5.3) {$P_3 = P$};
\node () at (0.5,0.5) {$P_4 = P$};
\node () at (-0.7, 2.9) {$P_{\omega} = 0$};
\node () at (1.7, 2.9) {$P_{\omega + 1} = 0$};


\begin{scope}[xshift=4cm]
\node at (-0.8,5.8) {(b)};

\fill [black!15] (-0.2,-0.1) rectangle (1.2,5.9);

\draw [fill=white] (0, 2.4) rectangle (1, 3.4);
\node () at (0.5, 2.9) {$P_0 = Q$};

\draw (-0.2,-0.1) -- (-0.2,5.9);
\draw (1.2,-0.1) -- (1.2,5.9);

\node () at (-0.7, 2.9) {$P_3 = 0$};
\node () at (1.7, 2.9) {$P_4 = 0$};

\fill [white] (0,-0.1) rectangle (1,2.2);
\draw (0,-0.1) -- (0,2.2) -- (1,2.2) -- (1,-0.1);
\node () at (0.5, 1.4) {$P_2 = 0$};

\fill [white] (0,3.6) rectangle (1,5.9);
\draw (0,5.9) -- (0,3.6) -- (1,3.6) -- (1,5.9);
\node () at (0.5, 4.4) {$P_1 = 0$};
\end{scope}
\end{tikzpicture}

\caption{Gluing instructions for applying Lemma~\ref{lem:OrdinalGluing} in the proof of Theorem~\ref{thm:Block}: (a) the construction of a homoclinic point in $\fix{K}{\X}$ and (b) the construction of an `H'-shape. Here, $P_\alpha = P$ and $P_\alpha = Q$ mean that at ordinal $\alpha$, the pattern is a suitably translated copy of the pattern $P$ or $Q$ respectively, and $P_\alpha = 0$ means that we glue zeroes in the area, i.e. $P_\alpha = 0^B$ for a suitable $B \subset \Z^2$. The gray areas have width $R$, and we have no control over their content.}
\label{fig:GluingInstructions}
\end{center}
\end{figure}
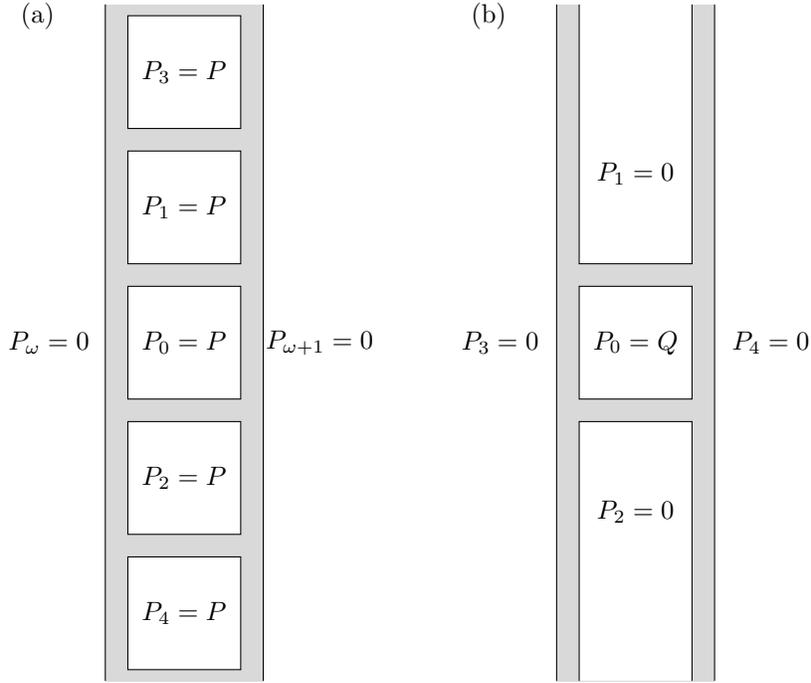

\begin{example}
Consider the SFT $X \subset \{0, 1\}^{\Z^2}$ where each occurrence of $1$ must have another $1$ as its north or east neighbor.
It is block-gluing, so Theorem~\ref{thm:Block} implies that it is nil-rigid.
It does not contain any homoclinic points apart from $0^{\Z^2}$, so our earlier results are not applicable to it.
\tqed
\end{example}

Recall from Section~\ref{sec:Gluing} that a subshift $X \subset \Sigma^{\Z^2}$ is \emph{$0$-to-$0$ sofic} if there exists a subshift of finite type $0^{\Z^2} \in Y \subset \Gamma^{\Z^2}$ and a block code $\pi : Y \to X$ such that $\pi^{-1}(0^{\Z^2}) = \{0^{\Z^2}\}$.
Such subshifts are $0$-gluing by Lemma~\ref{lem:GluingImage}.
A minor modification of the proof of Theorem~\ref{thm:Block} shows that block-gluing $0$-to-$0$ sofic shifts are also nil-rigid.
Note that the SFT $Y$ may not be block-gluing even if $X$ is.

\begin{theorem}
Let $X \subset \Sigma^{\Z^2}$ be a block-gluing $0$-to-$0$ sofic shift. Then $X$ is nil-rigid.
\end{theorem}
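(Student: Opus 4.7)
The plan is to follow the strategy of Theorem~\ref{thm:Block} closely, substituting the SFT cover $\pi : Y \to X$ (with $Y$ an SFT and $\pi^{-1}(0^{\Z^2}) = \{0^{\Z^2}\}$) for the direct SFT structure of $X$ used there. Fix $R$ larger than the block-gluing constant of $X$, the maximal forbidden-pattern size $R_Y$ of $Y$, and the block-code radius $r_\pi$ of $\pi$. By Lemma~\ref{lem:GluingImage}, $X$ is $0$-gluing, so the tiered-system framework applies, and it suffices to establish the sofic analogues of Claim~\ref{cl:HomoclDenseBlock} (density of homoclinic points in $\fix{K}{\X}$ for $K = \{0\} \times \Z$) and Claim~\ref{cl:MortalDenseBlock} (density of mortal homoclinic configurations in $f^m(X)$); after that, Remark~\ref{rem:PeriodTowers}, Lemma~\ref{lem:Z}, and Remark~\ref{rem:Mortal} conclude the proof without essential change.

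For the analogue of Claim~\ref{cl:HomoclDenseBlock}, I would first use block-gluing of $X$ and Lemma~\ref{lem:OrdinalGluing} exactly as in the SFT proof to construct $x \in X$ containing copies of a given valid pattern $P \in \Sigma^{B_n}$ at positions $(0, (n+R-1)k)$ for all $k \in \Z$, with zeros outside the strip $S = [-R+1, n+R-2] \times \Z$. Then lift $x$ to some $\tilde{x} \in Y$ via surjectivity of $\pi$. The SFT-pigeonhole step that extracts a vertically periodic point is now applied to $\tilde{x}$ on the enlarged strip $S' = [-R+1-r_\pi, n+R-2+r_\pi]$: the number of patterns of the form $\tilde{x}|_{S' \times [h, h+R_Y-1]}$ is finite, so one finds $h_1 < h_2 = h_1 + m(n+R-1)$ with coinciding patterns. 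Periodically repeat the slice $\tilde{x}|_{S' \times [h_1, h_2-1]}$ inside $S' \times \Z$, padded by zeros outside $S'$, to define $\tilde{y} \in Y$; its projection $y = \pi(\tilde{y})$ is then vertically periodic in $X$, contains $P$, and has zero columns outside a slight enlargement of $S$, giving the desired homoclinic point of $\fix{K}{\X}$. The analogue of Claim~\ref{cl:MortalDenseBlock} is handled in parallel: build the `H'-shaped configuration in $X$ via block-gluing, lift to $Y$, and perform the stripe erasure in $Y$ using its SFT structure (valid because the construction enforces a gap between stripes exceeding $R_Y$), then project back by $\pi$.

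The main technical obstacle is ensuring that the periodized $\tilde{y}$ truly lies in $Y$. Pigeonhole ensures validity of size-$R_Y$ patterns that lie wholly inside $S'$, and the regions outside $S'$ are all-zero and hence valid; but patterns straddling the boundary of $S'$ consist of $\tilde{x}$-content on one side and zeros on the other, and their validity is not automatic. The natural fix is to widen $S'$ by a further $R_Y$-buffer and first replace $\tilde{x}$ by a lift of $x$ whose columns outside $S'$ are identically zero: using the $0$-gluing of $Y$ together with the fact that $\pi(\tilde{x}) = 0$ on columns outside $S$ locally forces the $\pi$-windows of $\tilde{x}$ far from $S$ to be preimages of zero, one can produce such a lift by gluing the inside-$S'$ content of $\tilde{x}$ to $0^{\Z^2}$ outside the widened strip.
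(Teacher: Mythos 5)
Your proposal takes essentially the same route as the paper's proof sketch: build the relevant configurations in $X$ by block-gluing and Lemma~\ref{lem:OrdinalGluing}, lift them to the SFT cover $Y$ to carry out the pigeonhole extraction of a vertically periodic point and the stripe-erasure step, and project back via $\pi$, enlarging buffer distances to absorb the radius of $\pi$. The obstacle you flag --- whether the periodized lift $\tilde y$ actually lies in $Y$ --- is genuine and the paper's sketch elides it, but the cleanest repair is more direct than constructing a fresh lift by $0$-gluing: since $\pi^{-1}(0^{\Z^2}) = \{0^{\Z^2}\}$ and $Y$ is compact, a standard compactness argument (in effect Lemma~\ref{lem:Nhood} applied to $\pi$) yields a constant $C$, depending only on $Y$ and $\pi$, such that whenever $\pi(\tilde x)$ vanishes on the $C$-ball around $\vec v$ one already has $\tilde x_{\vec v} = 0$. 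Hence \emph{every} lift $\tilde x$ of $x$ is identically zero outside the strip $S$ widened by $C$, so with $S'$ taken this wide the straddling $B_{R_Y}$-patterns of the periodization occur verbatim in $\tilde x$ and there is nothing to repair; the same observation handles the lift of the `H'-shaped configuration in the mortality claim. Note also that your proposed route through $0$-gluing of $Y$ would anyway need a quantitative bound of this kind to check the border condition of the designation, since $\pi(\tilde x)$ being zero on a column does not by itself force $\tilde x$ to be small there in the metric of $Y$, so it is not an independent shortcut.
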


\begin{proof}[Proof sketch]
  The proof follows closely that of Theorem~\ref{thm:Block}.
  In the proof of Claim~\ref{cl:HomoclDenseBlock}, instead of $x \in X$, we work with a configuration $y \in Y$ of the SFT cover such that the pattern $P$ occurs in $\pi(y)$.
  In the proof of Claim~\ref{cl:MortalDenseBlock} we use the same trick and work with a $\pi$-preimage of the configuration $y$.
  Note that the distance $n$ needs to be slightly larger to take into account the radius of $\pi$.
\end{proof}


\section*{Acknowledgement}

The authors are thankful to Yves de Cornulier for reviewing the construction in Example~\ref{ex:WeirdGroup} and mentioning the link to the results in \cite{dCoMa07} on MathOverflow.

\bibliographystyle{ws-ijac}
\bibliography{../../../bib/bib}{}

\end{document}